\tikzstyle{divisor}=[circle,very thick,draw,scale=0.4,fill=white]
\tikzstyle{vertex}=[rectangle,draw,scale=0.4,fill=black]
\author{Matthew Dupraz}
\title{Realizability of tropical canonical divisors and linear systems}
\newcommand{\R}{\mathbb{R}}
\newcommand{\N}{\mathbb{N}}
\newcommand{\Z}{\mathbb{Z}}
\newcommand{\Q}{\mathbb{Q}}
\newcommand{\T}{\mathbb{T}}
\newcommand{\fX}{\mathfrak{X}}
\newcommand{\fm}{\mathfrak{m}}
\newcommand{\fd}{\mathfrak{d}}
\newcommand{\cO}{\mathcal{O}}
\newcommand{\cK}{\mathcal{K}}
\newcommand{\proj}{\mathbb{P}}
\newcommand{\supp}{\operatorname{supp}}
\newcommand{\bend}{\operatorname{bend}}
\newcommand{\lcm}{\operatorname{lcm}}
\newcommand{\Div}{\operatorname{Div}}
\newcommand{\fdiv}{\operatorname{div}}
\newcommand{\val}{\operatorname{val}}
\newcommand{\ord}{\operatorname{ord}}
\newcommand{\dist}{\operatorname{dist}}
\newcommand{\Spec}{\operatorname{Spec}}
\newcommand{\rank}{\operatorname{rk}}
\newcommand{\coker}{\operatorname{coker}}
\newcommand{\Rat}{\operatorname{Rat}}
\newcommand{\PL}{\operatorname{PL}}
\newcommand{\Adj}{\operatorname{Adj}}
\newcommand{\diag}{\operatorname{diag}}
\newcommand{\im}{\operatorname{im}}
\newcommand{\relint}{\operatorname{relint}}
\newcommand{\Real}{\operatorname{Real}}
\newcommand{\trop}{\operatorname{trop}}
\newcommand{\floor}[1]{{\lfloor #1 \rfloor}}
\newcommand{\inner}[2]{{\langle #1, #2 \rangle}}
\newcommand{\boundary}[1]{{\partial #1}}
\newcommand{\interior}[1]{{#1^\circ}}
\newcommand{\closure}[1]{{\overline{#1}}}
\newcommand{\completion}[1]{{\widehat{#1}}}
\newcommand{\degout}{\deg^{\textrm{out}}}
\newcommand{\tropicalhodge}{\mathbb{P}\Omega\mathcal{M}_g^\textrm{trop}}
\newcommand{\realizablelocus}{\mathbb{P}\mathcal{R}}
\newtheorem{theorem}{Theorem}[section]
\newtheorem{corollary}{Corollary}[theorem]
\newtheorem{lemma}[theorem]{Lemma}
\newtheorem{proposition}[theorem]{Proposition}
\theoremstyle{definition}
\newtheorem{definition}[theorem]{Definition}
\newtheorem{remark}[theorem]{Remark}
\newtheorem{example}[theorem]{Example}
\newtheorem{question}[theorem]{Question}
\begin{document}

\begin{titlepage}
\newcommand{\HRule}{\rule{\linewidth}{0.5mm}} 

\center 
 

\vspace{3cm}
\textsc{\LARGE École polytechnique fédérale de Lausanne}\\[1.5cm] 
\textsc{\Large Master thesis}\\[0.5cm]
{\large MSc Mathematics}\\[1cm] 


\HRule \\[0.5cm] 
{ \huge \bfseries Tropical linear systems\\and the realizability problem}\\[0.5cm] 
\HRule \\[1.5cm]
 

{\Large Matthew \textsc{Dupraz}}\\[0.5cm]
{\large June 2024}\\[2cm]

{
\large
\begin{tabular}{ll}
    \textbf{Supervisors:} & Dr. Francesca \textsc{Carocci}, \emph{Université de
    Genève}\\
     & Dr. Dimitri \textsc{Wyss}, \emph{EPFL}
\end{tabular}
}

\vfill


\includegraphics[width=0.4\linewidth]{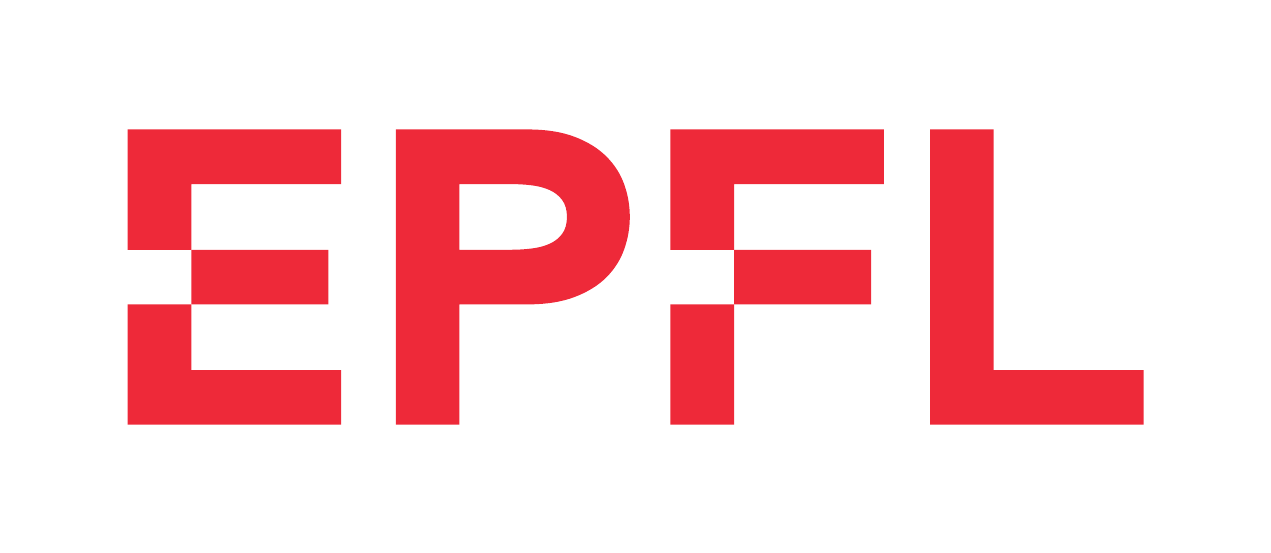}
 

\end{titlepage}

\begin{center}
    \textsc{\Large Tropical linear systems\\
        and the realizability problem}\\[0.5cm]
    Matthew \textsc{Dupraz}
\end{center}
\vspace{0.5cm}

\tableofcontents

\section{Introduction}

The goal of this thesis is to
explore linear systems on metric graphs, which despite being
relatively simple objects to understand, have much in common with their
counterparts on algebraic
curves.
There are suitable notions divisors, rational functions and linear
systems on metric graphs, which closely mimic how these objects behave on
algebraic curves. It turns out that this uncovers many interesting
connections between the world of algebraic geometry and combinatorics.
Baker and Norine defined in \cite{trop-rr} the notion of \emph{rank}
of a divisor, which behaves in very similar ways on algebraic curves and metric
graphs. For example, the Riemann-Roch theorem for algebraic curves may be stated
using the rank, and an amazing result shown in \cite{trop-rr} is that
the Riemann-Roch theorem holds also for metric graphs.

Complete linear systems on metric graphs 
have plenty of interesting combinatorial 
structure. On one hand, a complete linear system $|D|$
is an abstract polyhedral complex, and on the other hand,
the set of rational functions $R(D)$ associated to $|D|$ forms a
tropical module (a semi-module
equipped with the element-wise maximum and addition operations). 
The set $|D|$ may be identified with the tropical projectivization
$R(D)/\R$, and so one may
study the subspaces of $|D|$ that appear as the
projectivization of submodules of $R(D)$.
We will call such a subspace $\fd \subseteq |D|$ a \emph{tropical linear system}
(or a tropical linear \emph{series}).
In similar fashion to complete linear systems, it turns out that in 
many cases tropical linear systems also have an induced abstract
polyhedral complex structure.

The theories of linear systems on metric graphs and algebraic curves are far
from being just 
in simple analogy, as it is possible to link them via a process called
\emph{tropicalization}.
Given an algebraic curve with only ordinary double points as singularities,
one may associate to it a
graph, called its \emph{dual graph}.
When such a curve appears as a closed
fiber of a fibered surface, one may furthermore uniquely attribute edge lengths
to the dual graph and so give it the structure of a metric graph.
There is then a way to transfer divisors from the generic fiber of the surface
to the metric
graph through a process called \emph{specialization}. 
Matt Baker has shown in \cite{specialization-lemma} the
\emph{specialization lemma},
which states that the rank of a
divisor can only go up under specialization. This comparison theorem allows one
to derive results about divisors and linear systems on algebraic curves by
studying metric graphs.

One interpretation of the specialization lemma is that there are ``more"
divisors on metric graphs than on algebraic curves. It is then reasonable to ask
which divisors on metric graphs come from a divisor on an algebraic curve, if we
also require the rank to be preserved.
This question is called the realizability problem and there have been only a few
specific classes of divisors for which the realizable divisors were fully
characterized. For example in \cite{realizability-canonical} the authors give a
complete characterization of realizability for canonical divisors and this
result was later extended to pluri-canonical divisors in
\cite{realizability-pluricanonical}. In general, it is an important open problem
awaiting to be solved.

One object of study of this thesis is the set of
realizable divisors (also called the
\emph{realizability locus}) in the canonical linear system. We reinterpret 
the characterization for realizability from
\cite{realizability-canonical} and use the resulting criteria to show that the
realizability locus is an abstract
polyhedral complex and that it
is a tropical linear system.

A natural extension of the realizability question concerns 
the realizability of linear
systems. Tropicalizing a linear system on a curve
yields a tropical linear system and one may again ask the realizability
question in this context. This question is more complicated, because even if
all divisors
in a tropical linear system $\fd$ are realizable, it is possible that $\fd$
does not appear as the
tropicalization of a linear system on any given curve.
The theory of tropical linear systems is presented in
\cite{linsys-independence}, and further in \cite{kodaira-dimensions},
but it is a very new topic, and remains largely unexplored.

Since the rank of a linear system on an algebraic curve is equal to its
dimension as a projective space, it is natural to try to establish such a link
for metric graphs.
In this thesis we define a suitable notion of
local dimension of a tropical linear system and show 
that the local dimension is bounded from below by the rank. 
In \cite{linsys-independence}, the authors show that when the tropical linear
system is finitely generated and satisfies a further combinatorial condition,
the dimension may also be bound from above by the rank.
Along with the results from \cite{kodaira-dimensions}, this shows that
tropicalizations of linear systems on algebraic curves are
equi-dimensional abstract polyhedral complexes of dimension equal to the rank,
establishing a strong link
between the rank and dimension of realizable tropical linear systems and largely
limiting what kinds tropical linear systems may be realizable.

\subsubsection*{Structure of the thesis}

Section \ref{sec:metric-graphs} focuses on the theory of divisors and linear
systems on metric graphs. We start by covering the essential definitions
concerning metric graphs and divisors in subsections
\ref{ssec:metric-graphs} through \ref{sec:reduced-divisors}.
In subsection \ref{sec:tropical-modules} we will
introduce tropical modules and in
subsection \ref{sec:polyhedral-complex} we
describe how complete linear systems admit the
structure of an abstract polyhedral complex.
We then give a sufficient condition
for a subset to also admit the structure of an abstract polyhedral complex, and
describe how we can detect its dimension at a point.
In subsection \ref{sec:complete-linear-systems}
we show that the local dimension of a complete linear system is bounded from
below by the rank (Proposition \ref{prop:rank-dim}).
In subsection \ref{sec:structure-canonical}, we give
some characterizations of the canonical linear system. In particular we show
that the lower bound on the dimension is attained in the case of canonical
linear systems. In subsection 
\ref{sec:tropical-linear-systems} we extend Proposition \ref{prop:rank-dim} to
the setting of tropical linear systems (Corollary \ref{cor:complex-dim}).

In section \ref{sec:discrete-representations} we make the links between the
worlds of tropical and algebraic geometry. We first go into the details of the
tropicalization process in subsection \ref{sec:tropicalization}. In subsection
\ref{sec:specialization}, we
explain the specialization of divisors from algebraic curves to metric graphs
and the specialization lemma.
In subsection \ref{sec:realizability-canonical} we describe the condition for
realizability shown in \cite{realizability-canonical} and give a cleaner
characterization of inconvenient vertices. We then use this characterization in
subsection \ref{sec:canonical-linsys} to show that the realizability locus of
the canonical linear system is
tropically convex and an abstract polyhedral complex. In subsection
\ref{sec:cycles}
we give a sufficient condition for
realizability of canonical divisors (Proposition \ref{prop:no-disjoint-realizable}) and deduce that the realizable locus always
contains a maximal cell of dimension $g-1$. In subsection \ref{sec:lin-series} we explain that
specialization preserves linear equivalences and discuss the image of a linear
series under the specialization map. We then describe the advances made in
\cite{linsys-independence} and \cite{kodaira-dimensions} on this topic and
deduce that tropicalizations of linear series are equi-dimensional.

Finally, in section \ref{sec:discrete-representations}
we describe the theory of linear systems on graphs (without edge lengths) and
explain how it relates to the theory on metric graphs.
We describe useful results that can be used to work efficiently with
these discretizations and allow their implementation with algorithms.

\subsubsection*{Implementation}

I have used the concepts and results from Section
\ref{sec:discrete-representations}
to explore linear systems with a computer program.
Concretely, I wrote code for
working with metric graphs, which can among other things:
\begin{itemize}[itemsep=0em]
    \item Find all divisors in $|D|$ supported on a fixed model
    \item Find the extremals of $|D|$
    \item Check the realizability of a divisor in the canonical linear system
    \item Test whether a rational function belongs to the span of a generating
        set
    \item Find the maximal cells of $|D|$ and calculate their dimensions
\end{itemize}
This helped me build an intuition, find counter-examples and form hypotheses
regarding linear systems on metric graphs.

The code is freely accessible on following GitHub repository:
\url{https://github.com/MattDupraz/Graph-Linear-Systems.git}.

\subsubsection*{Acknowledgements}

I would like to express my deepest gratitude to my advisor, Francesca Carocci,
for her unwavering support and guidance, which extended well beyond the scope of
the thesis. I~have learned much from her not only
in terms of mathematics, but also on a personal level.

\section{Metric graphs and linear systems}
\label{sec:metric-graphs}

\subsection{Metric graphs}
\label{ssec:metric-graphs}

We will start by introducing the notion of \emph{metric graph}. Intuitively, a
metric graph is just a metric space isomorphic to the geometric realization
of a graph with given edge lengths. However, to be more precise, we will define
it using the notion of \emph{length space}. This approach
is largely inspired by \cite{metric-graph}.
For the definition of length space, we follow \cite{metric-geometry}.
Throughout this paper, we allow distance
functions that admit infinite values.

Let $(X, d)$ be a metric space. A \emph{path} is a continuous 
map $\gamma: [a, b] \to X$.
We will now define the length of a path. 
\begin{definition}\cite[Definition 2.3.1.]{metric-geometry}
    Let $\gamma: [a, b] \to X$ be a path. A \emph{partition} of $[a,b]$ is
    a finite collection of points $\{x_0, \dots, x_N\} \subseteq [a, b]$
    with
    \begin{equation*}
        a = x_0 < x_1 < \dots < x_N = b.
    \end{equation*}
    We define the \emph{length} of $\gamma$ as
    \begin{equation*}
        L(\gamma) = \sup \sum_{i = 1}^N d(\gamma(x_{i-1}), \gamma(y_i)),
    \end{equation*}
    where the supremum is taken over all the partitions of $[a, b]$.
    A curve is said to be \emph{rectifiable} if its length is finite.
\end{definition}

\begin{figure}[ht]
    \centering
    \begin{tikzpicture}
        \draw[red] (0,0) -- (2,2) -- (3,1) -- (5,0)
            -- (6,1) -- (5,2) -- (4.2,1.2);

        \draw[thick] (0,0) .. controls (0,1) and (1, 2) ..
              (2,2) .. controls (2.5,2) and (2.75,1.5) ..
              (3,1) .. controls (3.5,-0.2) and (4.8,0) ..
              (5,0) .. controls (5.5,0) and (6,0.5) ..
              (6,1) .. controls (6,1.5) and (5.5,2) ..
              (5,2) .. controls (4.5,2) and (4.2,1.5) .. (4.2, 1.2);
          \begin{scope}[every node/.style={vertex, red}]
            \node at (0,0) {};
            \node at (2,2) {};
            \node at (3,1) {};
            \node at (5,0) {};
            \node at (6,1) {};
            \node at (5,2) {};
            \node at (4.2,1.2) {};
        \end{scope}
    \end{tikzpicture}
    \caption{Example of a path and rectification}
    \label{fig:path}
\end{figure}
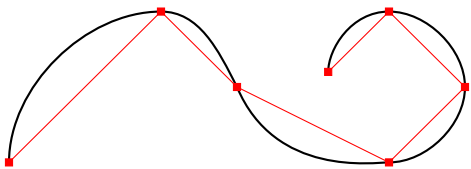

The notion of path length allows us to define a new distance on $X$.
\begin{definition}\cite[\S 2.1.2.]{metric-geometry}
    Let $(X, d)$ be a metric space. We define the
    \emph{induced intrinsic metric}
    to be 
    \begin{equation*}
        d_I(x, y) = \inf L(\gamma),
    \end{equation*}
    where the infimum is taken over all the paths $\gamma: [a, b] \to X$,
    with $\gamma(a) = x$ and $\gamma(b) = y$.
    If there is no path between $x$ and $y$ (when $X$ is disconnected), we let
    $d_I(x, y) = +\infty$.

    A metric space whose distance function is the same as the 
    induced intrinsic metric
    is called a \emph{length space}.
\end{definition}

\begin{remark}
    \label{rem:intrinsic-continuous}
    When $(X, d)$ is a metric space, the topology induced by the intrinsic
    metric $d_I$ is finer than the one induced by $X$. To see this, notice that
    for all $x, y \in X$,
    \begin{equation*}
        d_I(x, y) \geq d(x, y).
    \end{equation*}
    Indeed, when $x, y$ are connected by a path $\gamma$, then 
    $L(\gamma) \geq d(x, y)$ by definition of path length. When $x, y$ lie in
    different path-connected components of $X$, then $d_I(x, y) = +\infty$
    which also directly implies the above inequality.
    In other words, the identity map $(X, d_I) \to (X, d)$
    is continuous.
\end{remark}

\begin{definition}
    A \emph{metric graph} is a \emph{compact} length space
    $\Gamma$ such that each point $x
    \in \Gamma$ has a neighbourhood $U_x$ that is homeomorphic to
    $\bigsqcup_{i = 1}^v[0, \epsilon)/\sim$
    for some $\epsilon > 0$, where the
    equivalence relation $\sim$ identifies the zeroes of the intervals, and such
    that $x$ corresponds to the $0$ via this isomorphism. We call such a
    neighbourhood a \emph{star-shaped neighbourhood}.
    We say $v$ is the \emph{valence} of $x$ and denote it by $\val(x)$.
\end{definition}

\begin{remark}
    Some authors use the terminology \emph{abstract tropical curve} to designate
    metric graphs.
\end{remark}

\begin{remark}
    We necessarily have that the set of points $x \in \Gamma$ with
    $\val(x) \neq 2$ is finite.
\end{remark}

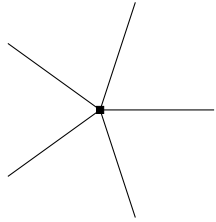
\begin{figure}[ht]
    \centering
    \begin{tikzpicture}[scale=1.5]
        \draw (0,0)--(0:1cm)
            (0,0)--(72:1cm)
            (0,0)--(72*2:1cm)
            (0,0)--(72*3:1cm)
            (0,0)--(72*4:1cm);
        \node[vertex] at (0,0) {};
    \end{tikzpicture}
    \caption{Star-shaped neighbourhood of a point of valence 5}
\end{figure}

\begin{definition}
    Let $V \subseteq \Gamma$ be a finite subset such that $\Gamma\setminus V$
    consists of disjoint union of open intervals.
    Then $V$ determines a \emph{model} 
    $G = (V, E)$ of the metric graph $\Gamma$, where $E$ is the set of 
    undirected edges
    corresponding to the open intervals of $\Gamma \setminus V$.
    For $e \in E$ an edge, we define $l(e)$ to be the length of the
    corresponding open interval.
\end{definition}

When $X$ is a length space and $\sim$ is an equivalence relation, we would like
to equip $X/\sim$ with the structure of a length space.
Following \cite[Definition 3.1.12.]{metric-geometry} we may define the following
\emph{semi}-metric on $X/\sim$:
\begin{equation*}
    d_\sim([x], [y]) = \inf\left\{\sum_{i = 1}^k d(p_i, q_i)\right\},
\end{equation*}
where the infimum is taken over sequences $p_1, \dots, p_k$ and $q_1, \dots,
q_k$ of points in $X$, such that $p_1 \sim x$, $q_k \sim y$
and $q_i \sim p_{i + 1}$ for $1 \leq i \leq k-1$.
This is a semi-metric, because it might happen that
$d_\sim([x], [y]) = 0$ even when $[x] \neq [y]$. A prototypical example of this
happening is the line with two origins obtaining by gluing two copies of $\R$
along $\R \setminus \{0\}$.

When $\Gamma$ is a metric graph, we would like to be able to glue some vertices
together to obtain a new graph. 
Let $u, v$ be two points of $\Gamma$, we may take the
quotient $\Gamma/\{u, v\}$, where the equivalence relation simply
identifies these two points. It turns out that the semi-metric $d_\sim$ defined
on this quotient is actually a metric.

\begin{proposition}
    The metric $d_\sim$ on $\Gamma/\{u, v\}$ is a well-defined metric.
\end{proposition}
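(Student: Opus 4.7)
The plan is to verify the three metric axioms for $d_\sim$, which the text has already introduced as a semi-metric. Symmetry is immediate upon reversing a sequence, and the triangle inequality follows from concatenation of sequences, so the only real content is positive definiteness: $d_\sim([x],[y]) = 0$ must force $[x] = [y]$. If $u = v$ the equivalence is trivial and there is nothing to prove, so I assume $u \neq v$ and set $D := d(u,v) > 0$. Since the only non-trivial equivalence class is $\{u,v\}$, every class has at most two elements, and $[x] \neq [y]$ implies $[x] \cap [y] = \emptyset$.

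Fix $[x] \neq [y]$ and any admissible pair of sequences $p_1 \sim x,\ q_1, p_2, q_2, \dots, p_k, q_k \sim y$ with $q_i \sim p_{i+1}$ and total cost $C = \sum_{i=1}^k d(p_i,q_i)$. I would regard this as a pseudo-path in $\Gamma$ in which each intermediate transition $q_i \to p_{i+1}$ is either an equality or a \emph{non-trivial jump} swapping $u$ and $v$; let $j$ count the non-trivial jumps. When $j = 0$ the triangle inequality in $\Gamma$, applied to the whole sequence, yields
\begin{equation*}
    C \geq d(p_1, q_k) \geq \min\{d(a,b) : a \in [x],\ b \in [y]\},
\end{equation*}
which is strictly positive since $[x]$ and $[y]$ are disjoint finite subsets of $\Gamma$. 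When $j \geq 1$, let $i_1$ and $i_j$ be the positions of the first and last non-trivial jumps; applying the triangle inequality to the initial segment $p_1, q_1, \dots, p_{i_1}, q_{i_1}$ and to the terminal segment $p_{i_j+1}, q_{i_j+1}, \dots, p_k, q_k$ and discarding the intermediate non-negative contributions gives
\begin{equation*}
    C \geq d(p_1, q_{i_1}) + d(p_{i_j+1}, q_k) \geq d(p_1, \{u,v\}) + d(q_k, \{u,v\}),
\end{equation*}
using $q_{i_1}, p_{i_j+1} \in \{u,v\}$. Because $[x] \neq [y]$, at most one of the two classes can equal $\{u,v\}$, so at least one of the two distances on the right is strictly positive. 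In both sub-cases $C \geq \delta$ for some positive $\delta$ depending only on $[x], [y], u, v$, whence $d_\sim([x],[y]) \geq \delta > 0$.

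The main obstacle I anticipate is the temptation to worry that repeated teleporting between $u$ and $v$ could drive the cost arbitrarily low. The key simplification is to retain only the initial and terminal pieces of the pseudo-path and to exploit that at most one of $[x], [y]$ coincides with the two-element class $\{u,v\}$. This avoids any case analysis on the number or directions of non-trivial jumps, at the cost of a possibly non-sharp bound — but positivity is all that is needed.
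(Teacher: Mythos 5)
Your proof is correct. It establishes the contrapositive directly: for $[x] \neq [y]$, every admissible chain of transitions has cost bounded below by a positive constant depending only on $[x], [y], u, v$. The core insight is the same as the paper's — any chain that "teleports" between $u$ and $v$ must first travel from $x$ to $\{u,v\}$ (and/or from $\{u,v\}$ to $y$), and this is bounded away from zero unless $x$ or $y$ is already in $\{u,v\}$. The difference is technical: the paper first normalizes the chain so that consecutive $q_i, p_{i+1}$ are distinct (so every intermediate transition is a genuine jump through $\{u,v\}$), then argues by contradiction from $d_\sim = 0$; you skip that normalization by isolating only the first and last non-trivial jump and discarding the middle, then invoke that at most one of the two classes can equal $\{u,v\}$. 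This buys you a slightly cleaner lower bound with no preprocessing, at the minor cost of handling the $j=0$ case separately. Both are valid; yours is marginally more streamlined.
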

\begin{proof}
    Clearly, $d_\sim$ is symmetric and non-negative and satisfies the triangle
    inequality. We have to verify that
    $d_\sim([x], [y]) = 0$ if and only if $[x] = [y]$.
    Suppose $d_\sim([x], [y]) = 0$.
    So for all $\epsilon > 0$ there exist sequences $(p_i)$, $(q_i)$ with the
    properties above, such that
    \begin{equation}
        \label{eq:inf-ineq}
        \inf\left\{\sum_{i = 1}^k d(p_i, q_i)\right\} < \epsilon.
    \end{equation}
    We may assume without loss of generality that $q_i \neq p_{i+1}$ for
    $1 \leq i \leq k-1$, since otherwise we have
    \begin{equation*}
        d(p_i, q_i) + d(p_{i+1}, q_{i+1}) \leq d(p_i, q_{i+1}) 
    \end{equation*}
    by the triangle inequality, so we could just remove the terms
    $q_i$ and $p_{i+1}$ from the sequences.
    So for $1 \leq i \leq k-1$, we may assume that
    $q_i, p_{i + 1} \in \{u, v\}$.

    If $x \notin \{u, v\}$, then $d(x, u), d(x, v) > \delta$ for some
    $\delta$ small enough, so (\ref{eq:inf-ineq}) implies that $k = 1$
    and $d(x, y) < \epsilon$ for all $\epsilon < \delta$, which in turn implies
    $d(x, y) = 0$ and so $x = y$. By symmetry we obtain the same result
    when $y \not\in \{u, v\}$,
    The last case is $x, y \in \{u, v\}$, but then $x \sim y$, so we are done.
\end{proof}

\begin{remark}
    The metric space $\Gamma/\{u, v\}$ is in fact a length space, as explained
    in \cite[\S3.1]{metric-geometry}, so it is in fact a metric graph,
    as around the image of $x \sim y$ we will again obtain a
    star-shaped neighbourhood of valence $\val(x) + \val(y)$.

    By induction, for any finite set of vertices $\{v_1, \dots, v_n\}$,
    the quotient space
    $\Gamma/\{v_1, \dots, v_n\}$ is also a metric graph. More in general,
    if $A_1, \dots, A_n$ are finite disjoint sets of vertices,
    we define $\Gamma/(A_1, \dots, A_n)$ to be the quotient by the equivalence
    relation $x \sim y$ if and only if
    $x = y$ or $\{x, y\} \subseteq A_i$ for some $i$.
    This is again a metric graph.

    As seen in
    \cite[Exercise 3.1.14.]{metric-geometry}, the topology induced by the metric
    coincides with the quotient topology, and so
    in particular the quotient map $\Gamma \to \Gamma/(A_1, \dots, A_n)$
    is continuous.
\end{remark}

\begin{definition}
    We say $\Gamma/(A_1, \dots, A_n)$ is a \emph{gluing} of $\Gamma$.
    Equivalently, we say that $\Gamma$ is a \emph{cut} of
    $\Gamma/(A_1, \dots, A_n)$.
\end{definition}

\begin{figure}[ht]
    \centering
    \begin{tikzpicture}
        \draw (0,0) -- (-1.4,0)
        (0,0) -- (1,1)
        (0,0) -- (1,-1);

        \begin{scope}[every node/.style=vertex]
            \node at (1,1) {};
            \node at (1,-1) {};
        \end{scope}

        \draw[line width=1.5pt,->] (1.75,0) -- (2.25,0);

        \begin{scope}[shift={(4.5,0)}, scale=1.4]
            \draw (0,0) -- (-1,0)
                (0,0) to[bend right=45] (1,0)
                (0,0) to[bend left=45] (1,-0);

            \node[vertex] at (1,0) {};
        \end{scope}
    \end{tikzpicture} 
    \caption{Example of a gluing of two vertices}
\end{figure}
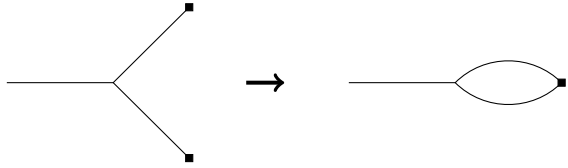

\begin{definition}
Suppose $G = (V, E)$ is a graph and $l: E \to \R_{>0}$ a map that assigns to
each edge a length. We may construct from this a metric graph. Let
\begin{equation*}
    \mathcal{E} = \bigsqcup_{e \in E}[0, l(e)].
\end{equation*}
The metric on the disjoint union is given by
\begin{equation*}
    d((x_1, e_1), (x_2, e_2)) = 
    \begin{cases}
        |x_1 - x_2| &\textrm{if } e_1 = e_2,\\
        0 &\textrm{otherwise.}
    \end{cases}
\end{equation*}
This clearly gives $\mathcal{E}$ the structure of length space and of a
metric graph.
Fix an ordering $v_1, \dots, v_n$ on vertices and suppose $e$ is an edge between
$v_i, v_j$ with $i \leq j$. We denote $s(e) = v_i$ and $t(e) = v_j$.
For each $i \in \{1, \dots, n\}$ let
\begin{equation*}
    V_i = \{(e, 0) \in \mathcal{E}: s(e) = v_i\}
    \cup \{(e, l(e)) \in \mathcal{E}, t(e) = v_i\}
\end{equation*}
Let $\Gamma = \mathcal{E}/(V_1, \dots, V_n)$. We may identify
$V = \{v_1, \dots, v_n\}$ with the images of $V_1, \dots, V_n$ under the
gluing and this induces a model on $\Gamma$ that agrees with $G$.
In other words, we have constructed a metric
graph that admits $G$ as a model and whose edge lengths agree with the
function $l$. We say $\Gamma$ is a \emph{realization}
of $(G, l)$.
\end{definition}

\begin{definition}
    We define the \emph{genus} $g(\Gamma)$ of a metric graph $\Gamma$
    to be its first Betti number. In other
    words it corresponds to the maximal number of
    independent cycles it contains.
\end{definition}

\begin{remark}
    \label{rem:genus-formula}
    If a metric graph $\Gamma$ with $n$ connected components
    admits a model $G = (V, E)$,
    then we have the relation 
    \[g(\Gamma) = |E| - |V| + n.\]
    Indeed, the simplicial 
    homology groups of $\Gamma$ are calculated from the chain complex
    \begin{align*}
        \cdots \to 0 \to \Z^E &\xrightarrow{\partial_1} \Z^V \to 0\\
        (u, v) &\mapsto u - v,
    \end{align*}
    where we fixed an arbitrary orientation for each edge.
    
    We have that $H^0(\Gamma) = \coker(\partial_1)$. Vertices that are joined by
    an edge are identified in the cokernel, so we deduce that
    $\rank H^0(\Gamma) = n$. We have the exact sequence
    \begin{equation*}
        0 \to \ker \partial_1 \to \Z^E \to \Z^V \to \coker \partial_1 \to 0,
    \end{equation*}
    from where it follows that
    \begin{equation*}
        \rank \ker \partial_1 - |E| + |V| - \rank \coker(\partial_1) = 0.
    \end{equation*}
    The formula for the genus then follows by remarking that
    $H^1(\Gamma) = \ker(\partial_1)$.
\end{remark}

We will now define tangent vectors on metric graphs in analogy to the definition
of tangent vectors on manifolds via tangent curves.
\begin{definition}
    For $\epsilon > 0$,
    let $I_{\epsilon, x}(\Gamma)$ be the set of isometries
    $\gamma: [0, \epsilon) \to \Gamma$, with $\gamma(0) = x$.
    For $\epsilon > \epsilon'$, we have a natural map
    $I_{\epsilon, x}(\Gamma) \to I_{\epsilon', x}(\Gamma)$
    given by the restriction, and this defines a direct system over
    $[0, \infty)$.
    Let $T_x\Gamma = \varinjlim I_{\epsilon, x}$ be the direct limit of this
    system.
    We call this the set of (unit) \emph{tangent vectors} of $\Gamma$ at $x$.
\end{definition}
\begin{remark}
    For $\epsilon$ small enough, the open ball
    $B(x, \epsilon)$ is a star-shaped neighbourhood, so in this case
    the elements of
    $I_{\epsilon, x}(\Gamma)$ correspond 
    to the identification of $[0, \epsilon)$
    to one of the copies of $[0, \epsilon)$ in
    \begin{equation*}
        B(x, \epsilon) \cong \bigsqcup_{i = 1}^{\val(x)}[0, \epsilon)/\sim.
    \end{equation*}
    In other words, there is a bijective correspondence 
    between the tangent vectors at $x$ and the half-edges of $\Gamma$
    adjacent to $x$.
\end{remark}

\begin{definition}
    \label{def:completion}
    Let $U \subseteq \Gamma$
    be an open subset with a finite number of connected components.
    We endow $U$ with the induced intrinsic metric
    and let $\hat{U}$ be the completion of $U$ with respect to this metric.
    Another way to see $\hat{U}$ is as the space obtained from $U$ by adding
    a point to each open half-edge of $U$. From this description it is clear
    that $\hat{U}$ is also a metric graph.
    
    By remark \ref{rem:intrinsic-continuous}, the inclusion
    $U \hookrightarrow \Gamma$ is continuous and so as $\Gamma$ is compact,
    there is unique map $\phi: \hat{U} \to \Gamma$ that extends 
    $U \hookrightarrow \Gamma$. Its image is the closure of $U$ in $\Gamma$.

    For any model of $\Gamma$,
    the metric graph $\hat{U}$ naturally inherits a model structure,
    which is the minimal
    model for the property that it contains
    $V\cap U \hookrightarrow \hat{U}$ in its set of vertices.
\end{definition}

\begin{figure}[ht]
    \centering
    \begin{tikzpicture}[scale=1.5]
        \begin{scope}[shift={(-1.5,0)}]
            \coordinate (a) at (0,0);
            \coordinate (b) at (0,1);
            \coordinate (c) at (-30:1cm);
            \coordinate (d) at (210:1cm);

            \draw (d)--(b);
            \draw[red] (a)--(b)--(c)--(d)--(a)--(c);
            \begin{scope}[every node/.style=vertex]
                \node[black] at (a) {};
                \node[red] at (b) {};
                \node[red] at (c) {};
                \node[red] at (d) {};
            \end{scope}
        \end{scope}

        \draw[line width=2pt,->] (-0.2,0) -- (0.2,0);
         
        \begin{scope}[shift={(1.5,0)}]
            \coordinate (b) at (0,1);
            \coordinate (c) at (-30:1cm);
            \coordinate (d) at (210:1cm);
            \coordinate (b2) at (0,0.25);
            \coordinate (c2) at (-30:0.25cm);
            \coordinate (d2) at (210:0.25cm);

            \draw[red] (b2)--(b)--(c)--(c2)
                (c)--(d)--(d2);
            \begin{scope}[every node/.style={vertex, red}]
                \node at (b) {};
                \node at (b2) {};
                \node at (c) {};
                \node at (c2) {};
                \node at (d) {};
                \node at (d2) {};
            \end{scope}
        \end{scope}
    \end{tikzpicture}
    \caption{Example of completion of an open subgraph (in red)}
\end{figure}
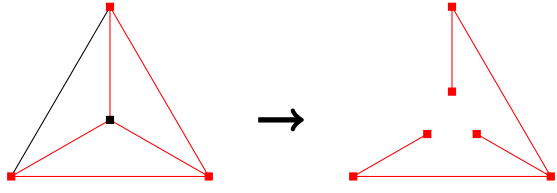

\begin{definition}
    If $U$ is an open subset with a finite number of connected components, we
    define its genus $g(U)$ to be the genus of its completion $\hat{U}$.
\end{definition}

We will now prove a useful lemma which may be used to calculate the genus of the
graph obtained after cutting the graph in a finite number of points.

\begin{lemma}
    \label{lemma:glueing-genus}
    Suppose $\Gamma$ is connected.
    Let $A \subset \Gamma$ be a finite set of points.
    Then
    \begin{equation*}
        g(\Gamma) = g(\Gamma \setminus A) + \sum_{x \in A}(\val(x) - 1) + 1 - N,
    \end{equation*}
    where $N$ is the number of connected components of $\Gamma \setminus A$
\end{lemma}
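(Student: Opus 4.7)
The plan is to reduce the statement to the combinatorial genus formula from Remark \ref{rem:genus-formula} by picking a compatible model and then carefully tracking how vertices and edges transform under the completion process.

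First I would choose a model $G = (V, E)$ of $\Gamma$ whose vertex set contains $A$; such a model exists because we can always subdivide $\Gamma$ to include any finite set in its vertex set. By Remark \ref{rem:genus-formula}, since $\Gamma$ is connected we have
\begin{equation*}
    g(\Gamma) = |E| - |V| + 1.
\end{equation*}

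Next I would describe the model inherited by $\widehat{U}$ where $U = \Gamma \setminus A$, as in Definition \ref{def:completion}. The key observation is that the open set $U$ is obtained from $\Gamma$ by removing the points of $A$, and its completion $\widehat{U}$ is obtained by adding a new boundary point for each half-edge adjacent to a removed point. Since the half-edges adjacent to $x$ are in bijection with tangent vectors at $x$, of which there are exactly $\val(x)$, each $x \in A$ effectively gets replaced by $\val(x)$ distinct new vertices in $\widehat{U}$. Thus the induced model on $\widehat{U}$ has vertex set of cardinality
\begin{equation*}
    |V| - |A| + \sum_{x \in A} \val(x),
\end{equation*}
while the edge set is still in natural bijection with $E$ (each edge of $G$ either lies entirely in $U$ or has one or both endpoints replaced by the new boundary points).

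Applying Remark \ref{rem:genus-formula} to $\widehat{U}$, which by definition has $N$ connected components, gives
\begin{equation*}
    g(\Gamma \setminus A) = g(\widehat{U}) = |E| - \Bigl(|V| - |A| + \sum_{x \in A} \val(x)\Bigr) + N.
\end{equation*}
Substituting $|E| - |V| = g(\Gamma) - 1$ and rearranging yields the desired identity. The only subtle point is justifying that the completion really does split each $x \in A$ into exactly $\val(x)$ new vertices with the edge set unchanged; this follows directly from the alternative description of $\widehat{U}$ in Definition \ref{def:completion} as the space obtained from $U$ by adding one point per open half-edge, combined with the bijection between tangent vectors and half-edges.
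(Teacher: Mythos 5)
Your proof is correct and follows essentially the same approach as the paper: subdivide so $A \subseteq V$, observe that passing to $\widehat{\Gamma\setminus A}$ replaces each $x \in A$ by $\val(x)$ new vertices while leaving the edge set unchanged, and then apply the genus formula from Remark~\ref{rem:genus-formula}. The only cosmetic difference is that the paper applies the formula to each connected component $\hat{C}$ separately and sums over the $N$ components, whereas you apply it once to the (possibly disconnected) $\hat{U}$ with $n = N$; the arithmetic is identical.
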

\begin{proof}
    We will denote $V(G)$ and $E(G)$ the set of vertices and edges
    of a graph $G$ respectively.
    Up to subdividing the model of $\Gamma$, we may assume the set $A$ is
    contained in $V(\Gamma)$. 

    For $C$ a connected component of $\Gamma \setminus A$, $\hat{C} \to \Gamma$
    is a one-to-one mapping, except for points laying above some $x \in A$.
    For such $x$, there are exactly $\val_{\closure{C}}(x)$ points in the
    preimage.

    We deduce that
    \begin{equation*}
        \#V(\hat{C}) = \#(V(\Gamma) \cap C) + \sum_{x \in A}
        \val_\closure{C}(x).
    \end{equation*}
    By summing over the connected components, we get that
    \begin{align*}
        \sum_C\#V(\hat{C}) &= \#(V(\Gamma) \setminus A) + \sum_{x \in A}
        \val(x)\\
        &= \#V(\Gamma) + \sum_{x\in A}(\val(x) - 1)
    \end{align*}

    We get by Remark \ref{rem:genus-formula} that
    \begin{align*}
        \sum_{C}g(\hat{C}) &= \sum_{C}(\#E(\hat{C}) -
        \#V(\hat{C}) + 1)\\
        &= \#E(\Gamma) - \#V(\Gamma)
        - \sum_{x\in A}(\val(x) - 1) + N\\
        &= g(\Gamma) - 1 - \sum_{x\in A}(\val(x) - 1) + N
    \end{align*}
    whence the result follows directly from the fact that
    $g(C \setminus A) = \sum_C g(\hat{C})$.
\end{proof}

\begin{definition}
    Let $v \in \Gamma$ a vertex,
    then for $U = \Gamma \setminus \{v\}$,
    we have that as a set
    $\completion{U} = U \sqcup \{v_1, \dots, v_n\}$,
    where $n = \val(v)$.
    The set $A = \{v_1, \dots, v_n\}$ corresponds naturally to the
    sets of tangent vectors $T_{v}\Gamma$.
    Let $\zeta \in T_v\Gamma$ be a tangent and let
    and let $S \subseteq A$, be the subset of points corresponding to the
    tangents other than $\zeta$.
    Then we have that
    the quotient map $\completion{U} \to \Gamma$ factors
    as 
    \[\completion{U} \to \completion{U}/S \to \Gamma.\]
    We say $\completion{U}/S$ is the \emph{cut of $\Gamma$ obtained by
    cutting along $\zeta$}.
\end{definition}

\subsection{Divisors and complete linear systems}
\label{sec:linear-systems}

\begin{definition}
    A \emph{divisor} on $\Gamma$ is an element of the free abelian group
    generated by the points of $\Gamma$, which we denote by $\Div(\Gamma)$.
    An element of this group is written as
    \begin{equation*}
        D = \sum_{x \in \Gamma} D(x) \cdot x,
    \end{equation*}
    where $D(x) = 0$ for all but finitely many $x$. Examples of two divisors are
    depicted in Fig. \ref{fig:divisor-ex}.
    
    We define the \emph{support} of $D$ to be
    \[\supp(D) := \{x \in \Gamma: D(x) \neq 0\}.\]
    We say $D$ is \emph{effective}, denoted by $D \geq 0$,
    when $D(x) \geq 0$ for all $x \in \Gamma$.
    The \emph{degree} of $D$ is the sum of its coefficients, that is
    \[\deg(D) = \sum_{x \in \Gamma}D(x).\]
    When $Z$ is a subgraph of $\Gamma$, we call
    \[D|_Z := \sum_{x \in Z} D(x) \cdot x\]
    the \emph{restriction} of $D$ to $Z$.
\end{definition}

\begin{definition}
    The \emph{canonical divisor} of $\Gamma$ is the divisor defined by
    \begin{equation*}
        K = \sum_{x \in \Gamma} (\val(x) - 2)\cdot x
    \end{equation*}
\end{definition}

\begin{remark}
    The canonical divisor $K \in \Div(\Gamma)$ has $\deg K = 2g - 2$.
\end{remark}

\begin{figure}[ht]
    \begin{subfigure}[h]{0.4\linewidth}
        \centering
        \begin{tikzpicture}[scale=4]
            \draw (0,0) to[bend right=50]
                node[pos=3/5, divisor, label=1] {}
                node[pos=2/5, divisor, label=1] {}
                (1,0);
            \draw (0,0) to[bend left=50] (1,0);
            \draw (0,0) -- (1,0);
            \node[vertex] at (0, 0) {};
            \node[vertex] at (1, 0) {};
        \end{tikzpicture}
        \caption{Divisor of degree $2$}
    \end{subfigure}
    \hfill
    \begin{subfigure}[h]{0.4\linewidth}
        \centering
        \begin{tikzpicture}[scale=4]
            \draw (0,0) to[bend right=50] (1,0);
            \draw (0,0) to[bend left=50] (1,0);
            \draw (0,0) -- (1,0);
            \node[divisor, label=1] at (0, 0) {};
            \node[divisor, label=1] at (1, 0) {};
        \end{tikzpicture}
        \caption{Canonical divisor}
    \end{subfigure}

    \caption{Two divisors on the same metric graph. The points in the support of
    the divisors are represented using circles and labeled with their
    multiplicity.}
    \label{fig:divisor-ex}
\end{figure}
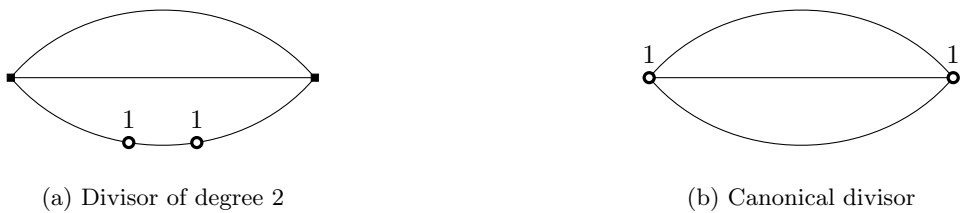

\begin{definition}
    A piece-wise linear (PL) function is a continuous function
    $f: \Gamma \to \R$ for which there exists a model $G = (V, E)$ such that
    $f$ is linear when restricted to the edges $e \in E$.
    We denote the set of piece-wise linear functions on $\Gamma$
    by $\PL(\Gamma)$.
    A \emph{rational} function is a PL function with integral slopes,
    and we denote the set of rational functions on $\Gamma$ by 
    $\Rat(\Gamma)$.

    Since $\Gamma$ is compact, the image of a PL function is compact, and so
    this lets us define a norm on $\PL(\Gamma)$ by
    \begin{equation*}
        \|f\|_\infty = \max f - \min f.
    \end{equation*}

    Let $f$ a rational function $f$ on $\Gamma$.
    For any $\zeta \in T_x\Gamma$, represented by an isometric path
    $\gamma: [0, \epsilon) \to \Gamma$,
    we define the slope of $f$ along $\zeta$ by
    \begin{equation*}
        s_\zeta(f) := \lim_{t \to 0}\frac{f(\gamma(t)) - f(x)}{t}.
    \end{equation*} 
    Since $f$ is piece-wise linear, $f\circ \gamma|_{[0, \delta]}$
    is linear for some
    $\delta > 0$ small enough,
    so this is well-defined, and clearly this
    does not depend on the choice of $\gamma$.
    
    The \emph{order} of $f$ at $x$, denoted by $\ord_x(f)$ is the sum of the
    outgoing slopes of $f$ along each edge emanating from $x$.
    In other words,
    \begin{equation*}
        \ord_x(f) = \sum_{\zeta \in T_x\Gamma} s_\zeta(f).
    \end{equation*}
    The \emph{principal divisor}
    associated to $f$ is the divisor given by
    \begin{equation*}
        \fdiv(f) := \sum_{x \in \Gamma} \ord_x(f)\cdot x.
    \end{equation*}
    Note that this is well defined, as for any model $(V, E)$ such that $f$ is
    linear when restricted to the edges,
    we have that $\ord_x(f) = 0$ for all $x \in \Gamma
    \setminus V$ and $V$ is finite.

    We define the \emph{bend locus} of $f$, denoted by $\bend(f)$,
    to be the support of the associated divisor
    $\fdiv(f)$.
\end{definition}

\begin{proposition}
    \label{prop:rational-degree}
    For any rational function $f$ on a compact metric graph $\Gamma$,
    we have that $\deg(\fdiv(f)) = 0$.
\end{proposition}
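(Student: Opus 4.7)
The plan is to reduce the claim to a straightforward edge-by-edge cancellation argument, in the spirit of the discrete divergence theorem.

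First, I would choose a model $G = (V, E)$ of $\Gamma$ on which $f$ is linear along each edge; such a model exists by definition of $f$ being PL, and by refining if needed we may assume that $V$ contains every point where $f$ fails to be linear (so that $\bend(f) \subseteq V$). For any $x \in \Gamma \setminus V$, the function $f$ is linear in a neighbourhood of $x$, and so the two tangent vectors at $x$ contribute opposite slopes and $\ord_x(f) = 0$. Consequently
\begin{equation*}
    \deg(\fdiv(f)) = \sum_{x \in \Gamma}\ord_x(f)
    = \sum_{x \in V}\sum_{\zeta \in T_x\Gamma} s_\zeta(f).
\end{equation*}

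Next, I would reorganize this double sum by edge rather than by vertex. By the remark identifying $T_x\Gamma$ with the set of half-edges at $x$, each edge $e \in E$ contributes exactly two terms to the sum above: one from each of its two endpoints (or twice at the same endpoint, if $e$ is a loop). If $e$ has endpoints $u,v$ and $f$ restricted to $e$ has constant slope $m$ when parametrized isometrically from $u$ to $v$, then the tangent at $u$ pointing along $e$ contributes $m$, while the tangent at $v$ pointing along $e$ contributes $-m$. These cancel. The loop case is analogous: the two half-edges emanating from the same vertex still contribute $m$ and $-m$.

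Summing the contribution $0$ over all edges gives the result. The only mildly delicate point is ensuring that the tangent-vector formalism correctly handles loops and the case where two tangent vectors at the same point both come from the same edge; this is handled by the bijection between $T_x\Gamma$ and the half-edges adjacent to $x$, so each edge genuinely contributes two (possibly equal-vertex) terms. No real obstacle arises — the statement is essentially a reformulation of the handshake-style identity that the sum of directed slopes along an edge is zero.
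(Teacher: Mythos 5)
Your proof is correct and follows essentially the same approach as the paper's: choose a model on which $f$ is linear along edges, then reorganize the sum $\sum_{x\in V}\sum_{\zeta\in T_x\Gamma}s_\zeta(f)$ as a sum over edges, each of which contributes two opposite slopes. The only cosmetic difference is that you explicitly spell out the loop case, which the paper leaves implicit.
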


\begin{proof}
    Let $G = (V, E)$ a model of $\Gamma$ containing $\bend(f)$ in its set of
    vertices. For $e$ an edge and $x$ one of its vertices, there is a unique
    tangent vector $\zeta \in T_x\Gamma$ that comes from some
    $\gamma: [0, \epsilon) \to \Gamma$ whose image lies inside the closure
    of $e$ (in the future 
    we will simply say that $\zeta$ is the tangent of $x$ along $e$,
    and $s_\zeta(f)$ is the outgoing slope of $f$ at $x$ along $e$).
    Denote $\zeta_{e, 1}, \zeta_{e, 2}$ the two tangent vectors
    corresponding to the vertices of $e$. Since $f$ is linear along each edge,
    it follows that
    $s_{\zeta_{e, 1}}(f) = -s_{\zeta_{e, 2}}(f)$.
    We the obtain the desired result, as
    \begin{align*}
        \deg(\fdiv(f)) &= \sum_{x \in V}\sum_{\zeta \in T_x\Gamma} s_\zeta(f)\\
        &= \sum_{e \in E}(s_{\zeta_{e,1}}(f) + s_{\zeta_{e, 2}}(f)) = 0\\
    \end{align*}
\end{proof}

\begin{definition}
    A divisor $D \in \Div(\Gamma)$ is called \emph{principal} when there exists
    a rational function $f$ with $D = \fdiv(f)$.

    Two divisors $D, D' \in \Div(\Gamma)$ are said to be linearly equivalent, 
    denoted by $D \sim D'$,
    when $D - D'$ is principal. 
\end{definition}

\begin{remark}
    By Proposition \ref{prop:rational-degree}, it follows that that
    any two linearly equivalent divisors have the same degree.
\end{remark}

\begin{remark}
    The two divisors depicted in Figure \ref{fig:divisor-ex} are linearly
    equivalent.
\end{remark}

\begin{definition}
    Let $D$ be an effective divisor,
    we define $R(D)$ to be the set of rational functions
    $f$ such that $D + \fdiv(f) \geq 0$.
\end{definition}

\begin{definition}
    For $D$ an effective divisor, we define the
    \emph{complete linear system}
    associated to $D$ to
    be the set
    \begin{equation*}
        |D| := \{D' \geq 0: D' \sim D\}.
    \end{equation*}
\end{definition}

\begin{remark}
    Let $R(D)/\R$ be the quotient of $R(D)$ modulo tropical scaling, that is,
    we take the quotient by the equivalence relation defined 
    by $f \sim g$ if and only if
    $f = c + g$ for some $c \in \R$. Then we have a bijective
    correspondence
    \begin{align*}
        R(D)/\R &\to |D|\\
        f &\mapsto D + \fdiv(f)
    \end{align*}
\end{remark}

The set $\Div_d^+(\Gamma)$ 
of divisors of degree $d$ on $\Gamma$ may be naturally identified with
the symmetric product $\Gamma^d/S_d$. The latter is a topological space, and so
we may give $\Div_d^+(\Gamma)$ the structure of a topological space induced by
this identification. Since $|D|$ is a subset of $\Div_d^+(\Gamma)$, we may equip
it with the subspace topology. As seen below, $|D|$ naturally admits the
structure of a metric space.

\begin{proposition}
    The norm $\|\cdot\|_\infty$ on $PL(\Gamma)$
    induces a metric on $|D|$, which we will also denote 
    by $d_\infty$.
    Furthermore, the topology induced by this metric agrees with that
    induced by the inclusion $|D| \hookrightarrow \Gamma^d/S_d$.
\end{proposition}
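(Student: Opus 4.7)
My plan is to first verify that $d_\infty(D_1, D_2) := \|f_1 - f_2\|_\infty$, where $f_i \in R(D)$ is any representative of $D_i$ under the bijection of the preceding remark, defines a metric on $|D|$, and then to compare the resulting metric topology with the subspace topology inherited from $\Gamma^d/S_d$.

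The metric axioms are all straightforward. The formula descends to $R(D)/\R$ because $\|\cdot\|_\infty = \max - \min$ is translation-invariant, so modifying $f_i$ by a constant does not affect the value. Symmetry is immediate, and the triangle inequality follows from the estimates $\max(g+h) \leq \max g + \max h$ and $\min(g+h) \geq \min g + \min h$. For definiteness, $\|f_1 - f_2\|_\infty = 0$ forces $f_1 - f_2$ to be constant, and therefore $D_1 = D + \fdiv f_1 = D + \fdiv f_2 = D_2$.

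To compare the two topologies on $|D|$, I note that both are metrizable, so it suffices to check that they yield the same convergent sequences. For the forward direction, suppose $\|f_n - f_0\|_\infty \to 0$ for chosen representatives. I fix a model of $\Gamma$ whose vertex set contains $\supp D \cup \supp D_0 \cup \bend f_0$, and choose disjoint small open neighborhoods $U_j$ of the points $x_j \in \supp D_0$. The crucial observation is that the difference $h_n := f_n - f_0$ has integer slopes, so any maximal sub-interval on which $h_n$ is linear with nonzero slope has length at most $2\|h_n\|_\infty$. Combined with a uniform bound on the number of bends of $h_n$ (coming from the bound $|\bend f_n| \leq \deg D + |\supp D|$ on the bends of $f_n$), this ensures that for $n$ large the boundary points of $U_j$ can be chosen so that $f_n$ has the same integer slope as $f_0$ there. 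A telescoping flux computation then gives $(D + \fdiv f_n)(U_j) = (D + \fdiv f_0)(x_j)$, and shrinking the $U_j$ yields convergence $D_n := D + \fdiv f_n \to D_0$ in $\Gamma^d/S_d$.

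The reverse direction is the main obstacle. Given $D_n \to D_0$ in $\Gamma^d/S_d$, I must obtain uniform convergence of representatives. I will first establish that $|D|$ is closed (hence compact) in $\Gamma^d/S_d$, and then show that the normalized representatives (with $\min f_n = 0$) satisfy a uniform bound: the outgoing slopes of any $f \in R(D)$ are controlled in terms of $\deg D$, so $\|f\|_\infty$ is bounded in terms of $\deg D$ and the diameter of $\Gamma$. An Arzelà--Ascoli-type argument then extracts a uniformly convergent subsequence from any sequence of such $f_n$. By the forward direction, the limit represents $\lim D_n = D_0$, hence equals $f_0$ by uniqueness of representatives, and since this holds for every convergent subsequence the full sequence converges uniformly.
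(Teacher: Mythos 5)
Your construction of $d_\infty$ follows the paper's, with one small difference: you import the injectivity of $f \mapsto D + \fdiv f$ on $R(D)/\R$ from the preceding remark, whereas the paper actually proves it inside this very proposition (by observing that if $f-g$ were non-constant, the boundary of its minimum level set would carry positive order). Where you diverge genuinely is in the topology comparison. The paper simply cites \cite[Proposition B.1]{luo-idempotent} and does no work; you give a self-contained sequential argument, which is the more instructive route. Your forward direction is sound: the integer-slope and bend-count bounds on $h_n = f_n - f_0$ force $f_n$ to have the same slope as $f_0$ at well-chosen boundary points of small neighbourhoods $U_j$ of $\supp D_0$, and the flux count across $\partial U_j$ then pins down the chip count of $D_n$ in $U_j$.

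The reverse direction has a genuine gap. Arzelà--Ascoli hands you a continuous uniform limit $f$ of the normalized $f_{n_k}$, but nothing in that theorem makes $f$ piecewise linear with integer slopes, and until you know $f \in R(D)$ you cannot invoke your forward direction or ``uniqueness of representatives.'' The fix uses the same quantitative bounds you already introduced: since $\#\bend(f_n) \leq \deg D + \#\supp D$ and every slope lies in $\{-\deg D, \dots, \deg D\}$, pass to a further subsequence along which the (at most $B$) bend locations converge and the finite slope profile is eventually constant; the resulting pointwise limit is rational, it agrees with $f$ by uniqueness of limits, and effectivity of $D + \fdiv f$ follows from local flux bookkeeping in the limit. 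With that inserted, your subsequence-uniqueness conclusion goes through. Two smaller points: the closedness of $|D|$ in $\Gamma^d/S_d$ you announce is never used (you are handed $D_0 \in |D|$); and the slope bound $|s_\zeta(f)| \leq \deg D$ for $f \in R(D)$ that underlies your uniform Lipschitz estimate is true but not in the paper, so it deserves a line of proof (e.g.\ by a flux computation across a generic super-level set $\{f \geq t\}$).
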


\begin{proof}
    For any divisors $D_1, D_2 \in |D|$, there is some $f \in \Rat(\Gamma)$ such
    that $D_1 = D_2 + \fdiv(f)$. We define the metric $d_\infty$ by
    \begin{equation*}
        d_\infty(D_1 , D_2) = \|f\|_\infty.
    \end{equation*}
    We verify that this is well-defined. Suppose $\fdiv(f) = \fdiv(g)$. This
    implies in particular that $\fdiv(f - g) = 0$ and so $f - g$ is constant.
    Indeed, if $f - g$ was not constant, then if $Z$ was the subgraph of
    $\Gamma$ on which $f-g$ admits its minimum, there would be some $x \in
    \boundary{Z}$, and for such an $x$ we would necessarily have
    $\fdiv(f - g)(x) > 0$. Now, it follows directly from the definition of
    $\|\cdot\|_\infty$ that for any $c \in \R$,
    $\|f + c\|_\infty = \|f\|_\infty$, and hence taking $c = g - f$, we get that
    $\|f\|_\infty = \|g\|_\infty$.

    The fact that the topology induced by this metric agrees with that
    induced from $\Gamma^d/S_d$ is detailed in \cite[Proposition
    B.1]{luo-idempotent}.
\end{proof}

\begin{definition}
    \label{def:rank}
    The \emph{rank} of a divisor $D \in \Div(\Gamma)$ is the number
    \begin{equation*}
        r(D) := 
            \max \{d \in \N \mid |D - E| \neq \emptyset\textrm{
            for all effective divisor $E$ of degree $d$}\},
    \end{equation*}
    where if $|D| = \emptyset$ we set $r(D) = -1$.
\end{definition}

\begin{theorem}[Tropical Riemann-Roch] \cite[Theorem 1.12]{trop-rr}
    \label{thm:trop-rr}
    Let $D \in \Div(\Gamma)$ be a divisor, then
    \begin{equation*}
        r(D) - r(K-D) = \deg(D) - g + 1
    \end{equation*}
\end{theorem}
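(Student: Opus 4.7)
The plan is to adapt the combinatorial proof of Baker-Norine from \cite{trop-rr} to the metric graph setting. The central tool is the theory of \emph{$v$-reduced divisors}: for each basepoint $v \in \Gamma$, a divisor $D$ is $v$-reduced when $D|_{\Gamma \setminus \{v\}} \geq 0$ and for every non-empty closed subgraph $A \subseteq \Gamma \setminus \{v\}$ there is a boundary point $x \in \boundary{A}$ where $D(x)$ is strictly smaller than the number of tangent directions at $x$ pointing out of $A$. Via a continuous analogue of Dhar's burning algorithm, I would show that each linear equivalence class contains a unique $v$-reduced divisor $D'$, and that $|D| \neq \emptyset$ if and only if $D'(v) \geq 0$.

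Next I would pass to a combinatorial model $G = (V, E)$ of $\Gamma$ large enough that every reduced divisor entering the argument is supported on $V$; only finitely many points appear, so such a model exists. Within $G$ the problem essentially reduces to the discrete Baker-Norine statement. The combinatorial heart is the duality
\begin{equation*}
    r(N) = -1 \quad \Longleftrightarrow \quad r(K - N) = -1
\end{equation*}
for divisors $N$ of degree $g - 1$. Baker-Norine establish this by choosing a linear order $\sigma$ on $V$, forming the associated reference divisor $\nu_\sigma$ of degree $g - 1$ (built from the induced out-degrees), and showing that every degree-$(g-1)$ class is either effective or uniquely equivalent to $\nu_\sigma - E$ for some effective $E$. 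This produces an involution on non-effective degree-$(g-1)$ classes which swaps the class of $N$ with that of $K - N$.

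Riemann-Roch then follows by bookkeeping. Observe that $r(D) + 1$ equals the smallest degree of an effective $E$ for which $|D - E| = \emptyset$; by choosing such a minimal $E$ and adding an effective divisor of complementary degree, one lands in degree $g - 1$, where the above duality applies and relates $r(D)$ to $r(K - D)$. Tracking degrees carefully yields the identity $r(D) - r(K - D) = \deg D - g + 1$.

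The main obstacle is the $v$-reduced divisor machinery in the continuous setting: Dhar's burning algorithm must be reformulated so that one can fire a closed subgraph infinitesimally along its outgoing tangent directions, and one needs to verify convergence and that the resulting divisor is well-defined and $v$-reduced. Once this technical preparation is in place, the Baker-Norine combinatorial argument transfers essentially unchanged, since all the involutions and reference divisors can be constructed on the finite model $G$.
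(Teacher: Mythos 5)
The paper does not prove this theorem: it is quoted verbatim as \cite[Theorem 1.12]{trop-rr}, an external result, so there is no internal proof to compare your sketch against. That said, your outline does follow the standard route in the literature — reduce to a combinatorial model and invoke the Baker–Norine duality $r(N) = -1 \iff r(K-N) = -1$ in degree $g-1$ — so it is the right idea.

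One step you wave away deserves more care. You write that one can pass to a model $G = (V,E)$ ``large enough that every reduced divisor entering the argument is supported on $V$; only finitely many points appear.'' But the rank is a maximum over \emph{all} effective divisors $E$ of a given degree, and these range over the uncountable set $\Div^+_d(\Gamma)$, with arbitrary (in particular irrational) support. It is not a priori clear that a single finite model captures them all, nor that the $v$-reduced representative of $D - E$ is supported on $V$ uniformly in $E$. Bridging this gap is the genuine technical content of the metric-graph Riemann–Roch theorem: one must show either that the rank can be computed by restricting $E$ to a dense set of rational points (a continuity argument for the reduced-divisor map $E \mapsto \operatorname{red}_v(D-E)$), or that the rank of $D$ on $\Gamma$ agrees with the Baker–Norine rank of $D$ on a sufficiently subdivided integral model. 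Without this, the reduction to the discrete Baker–Norine theorem does not go through. The rest of your plan — the reference divisor $\nu_\sigma$, the involution on non-effective degree-$(g-1)$ classes, and the final bookkeeping via a minimal $E$ with $|D - E| = \emptyset$ — is the correct combinatorial core once the reduction is in place.
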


\begin{definition}
    For a closed, not necessarily connected
    subgraph $Z \subseteq \Gamma$, we define
    \begin{equation*}
        Z_\epsilon := \{x \in \Gamma: \dist(x, Z) < \epsilon \}.
    \end{equation*}
    A \emph{chip firing move} is the data of a closed subgraph
    $Z \subseteq \Gamma$ with a finite number of connected components,
    and a distance $\epsilon > 0$ such that 
    $Z_\epsilon \setminus Z$ is a disjoint union of open intervals.

    To such data we can associate the rational function
    \begin{equation*}
        CF(Z, \epsilon)(x) := -\min\{\dist(x, Z), \epsilon\}.
    \end{equation*} 
    This function is identically $0$ on $Z$, it is identically $-\epsilon$
    on $Z_\epsilon^c$ and it interpolates linearly between these two
    regions on 
    $Z_\epsilon \setminus Z$.

    For $D \in \Div(\Gamma)$ a divisor, we say that we obtain a divisor $D'$
    by \emph{firing} $Z$ (by a distance $\epsilon$) when
    $D' = D + \fdiv(CF(Z, \epsilon))$.
    We say that $Z \subseteq \Gamma$
    \emph{can fire} if there is some $\epsilon > 0$,
    such that for all $x \in \boundary{Z}$,
    $D(x) + \ord_xCF(Z, \epsilon) \geq 0$. 
\end{definition}

\begin{remark}
    When $Z \subset \Gamma$ is a closed subgraph and $x \in \boundary{Z}$,
    we define $\degout_Z(x)$ to be the valence of $x$ in the closed
    subgraph $\Gamma \setminus \interior{Z}$. It follows that for $x \in
    \boundary{Z}$,
    $\ord_xCF(Z, \epsilon) = -\degout_Z(x)$ and so
    $Z \subseteq \Gamma$ 
    can fire if and only if
    for all $x \in \boundary Z$, $D(x) \geq \degout_Z(x)$.
\end{remark}

\begin{remark}
    It follows directly from the definition that $\|CF(Z, \epsilon)\|_\infty =
    \epsilon$.
\end{remark}

\begin{definition}
    A \emph{weighted chip firing move} is a non-constant
    rational function $f$, for which
    there exist two disjoint
    proper closed subgraphs $Z_1$ and $Z_2$, such that
    $\Gamma \setminus (Z_1 \cup Z_2)$ consists only of open segments
    such that $f$ is constant on $Z_1$
    and $Z_2$ and linear on each component of $\Gamma \setminus (Z_1 \cup Z_2)$.
\end{definition}

\begin{lemma}
    \label{lemma:weighted-cf-decomp}
    Every weighted chip firing move $f$ can be written as a sum of chip firing
    moves (up to a constant)
    \begin{equation*}
        f = f_1 + \dots + f_n,
    \end{equation*}
    where $\|f_i\|_\infty \leq \|f\|_\infty$. Furthermore, if $f \in R(D)$ for
    some effective divisor $D$, then $f_{k} \in
    R(D + \fdiv(f_1 + \dots + f_{k-1}))$
    for all $k$,
    that is $f_1, \dots, f_n$ is a sequence of legal chip firing moves.
\end{lemma}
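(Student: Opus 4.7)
The plan is to construct the decomposition iteratively, peeling off one chip firing move from the top of $f$ at each step. After normalizing so that $\max f = 0$ (and hence $f \leq 0$), set $g_0 = f$ and recursively define $g_k := g_{k-1} - f_k$ with $f_k := CF(W_k, \epsilon_k)$, where $W_k := g_{k-1}^{-1}(0)$ is the current maximum level set and $\epsilon_k > 0$ is chosen maximally so that $\max g_k = 0$. Since $f$ has integer slopes and this property is preserved inductively by the $g_{k-1}$, the slope of $|g_{k-1}|$ along each outgoing edge from $W_k$ is at least $1$, so the required condition $|g_{k-1}(x)| \geq \dist(x, W_k)$ holds for small $\epsilon_k > 0$; after refining the model if necessary, $W_k$ is a valid closed subgraph and $f_k$ is a legitimate chip firing move.

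For the norm bound, observe that $\epsilon_k \leq \|g_{k-1}\|_\infty \leq \|f\|_\infty$: were $\epsilon_k > \|g_{k-1}\|_\infty$, then at any $x$ with $\dist(x, W_k) \geq \epsilon_k$ we would have $g_k(x) = g_{k-1}(x) + \epsilon_k > 0$, contradicting the choice of $\epsilon_k$. For the legality claim, suppose $f \in R(D)$ and set $E := D + \fdiv(f) \geq 0$. Writing $D_{k-1} := D + \fdiv(f_1 + \cdots + f_{k-1}) = E - \fdiv(g_{k-1})$, the condition $f_k \in R(D_{k-1})$ reduces via the firing criterion to $D_{k-1}(x) \geq \degout_{W_k}(x)$ for every $x \in \boundary{W_k}$. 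At such an $x$, one computes $\fdiv(g_{k-1})(x) = -\sum_e s_e$ summed over the outgoing edges $e$ at $x$, each with integer slope $s_e \geq 1$, so $D_{k-1}(x) = E(x) + \sum_e s_e \geq \sum_e 1 = \degout_{W_k}(x)$, as required.

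The main obstacle is establishing finite termination, since a priori the $\epsilon_k$ might shrink without $g_k$ ever becoming constant. I would address this by pinning $\epsilon_k$ to the minimum distance from $\boundary{W_k}$ to a vertex of a fixed model $G$ on which $f$ is linear, along an outgoing edge. With this choice, each firing either fully absorbs an outgoing edge of $G$ into $W_{k+1}$ or strictly reduces the slope of $g_{k-1}$ on some edge of $G$; a lexicographic invariant built from the pair (number of edges of $G$ not yet absorbed, total remaining slope $\sum_e s_e$) then strictly decreases at every step, forcing the process to terminate.
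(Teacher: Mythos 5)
Your iterative ``peel from the top'' strategy is a genuinely different route from the paper's. The pieces you work out carefully are sound: the integer-slope observation ensuring $W_k$ can fire, the norm bound $\epsilon_k \leq \|g_{k-1}\|_\infty$ for the maximal choice of $\epsilon_k$, and the legality computation $D_{k-1}(x) = E(x) + \sum_e s_e \geq \degout_{W_k}(x)$ at $x \in \boundary{W_k}$ (with an implicit induction on $k$ to keep $D_{k-1}$ effective). The paper avoids your main difficulty entirely by producing the decomposition in a single explicit pass: with $d = f(Z_2) - f(Z_1)$, slopes $s_i = d/l_i$, $s = \lcm(s_1, \ldots, s_r)$, and $\delta = d/s$, it fires a nested family $Y_j$ (obtained by attaching to $Z_2$ initial subsegments of $L_i$ of length $\floor{j/s_i}\delta$), each by the uniform step $\delta$. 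There are exactly $s$ moves by construction, so termination is built in, and legality follows from the concavity of the partial sums on each $L_i$. Your plan defers the entire weight of the proof to a termination argument, and that argument is where the proposal breaks.

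The termination argument has a genuine gap. The invariant you propose --- the pair $(\text{number of edges of $G$ not yet absorbed},\ \sum_e s_e)$ --- is not well-defined for $k \geq 2$, because after the first firing $g_{k-1}$ acquires bend points in the interior of the edges of any fixed model on which $f$ is linear, so there is no single slope $s_e$ per edge of $G$. Under the natural reading of $s_e$ as the outgoing slope of $g_{k-1}$ at $\boundary W_k$ along $e$, the invariant can fail to decrease: take $r = 2$ with $s_1 = 2$, $s_2 = 3$ (so $l_1 = d/2$, $l_2 = d/3$). The vertex-pinned rule gives $\epsilon_1 = \epsilon_2 = d/3$; before the second firing the outgoing boundary slopes on $L_1, L_2$ are $-1, -2$ (sum $3$), and after it $W_3$ absorbs $[0, d/3] \subset L_1$, whose new boundary slope is $-2$, while $L_2$ contributes $-1$ (sum still $3$, with both edges only partially absorbed). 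So the pair stays at $(2,3)$ and the lexicographic argument stalls. There is also an unaddressed mismatch: the norm-bound argument and the very definition $W_{k+1} = g_k^{-1}(0)$ both rely on $\max g_k = 0$, which you established only for the maximal $\epsilon_k$, but the termination fix quietly replaces $\epsilon_k$ with the vertex-pinned choice without re-deriving $\max g_k = 0$. Each of these points can probably be repaired, but as written the proof of finiteness --- the step you yourself identify as the main obstacle --- does not go through.
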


\begin{proof}
    We will proceed as in \cite[Lemma 1]{linsys}.
    Let $Z_1, Z_2$ be as in the definition of weighted chip firing move and
    let $d = f(Z_2) - f(Z_1)$. Without loss of generality,
    suppose that $d > 0$.
    Denote $L_1, \dots, L_r$ the
    open segments making up $\Gamma \setminus (Z_1 \cup Z_2)$.
    Let $l_i$ be the length of $L_i$.
    Let also $s_i > 0$ be the slope of $f$ along $L_i$ from $Z_1$ to $Z_2$ (so
    that $s_i = d/l_i$),
    and let $s = \lcm(s_1, \dots, s_r)$.

    Let $k_i = s/s_i$ and $\delta = d/s$. For $j = 0, \dots, s - 1$, we let
    $Y_j$ be the subgraphs obtained by attaching the adjacent subsegment of
    $L_i$ of length $\floor{j/s_i}\delta$ to $Z_2$.
    We then define $f_j := CF(Y_j, \delta)$. Let $g = f_1 + \dots + f_{s-1}$.
    Clearly $g$ is constant on $Z_1$ and $Z_2$ and its slope along any given
    $L_i$ is by definition equal to $s_i$, hence $f - g$ is a constant.

    Suppose $f \in R(D)$ for some effective divisor $D$.
    To show that $f_1, \dots, f_{s-1}$ is a sequence of legal chip firing moves,
    we will focus on a single $L_i$. Identify $L_i = (0, l_i)$, where we orient
    $L_i$ from $Z_2$ to $Z_1$. 
    We have that $g_k := f_1 + \dots + f_k$ is always concave on $L_i$.
    Indeed, let $\zeta_x$ be the tangent vector at $x \in L_i$,
    then by the definition of the $f_j$, we have that
    \begin{equation*}
        s_{\zeta_x}(f_1 + \dots + f_k) =
        \begin{cases}
            -s_i &\textrm{if } x\in (0, \floor{k/s_i}\delta),\\
            -s_i\{k/s_i\} & \textrm{if } x \in 
            [\floor{j/s_i}\delta, (\floor{k/s_i} + 1)\delta,\\
            0 &\textrm{otherwise.}
        \end{cases}
    \end{equation*}
    Here $\{a\} := a - \floor{a}$ denotes the fractional part of $a$.
    So $g_k$ is concave on $L_i$, whence
    \[\ord_x g_k \geq 0 \geq -D(x)\]
    for any $x \in L_i$.
    If $x$ is the point corresponding to $0 \in \closure{L_i}$,
    then the slope of $g_k$ at $x$ along $L_i$
    is at least $-s_i$, so we deduce that $\ord_x g_k \geq \ord_x f \geq -D(x)$.
    Finally, if $x$ is the point corresponding to $l_i \in \closure{L_i}$, then
    the slope of $g_k$ at $x$ along $L_i$ (in the opposite direction) is
    non-negative, and hence $\ord_x g_k \geq 0 \geq -D(x)$.
    We deduce that $g_k \in R(D)$, or equivalently that
    \[f_k \in R(D + \deg(f_1 + \dots + f_{k-1})).\]
\end{proof}

\begin{lemma}
    \label{lemma:tropical-sum}
    Every tropical rational function is a sum of chip firing moves (up to a
    constant). If we denote the sum by 
    $f = f_1 + \dots + f_n$, where the $f_i$
    are the chip firing moves, then the $f_i$ can be chosen such that
    $\|f_i\|_\infty \leq \|f\|_\infty$ and furthermore
    if $f \in R(D)$ for some divisor $D$,
    then $f_k \in R(D + \fdiv(f_1 + \dots + f_{k-1}))$ for all $k$, that is
    $f_1, \dots, f_n$ is a sequence of legal chip firing moves.
\end{lemma}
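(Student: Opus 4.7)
The strategy is to decompose $f$ as a sum of weighted chip firing moves indexed by super-level sets of $f$, then invoke Lemma \ref{lemma:weighted-cf-decomp} to resolve each weighted move into a legal sequence of ordinary chip firing moves.

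Fix a model $G = (V, E)$ on which $f$ is linear, and let $c_1 < c_2 < \dots < c_N$ be the distinct values of $f$ on $V$. For $1 \le k \le N-1$ set
\[
    h_k := \min\bigl(\max(f - c_{N-k},\, 0),\; c_{N-k+1} - c_{N-k}\bigr),
\]
the ``$k$-th layer from the top'' of $f$, which equals $c_{N-k+1} - c_{N-k}$ on $\{f \ge c_{N-k+1}\}$, interpolates linearly on $\{c_{N-k} \le f \le c_{N-k+1}\}$, and vanishes on $\{f \le c_{N-k}\}$. A pointwise check gives $f = c_1 + h_1 + \dots + h_{N-1}$, and each $h_k$ is a weighted chip firing move (with $Z_1 = \{f \le c_{N-k}\}$ and $Z_2 = \{f \ge c_{N-k+1}\}$) satisfying $\|h_k\|_\infty = c_{N-k+1} - c_{N-k} \le \|f\|_\infty$.

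Next I verify that $h_1, \dots, h_{N-1}$ is a legal sequence of weighted chip firing moves, i.e.\ the partial sums $g_K := h_1 + \dots + h_K$ lie in $R(D)$ for all $K$. A direct computation shows $g_K = \max(f - c_{N-K}, 0)$. To see $g_K \in R(D)$, I compute $\ord_x g_K$ case by case. If $f(x) > c_{N-K}$, then $g_K$ differs from $f$ by a constant near $x$, so $\ord_x g_K = \ord_x f$. If $f(x) < c_{N-K}$, then $g_K \equiv 0$ near $x$ and $\ord_x g_K = 0$. If $f(x) = c_{N-K}$, then on tangents $\zeta$ with $s_\zeta(f) \le 0$ the function $g_K$ is identically zero near $x$, while on tangents with $s_\zeta(f) > 0$ it agrees with $f - c_{N-K}$, giving $\ord_x g_K = \sum_{\zeta:\, s_\zeta(f) > 0} s_\zeta(f) \ge 0$. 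In all three cases $D(x) + \ord_x g_K \ge 0$ follows from either the hypothesis $f \in R(D)$ or the effectivity of $D$.

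Finally, applying Lemma \ref{lemma:weighted-cf-decomp} to each $h_k$ expresses it as a legal sequence of ordinary chip firing moves with $\|\cdot\|_\infty \le \|h_k\|_\infty \le \|f\|_\infty$. Concatenating these sequences in order produces the desired decomposition $f_1, \dots, f_n$ summing to $f$ up to the constant $c_1$; legality propagates across concatenation because each block of Lemma \ref{lemma:weighted-cf-decomp} for $h_{k+1}$ starts from the divisor $D + \fdiv(g_k)$, which is exactly the divisor left by the previous blocks. The main obstacle is the case analysis establishing $g_K \in R(D)$: the delicate point is the boundary $\{f = c_{N-K}\}$, where $\ord_x f$ may be negative, yet truncation replaces negative outgoing slopes by zero, so the inequality reduces to the effectivity of $D$.
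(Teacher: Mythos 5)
Your proof is correct and follows essentially the same route as the paper's: decompose $f$ into horizontal ``layers'' by its level sets, observe each layer is a weighted chip-firing move of norm bounded by $\|f\|_\infty$, and feed each layer to Lemma~\ref{lemma:weighted-cf-decomp}. Your truncations $h_k$ coincide (up to additive constants) with the paper's $g_i = \max\{\min\{f, y_i\}, y_{i+1}\}$, and both arguments reduce the legality check to showing that the partial sums $\max(f - c_{N-K}, 0)$ lie in $R(D)$. The one place you diverge is in how that last point is established: the paper notes that $\max\{f, y_{k+1}\}$ is a tropical combination $f \oplus y_{k+1}$ and invokes the already-proved fact that $R(D)$ is a tropical module, whereas you carry out an explicit case analysis of $\ord_x g_K$ at points where $f(x)$ is above, below, or equal to the cutoff. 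Your route is more hands-on and self-contained; the paper's is shorter because it reuses the module structure. Both are sound, and you correctly identify the only delicate case (the boundary $\{f = c_{N-K}\}$), where truncation can only increase the order and effectivity of $D$ saves the inequality.
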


\begin{proof}
    We will proceed as in \cite[Lemma 2]{linsys}.
    Let $Y = f(\bend(f) \cup V)$, where $V$ is a set of vertices for any chosen
    model of $\Gamma$. Then $Y$ is finite so denote $y_1 > \dots > y_r$ its
    elements.  By
    construction we have that
    \begin{equation*}
        g_i := \max\{\min\{f, y_i\}, y_{i + 1}\}   
    \end{equation*}
    is a weighted chip-firing move.
    Note that
    \begin{equation*}
        \|g_i\|_\infty = y_{i+1} - y_i \leq y_r - y_1 = \|f\|_\infty,
    \end{equation*}
    and that for all $k$, 
    \[g_1 + \dots + g_k = \max\{f, y_{k+1}\} + c_k,\]
    where $c_k$
    is some constant.
    We show this by induction.
    We have that $g_1 = \max\{f, y_2\}$, so we can set $c_1 = 0$ and
    \begin{equation*}
        \max\{f, y_{k}\} + c_{k-1} + g_k =
        \max\{f, y_k\} + \max\{\min\{f, y_k\}, y_{k + 1}\} + c_{k - 1}.
    \end{equation*}
    Evaluated in $x \in \Gamma$, we obtain
    \begin{equation*}
        \begin{cases}
            f(x) + y_k + c_{k-1}&\textrm{ if }f(x) \geq y_k,\\
            y_k + f(x) + c_{k-1}&\textrm{ if }y_{k+1} \leq f(x) \leq y_{k},\\
            y_k + y_{k+1}&\textrm{ if }f(x) \leq y_{k+1}.
        \end{cases}
    \end{equation*}
    And hence if we set $c_{k} = c_{k-1} + y_k$, we get that
    \begin{equation*}
        \max\{f, y_{k}\} + c_{k-1} + g_k = \max\{f, y_{k+1}\} + c_k.
    \end{equation*}

    By Lemma \ref{lemma:weighted-cf-decomp}, we have
    that $g_k = f^{(k)}_1 + \dots + f^{(k)}_{n_k}$ with
    $\|f_i^{(k)}\|_\infty \leq \|g_k\|_{\infty}$.
    So let $D$ an effective divisor such that
    $f \in R(D)$, then since
    $g_1 + \dots + g_k = \max\{f,  y_{k+1}\} + c_k$,
    we have that $g_1 + \dots + g_k \in R(D)$.
    This implies that $g_k \in R(D + \fdiv(g_1 + \dots + g_{k-1}))$
    and hence by Lemma \ref{lemma:weighted-cf-decomp} we have that
    $$f^{(k)}_l \in R(D + \fdiv(g_1 + \dots +
    g_{k-1} + f^{(k)}_1 + \dots + f^{(k)}_{l-1}))$$
    for all $l$.
    
    Now it is clear that the desired properties hold if we set
    \begin{equation*}
        (f_1, \dots, f_n) = (f^{(1)}_1, \dots, f^{(1)}_{n_1}, \dots, 
        f^{(r-1)}_1, \dots, f^{(r-1)}_{n_{r-1}}).
    \end{equation*}
\end{proof}

\subsection{Reduced divisors}
\label{sec:reduced-divisors}

We now describe reduced divisors, which are distinguished divisors in a linear
system. The purpose of this subsection is to introduce some theoretical
background that will be useful in section \ref{sec:discrete-representations} and
is used in the implementation.

\begin{definition}
    Let $v \in \Gamma$ be a point. We say a divisor $D \in \Div(\Gamma)$
    is \emph{effective away from $v$} if $D(x) \geq 0$ for all $x \neq v$.

    A \emph{$v$-reduced} divisor is a divisor $D \in \Div(\Gamma)$ that is
    effective away from $v$ and such that for all subgraphs
    $Z \subset V$ with $v \notin Z$, $Z$ cannot fire with respect to $D$.
\end{definition}

\begin{proposition}
    \label{prop:reduced-existence-unicity}
    \cite[Theorem 2]{amini-reduced}
    Let $D \in \Div(\Gamma)$. There exists a unique $v$-reduced divisor linearly
    equivalent to $D$.
\end{proposition}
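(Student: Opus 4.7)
The plan is to prove existence via a minimization argument over the linear equivalence class of $D$ and uniqueness via a maximum-principle argument for rational functions.

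Starting with uniqueness, suppose $D_1, D_2$ are both $v$-reduced with $D_1 \sim D_2$, and write $D_1 = D_2 + \fdiv(f)$ for some $f \in \Rat(\Gamma)$. If $f$ is constant, then $D_1 = D_2$; otherwise the closed subgraph $Z := \{x \in \Gamma : f(x) = \max f\}$ is a proper subset of $\Gamma$. At each $x \in \boundary{Z}$, every tangent direction leaving $Z$ has integer slope $\leq -1$, so $\ord_x f \leq -\degout_Z(x)$. Suppose first that $v \notin Z$: then $D_1(x) \geq 0$ for all $x \in \boundary{Z}$, so $D_2(x) = D_1(x) - \ord_x f \geq \degout_Z(x)$, which means $Z$ can fire with respect to $D_2$, contradicting the $v$-reducedness of $D_2$. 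If instead $v \in Z$, apply the symmetric argument with $-f$ and the subgraph $Z' := \{x : f(x) = \min f\}$, which does not contain $v$, to conclude that $Z'$ can fire with respect to $D_1$, again a contradiction. Hence $f$ must be constant, and $D_1 = D_2$.

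For existence, first show that the set $S_v(D) := \{D' \in \Div(\Gamma) : D' \sim D,\ D'(x) \geq 0 \text{ for all } x \neq v\}$ is non-empty; this follows by starting from $D$ and firing subgraphs containing $v$ to transfer chips onto any negative locus, using connectedness of $\Gamma$. Next, define an energy functional on $S_v(D)$ that measures how concentrated the chips are near $v$; a first candidate is $\Phi(D') := \sum_{x \neq v} D'(x)\,\dist(x, v)$, possibly refined by lexicographic tiebreakers. Since $S_v(D)$ sits as a closed subset of the compact Hausdorff space $\Gamma^d/S_d$, one can extract a minimizer $D^\star$ of $\Phi$ on $S_v(D)$. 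To conclude, argue that $D^\star$ must be $v$-reduced: if some subgraph $Z \not\ni v$ could legally fire with respect to $D^\star$, then firing it by a small $\epsilon$ would move chips from $\boundary{Z}$ into $\Gamma \setminus Z$ (which contains $v$), which should strictly decrease $\Phi$, contradicting the minimality of $D^\star$.

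The main obstacle is the final step of the existence argument. Firing $Z \not\ni v$ can in principle send some chips farther from $v$, when an outgoing tangent at a boundary point of $Z$ happens to point away from $v$ in the graph metric, so the naive $\Phi$ may fail to strictly decrease under every legal firing of a non-$v$-containing subgraph. Overcoming this requires either a more refined energy functional—for instance a Green's-function-weighted sum as used in \cite{amini-reduced}—a careful lexicographic minimization, or a direct adaptation of Dhar's burning algorithm to the metric graph setting.
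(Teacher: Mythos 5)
Your uniqueness argument is correct and is essentially the standard ``maximum principle'' proof: writing $D_1 = D_2 + \fdiv(f)$, the maximum set $Z$ of $f$ is a closed subgraph with $\ord_x f \leq -\degout_Z(x)$ at every $x \in \boundary Z$, so if $v \notin Z$ then $Z$ can fire with respect to $D_2$, contradicting $v$-reducedness; if $v \in Z$, the minimum set of $f$ avoids $v$ and can fire with respect to $D_1$ by the symmetric argument. No issues there. The paper itself does not reprove this statement --- it cites Amini --- so there is no ``paper's proof'' to compare against.

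The existence half has two gaps, one of which you identify and one you don't. The one you flag --- that firing a subgraph $Z \not\ni v$ can send some chips \emph{away} from $v$ in the graph metric, so the naive $\Phi(D') = \sum_{x\neq v} D'(x)\dist(x,v)$ need not strictly decrease --- is real, and your suggested remedies (a Green's/capacity-weighted potential as in \cite{amini-reduced}, or a Dhar-style burning argument) are in fact the standard ways to close it. The gap you do not flag is the compactness claim: $S_v(D)$ does \emph{not} embed into $\Gamma^d/S_d$. That space parametrizes \emph{effective} degree-$d$ divisors, whereas divisors in $S_v(D)$ are only effective away from $v$, and the coefficient $D'(v)$ can be made arbitrarily negative within the linear equivalence class (by firing subsets containing $v$ repeatedly), so the number of chips sitting on $\Gamma \setminus \{v\}$ is unbounded. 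Consequently $S_v(D)$ is not compact, sublevel sets of $\Phi$ are not obviously compact either (chips can pile up arbitrarily close to $v$ with small $\Phi$-cost), and ``extract a minimizer'' is not justified without further work. Amini avoids this by working with a potential-theoretic energy with the right coercivity, and the finite-graph proof in the present paper (Proposition \ref{prop:existence-v-reduced-graphs}) avoids it by an explicit terminating algorithm with a lexicographic monovariant. Either of those would repair both gaps at once, but as written the existence argument does not go through.
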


\begin{definition}
    Let $f$ a piece-wise linear function on $\Gamma$.
    We say a closed connected subset
    $C \subseteq \Gamma$ is a \emph{local maximum} of $f$ if $f$
    is constant on $C$ and there exists some open neighbourhood $U$ of $C$, 
    with $f(U\setminus C) < f(C)$ 
    (in the sense that for all $x \in U\setminus C$, $y \in C$, we have
    $f(x) < f(y)$).
\end{definition}

\begin{proposition}
    \label{prop:reduced-local-max}
    Let $D \in \Div(\Gamma)$ be a $v$-reduced divisor.
    Then for all $f \in R(D)$, if $Z \subseteq \Gamma$ is a local maximum for
    $f$, then $v \in Z$.
\end{proposition}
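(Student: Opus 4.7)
The plan is to argue by contradiction: assume $v \notin Z$ and show that $Z$ can fire with respect to $D$, contradicting the hypothesis that $D$ is $v$-reduced.

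First I would like $Z$ to look like a subgraph so that the chip-firing machinery applies. Since $f$ is PL, I can refine the model of $\Gamma$ so that $f$ is linear on every edge; after this refinement $Z$ is a closed subgraph of $\Gamma$ (it is a connected component of the closed set $\{f = c\}$ where $c$ is the constant value of $f$ on $Z$, and by the local-maximum hypothesis there is a neighbourhood $U \supset Z$ on which $f < c$ off $Z$, so $Z$ is automatically a union of vertices and closed edges of the refined model). In particular $\partial Z$ consists of vertices, and for each $x \in \partial Z$ the tangent space $T_x \Gamma$ splits into tangents pointing into $Z$ (along edges contained in $Z$) and tangents pointing out of $Z$ (these are counted by $\degout_Z(x)$).

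Next I would analyze the slopes of $f$ at a boundary point $x \in \partial Z$. For a tangent $\zeta$ into $Z$, the function $f$ is constant on the adjacent edge, so $s_\zeta(f) = 0$. For a tangent $\zeta$ out of $Z$, the local-maximum hypothesis forces $f$ to strictly decrease along $\zeta$, so $s_\zeta(f) < 0$; since slopes are integers this gives $s_\zeta(f) \leq -1$. Summing,
\begin{equation*}
    \ord_x(f) = \sum_{\zeta \in T_x\Gamma} s_\zeta(f) \leq -\degout_Z(x).
\end{equation*}
Using $f \in R(D)$, which means $D(x) + \ord_x(f) \geq 0$, I conclude
\begin{equation*}
    D(x) \geq -\ord_x(f) \geq \degout_Z(x).
\end{equation*}

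By the remark following the definition of chip firing, the inequality $D(x) \geq \degout_Z(x)$ for every $x \in \partial Z$ is exactly the condition that $Z$ can fire with respect to $D$. Since by assumption $v \notin Z$, this contradicts the assumption that $D$ is $v$-reduced, completing the proof. The one mildly delicate point is justifying that $Z$ may be treated as a closed subgraph with a well-defined $\degout_Z$; once the model is chosen so that $f$ is linear on every edge this is automatic, and the rest of the argument is a direct slope count.
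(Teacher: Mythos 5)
Your proof is correct and follows essentially the same route as the paper: both show that $Z$ can fire with respect to $D$ by bounding $\ord_x(f) \leq -\degout_Z(x)$ at boundary points via the strict negativity of the outgoing slopes, and then invoke the definition of $v$-reduced to conclude $v \in Z$. You supply a bit more detail (refining the model so $Z$ is a closed subgraph, separating inward from outward tangents) and phrase it as a contradiction rather than a direct implication, but the substance of the argument is identical.
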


\begin{proof}
    This will follow if we show that $Z$ can fire with respect to $D$.
    Since $Z$ is a local maximum, $f$ has strictly negative integral slope along
    all outgoing tangents on $\boundary Z$. It follows that for any 
    $x \in \boundary Z$,
    \begin{equation*}
        \ord_x(f) \leq -\degout_Z(x),
    \end{equation*}
    and so since $(D+\fdiv(f))(x) \geq 0$, this implies
    $D(x) - \degout_Z(x) \geq 0$ and so $Z$ can fire.
\end{proof}

\begin{corollary}
    \label{cor:reduced-max}
    Let $D \in \Div(\Gamma)$ be a $v$-reduced divisor and
    $f \in R(D)$ a rational function. Then $f$ admits its maximum in $v$.
\end{corollary}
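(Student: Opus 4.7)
The plan is to deduce the corollary from Proposition \ref{prop:reduced-local-max} by exhibiting the set of global maxima as a disjoint union of local maxima in the sense of the preceding definition.

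First I would set $M = \max_\Gamma f$ and $Z_0 = f^{-1}(M)$. By compactness of $\Gamma$ and continuity of $f$, the set $Z_0$ is nonempty and closed. To describe $Z_0$ combinatorially, fix a model $G = (V,E)$ of $\Gamma$ on which $f$ is linear along each edge. Then $Z_0$ consists of the vertices $x \in V$ with $f(x) = M$ together with the closed edges on which $f \equiv M$ (an edge contributes fully to $Z_0$ exactly when both of its endpoints attain $M$ and its slope vanishes). In particular, $Z_0$ is a finite union of points and closed segments, so it has finitely many connected components.

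Next, let $Z$ be any connected component of $Z_0$. Since there are finitely many such components and each is closed, I can separate $Z$ from the others: there is an open neighbourhood $U$ of $Z$ in $\Gamma$ with $U \cap Z_0 = Z$. For every $x \in U \setminus Z$ we then have $f(x) < M = f(y)$ for all $y \in Z$, so $Z$ is a local maximum of $f$ in the sense of the definition preceding Proposition \ref{prop:reduced-local-max}.

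Applying Proposition \ref{prop:reduced-local-max} to this $Z$ forces $v \in Z$, hence $f(v) = M = \max_\Gamma f$, which is the desired conclusion. The only mildly delicate point is step one, namely checking that $Z_0$ really has finitely many connected components and that each component can be isolated by an open neighbourhood on which $f$ is strictly smaller elsewhere — but the piecewise linear structure makes this routine, so no serious obstacle is expected.
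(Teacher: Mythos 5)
Your proof is correct and follows the same route as the paper: identify the set where $f$ attains its maximum, observe that it (or a connected component of it) is a local maximum, and invoke Proposition~\ref{prop:reduced-local-max}. You are in fact slightly more careful than the paper, which applies the proposition directly to the full preimage $f^{-1}(M)$ without noting that the definition of local maximum requires connectedness; passing to a connected component as you do closes that small gap (and since $v$ must lie in each such component, the preimage is in fact connected, recovering the paper's formulation).
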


\begin{proof}
    Let $Z \subseteq \Gamma$ be the set on which $f$ admits its maximum, then 
    $Z$ is a local maximum of $f$ and so by Proposition 
    \ref{prop:reduced-local-max},
    $Z$ contains $v$.
\end{proof}

\begin{corollary}
    \label{cor:superlevel-connected}
    Let $D \in \Div(\Gamma)$ be a $v$-reduced divisor and
    $f \in R(D)$ a rational function. For all $a \leq f(v)$,
    the subgraph $f^{-1}([a, \infty))$ is connected.
\end{corollary}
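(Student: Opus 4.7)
The plan is to argue by contradiction, producing a local maximum of $f$ that avoids $v$, which contradicts Proposition \ref{prop:reduced-local-max}. Suppose $f^{-1}([a,\infty))$ is disconnected. Since $a\le f(v)$, the point $v$ lies in $f^{-1}([a,\infty))$, and by Corollary \ref{cor:reduced-max} it actually realises the global maximum of $f$; so $v$ belongs to one component, and I may pick a different component $C$.

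Because $f$ is piecewise linear on the compact graph $\Gamma$, the closed set $f^{-1}([a,\infty))$ has only finitely many connected components, each compact. In particular $f$ attains its maximum $b$ on $C$, and the union of the remaining components of $f^{-1}([a,\infty))$ is a closed set disjoint from $C$, so $C$ sits at positive distance from every other component. I then take $Z$ to be a connected component of the closed set $f^{-1}(\{b\})\cap C$; by construction $Z$ is closed, connected, contained in $C$, and $f\equiv b$ on it.

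The step requiring the most care is checking that $Z$ is a local maximum in the sense of the paper's definition, i.e.\ producing an open neighbourhood $U$ of $Z$ with $f<b$ on $U\setminus Z$. I would shrink $U$ so that simultaneously $U\cap f^{-1}([a,\infty))\subseteq C$ (using the positive-distance observation) and $U\cap f^{-1}(\{b\})\cap C=Z$ (using that the finitely many components of this closed set are pairwise disjoint closed sets). Then for $x\in U\setminus Z$, either $x\in C$, in which case $f(x)\le b$ with equality excluded by the second condition, or $x\notin C$, in which case the first condition forces $f(x)<a\le b$. Since $Z\subseteq C$ and $v\notin C$, the set $Z$ is a local maximum of $f$ that does not contain $v$, contradicting Proposition \ref{prop:reduced-local-max}.

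The only real obstacle is the topological bookkeeping needed to construct the neighbourhood $U$, which rests on the finiteness of the set of components of $f^{-1}([a,\infty))$ and of $f^{-1}(\{b\})\cap C$; this finiteness is automatic from the PL structure of $f$ together with compactness of $\Gamma$, so no serious analytic difficulty arises beyond being careful with the definition of local maximum.
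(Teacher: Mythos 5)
Your argument is essentially the paper's proof: take a component $C$ of $f^{-1}([a,\infty))$ avoiding $v$, exhibit a local maximum of $f$ inside $C$, and contradict Proposition \ref{prop:reduced-local-max}. The one place you improve on the paper's terse write-up is in taking $Z$ to be a \emph{connected component} of $f^{-1}(\{b\})\cap C$ rather than the whole max-set of $f|_C$ — the paper's phrasing glosses over the requirement that a local maximum be connected — and in spelling out the neighbourhood construction; both are fine, but neither changes the route.
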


\begin{proof}
    Suppose there was some $a$ with
    $Y= f^{-1}([a, \infty))$ disconnected. Then let $C$ be a connected component
    of $Y$ such that $v \notin C$, then if we denote $Z\subseteq \Gamma$
    the set on which 
    $f|_C$ admits its maximum, we have that $Z$ is a local maximum of $f$ and
    hence $v \in Z$, a contradiction.
\end{proof}

\begin{proposition}
    \label{prop:reduced-effective}
    Suppose $D \in \Div(\Gamma)$ is a $v$-reduced divisor. Then $D$ is linearly
    equivalent to an effective divisor if and only if $D$ is effective.
\end{proposition}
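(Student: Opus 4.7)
The plan is to use Corollary \ref{cor:reduced-max} to squeeze the remaining coefficient $D(v)$ into being nonnegative. One direction is immediate: if $D$ is effective, then $D \sim D$ exhibits $D$ as linearly equivalent to an effective divisor.

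For the converse, suppose $D$ is $v$-reduced and $D \sim D'$ for some effective divisor $D'$. By definition of linear equivalence, $D' - D = \fdiv(f)$ for some rational function $f$, and then $D + \fdiv(f) = D' \geq 0$ says exactly that $f \in R(D)$. Since $D$ is $v$-reduced, we already have $D(x) \geq 0$ for every $x \neq v$, so the whole content of the statement reduces to showing $D(v) \geq 0$.

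To control $D(v)$ I would extract information from $f$ at $v$. Corollary \ref{cor:reduced-max} applies to the $v$-reduced divisor $D$ and the function $f \in R(D)$, so $f$ attains its maximum at $v$. In particular, on every tangent direction $\zeta \in T_v\Gamma$, the slope $s_\zeta(f)$ is nonpositive (since $f$ cannot increase away from a global maximum). Summing over $\zeta$ gives $\ord_v(f) \leq 0$. Combining this with the effectiveness inequality $(D + \fdiv(f))(v) \geq 0$ at $v$ yields
\begin{equation*}
    D(v) \geq -\ord_v(f) \geq 0,
\end{equation*}
which completes the proof.

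I do not anticipate any real obstacle: once the previous corollary on $v$-reduced divisors is available, the argument is a one-line slope computation at $v$. The only subtlety worth being explicit about is that ``$f$ admits its maximum at $v$'' genuinely forces all outgoing slopes at $v$ to be $\leq 0$, which is immediate because $f \circ \gamma$ is a linear function on a small interval $[0,\delta]$ starting at $v$ whose value at $0$ is the global maximum of $f$.
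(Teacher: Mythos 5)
Your proof is correct and follows the paper's argument exactly: both invoke Corollary \ref{cor:reduced-max} to conclude that $f$ attains its maximum at $v$, deduce $\ord_v(f) \leq 0$ from the nonpositivity of the outgoing slopes, and combine this with $(D+\fdiv(f))(v)\geq 0$ to obtain $D(v)\geq 0$. The extra remark spelling out why the slopes are nonpositive is a small additional clarification, but the structure and key lemma are the same.
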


\begin{proof}
    If $D$ is effective then the statement is clear. Suppose
    $D + \fdiv(f) \geq 0$ for some $f$. Then $f$ admits its maximum in $v$
    by Corollary \ref{cor:reduced-max}. In particular, the outgoing slopes of
    $f$ at $v$ are all negative (or zero). This implies that
    $\ord_v(f) \leq 0$ and hence
    \begin{equation*}
        D(v) \geq D(v) + \ord_v(f) = (D + \fdiv(f))(v) \geq 0,
    \end{equation*}
    so $D$ is effective.
\end{proof}

\subsection{Tropical modules}
\label{sec:tropical-modules}

We will now discuss tropical modules, which are a natural structure that
appear in the context of linear systems. We build on the discussion in
\cite[Section 3]{linsys}.

\begin{definition}
    The \emph{tropical semifield} $(\R\cup\{-\infty\}, \oplus, \odot)$
    is the set of real numbers $\R \cup \{-\infty\}$ with infinity
    with the two tropical operations defined by
    \begin{align*}
        a \oplus b &= \max(a, b),\\
        a \odot b &= a + b.
    \end{align*}
    A \emph{tropical module} $(M, \oplus, \odot, -\infty)$
    is a semi-module over the tropical semi-ring.

    For any set $E$, the space $\R^E\cup\{-\infty\}$
    is naturally a tropical module.
    Clearly, $\PL(\Gamma)$ and $\Rat(\Gamma)$ are stable under tropical addition
    and scaling, so we have an inclusion of tropical modules
    $\R^\Gamma\cup\{-\infty\} \supset \PL(\Gamma) \supset \Rat(\Gamma)$,
    where by abuse of notation we implicitly consider $-\infty$ to be part 
    of these tropical modules.
\end{definition}

\begin{proposition}
    Let $D$ be an effective divisor.
    Then the space
    $R(D)$ with the point-wise tropical operations is a tropical module.
\end{proposition}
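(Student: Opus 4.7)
The plan is to unpack the definition of tropical module: we must verify (i) closure of $R(D)$ under tropical scaling $c \odot f = c + f$, (ii) closure under tropical addition $f \oplus g = \max(f, g)$, (iii) the convention that $-\infty \in R(D)$ serves as the additive neutral element, and (iv) that the induced operations satisfy the semi-module axioms. Items (i) and (iv) are essentially free: adding a constant to a rational function leaves $\fdiv$ unchanged, so $c + f \in R(D)$ whenever $f \in R(D)$, and the semi-module axioms are inherited from the ambient module $\R^\Gamma \cup \{-\infty\}$ of which $R(D)$ is a subset. So the content of the proposition is really closure under pointwise $\max$.

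For (ii), let $f, g \in R(D)$ and set $h = \max(f, g)$. First I would observe that $h$ is again a rational function: take a common refinement of models on which both $f$ and $g$ are linear on edges; on each such edge, $h$ is the pointwise maximum of two affine functions with integral slopes and hence is itself piecewise linear with integral slopes, admitting a further subdivision as a model. The remaining task is to check $D + \fdiv(h) \geq 0$, i.e. $\ord_x(h) \geq -D(x)$ for every $x \in \Gamma$.

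The key step is the pointwise slope analysis at each $x$. If $f(x) \neq g(x)$, say $f(x) > g(x)$, then by continuity $h \equiv f$ on a neighbourhood of $x$, so $s_\zeta(h) = s_\zeta(f)$ for every $\zeta \in T_x\Gamma$ and hence $\ord_x(h) = \ord_x(f) \geq -D(x)$; the symmetric case is analogous. The interesting case is $f(x) = g(x)$: for each tangent direction $\zeta \in T_x\Gamma$, $h$ locally equals whichever of $f, g$ has the larger slope along $\zeta$, so $s_\zeta(h) = \max(s_\zeta(f), s_\zeta(g))$. Summing over tangents and using the fact that the sum of maxima is at least the max of sums,
\begin{equation*}
    \ord_x(h) = \sum_{\zeta \in T_x\Gamma} \max(s_\zeta(f), s_\zeta(g))
    \geq \max\left(\sum_\zeta s_\zeta(f), \sum_\zeta s_\zeta(g)\right)
    = \max(\ord_x(f), \ord_x(g)) \geq -D(x).
\end{equation*}

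I do not anticipate any real obstacle here: the only subtle point is the equality case $f(x) = g(x)$, where one must resist the temptation to write $\ord_x(h) = \max(\ord_x(f), \ord_x(g))$ (false in general) and instead use the directionwise inequality above. To round off the proof I would briefly note that $-\infty \in R(D)$ by convention, that $c \odot (-\infty) = -\infty$ and $f \oplus (-\infty) = f$, so $-\infty$ is a genuine neutral element, and that associativity, commutativity and distributivity of $\oplus, \odot$ on $R(D)$ are inherited from $\R^\Gamma \cup \{-\infty\}$.
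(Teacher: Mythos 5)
Your proof is correct and follows essentially the same route as the paper: both arguments reduce to the pointwise slope analysis at each $x$, splitting into the cases $f(x) \neq g(x)$ and $f(x) = g(x)$, with the key observation $s_\zeta(f \oplus g) = \max(s_\zeta(f), s_\zeta(g))$ in the equality case. The paper simply concludes with $\ord_x(f\oplus g) \geq \ord_x(f)$ rather than your more symmetric $\geq \max(\ord_x(f), \ord_x(g))$, and leaves the rationality of $\max(f,g)$ and the semi-module axioms implicit where you spell them out.
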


\begin{proof}
    For any $c \in \R$ and $f \in R(D)$, we have that $\fdiv(c \odot f)
    = \fdiv(f)$
    as adding a constant does not change the slopes of $f$, so clearly
    $R(D)$ is stable under tropical scaling.

    Let $f, g \in R(D)$ and $x\in \Gamma$. If $f(x) \neq g(x)$, w.l.o.g.
    $f(x) > g(x)$, then for all tangent
    vectors $\zeta \in T_x\Gamma$, $s_\zeta(f \oplus g) = s_\zeta(f)$.
    It follows that $\ord_x(f\oplus g) = \ord_x(f)$
    and so 
    \begin{equation*}
        (D + \fdiv(f \oplus g))(x) = (D + \fdiv(f))(x) \geq 0
    \end{equation*}
    If instead $f(x) = g(x)$, then for all tangent vectors
    $\zeta \in T_x\Gamma$,
    \begin{equation*}
        s_\zeta(f \oplus g) = \max(s_\zeta(f), s_\zeta(g)) \geq s_\zeta(f),
    \end{equation*}
    and so in particular $\ord_x(f \oplus g) \geq \ord_x(f)$.
    We deduce that
    \begin{equation*}
        (D + \fdiv(f\oplus g))(x) \geq (D + \fdiv(f))(x) \geq 0,
    \end{equation*}
    and so we conclude that
    $f\oplus g \in R(D)$.

    Hence $R(D)$ is stable under tropical addition and scaling, so it is a
    tropical submodule of $\Rat(\Gamma)$.
\end{proof}

As seen before, we have that $|D| \cong R(D)/\R$ and this gives $|D|$ additional
structure we can work with.

\begin{definition}
    Let $M$ a tropical module, we define the \emph{tropical
    projectivization} of $M$ to be
    \begin{equation*}
        \T(M) = (M\setminus\{-\infty\})/\R,
    \end{equation*}
    which is the quotient of $M \setminus \{-\infty\}$ modulo tropical scaling.
    We call such a space a \emph{tropical projective space}.
\end{definition}

\begin{definition}
    Let $X = \T(M)$ be a tropical projective space. We say a subset 
    $Y \subset X$ is \emph{tropically convex}, if $Y = \T(N)$ where $N$ is a
    tropical submodule of $M$.
\end{definition}

\begin{remark}
    When $D \sim D'$, then $D' = D + \fdiv(f)$, so the tropical modules
    $R(D)$ and $R(D')$ are isomorphic via the mapping
    \begin{align*}
        \phi: R(D) &\to R(D')\\
        g &\mapsto g - f.
    \end{align*}
    This is clearly a morphism as for $g, h \in R(D)$,
    \begin{align*}
        \phi(g\oplus h) &= \phi(\max(g, h))\\
        &= \max(g, h) - f\\
        &= \max(g - f, h-f)\\
        &= \phi(g)\oplus\phi(h)
    \end{align*}
    and for $c \in \R \cup \{-\infty\}$,
    \begin{equation*}
        \phi(c\odot g) = c + g - f = c\odot\phi(g).
    \end{equation*} 
    We deduce that the tropical projective space structure on $|D|$ does not
    depend on the chosen divisor $D$.
\end{remark}

We will now define a binary operator useful for studying tropical modules. This
operator has been used in the implementation to check whether a rational
function belongs to the submodule spanned by a chosen set of generators.

\begin{definition}
    Let $M \subseteq \R^E\cup\{-\infty\}$ be a tropical submodule. We define the
    binary operator $\inner{\cdot}{\cdot}$ by
    \begin{equation*}
        \inner{f}{g} = \inf_{x\in E}\{f(x) - g(x)\}
    \end{equation*}
    for all $f, g \in M$, and $g \neq -\infty$.

    For $f, g \in M$ we say $\inner{f}{g}\odot g$ is the \emph{projection}
    of $g$ on $f$.
\end{definition}

\begin{remark}
    When $M$ is a submodule of 
    $\PL(\Gamma) \subseteq \R^\Gamma \cup \{-\infty\}$,
    or when $E$ is finite, the infimum
    is attained.
\end{remark}

\begin{remark}
    \label{rmk:proj-ineq}
    We have that
    \begin{equation*}
        (\inner{f}{g}\odot g)(x) = \inf_{y \in E}\{f(y) - g(y)\} + g(x)
        \leq f(x)
    \end{equation*}
    for all $x \in E$, and so $\inner{f}{g} \odot g \leq f$.
\end{remark}

\begin{proposition}
    Let 
    $M \subseteq \R^E \cup \{-\infty\}$ be a finitely generated submodule and
    $G \subseteq M$ a finite generating set. Then for all
    $f \in M$,
    \begin{equation*}
        f = \bigoplus_{g \in G}\inner{f}{g}\odot g.
    \end{equation*} 
\end{proposition}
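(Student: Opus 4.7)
The plan is to prove the identity by establishing both inequalities in the natural partial order on $\R^E \cup \{-\infty\}$, exploiting Remark \ref{rmk:proj-ineq} for one direction and the assumption that $G$ generates $M$ for the other.

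First I would handle the inequality $\bigoplus_{g \in G} \inner{f}{g} \odot g \leq f$. This is immediate from Remark \ref{rmk:proj-ineq}: for each $g \in G$ with $g \neq -\infty$, the projection $\inner{f}{g} \odot g$ is pointwise bounded above by $f$, so their pointwise maximum is as well. Any $g = -\infty$ contributes $-\infty$ to the sum and can be harmlessly discarded.

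For the reverse inequality, I would use that $G$ generates $M$: since $f \in M$, there exist scalars $c_g \in \R \cup \{-\infty\}$ with
\[
f = \bigoplus_{g \in G} c_g \odot g.
\]
Then for each fixed $g \in G$, $c_g \odot g \leq f$ pointwise, which rearranges to $c_g \leq f(x) - g(x)$ for every $x \in E$ with $g(x) \neq -\infty$. Taking the infimum over $x$ gives $c_g \leq \inner{f}{g}$, so $c_g \odot g \leq \inner{f}{g} \odot g$ pointwise. Summing tropically over $g \in G$ yields
\[
f = \bigoplus_{g \in G} c_g \odot g \leq \bigoplus_{g \in G} \inner{f}{g} \odot g,
\]
which combined with the previous inequality forces equality.

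There is no real obstacle here; the argument is essentially the universal property characterizing $\inner{f}{g}$ as the largest scalar $c$ with $c \odot g \leq f$. The only minor care point is handling the cases where $g$ or $f$ is identically $-\infty$, or where $g(x) = -\infty$ at some points so $f(x) - g(x)$ is interpreted as $+\infty$ in the infimum; in all these corner cases the identity holds trivially or the offending generators can be removed from $G$ without changing the tropical sum.
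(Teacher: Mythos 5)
Your proof is correct and follows essentially the same approach as the paper: one inequality comes from Remark~\ref{rmk:proj-ineq}, and the other from expressing $f$ as a tropical combination of the generators and observing that each coefficient is bounded above by the corresponding $\inner{f}{g}$. The additional care you devote to the $-\infty$ corner cases is a harmless refinement that the paper leaves implicit.
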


\begin{proof}
    By Remark \ref{rmk:proj-ineq}, we already know that
    \begin{equation*}
        f \geq \bigoplus_{g \in G}\inner{f}{g}\odot g.
    \end{equation*}

    Since $G$ is a generating set for $M$, there exist some $a_g \in
    \R\cup\{-\infty\}$ for all $g \in G$ such that
    \begin{equation*}
        f = \bigoplus_{g \in G}a_g\odot g.
    \end{equation*}  
    Now, for all $g$, we have that $f \geq a_g \odot g$. It follows
    that $f - g \geq a_g$ and so $\inner{f}{g} \geq a_g$.
    We deduce that
    \begin{equation*}
        \bigoplus_{g \in G}\inner{f}{g}\odot g \geq
        \bigoplus_{g \in G}a_g\odot g = f,
    \end{equation*}
    which shows the other inequality.
\end{proof}

\begin{corollary}
    \label{cor:span-inner}
    Let $M \subseteq \R^E \cup \{-\infty\}$ a tropical submodule and
    $G$ a finite subset. Then $f \in M$ belongs to the submodule spanned by $G$
    if and only if
    \begin{equation*}
        f = \bigoplus_{g \in G}\inner{f}{g}\odot g.
    \end{equation*} 
\end{corollary}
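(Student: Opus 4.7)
The plan is to deduce this corollary directly from the proposition that immediately precedes it.

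For the easy direction, assume $f = \bigoplus_{g \in G}\inner{f}{g}\odot g$. Since each $\inner{f}{g} \in \R \cup \{-\infty\}$ is a scalar and each $g \in G$, the right-hand side is an explicit tropical linear combination of elements of $G$, so it lies in the tropical submodule spanned by $G$. Hence $f$ does too.

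For the converse, suppose $f$ belongs to the submodule $N \subseteq M$ spanned by $G$. Then $N$ is itself a finitely generated tropical submodule of $\R^E \cup \{-\infty\}$ with finite generating set $G$, and $f \in N$. Applying the previous proposition to $N$ in place of $M$ yields the identity
\begin{equation*}
    f = \bigoplus_{g \in G}\inner{f}{g}\odot g,
\end{equation*}
as desired. The one point to observe here is that the binary operator $\inner{\cdot}{\cdot}$ depends only on $f$ and $g$ as elements of $\R^E \cup \{-\infty\}$ (i.e.\ on their pointwise values), not on the ambient submodule in which we view them, so replacing $M$ by $N$ does not change the value of $\inner{f}{g}$.

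There is essentially no obstacle: the entire content is in the previous proposition, and this corollary is a routine reformulation which emphasizes that the equality characterizes membership in the submodule spanned by a finite set, regardless of whether the ambient $M$ is itself finitely generated.
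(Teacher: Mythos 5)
Your proof is correct and is exactly the intended deduction from the preceding proposition (the paper itself supplies no proof, treating it as immediate). The observation that $\inner{f}{g}$ depends only on the pointwise values of $f$ and $g$, not on the ambient submodule, is the right thing to flag when applying the proposition with $M$ replaced by the span $N$.
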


These properties are quite useful for studying tropical modules when we know
their set of generators. Unfortunately, it is in general not
easy to find a set of
generators of a tropical submodule.
Luckily, when $M$ is a finitely generated
submodule of $\R^E \cup \{-\infty\}$, we can give an explicit characterization
of the elements
belonging to a minimal generating set and in some cases it is even
possible to find
these exhaustively.

\begin{definition}
    An element $f \in M$ is called \emph{extremal} if for any 
    $g,h \in M$ such that $f = g \oplus h$, it holds that either
    $f = g$ or $f = h$.
\end{definition}

\begin{remark}
    An element $f \in M$ is extremal if and only if for all $c \in \R$,
    $c\odot f$ is extremal as well.
\end{remark}

\begin{proposition}\cite[Proposition 8]{linsys}
    Let $M \subseteq \R^E \cup \{-\infty\}$ a finitely generated
    tropical module.
    The set of extremals of $M$ is finite (up to tropical scalar
    multiplication),
    and is a minimal generating set of
    of $M$.
\end{proposition}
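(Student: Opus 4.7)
The plan is to start from a finite generating set $G = \{g_1, \dots, g_n\}$, reduce it to a minimal one by iteratively discarding any $g_i$ that already lies in the submodule spanned by $G \setminus \{g_i\}$ (a finite process since $G$ is finite), and then show that such a minimal generating set consists precisely of one representative per extremal equivalence class of $M$.

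The main claim to establish is that every $g$ in this minimal generating set is extremal. I argue by contradiction: if $g = h_1 \oplus h_2$ with $g \neq h_i$ for $i=1,2$, then $h_i \leq g$ pointwise but $h_i \not\equiv g$, so there exist $x_i \in E$ with $h_i(x_i) < g(x_i)$, giving $\inner{h_i}{g} = \inf_y(h_i(y) - g(y)) < 0$. Expanding $h_i = \bigoplus_{g' \in G} \inner{h_i}{g'} \odot g'$ via the previous proposition and setting $b_{g'} = \max(\inner{h_1}{g'}, \inner{h_2}{g'})$, one obtains $g = \bigoplus_{g' \in G} b_{g'} \odot g'$ with $b_g < 0$. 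But then $b_g \odot g < g$ pointwise wherever $g$ is finite, so the $g' = g$ term never realizes the tropical maximum, and in fact $g = \bigoplus_{g' \neq g} b_{g'} \odot g'$, contradicting the minimality of $G$.

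For the converse direction, given any extremal $f \in M$, I would write $f = \bigoplus_{g \in G} \inner{f}{g} \odot g$ using the proposition, and apply the defining property of extremality inductively to this finite tropical sum: $f$ must coincide with one of the summands $\inner{f}{g} \odot g$, hence is a tropical scalar multiple of some $g \in G$. This simultaneously shows that the set of extremals is finite (at most $|G|$) up to scaling, and that it coincides with $G$ up to scaling; the resulting generating set is automatically minimal, because if some extremal $e$ lay in the span of the others, extremality applied to its expansion would force $e$ to be proportional to one of them, contradicting their pairwise non-proportionality. The main obstacle is the contradiction argument in the second paragraph, where the crucial input is that the strict inequality $\inner{h_i}{g} < 0$ forces the coefficient $b_g$ in the expansion to be strictly negative, which is exactly what removes $g$ itself from the tropical max.
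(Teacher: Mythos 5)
The paper cites this proposition from \cite[Proposition 8]{linsys} without reproducing a proof, so there is no in-paper argument to compare against directly. Evaluated on its own terms, your argument is correct and complete, and it makes exactly the right use of the machinery the thesis has already set up (the decomposition $f = \bigoplus_{g\in G}\inner{f}{g}\odot g$ over a finite generating set $G$). The key step is sound: from $g = h_1 \oplus h_2$ with $h_i \neq g$ you get $\inner{h_i}{g} < 0$ for both $i$, so $b_g = \inner{h_1}{g}\oplus\inner{h_2}{g} < 0$, and since $b_g\odot g < g$ wherever $g$ is finite, the term $g' = g$ never attains the maximum in $g = \bigoplus_{g'} b_{g'}\odot g'$, so $g$ lies in the span of $G\setminus\{g\}$, contradicting minimality. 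The converse direction is also handled correctly: extremality propagates through a finite tropical sum by induction, so every extremal is (a scalar multiple of) some $g \in G$, giving finiteness and the identification of the minimal generating set with the extremals. Two small points worth making explicit if you were to write this up formally: (i) elements of a minimal generating set are pairwise non-proportional (a proportional pair would be mutually redundant), which you implicitly use in the last step; and (ii) no $g \in G$ can be $-\infty$ (else it is trivially redundant) and neither can the $h_i$ (else $g$ would equal the other one), which is what keeps the pairings $\inner{\cdot}{\cdot}$ well-defined. Neither is a gap, just bookkeeping.
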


\begin{proposition}\cite[Theorem 6]{linsys}
    The tropical semi-module $R(D)$ finitely generated.
\end{proposition}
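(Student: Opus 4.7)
The strategy is to endow $|D| \cong R(D)/\R$ with a finite polyhedral complex structure and extract a finite generating set from its vertices. Note that $|D|$ is compact, being a closed subset of the compact space $\Gamma^d/S_d$, where $d := \deg D$.

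First, I would bound the combinatorial complexity of elements of $R(D)$. Fix a model $G = (V, E)$ of $\Gamma$ with $\supp(D) \subseteq V$. For any $f \in R(D)$, the divisor $E_f := D + \fdiv(f)$ is effective of degree $d$, so $\bend(f) \subseteq \supp(E_f)$ contains at most $d$ points. On any open edge of $G$ we have $\ord_x(f) \geq 0$, forcing $f$ to be convex between bends; at each vertex $v$ the sum of outgoing slopes equals $\ord_v(f) \geq -D(v) \geq -d$. Together these yield a uniform integer bound on all slopes $s_\zeta(f)$, so only finitely many slope profiles along $G$ occur.

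Next, I would stratify $|D|$ by \emph{combinatorial type}, namely the slopes of $f$ along each half-edge of $G$ together with the coefficients $E_f(v)$ at vertices $v \in V$. Only finitely many types arise. For each type, the set of corresponding $f$ is parameterized by the positions of bend points on each edge (modulo a global additive constant), and the effectivity inequalities $E_f \geq 0$ cut out a bounded polyhedron in this parameterization. These polyhedra glue along common faces to give $|D|$ a finite polyhedral complex structure.

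Finally, I would take the rational functions associated to the finitely many vertices of this complex as the candidate generating set. The remaining step is to write an arbitrary $f \in R(D)$ as a tropical combination $\bigoplus_i c_i \odot f_i$ of vertex generators $f_i$. A natural tool is Lemma \ref{lemma:tropical-sum}, which decomposes $f$ into a legal sequence of chip-firing moves corresponding to a path through the cells of $|D|$. The main obstacle lies precisely here: Lemma \ref{lemma:tropical-sum} yields an ordinary additive decomposition, whereas generation requires expressing $f$ via $\oplus$. Bridging this gap demands a careful analysis of how the polyhedral geometry of each cell translates into tropical combinations of its vertices, for instance by iteratively taking $\oplus$ over rational functions whose bend loci sit at extreme positions within each cell and invoking Corollary \ref{cor:span-inner} to verify membership in the span of the candidate generators.
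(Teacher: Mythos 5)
The paper does not prove this proposition; it states it as a citation to \cite[Theorem 6]{linsys}. So I compare your plan against the cited argument.

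Your setup is on the right track, and in fact closer to the target than you may realize: the polyhedral complex structure on $|D|$ is correct, the finiteness of your candidate set is essentially right, and — importantly — your candidate generating set (the $0$-cells of the complex) is precisely the set $\mathcal{S}$ that appears in the very next proposition of this subsection. Indeed, by Proposition \ref{prop:dim-comp}, a divisor $D' = D_0 + \fdiv(f)$ lies in a $0$-dimensional cell if and only if $\Gamma \setminus \supp(D')_E$ is connected, i.e., if and only if $\supp(D')_E$ is \emph{not} a cut set, which is the defining condition for $f \in \mathcal{S}$.

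The gap is the generation step, which you acknowledge, and neither of the tools you reach for can close it. Lemma \ref{lemma:tropical-sum} produces an \emph{ordinary} sum of chip-firing moves, and there is no general mechanism to convert $f = f_1 + \cdots + f_n$ into a tropical combination $\bigoplus_i a_i \odot h_i$ over a fixed finite set: the telescoping partial sums wander through cells of $|D|$ but are not themselves among your proposed generators. Corollary \ref{cor:span-inner} is circular here, since it is a membership test that \emph{presupposes} a finite generating set; it offers no route to proving that the $0$-cells generate in the first place.

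What the cited proof actually uses is a direct $\oplus$-splitting across the cut set, not an upgrade of the additive decomposition. If $\supp(D_0 + \fdiv(f))_E$ is a cut set, write $\Gamma = Z_1 \cup Z_2$ with $Z_1, Z_2$ proper closed subgraphs whose intersection is contained in that support. Every $x \in \boundary{Z_i}$ is an edge point of valence $2$ carrying a chip and has $\degout_{Z_i}(x) = 1$, so each $Z_i$ can fire with respect to $D_0 + \fdiv(f)$. Setting $g_i := f + CF(Z_i, \epsilon_i)$ gives $g_i \in R(D_0)$ and
\begin{equation*}
    g_1 \oplus g_2 = f,
\end{equation*}
because on $Z_1 \setminus Z_2$ one has $CF(Z_1,\cdot) = 0$ while $CF(Z_2,\cdot) \leq 0$ (so $g_1 = f \geq g_2$ there), symmetrically on $Z_2 \setminus Z_1$, and both equal $f$ on $Z_1 \cap Z_2$. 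Taking each $\epsilon_i$ maximal strictly reduces the combinatorial complexity of $g_i$, and induction places $f$ in the tropical span of the functions whose edge chips form no cut set, i.e., the $0$-cells. This single $\oplus$-split is the key move your proposal is missing; trying to salvage the additive chip-firing decomposition will not produce it.
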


The description of the generating set of a tropical module in terms of its
extremals is especially useful when working with $R(D)$, and in this case we may
even find a generating set explicitly.

\begin{proposition}\cite[Lemma 5]{linsys}
    \label{prop:char-extremal}
    A rational function $f \in R(D)$ is an extremal of $R(D)$
    if and only if there are no 
    proper subgraphs $\Gamma_1, \Gamma_2$ covering $\Gamma$ (in the sense 
    that $\Gamma_1 \cup \Gamma_2 =
    \Gamma$), such that each can fire on $D + \fdiv(f)$.
\end{proposition}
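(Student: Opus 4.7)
The plan is to prove both implications by contrapositive, using the chip firing functions $CF(Z, \epsilon)$ to translate between the algebraic (extremal) condition and the combinatorial (covering) one.

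For the easier direction, I would assume such a covering $\Gamma_1, \Gamma_2$ exists and construct a nontrivial decomposition $f = g \oplus h$. Since each $\Gamma_i$ can fire on $D + \fdiv(f)$, for some $\epsilon_i > 0$ the function $g_i := f + CF(\Gamma_i, \epsilon_i)$ lies in $R(D)$: indeed $D + \fdiv(g_i) = (D + \fdiv(f)) + \fdiv(CF(\Gamma_i, \epsilon_i)) \geq 0$ precisely because the firing condition is the obstruction at $\boundary \Gamma_i$ and $CF(\Gamma_i, \epsilon_i)$ has outgoing slope $\geq 0$ into the interior of $\Gamma_i$. Because $CF(Z, \epsilon)$ vanishes on $Z$ and is negative off $Z$, the covering hypothesis $\Gamma_1 \cup \Gamma_2 = \Gamma$ forces $\max(CF(\Gamma_1, \epsilon_1), CF(\Gamma_2, \epsilon_2)) \equiv 0$, so $g_1 \oplus g_2 = f$. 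Since each $\Gamma_i$ is a \emph{proper} subgraph, $g_i \neq f$, showing $f$ is not extremal.

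For the converse, assume $f$ is not extremal: $f = g \oplus h$ with $g, h \in R(D)$ and $g, h \neq f$. I set
\begin{equation*}
    \Gamma_1 = \{x \in \Gamma : g(x) = f(x)\}, \qquad
    \Gamma_2 = \{x \in \Gamma : h(x) = f(x)\}.
\end{equation*}
Since $g, h$ are PL, these are closed subgraphs, and since $f = \max(g, h)$ pointwise, $\Gamma_1 \cup \Gamma_2 = \Gamma$. They are proper because $g, h \neq f$. The crux is to show each $\Gamma_i$ can fire on $D + \fdiv(f)$. I will argue for $\Gamma_1$ by considering $u := f - g \geq 0$, which is a rational function vanishing exactly on $\Gamma_1$. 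At any $x \in \boundary \Gamma_1$, every tangent $\zeta$ pointing outside $\Gamma_1$ satisfies $s_\zeta(u) \geq 1$ (since $u$ is nonnegative, integral-sloped, vanishes at $x$, and is strictly positive immediately along $\zeta$), while tangents into $\Gamma_1$ contribute $0$. Hence $\ord_x(u) \geq \degout_{\Gamma_1}(x)$, and since $D + \fdiv(g) \geq 0$, I get
\begin{equation*}
    (D + \fdiv(f))(x) = (D + \fdiv(g))(x) + \ord_x(u) \geq \degout_{\Gamma_1}(x),
\end{equation*}
which is exactly the firing condition. The symmetric argument with $h$ handles $\Gamma_2$.

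The main obstacle is the firing verification in the converse direction, specifically the claim $s_\zeta(u) \geq 1$ for outgoing tangents at boundary points. I need to make sure I am using the correct definition of $\boundary \Gamma_1$ (that $\zeta$ outgoing means $\gamma(t) \notin \Gamma_1$ for small $t$, hence $u(\gamma(t)) > 0$) together with integrality of slopes of $u$, which is inherited from $f, g \in \Rat(\Gamma)$. Once this slope inequality is set up, everything else slots cleanly into the firing criterion recorded in the remark following the definition of chip firing moves.
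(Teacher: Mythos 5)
Your proof is correct and follows essentially the same strategy as the cited reference \cite[Lemma 5]{linsys} (which the paper invokes without reproducing): one direction via $g_i = f + CF(\Gamma_i, \epsilon_i)$ with $g_1 \oplus g_2 = f$, the other via the equality loci $\Gamma_i = \{f_i = f\}$ and the slope estimate $\ord_x(f - g) \geq \degout_{\Gamma_1}(x)$ at boundary points. The only point worth spelling out in a written version is that $\Gamma_1, \Gamma_2$ are nonempty (if, say, $\Gamma_1 = \emptyset$ then $h = f$, contradicting $h \neq f$), which you implicitly use.
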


\begin{definition}
    We say a finite subset $A \subset \Gamma$ is a \emph{cut set}
    if $\Gamma \setminus A$ is not connected and w
    e denote by $\mathcal{S}$ the set of rational functions $f \in R(D)$,
    such that $\supp(D + \fdiv(f))_E$ is not a cut set.
\end{definition}

\begin{proposition}\cite[Theorem 6(a)]{linsys}
    The set $\mathcal{S}$ is finite (up to tropical scalar multiplication)
    and contains the set of extremals of $R(D)$.
\end{proposition}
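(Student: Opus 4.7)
The plan is to exploit Proposition \ref{prop:char-extremal}, which characterizes extremals as those $f \in R(D)$ for which no two proper subgraphs covering $\Gamma$ can simultaneously fire on $D + \fdiv(f)$. Both statements -- the containment of extremals in $\mathcal{S}$ and the finiteness of $\mathcal{S}$ up to tropical scaling -- will be reduced to chip-firing constructions tied to the cut-set condition on $\supp(D + \fdiv(f))_E$.

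For the containment I would argue by contrapositive. Suppose $f \in R(D)$ has $\supp(D + \fdiv(f))_E$ forming a cut set of $\Gamma$. Partition the connected components of $\Gamma \setminus \supp(D + \fdiv(f))_E$ into two nonempty unions $U_1 \sqcup U_2$, and set $\Gamma_i := \closure{U_i}$; these are proper closed subgraphs covering $\Gamma$ and meeting only inside the cut. The key step is to verify the firing inequality
\[
    (D + \fdiv(f))(x) \;\geq\; \degout_{\Gamma_i}(x)
\]
at every $x \in \boundary{\Gamma_i}$. This should follow from a local analysis at $x$: decompose its tangent directions by whether they point into $U_1$ or $U_2$, and balance the integer outgoing slopes of $f$ along the edge segments of $U_i$ against $D(x)$ and the chip multiplicity of $D + \fdiv(f)$ at $x$. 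Once both $\Gamma_i$ can fire, Proposition \ref{prop:char-extremal} immediately delivers that $f$ is not extremal.

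For finiteness I would fix a model $G = (V, E)$ of $\Gamma$ whose vertex set contains $\supp D$, and classify the divisors $D' = D + \fdiv(f) \in |D|$ by a discrete combinatorial invariant: the multiplicity profile on $V$ together with the placement of chips in edge interiors. Vertex multiplicities are bounded by $\deg D$, giving only finitely many vertex profiles. The cut-set-avoidance condition then severely restricts the edge-interior placements, since chips may not be inserted in ways that would disconnect $\Gamma$ minus the already-chosen support points. Within each remaining configuration, the linear-equivalence constraint $D' \sim D$, combined with the integer-slope balance at each chip, should pin down the interior chip positions up to finitely many discrete choices, yielding finitely many $f$ up to tropical scaling. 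I expect the main obstacle to be the firing verification in the containment step, where an arbitrary bipartition of components may violate the inequality; the remedy is to choose the bipartition judiciously, so that each boundary point supplies enough chip multiplicity to cover the $\degout_{\Gamma_i}$ deficit on its side, possibly by peeling off one component at a time and refining the partition as needed.
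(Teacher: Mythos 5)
The paper itself does not prove this statement; it cites \cite[Theorem 6(a)]{linsys}, so there is no in-text argument to compare against. On the containment half, your sketch is correct and in fact cleaner than you suggest: no judicious choice of bipartition or component-by-component peeling is needed. Writing $A = \supp(D+\fdiv(f))_E$ and $\Gamma_i = \closure{U_i}$, one has $\boundary{\Gamma_i} \subseteq A$; every $x \in A$ lies in the interior of an edge so $\val(x)=2$; and at a boundary point of $\Gamma_i$ exactly one of the two tangents at $x$ leaves $\Gamma_i$, so $\degout_{\Gamma_i}(x)=1\leq (D+\fdiv(f))(x)$. Thus both $\Gamma_1$ and $\Gamma_2$ can fire for an arbitrary nontrivial bipartition, and Proposition \ref{prop:char-extremal} gives non-extremality.

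The finiteness half has a genuine gap. You write that linear equivalence plus the integer-slope balance ``should pin down the interior chip positions up to finitely many discrete choices,'' but this is exactly the claim that requires an argument: within a fixed combinatorial type (vertex multiplicities, number and multiplicities of interior chips per edge, slopes at edge-ends), the divisors of that type form a cell of $|D|$ that is in general positive-dimensional, so fixing the type does not by itself isolate finitely many divisors. The mechanism that eliminates the continuous degrees of freedom is precisely the non-cut-set hypothesis, and the clean way to see this is Proposition \ref{prop:dim-comp}: $\dim\Delta_{D'}$ equals the number of connected components of $\Gamma\setminus\supp D'_E$ minus one. If $\supp D'_E$ is not a cut set (and $\Gamma$ is connected), that complement is connected and the cell is zero-dimensional, i.e.\ a single divisor. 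Since $|D|$ has finitely many cells, $\mathcal{S}$ is finite up to tropical scaling. Your outline produces finitely many cells but never rules out that a single cell contains infinitely many members of $\mathcal{S}$; invoking the dimension formula (or an equivalent rigidity argument for chip positions when the edge-support does not disconnect) is the missing step.
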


\begin{remark}
    Since Proposition
    \ref{prop:char-extremal} gives us a way to check whether
    a function is extremal
    in a finite number of steps, this yields an algorithm that finds a minimal
    generating set of $R(D)$ in finite time.
\end{remark}

\begin{remark}
One could also check that a function $f \in \mathcal{S}$ is extremal directly
from the definition using Corollary \ref{cor:span-inner},
since $\mathcal{S}$ is a finite generating set. Indeed, $f$ is extremal if and
only if it is not in the span of $\mathcal{S} \setminus \{f\}$.
\end{remark}

\subsection{Abstract polyhedral complex structure}
\label{sec:polyhedral-complex}

Let $X$ be a Hausdorff topological space $X$. An
\emph{abstract polyhedral complex} structure on $X$ is a finite set $\cK$
of subspaces $\sigma \subseteq X$, with partial order given by inclusion,
such that
\begin{itemize}
    \item Each $\sigma \in \cK$ is homeomorphic to a convex
        rational polyhedron $|\sigma|$.
    \item The homeomorphism $\sigma \cong |\sigma|$ induces an isomorphism
        between the posets
        \[\cK_{\leq \sigma} := \{\tau \in \cK \mid \tau \subseteq \sigma\}\]
        and the poset of faces of $|\sigma|$, denoted by $F_\sigma$.
    \item When $\tau \subseteq \sigma$ is a face
        corresponding to $\eta \in F_\sigma$,
        $|\tau|$ and $\eta$ are isometric.
    \item For $\sigma, \tau \in \cK$, we have that $\sigma \cap \tau \in \cK$.
\end{itemize}

As described in \cite{amini-reduced}, a complete linear system $|D|$ 
admits
a natural structure of an abstract polyhedral complex. We will briefly
summarize how it is characterized, but we will avoid going into the technical
details.

A model $G = (V, E)$ 
of $\Gamma$ induces an abstract polyhedral complex structure on it.
Indeed, if we start with $V$
with the discrete topology, $\Gamma$ is obtained by gluing for each edge $e \in
E$ a closed interval of length $l(e)$ to the corresponding vertices.
There is then a naturally induced abstract polyhedral complex structure
on the product $\Gamma^d$.
When $G$ has no self-loops, we also get an
induced abstract polyhedral complex structure on the
$d$\textsuperscript{th} symmetric product $\Gamma^d/S_d$,
where $S_d$ is the symmetric group acting on $\Gamma^d$.
Fix for each edge $e \in E$ a direction, making it a directed edge. This allows
us to identify an edge $e$ with the interval $[0, l(e)]$.
The relative interior of a face $\sigma$ of $\Gamma^d/S_d$ is described by maps
\begin{itemize}[itemsep=0em]
    \item $m_V: V\to \N$,
    \item $m_E: E\to \bigcup_{k=0}^\infty\N^k_{>0}$,
\end{itemize}
A point $x = (x_1, \dots, x_d)$ belongs to $\relint(\sigma)$ if and only if the
following conditions hold:
\begin{itemize}[itemsep=0em]
    \item For each vertex $v \in V$, the number of $x_i$ such that $x_i = v$
        is $m_V(v)$.
    \item For each edge $e \in E$ let $y_1, \dots, y_r$ be the points of $e$
        appearing in $(x_1, \dots, x_d)$, ordered according to the fixed
        orientation of $e$. Then the number of $x_i$ such that $x_i = y_j$
        is $m_E(e)_j$.
\end{itemize}

Fix an ordering $e_1, \dots, e_s$ of edges. 
Let $k_i$ be the length of the sequence $m_E(e_i)$.
Let $n = \sum_{e \in E} k_e$, then
$\relint(\sigma)$ may be identified with the following subset of $\R^n$.
\begin{align*}
    \relint(\sigma) \cong \{x \in \R^n \mid 0 &< x_1 < \dots < x_{k_1} <
    l(e_1),\\
    0 &< x_{k_1 + 1} < \dots < x_{k_1 + k_2} < l(e_2), \dots, \\
    0 &< x_{k_1 + \dots + k_{s-1} + 1} < \dots < x_{n} < l(e_s)\}
\end{align*}

When $D$ is a divisor of degree $d$, $|D|$ embeds into the abstract
polyhedral complex
complex $\Gamma^d/S_d$. Denote $\cK$ the poset of faces
(also called \emph{cells}) of $\Gamma^d/S_d$.
The intersection of $|D|$ with the relative
interior of a face $\tau \in \cK$ consist of
a finite number of connected components. It turns out that the set
$\cK_D$ consisting of the closures of all such connected components gives $|D|$
the structure of an abstract polyhedral complex. A face $\tau \in \cK_D$ is
by definition contained in some face $\sigma \in \cK$.
The homeomorphism $\tau \cong |\tau|$ is then given by co-restriction of the
embedding
\[\tau \hookrightarrow \sigma \cong |\sigma|.\]

Similarly as before, we may describe the
relative interior of a face $\tau \in \cK_D$ with a triple
$(m_V, m_E, s)$ given by maps
\begin{itemize}[itemsep=0em]
    \item $m_V: V\to \N$,
    \item $m_E: E\to \bigcup_{k=0}^\infty\N^k_{>0}$,
    \item $s: E \to \Z$.
\end{itemize}
A divisor $D' = D + \fdiv(f)$ then belongs to $\relint(\tau)$ if and only if the
following conditions are satisfied:
\begin{itemize}[itemsep=0em]
    \item For each vertex $v \in V$, $D'(v) = m_V(v)$.
    \item For each edge $e \in E$, 
        $D'|_e = \sum_i m_E(e)_i x_i$, where 
        $0 < x_1 < \dots < x_k < l(e)$.
    \item For each edge $e \in E$ with corresponding tangent $\zeta$ based at
        the origin of the edge, $s_\zeta(f) = s(e)$.
\end{itemize}

\begin{remark}
    Not all triples $(m_V, m_E, s)$ will correspond to a non-empty face of $|D|$.
\end{remark}

We will now define the notion of
\emph{definable} subset, following \cite{linsys-independence}.

\begin{definition}
    A subset of $\R^n$ is called \emph{definable} if it can be written as a
    finite expression involving intersections, unions, and complements
    of closed half-spaces 
    \[H_i = \{u \in \R^n \mid \inner{u}{v_i} \geq a_i\}.\]
    A subset $X \subseteq |D|$ is \emph{definable} if
    the image of $X \cap \sigma \hookrightarrow |\sigma|$ is definable for all
    $\sigma \in \cK_D$. 
\end{definition}

The reason why this notion is nice is that it allows us to define a reasonable
notion of dimension around a point for a large class of subspaces of $|D|$.

\begin{proposition}
    \label{prop:closed-definable-polyhedra}
    When $Y \subseteq \R^n$ is a closed and
    definable subset, then $Y$ is a finite union of (possibly unbounded)
    polyhedra.
\end{proposition}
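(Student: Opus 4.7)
The plan is to use the hyperplane arrangement determined by the half-spaces appearing in the boolean expression defining $Y$. By hypothesis, there exist vectors $v_1,\dots,v_k \in \R^n$ and numbers $a_1,\dots,a_k \in \R$ such that membership in $Y$ is determined by a boolean expression in the predicates $\inner{u}{v_i} \geq a_i$. Consider the hyperplanes $H_i = \{u \in \R^n \mid \inner{u}{v_i} = a_i\}$ and the arrangement $\mathcal{A}$ they cut out.

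First, I would recall that the arrangement $\mathcal{A}$ partitions $\R^n$ into finitely many relatively open cells, each of which is characterized by fixing, for every $i$, the sign of $\inner{u}{v_i} - a_i$ to be strictly positive, strictly negative, or zero. Each such cell $C$ is a relatively open convex polyhedron, and its closure $\closure{C}$ is a closed convex polyhedron (possibly unbounded), obtained by relaxing the strict sign conditions into the corresponding non-strict ones.

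Next, the key observation is that the truth value of the boolean expression defining $Y$ depends only on the signs of $\inner{u}{v_i} - a_i$, and hence is constant on each cell of $\mathcal{A}$. Therefore $Y$ is the union of some subcollection $\{C_j\}_{j \in J}$ of cells of $\mathcal{A}$, and in particular this union is finite. It only remains to replace the (possibly non-closed) cells $C_j$ by closed polyhedra without changing the union.

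Here is where the closedness of $Y$ enters. For each $j \in J$, the cell $C_j$ is contained in $Y$, and since $Y$ is closed we get $\closure{C_j} \subseteq Y$. Summing over $j$ gives $\bigcup_{j \in J}\closure{C_j} \subseteq Y$, while the reverse inclusion $Y = \bigcup_{j \in J} C_j \subseteq \bigcup_{j \in J}\closure{C_j}$ is immediate. This exhibits $Y$ as a finite union of (possibly unbounded) closed convex polyhedra, which is what we wanted. The only mildly delicate point is the combinatorial statement that the cells of a hyperplane arrangement are relatively open polyhedra whose closures are obtained by relaxing strict inequalities, but this is classical and poses no real difficulty.
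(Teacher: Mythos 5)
Your proof is correct, but it takes a genuinely different route from the paper's. The paper first rewrites the boolean expression in disjunctive normal form $Y = \bigcup_i \bigcap_j H_{i,j}$ (with each $H_{i,j}$ an open or closed half-space, and each term non-empty), then uses closedness to replace each $\bigcap_j H_{i,j}$ by $\bigcap_j \closure{H_{i,j}}$; the load-bearing step is proving $\bigcap_j \closure{H_{i,j}} \subseteq \closure{\bigcap_j H_{i,j}}$ via a limit of convex combinations with a fixed interior point. You instead pass to the finest partition of $\R^n$ into cells of the hyperplane arrangement, observe that the defining formula is constant on each cell (since the predicates depend only on the sign vector), and use closedness of $Y$ to replace each cell by its closure. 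Both proofs hide the same small geometric lemma in slightly different clothing: the paper needs that the closure of a non-empty intersection of half-spaces equals the intersection of the closures, while you need that the closure of a non-empty arrangement cell is obtained by relaxing strict inequalities to weak ones — both follow from the same convex-combination argument. Your approach buys a cleaner partition structure (disjoint cells, no need to normalize the boolean expression) at the cost of potentially many more pieces and relying on standard facts about hyperplane arrangements; the paper's approach stays closer to the definability formalism already set up. One small point worth making explicit if you write this up: you implicitly use that each cell $C_j$ appearing in the decomposition is non-empty (empty sign vectors do not give cells of the arrangement), which is exactly what makes the closure-by-relaxation statement valid.
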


\begin{proof}
    By assumption, $Y \subseteq \R^n$ is written as a finite expression
    involving intersections, unions, and complements
    of closed half-spaces $H_i$.
    We can rewrite this expression to write it under the form
    \begin{equation*}
        Y = \bigcup_{i = 1}^k\bigcap_{j = 1}^l H_{i, j},
    \end{equation*}
    where the $H_{i, j}$ are either closed, or open half-spaces.
    We call an expression of this form
    a \emph{disjunctive normal form} (it is not unique).
    We may assume that for all $i$ 
    \begin{equation*}
        \bigcap_{j = 1}^l H_{i, j}
    \end{equation*}
    is non-empty, as otherwise we can just remove this term from the expression.
    
    However as we assumed that $Y$ is closed, we have that
    \begin{equation*}
        Y = \closure{Y} = \bigcup_{i = 1}^k\closure{\bigcap_{j = 1}^l H_{i, j}} 
        = \bigcup_{i = 1}^k\bigcap_{j = 1}^l \closure{H_{i, j}}.
    \end{equation*}
    To see the last equality, note that
    \[\closure{\bigcap_{j = 1}^l H_{i, j}} \subseteq
    \bigcap_{j = 1}^l \closure{H_{i, j}}. \]
    Take any $x \in \bigcap_{j = 1}^l \closure{H_{i, j}}$ and
    $y \in \bigcap_{j= 1}^l H_{i, j}$
    (such an $y$ exists as we assumed the intersection
    is non-empty). Then define 
    \[x_n = \frac{1}{n}y + \left(1 - \frac{1}{n}\right)x.\]
    We will show that this sequence is contained in $\bigcap_{j= 1}^l H_{i, j}$.
    Suppose for the sake of contradiction that there are some $i, j, n$
    such that $x_n \notin H_{i, j}$. If $H_{i, j}$ was a closed half-space, then
    $x \in H_{i, j}$, but this would imply that
    $[x, y] \subseteq H_{i, j}$ as $H_{i, j}$
    is convex. So $H_{i, j}$ has to be an open half-space. Write
    \[H_{i, j} = \{u \in \R^n \mid \inner{u}{v} < a\},\]
    then since $x \in \closure{H_{i, j}}$, it follows that $\inner{x}{v} \leq
    a$. Furthermore $\inner{y}{v} < a$. But by bilinearity we get that
    \[\inner{x_n}{v} = \frac{1}{n}\inner{y}{v} + \left(1 - \frac{1}{n}\right)
    \inner{x}{v} < a.\]
    This shows that $x_n \in H_{i, j}$, a contradiction.

    This shows that the sequence of $x_n$ is contained in
    $\bigcap_{j=1}^l H_{i, j}$, and hence since $x$ is 
    the limit of this sequence, it lies in
    $\closure{\bigcap_{j = 1}^l H_{i, j}}$. This shows the other
    inclusion.

    As a result, we may assume that all the $H_{i, j}$ are actually closed
    half-spaces and so this concludes the proof.
\end{proof}

\begin{corollary}
    Let $Y \subseteq \R^n$ a closed and definable subset.
    Then $Y$ admits the structure of a polyhedral complex.
\end{corollary}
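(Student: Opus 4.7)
The plan is to upgrade the union-of-polyhedra decomposition from Proposition \ref{prop:closed-definable-polyhedra} into a bona fide polyhedral complex by passing to the arrangement generated by all the bounding hyperplanes. Concretely, write $Y = \bigcup_{i=1}^k P_i$ as a finite union of closed polyhedra and, for each $i$, let $H_{i,1}, \dots, H_{i,l_i}$ be the closed half-spaces whose intersection is $P_i$. Let $\mathcal{H}$ denote the finite collection of affine hyperplanes bounding all these half-spaces, across all $i$.

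The key step is then to consider the hyperplane arrangement on $\R^n$ induced by $\mathcal{H}$. For each hyperplane $H \in \mathcal{H}$, pick one of the two closed half-spaces it bounds and call it $H^+$, the other $H^-$. For every choice function $\sigma: \mathcal{H} \to \{+, -\}$, form the intersection $C_\sigma = \bigcap_{H \in \mathcal{H}} H^{\sigma(H)}$. Each nonempty $C_\sigma$ is a closed convex polyhedron, and the collection $\{C_\sigma\}$ is a polyhedral complex covering $\R^n$: pairwise intersections are again obtained by choosing signs compatibly and therefore are common faces of each. This is the standard arrangement-subdivision of $\R^n$, and it is finite since $\mathcal{H}$ is finite.

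Now I would observe that each $P_i$ is a union of cells $C_\sigma$ of this arrangement. Indeed, $P_i = \bigcap_j H_{i,j}$ is defined by requiring certain choices of signs for the hyperplanes appearing in $P_i$'s description and imposing no condition on the others; hence $P_i$ is exactly the union of all $C_\sigma$ whose sign pattern agrees with $P_i$'s on those coordinates. Consequently $Y = \bigcup_i P_i$ is also a union of cells of the arrangement. Taking the subcomplex of $\{C_\sigma\}$ consisting of all cells contained in $Y$, together with all their faces (which are themselves cells of the same arrangement, obtained by relaxing sign choices to equalities), yields a polyhedral complex structure on $Y$.

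The one point requiring care is showing that a face of $C_\sigma$ in the polyhedral sense is itself one of the cells of the arrangement, so that the complex is closed under taking faces; this is a routine consequence of the fact that faces of $C_\sigma$ are obtained precisely by replacing some strict half-space conditions by the corresponding hyperplane equalities, which simply selects another cell of the arrangement. Beyond this bookkeeping, the argument is entirely elementary given Proposition \ref{prop:closed-definable-polyhedra}; I do not foresee a significant obstacle.
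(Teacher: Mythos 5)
Your argument is correct and is essentially the standard way to make precise the paper's terse remark that ``up to subdividing the polyhedra we may assume that the intersection of any two given polyhedra is a face of each'': you realize the subdivision explicitly via the hyperplane arrangement generated by all bounding hyperplanes and take the induced subcomplex covering $Y$. One small notational point: if you define $C_\sigma$ only for $\sigma : \mathcal{H} \to \{+,-\}$, a lower-dimensional $P_i$ (one described using both closed half-spaces of a single hyperplane) is an intersection, not a union, of such cells, and the faces of $C_\sigma$ are likewise not among the $C_\sigma$; you should instead index cells by $\sigma : \mathcal{H} \to \{+,0,-\}$ from the outset (which is what ``the arrangement'' usually means, and which you implicitly do when you later ``relax sign choices to equalities''). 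With that adjustment the proof is complete and, if anything, more careful than the paper's, since you explicitly verify closure under taking faces in addition to the condition on pairwise intersections.
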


\begin{proof}
    Since $Y$ is a finite union of polyhedra,
    up to subdividing the polyhedra we
    may assume that the intersection of any two given polyhedra is a face of
    each respective polyhedron. It follows that $Y$ admits a natural structure
    of polyhedral complex.
\end{proof}

\begin{corollary}
    \label{cor:closed-def-complex}
    If $X \subseteq |D|$ is a closed and definable subset, then
    $X$ admits the structure of an abstract polyhedral complex.
\end{corollary}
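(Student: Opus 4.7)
The plan is to leverage the preceding corollary cell-by-cell on the abstract polyhedral complex structure $\cK_D$ of $|D|$ and then glue the resulting local structures compatibly. First I would fix, for each $\sigma \in \cK_D$, the homeomorphism $\sigma \cong |\sigma| \subseteq \R^{n_\sigma}$ coming from the polyhedral complex structure on $|D|$. Since $X$ is closed in $|D|$ and $\sigma$ is closed, the intersection $X \cap \sigma$ is closed in $\sigma$, and hence its image under the homeomorphism is closed in $|\sigma|$, which is itself a closed subset of $\R^{n_\sigma}$; together these give a closed subset of $\R^{n_\sigma}$. By the definability assumption on $X$, this image is definable. Applying Proposition \ref{prop:closed-definable-polyhedra} (and the corollary right after it), the image of $X \cap \sigma$ admits the structure of a polyhedral complex, and so does $X \cap \sigma$ via the homeomorphism.

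Next I would assemble these local polyhedral structures into a single global abstract polyhedral complex structure on $X$. The natural obstacle is that the polyhedral subdivision chosen for $X \cap \sigma$ may restrict on a face $\tau \subsetneq \sigma$ to something different from the subdivision chosen independently for $X \cap \tau$. I would overcome this by an induction on $\dim \sigma$. Start from the zero-dimensional cells of $\cK_D$, where no compatibility issue arises. Assuming polyhedral structures have been chosen on $X \cap \tau$ for every $\tau \subsetneq \sigma$ in such a way that they agree on all further subfaces, the decomposition already fixed on the topological boundary $\partial \sigma$ provides a polyhedral subdivision of the boundary of $|\sigma|$. I would then take a common refinement of this boundary subdivision together with the polyhedral decomposition of $X \cap \sigma$ supplied by the previous corollary; such a common refinement exists because each is given by finitely many closed half-space inequalities inside $\R^{n_\sigma}$, and intersecting with the additional hyperplanes yields a refined polyhedral complex still containing $X \cap \sigma$ as a union of cells.

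Finally I would verify the four axioms of an abstract polyhedral complex for the resulting collection $\cK_X$ consisting of all cells obtained in this process. Each cell is, by construction, homeomorphic to a rational polyhedron, and the homeomorphism pulls back faces to faces and is an isometry on each face (inherited from the isometries built into $\cK_D$). The intersection of any two cells of $\cK_X$ is a common face of both, because this is true within each $X \cap \sigma$ by the polyhedral-complex property there, and for cells sitting in different $\sigma, \sigma' \in \cK_D$ the intersection lies in $\sigma \cap \sigma' \in \cK_D$, where compatibility has already been arranged by the inductive construction.

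The main obstacle is this compatibility across the face relations of $\cK_D$; once the inductive common-refinement argument is set up, the remaining verifications are routine consequences of the corresponding properties already holding for $\cK_D$ and for each individual polyhedral complex $X \cap \sigma$.
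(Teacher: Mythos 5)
Your proposal is correct and takes essentially the same approach as the paper: decompose cell-by-cell over $\cK_D$, get a polyhedral complex structure on each $X \cap \sigma$ from the previous corollary, and then refine to ensure compatibility across faces. The paper dispatches the compatibility step in one clause (``up to refining the polyhedral complex structure, we may assume\ldots''), whereas you spell out the induction on dimension and the common-refinement argument that justify it; the underlying idea is identical.
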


\begin{proof}
    For all $\sigma \in \cK_D$, $X\cap \sigma$ admits the structure of
    a polyhedral complex $\cK_\sigma$,
    and up to refining the polyhedral complex structure,
    we may assume that for any 
    $\tau \subseteq \sigma$,
    $\cK_\sigma$ restricted to faces contained in $\tau$
    agrees with $\cK_\tau$. The abstract polyhedral
    complex structure $\cK_X$ on $X$ is then given by the union of the
    $\cK_\sigma$, where for $\tau \subseteq \sigma$ we identify $\cK_\tau$
    with the corresponding subset of $\cK_\sigma$.
\end{proof}

\begin{definition}
    Let $X$ be an abstract polyhedral complex,
    and $x \in X$. We define the
    dimension of $X$ at $x$ to be
    \begin{equation*}
        \dim_x X := \max\{\dim(\sigma) \mid \sigma \in \cK_X, x \in \sigma\}.
    \end{equation*}
    Here $\dim(\sigma)$ denotes the dimension of the smallest affine subspace
    containing $|\sigma|$.
\end{definition}

\begin{definition}
    Let $X$ be an abstract polyhedral complex. 
    We define the relative
    interior of $X$ to be
    \begin{equation*}
        \relint(X) := \bigcup_{\substack{\sigma \in \cK_X\\
        \sigma\textrm{ maximal}}} \relint(\sigma).
    \end{equation*}
\end{definition}

\begin{remark}
    Let $X$ be a closed definable subset of $|D|$, equipped with the induced
    abstract polyhedral complex structure. Let $D' \in X$ be a divisor, which
    belongs 
    to the relative interior of a unique
    face $\sigma$ of $\Gamma^d/S_d$ and also of a unique face $\tau$ of $X$.
    Then $\tau \subseteq \sigma$ and in addition
    $\tau$ embeds as a polytope in $\sigma$, so we get naturally
    an embedding of manifolds $\relint(\tau) \hookrightarrow \relint(\sigma)$.
    If $D' \in \relint(X)$, then $\tau$ is a maximal face of $X$,
    and so $\tau$ has dimension $\dim_x X$.
    In particular, this implies that we may detect the
    dimension of $X$ at $D'$ from the dimension of the
    tangent space of $\relint(\tau)$ at $D'$,
    which we may naturally identify as a subspace of the tangent space of
    $\relint(\sigma)$ at $D'$.

    Recall that $\relint(\sigma)$ can be naturally identified as a subspace of
    $\R^n$ for some $n$, with coordinates corresponding to the positions of
    the chips of $D'$ along the edge they are supported on.
    A tangent vector at $D'$ may be therefore coordinatized by the rate at which
    it moves each of the chips of $D'$ in a given direction along each edge.
    In other words, it corresponds to an infinitesimal transformation of
    $D'$ and may be represented by a continuously differentiable
    path $[0, \epsilon) \to \relint(\sigma)$
\end{remark}

The above discussion motivates a definition of tangent space of an arbitrary
subspace of $\Gamma^d/S_d$ at a given point.

\begin{definition}
    Let $X \subseteq \Gamma^d/S_d$ be a subspace. We define the tangent space of
    $X$ at $x$, denoted by $T_xX$ as the linear subspace of $T_x\Gamma^d/S_d$
    generated by the tangent vectors that may be represented as a continuously
    differentiable path $[0, \epsilon) \to \Gamma^d/S_d$ whose image is
    contained in $X$.
\end{definition}

\begin{remark}
    \label{rmk:dims-tangents}
    When $X$ is an abstract polyhedral complex and $x \in \relint(X)$, then
    \begin{equation*}
        \dim_xX = \dim T_xX.
    \end{equation*}
\end{remark}

\subsection{Structure of complete linear systems}
\label{sec:complete-linear-systems}

In what follows we fix a model $G = (V, E)$
of $\Gamma$ without self-loops.
As discussed previously, a complete linear system $|D_0|$
inherits the structure of
an abstract polyhedral complex which depends on this model.

\begin{definition}
    Let $D \in |D_0|$ be a divisor.
    We say $D$ is \emph{generic} if $D \in \relint(|D_0|)$.
\end{definition}

\begin{remark}
    For any divisor $D \in |D_0|$,
    there is a unique cell $\Delta_D$ of $|D_0|$ such that
    $D$ belongs to the relative interior of $\Delta_D$. It follows that
    $D$ is generic if and only if $\Delta_D$ is an inclusion-wise maximal cell.
\end{remark}

\begin{remark}
    The set of generic divisors of $|D_0|$ is dense in $|D_0|$.
\end{remark}

For $D$ a divisor, we will split it as a sum of the divisors
$D_V := D|_V$ and $D_E := D|_{\Gamma \setminus V}$.

\begin{proposition}
    \cite[Prop. 13]{linsys}
    \label{prop:dim-comp}
    Let $D \in |D_0|$ be a divisor, then
    \begin{equation*}
        \dim \Delta_D = \#\{\textrm{connected components of }
        \Gamma\setminus \supp{D_E}\} - 1
    \end{equation*}
\end{proposition}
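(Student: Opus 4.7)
The plan is to reduce the statement to a tangent-space computation via Remark~\ref{rmk:dims-tangents}, which gives $\dim\Delta_D=\dim T_D\Delta_D$ for $D\in\relint(\Delta_D)$. Set $A:=\supp(D_E)$ and let $C_1,\dots,C_c$ be the connected components of $\Gamma\setminus A$. Identify $T_D\sigma\cong\R^A$ by sending a tangent vector to the infinitesimal displacement $(t_x)_{x\in A}$ of each chip along its edge. I will show that $T_D\Delta_D$ consists exactly of those $(t_x)$ for which there exist $\dot u_1,\dots,\dot u_c\in\R$ with
\[
\dot u_{i_+(x)}-\dot u_{i_-(x)}=-m_x\,t_x\qquad(x\in A),
\]
where $m_x:=D(x)$ and $i_\pm(x)$ index the two components of $\Gamma\setminus A$ flanking $x$ along its edge (possibly equal).

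\emph{Forward direction.} A tangent in $T_D\Delta_D$ is the initial derivative of a continuously differentiable path $\gamma:[0,\epsilon)\to\Delta_D$ with $\gamma(0)=D$. Writing $\gamma(s)=D+\fdiv(f_s)$ with $f_s\in\Rat(\Gamma)$ unique up to a constant, the class $[f_s]\in\Rat(\Gamma)/\R$ depends continuously on $s$ (since $\fdiv$ induces a bijection between $\Rat(\Gamma)/\R$ and the principal divisors). For any compact connected $K$ in the interior of a single edge of $G$ with $K\subseteq\Gamma\setminus A$, the moved chips of $\gamma(s)$ remain in a neighbourhood of $A$ for small $s$, so $K$ is disjoint from $\supp(\gamma(s))$ and $f_s|_K$ is linear; its integer slope depends continuously on $s$ and vanishes at $s=0$, hence vanishes for all small $s$. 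Combined with $\ord_v f_s=0$ at every $v\in V$ (forced by $\gamma(s)\in\sigma$), this shows that $f_s$ is locally constant, equal to some value $u_i(s)$, on each $C_i$ except on a tiny notch of length $s|t_x|$ at each moved chip $x\in A$, where the bend condition at $x$ forces the slope to equal $-m_x$. Comparing values at the two ends of the notch gives $u_{i_+(x)}(s)-u_{i_-(x)}(s)=-m_x\,s\,t_x$, and differentiating at $s=0$ yields the claimed relation.

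\emph{Converse and dimension count.} Given $(\dot u,t)\in\R^c\times\R^A$ satisfying the relations, setting $f_s\equiv s\dot u_i$ on $C_i$ with a slope-$(-m_x)$ notch of width $s t_x$ at each chip $x$ defines, for $s$ small enough, an honest element of $\Rat(\Gamma)$---the constraint is precisely the continuity condition across each notch---and a differentiable path in $\Delta_D$ realizing $t$. Hence $T_D\Delta_D$ equals the projection to $\R^A$ of the subspace of $\R^c\times\R^A$ cut out by the $|A|$ relations; these are linearly independent, as each involves $t_x$ with nonzero coefficient $m_x$, so the solution space has dimension $c$. The kernel of the projection consists of $\dot u$ satisfying $\dot u_{i_+(x)}=\dot u_{i_-(x)}$ for all $x$; since $\Gamma$ is connected, the reduced graph with vertex set $\{C_1,\dots,C_c\}$ and edge set $A$ is connected as well, forcing $\dot u$ to be constant, i.e.\ a one-dimensional kernel. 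Therefore $\dim T_D\Delta_D=c-1$.

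The main obstacle is the slope-vanishing step in the forward direction: turning the continuity of $[f_s]\bmod\R$ together with the integrality of slopes into the assertion that $f_s$ is locally constant on $\Gamma\setminus A$ for all sufficiently small $s$. This is the crucial bridge between the continuous deformation of $\gamma(s)$ and the discrete linear-algebraic constraints on $(t_x)$; once it is in hand, the remainder of the proof is a straightforward dimension count.
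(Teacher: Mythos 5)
The paper does not actually prove this proposition---it is cited as \cite[Prop.~13]{linsys} and immediately followed by the next definition---so there is no in-paper argument to compare yours against. Your proof is correct and self-contained.

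The structure is sound: since $D\in\relint(\Delta_D)$, Remark~\ref{rmk:dims-tangents} reduces the problem to computing $\dim T_D\Delta_D$, and your linear-algebra model in $\R^c\times\R^A$ (levels on components, velocities on chips) correctly captures the tangent cone. The forward direction hinges on the observation that the integer slope of $f_s$ on any compact subinterval $K\subseteq\Gamma\setminus A$ is continuous in $s$ and vanishes at $s=0$, hence vanishes identically for small $s$; combined with continuity at vertices this forces $f_s$ to be constant on each $C_i$ off a shrinking neighbourhood of $A$. The $\#A$ linear relations are independent because each involves a distinct $t_x$ with coefficient $m_x\neq 0$, and the kernel of the projection to $\R^A$ is spanned by the constant $\dot u$ precisely because the reduced graph with vertices $\{C_1,\dots,C_c\}$ and edges $A$ is connected---a point that uses connectedness of $\Gamma$, which is the standing hypothesis the paper also invokes (see Lemma~\ref{lemma:dim-deg-formula}). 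Your treatment of the degenerate case $i_+(x)=i_-(x)$ (forcing $t_x=0$) is also right.

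Two small imprecisions worth tidying. First, the notch width is $s|t_x|$, not $s\,t_x$, and the slope in the notch is $-m_x\operatorname{sign}(t_x)$ rather than $-m_x$: for $t_x<0$ the chip arrives before it departs, so $f_s$ rises with slope $+m_x$ across the notch. The net value change is $-m_x\,s\,t_x$ in both cases, so the displayed relation $\dot u_{i_+(x)}-\dot u_{i_-(x)}=-m_x t_x$ is still correct; just don't assert a single sign for the notch slope. Second, the identification $T_D\sigma\cong\R^A$ implicitly uses the paper's convention that coordinates on a cell of $\Gamma^d/S_d$ are indexed by the distinct edge-interior positions (counted once, not with multiplicity); it would be cleaner to say so explicitly, since $|A|=\#\supp D_E$ is indeed the number $n=\sum_e k_e$ from the paper's description of $\relint(\sigma)$.
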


\begin{definition}
    Let $A$ be a finite subset of $\Gamma$. We say a divisor
    $D \in \Div(\Gamma)$ is \emph{$A$-unsaturated} whenever
    there is no closed subgraph $C \subseteq \Gamma$ with 
    $\boundary{C} \cap A \neq \emptyset$ that can fire.
\end{definition}

\begin{remark}
    We are mostly interested in the case where the set $A$ is the set of
    vertices $V$ of our model $G$.
\end{remark}

\begin{proposition}
    \label{prop:generic-char}
    Let $D$ be a divisor. Then $D$ is generic if and only if the following
    conditions are satisfied:
    \begin{enumerate}[1)]
        \item For all $x \in \Gamma$, we have that $D(x) < \val(x)$.
        \item $D$ is $V$-unsaturated
    \end{enumerate}    
\end{proposition}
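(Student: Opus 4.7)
I will prove both directions.

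For necessity, I proceed by contrapositive: if either condition fails, I construct a chip firing move producing a divisor $D' \in |D_0|$ with $\dim \Delta_{D'} > \dim \Delta_D$, so that $\Delta_D$ is not maximal. If $D(v) \geq \val(v)$ for some $v \in V$, the singleton $\{v\}$ can fire and $D' = D + \fdiv(CF(\{v\}, \epsilon))$ has $\val(v)$ new chips at distance $\epsilon$ from $v$ along its adjacent edges, cutting off a small star around $v$ as a new component of $\Gamma \setminus \supp(D'_E)$. If $D(x) \geq 2$ for $x$ in the interior of an edge, I use a tent function equal to $0$ at $x$ and decreasing linearly with slope $-1$ along both directions of the edge over a distance $\epsilon$, then extended as the constant $-\epsilon$ on the rest of $\Gamma$; its divisor splits the chip at $x$ into two chips at $x \pm \epsilon$, again increasing the number of components. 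If condition (2) fails, firing the witnessing subgraph $C$ creates new chips on edges adjacent to some $v \in \boundary C \cap V$, disconnecting a small neighborhood of $v$ from the rest of its component. In each case Proposition \ref{prop:dim-comp} yields $\dim \Delta_{D'} > \dim \Delta_D$.

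For sufficiency, assume $D$ satisfies (1) and (2) and suppose for contradiction that $\Delta_D$ is not maximal, so some cell $\tau \supsetneq \Delta_D$ contains a divisor $D' \in \relint(\tau)$ with $\|f\|_\infty$ arbitrarily small, where $f \in R(D)$ and $D' = D + \fdiv(f)$. By Lemma \ref{lemma:tropical-sum} I decompose $f = f_1 + \cdots + f_n$ as a legal sequence of chip firing moves $f_k = CF(Z_k, \epsilon_k)$ from $D$, each with $\|f_k\|_\infty \leq \|f\|_\infty$. Setting $D_k := D + \fdiv(f_1 + \cdots + f_k)$, the plan is to show by induction on $k$ that $D_k$ still satisfies (1) and (2) and that $\dim \Delta_{D_k} = \dim \Delta_D$; this contradicts $\dim \Delta_{D_n} = \dim \tau > \dim \Delta_D$.

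The inductive step is the heart of the argument. Firing legality of $Z_{k+1}$ on $D_k$ together with condition (2) for $D_k$ forces $\boundary Z_{k+1} \cap V = \emptyset$ (otherwise $Z_{k+1}$ itself witnesses failure of $V$-unsaturation of $D_k$). Every boundary point then lies in the interior of an edge with valence $2$, and condition (1) rules out $\degout_{Z_{k+1}} = 2$ at any such point, so $\degout_{Z_{k+1}}(y) = 1$ and $D_k(y) = 1$ at every $y \in \boundary Z_{k+1}$. The firing accordingly shifts each such chip to a nearby position at distance $\epsilon_{k+1}$ outside $Z_{k+1}$ without creating or destroying chip positions, so for small enough $\epsilon_{k+1}$ the topology of $\Gamma \setminus \supp(D_{k+1,E})$ is unchanged and Proposition \ref{prop:dim-comp} gives $\dim \Delta_{D_{k+1}} = \dim \Delta_D$. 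Condition (1) for $D_{k+1}$ is then immediate. The main obstacle is verifying condition (2): given any closed $C$ that fires on $D_{k+1}$ with $\boundary C \cap V \neq \emptyset$, I plan to modify $C$ into $C^*$ by shifting each boundary point of $C$ located at a newly-created chip back to the corresponding $y \in \boundary Z_{k+1}$, adjoining or removing the short segment between the two. The local surgery preserves the outgoing degrees at all boundary points, and at the shifted boundary point $y$ the firing condition is met because $D_k(y) = 1 = \degout_{C^*}(y)$; since $\boundary C^* \cap V = \boundary C \cap V \neq \emptyset$, the subgraph $C^*$ contradicts $V$-unsaturation of $D_k$, closing the induction.
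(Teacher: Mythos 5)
Your proof takes essentially the same route as the paper: the contrapositive for necessity via Proposition \ref{prop:dim-comp}, and for sufficiency the decomposition of $f$ into a legal chip-firing sequence via Lemma \ref{lemma:tropical-sum} followed by induction. You are in fact more careful than the paper, which after establishing $\partial Z_1 \cap V = \emptyset$ and that $Z_1$ has no isolated point simply says ``repeat the argument'' --- it never explicitly checks that conditions (1) and (2) remain true for the intermediate divisors $D_k$. Your surgery idea for re-establishing $V$-unsaturation --- replace each boundary point of a would-be firable $C$ located at a new chip $y'$ by the original chip position $y \in \partial Z_{k+1}$, adjoining or deleting the short segment $[y,y']$ --- is exactly the right move and does work: since $D_{k+1}(y)=0$ one knows $y\notin\partial C$, and one checks in the two cases (segment inside or outside $C$) that $C^*$ remains closed, that $\degout_{C^*}(y)=1=D_k(y)$, and that the other boundary data are untouched once $\epsilon_{k+1}$ is small enough to avoid collisions. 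As written that step is announced (``I plan to'') rather than carried out, so you should spell out those cases; the plan itself is sound.
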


\begin{proof}
    Suppose $D$ is generic, then it follows immediately from Proposition 
    \ref{prop:dim-comp} that
    there is no subgraph $C$ that can fire,
    that satisfies
    \begin{equation*}
        \#\supp(\fdiv(D + CF(Z, \epsilon))_E) > \#\supp D_E
    \end{equation*}
    for all $\epsilon > 0$ small enough.

    If for some
    $x \in \Gamma$ we had that
    $D(x) \geq \val(x)$, then taking $C = \{x\}$,
    we would get that the subgraph $C$ can fire, and by firing it we 
    would get a
    divisor with more points supported on the edges.
    So condition 1) is satisfied.

    If there was a closed subgraph $C \subseteq \Gamma$ with
    $\boundary C \cap V
    \neq \emptyset$, then firing $C$ would move at least one
    chip from a vertex to the
    interior of an edge. Again, this would yield a divisor with more points
    supported on the edges. Hence $D$ is $V$-unsaturated and so condition 2) is
    satisfied as well.

    Now, suppose $D$ satisfies the two conditions. For the sake of
    contradiction, suppose $D$ is not generic, then for all $\epsilon > 0$,
    there exists some $D' = D + \fdiv(f) \in |D|$, such that
    $\|f\|_\infty \leq \epsilon$ and $D'$
    belongs to some higher dimensional cell.
    By Lemma \ref{lemma:tropical-sum}, we may write
    $f = f_1 + \dots + f_n$, where the $f_i = CF(Z_i, \epsilon_i)$
    are chip firing moves with 
    $\epsilon_i = \|f_i\|_\infty \leq \|f\|_\infty$. Furthermore, the Lemma also
    ensures that
    $f_1 \in R(D)$. By condition 1), $Z_1$
    contains no isolated point, and since $D$ is $V$-unsaturated,
    $\boundary Z_1$ does not contain
    any vertices. We deduce that for $\epsilon$
    small enough, $D + \fdiv(f_1)$
    has the same combinatorial type as $D$. In addition, $f_1$ is constant in
    a neighbourhood of each vertex, so we deduce that $D + \fdiv(f_1)$
    belongs to the relative interior of the 
    same cell of $|D|$ as $D$. We can repeat the argument to get by
    induction that $D'$ belongs to the relative interior of the 
    same cell as $D$, a contradiction.
\end{proof}

\begin{figure}
    \centering
    \begin{tikzpicture}[domain=0:4,
        declare function={
            f(\x) = -abs(\x-2)+2 + \x + abs(\x - 3.5) - 3.5;
        },
    ]
        \draw[color=red]    plot (\x,{f(\x)})    node[right]
          {$f$};
        \draw[color=gray, dashed]   plot (\x,1.5)    node[right] {};
        \draw[color=black, thick, samples=100] plot (\x,{max(f(\x), 1.5) +
          0.03}) node[right] {$f_t$};
      \draw [|<->|] (-0.2, 1.5)-- node[left=1mm] {$t$} (-0.2, 2);
    \end{tikzpicture}
    \caption{Example of $f$ and $f_t = f\oplus(\sup(f) - t)$}
    \label{fig:f_t}
\end{figure}
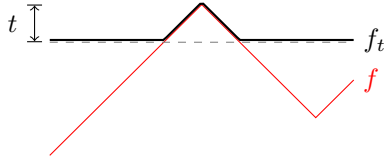

\begin{proposition}
    \label{prop:rank-dim}
    For any generic divisor $D \in |D_0|$, we have that
    \[\dim \Delta_D \geq r(D).\]
\end{proposition}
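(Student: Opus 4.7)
The plan is to show $\dim T_D \Delta_D \geq r := r(D)$; since $D$ is generic, Remark~\ref{rmk:dims-tangents} then yields $\dim \Delta_D \geq r$. The case $r \leq 0$ is trivial, so assume $r \geq 1$. I will construct $r$ linearly independent tangent vectors in $T_D \Delta_D$ using the truncation construction $f_t = f \oplus (\sup f - t)$ from Figure~\ref{fig:f_t}.

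Setup: pick $r$ distinct points $p_1, \ldots, p_r \in \Gamma \setminus \supp D$, each in the interior of a different edge; the rank condition yields some $f \in R(D)$ with $D + \fdiv(f) \geq p_1 + \cdots + p_r$, normalized so that $\sup f = 0$. Since $D(p_i) = 0$, we have $\fdiv(f)(p_i) \geq 1$, so $f$ has a nontrivial bend at each $p_i$. For this $f$, the single-parameter path $t \mapsto D_t := D + \fdiv(f_t)$ satisfies $D_0 = D$ (as $f_0 \equiv 0$) and $\|f_t\|_\infty = t$; by the argument from the proof of Proposition~\ref{prop:generic-char} --- decomposing $f_t$ into a sequence of legal chip-firing moves via Lemma~\ref{lemma:tropical-sum} and using that $D$ is $V$-unsaturated with $D(x) < \val(x)$ for all $x$ --- the divisor $D_t$ lies in $\relint(\Delta_D)$ for small $t > 0$, so the path provides a tangent vector in $T_D \Delta_D$.

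To obtain $r$ independent tangent vectors rather than just one, the idea is to replace the single parameter $t$ by an $r$-tuple $(t_1, \ldots, t_r)$, truncating $f$ independently in $r$ disjoint neighborhoods, each containing one of the $p_i$. The weighted chip-firing decomposition of $f$ produced by Lemma~\ref{lemma:tropical-sum} presents $f$, up to a constant, as a sum of chip-firing moves ordered by the level values of $f$; the moves whose support contains $p_i$ can be identified and scaled independently of those supporting the other $p_j$. Each choice of $(t_1, \ldots, t_r) \in [0, \epsilon)^r$ gives a divisor $D_{\mathbf{t}} \in \relint(\Delta_D)$ (by the same chip-firing argument as above, since each component move has small $\|\cdot\|_\infty$), and $D_\mathbf{0} = D$.

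The principal obstacle is rigorously decoupling these $r$ motions and establishing linear independence of the resulting tangent vectors $v_i := \partial_{t_i} D_{\mathbf{t}} \big|_{\mathbf{t} = 0}$. The key observation is that the $t_i$-direction moves a chip along the edge of $p_i$ (a nonzero motion, since $\fdiv(f)(p_i) \geq 1$ guarantees a bend that is genuinely uncovered as $t_i$ increases), while leaving chips on the edges of the $p_j$ with $j \neq i$ fixed (by the localization of the relevant chip-firing move to a neighborhood of $p_i$'s edge). Because the $p_i$ lie on pairwise distinct edges, the $v_i$ correspond to independent coordinates in the ambient tangent space $T_D(\Gamma^d/S_d)$, and therefore are linearly independent in $T_D \Delta_D \subseteq T_D(\Gamma^d/S_d)$, completing the bound $\dim T_D \Delta_D \geq r$.
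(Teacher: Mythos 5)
Your single-parameter construction of a tangent vector via $f_t = f \oplus (\sup f - t)$ matches the paper's opening move, but the leap to $r$ independent tangent vectors via ``independent local truncations'' has a genuine gap. The fundamental issue is that the points $p_1, \ldots, p_r$ were chosen in $\Gamma \setminus \supp D$, so there is no chip of $D$ at $p_i$ to move: the tangent space $T_D(\Gamma^d/S_d)$ is coordinatized by the positions of the $d$ chips of $D$, and the initial velocity of $t \mapsto D + \fdiv(f_t)$ is governed by the chips of $D$ lying on $\boundary Z$ (where $Z$ is the superlevel set of $f$ near its maximum), not by where $D + \fdiv(f)$ eventually has a chip. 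The bend of $f$ at $p_i$ is only ``uncovered'' when the truncation level reaches $f(p_i)$, which for generic $p_i$ is a macroscopic value of $t$, not an infinitesimal one, so it contributes nothing to the tangent vector at $D$. Consequently the claimed coordinates for linear independence do not exist, and the conclusion ``the $v_i$ correspond to independent coordinates in the ambient tangent space'' is unfounded.

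Moreover, the ``truncation in $r$ disjoint neighborhoods'' is not well-defined. The decomposition of $f$ from Lemma~\ref{lemma:tropical-sum} is organized by level, not by location, and a single chip-firing move at a given level can touch a superlevel set spread across many edges; there is no way in general to assign each move to a specific $p_i$ and scale it independently while remaining a legal sequence in $R(D)$. Once you scale components independently, the result is no longer a truncation of $f$, and you lose any reason for $D_{\mathbf{t}}$ to lie in $\relint(\Delta_D)$ or even to be in $|D|$. The paper avoids all of this by inducting on the rank: it constructs a single tangent vector $\zeta$, identifies a specific chip $x \in \boundary Z \cap \supp D$ that $\zeta$ moves (crucially, $x \in \supp D$ with $D(x) = 1$ by genericity), checks that $D - x$ is again generic with $r(D - x) \geq r - 1$, obtains $r - 1$ tangent vectors by induction that fix the chip at $x$ (since $D - x$ has no chip there), and concludes independence because $\zeta$ alone has a nonzero component along the $x$-coordinate. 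To repair your argument you would essentially have to rediscover this inductive peeling-off of one chip at a time; the ``$r$ simultaneous truncations'' shortcut does not go through.
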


\begin{proof}
    We will show by induction on $r$ that if $r(D) \geq r$, then 
    $\dim \Delta_D \geq r$, which implies the result. 
    The base case $r = 0$ is trivially verified, so suppose $r \geq 1$.

    Since $r(D) \geq r \geq 1$,
    there exists some non-constant 
    rational function $f \in R(D)$, then define
    \begin{equation*}
        f_t = f \oplus (\sup(f) - t).
    \end{equation*}
    See Figure \ref{fig:f_t} for a depiction of $f_t$ on an interval.
    The map $t \in [0, \epsilon) \mapsto D + \fdiv(f_t)$ determines a
    tangent vector $\zeta$ in $|D|$.

    Let $S := V \cup \bend(f)$, then $f(S)$ is a finite set and so 
    for $\epsilon > 0$ small enough, the set
    \[f(S) \cap [\sup(f) - \epsilon, \infty)\]
    will consist only of a single point, which implies that $f_\epsilon$ will be 
    a weighted chip-firing move (and so will be all the $f_t$ for $0 < t <
    \epsilon$).
    Let $Z$ be the subgraph on which $f_\epsilon$ attains its maximum, then $Z$
    can fire as 
    \[D(x) \geq -\fdiv(f_\epsilon)(x) \geq \degout_Z(x).\]
    Since $D$ was assumed to be generic, by Proposition \ref{prop:generic-char}
    we know that $Z$ contains no isolated points (as otherwise for such an $x$ we
    would get $\degout_Z(x) = \val(x)$ and so $D(x) \geq \val(x)$), and
    $\boundary Z \cap V = \emptyset$, as $D$ is $V$-unsaturated.
    In particular, $\boundary Z \cap \supp D$ consists only of points on the
    interior of edges, and for such $x$, we know that $D(x) < \val(x) = 2$.
    We deduce that $f_\epsilon$ can only be an ordinary chip firing move
    $CF(Z, \epsilon)$. Now, take any $x \in \boundary Z \cap \supp D$.
    Clearly, 
    \[r(D - x) \geq r(D) - 1 \geq r - 1.\]
    Furthermore, $D - x$ is
    generic as it clearly satisfies the conditions of
    Proposition \ref{prop:generic-char}.
    By the induction hypothesis, we obtain that $\Delta_{D - x} \geq r - 1$
    and so let $\zeta_1, \dots, \zeta_{n-1}$ be linearly independent tangent
    vectors
    at $D - x$.
    Since $\Delta_{D - x}$ naturally embeds into $\Delta_{D}$ via the
    map $D' \mapsto D' + x$, we may see the $\zeta_i$ as vectors in the tangent
    space at $D$.
    
    We claim that the vectors $\zeta, \zeta_1, \dots, \zeta_{n-1}$ are linearly
    independent, which will imply that the tangent space at $D$ has dimension at
    least $n$ and so $\dim \Delta_D \geq n$. To see this, notice that the 
    $\zeta_i$ correspond to infinitesimal transformations of $D$ that fix the
    chip at $x$. This follows from the fact that $D(x) = 1$ and so $D - x$ has
    no chip at $x$.
    However, $x$ was chosen in the boundary of $Z$, so $\zeta$ has
    a non-zero component along the coordinate corresponding to the chip at $x$,
    so $\zeta$ is independent from the $\zeta_i$, and this completes the proof.
\end{proof}

\begin{corollary}
For any divisor $D \in |D_0|$,
we have that 
\[\dim_D|D_0| \geq r(D_0).\]
\end{corollary}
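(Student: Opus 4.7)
The plan is to reduce to Proposition \ref{prop:rank-dim} by using density of generic divisors together with two simple facts: that $r$ is constant on a complete linear system, and that in an abstract polyhedral complex every sufficiently small neighbourhood of a point $D$ meets only cells that contain $D$.

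First I would observe that $r(D) = r(D_0)$ for every $D \in |D_0|$. Indeed, the defining condition ``$|D - E| \neq \emptyset$ for all effective $E$ of degree $d$'' depends only on the linear equivalence class of $D$, since $D - E \sim D_0 - E$. So the right-hand side of the inequality we want to prove can be read as $r(D)$.

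Next, let $\tau \in \cK_{|D_0|}$ be the unique cell with $D \in \relint(\tau)$. By the remark that generic divisors are dense in $|D_0|$, any neighbourhood of $D$ contains a generic divisor. By the local structure of a polyhedral complex, every divisor sufficiently close to $D$ lies in $\relint(\sigma)$ for some cell $\sigma$ with $\tau \subseteq \sigma$; equivalently, $D$ lies in the closure of every such $\sigma$. Thus we can pick a generic $D' \in |D_0|$ close enough to $D$ so that $D \in \Delta_{D'}$.

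Applying Proposition \ref{prop:rank-dim} to the generic divisor $D'$ gives
\begin{equation*}
    \dim \Delta_{D'} \geq r(D') = r(D_0),
\end{equation*}
using the invariance of the rank under linear equivalence. Since $D \in \Delta_{D'}$, the definition of $\dim_D |D_0|$ yields $\dim_D |D_0| \geq \dim \Delta_{D'} \geq r(D_0)$, as required. The only delicate point is the local claim that $D$ lies in every top-dimensional cell touching a small enough neighbourhood of $D$, but this is a standard consequence of the fact that $|D_0|$ is a finite abstract polyhedral complex and so admits a neighbourhood base at $D$ of the form $\bigcup_{\tau \subseteq \sigma} (\sigma \cap B_\varepsilon(D))$.
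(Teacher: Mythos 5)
Your proof is correct and takes essentially the same route as the paper: both reduce to Proposition \ref{prop:rank-dim} applied to a generic divisor $D'$ lying in a maximal cell containing $D$. The paper gets such a $D'$ directly by taking any point of $\relint(\sigma)$ for a maximal cell $\sigma \ni D$, which bypasses the density argument and the local-structure claim you appeal to, but both ingredients you use are valid and the conclusion is the same.
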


\begin{proof}
    For any maximal cell $\sigma \in \cK_{D_0}$,
    its relative interior is non-empty and so
    there exists some $D' \in \relint(\sigma)$. Note that this implies
    $\Delta_{D'} = \sigma$.
    In particular,
    \[\dim \Delta_{D'} \geq r(D') = r(D_0)\]
    Since the dimension of $|D_0|$ at $D$ is at least the dimension of a maximal
    cell containing $D$, the statement follows.
\end{proof}

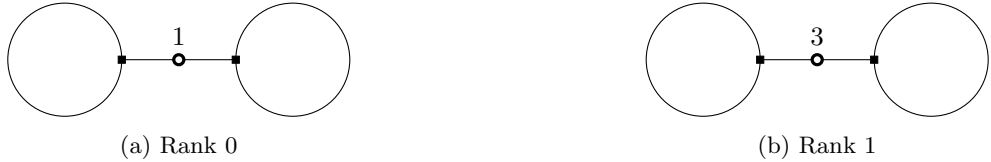
\begin{figure}[ht]
    \begin{subfigure}[h]{0.4\linewidth}
        \centering
        \begin{tikzpicture}[scale=1.5]
            \draw (-0.5, 0) circle (0.5);
            \draw (0,0) -- (1,0);
            \draw (1.5, 0) circle (0.5);
            \node[divisor, label=1] at (0.5, 0) {};
            \node[vertex] at (0, 0) {};
            \node[vertex] at (1, 0) {};
        \end{tikzpicture}
        \caption{Rank 0}
    \end{subfigure}
    \hfill
    \begin{subfigure}[h]{0.4\linewidth}
        \centering
        \label{fig:dumbbell-rank-1}
        \begin{tikzpicture}[scale=1.5]
            \draw (-0.5, 0) circle (0.5);
            \draw (0,0) -- (1,0);
            \draw (1.5, 0) circle (0.5);
            \node[divisor, label=3] at (0.5, 0) {};
            \node[vertex] at (0, 0) {};
            \node[vertex] at (1, 0) {};
        \end{tikzpicture}
        \caption{Rank 1}
    \end{subfigure}

    \caption{Divisors whose complete linear system is locally of dimension
        strictly greater than the rank.}
    \label{fig:dumbbell-rank}
\end{figure}

\begin{remark}
    One may wonder whether the rank is fully encoded in the dimension of the
    complete linear system. In general this is not true, as it might
    happen that $|D|$ only has cells of dimension strictly higher than
    $r(D)$. The dumbbell graph provides simple counter-examples,
    as illustrated in the Figure
    \ref{fig:dumbbell-rank}. The complete linear system corresponding to the
    divisor in (a) is just an
    interval, so equi-dimensional of dimension 1. The complete linear system 
    corresponding to the divisor in (b) has maximal cells of dimensions at least
    $2$. Indeed, the maximal cells are of three sorts:
    \begin{itemize}[itemsep=0em]
        \item All three chips on the bridge (dimension $3$)
        \item Two chips on one circle and the third on the bridge (dimension
            $2$)
        \item All three chips on the same circle (dimension $2$)
    \end{itemize}
\end{remark}

\subsection{Structure of the canonical linear system}
\label{sec:structure-canonical}

We will now study the case of the canonical linear system. We recall that the
canonical divisor is defined by
\[K = \sum_{x \in \Gamma}(\val(x) - 2) \cdot x.\]
The canonical linear system depends tightly on the cycles present in the metric
graph as we will soon see. We will start with a few lemmas.

\begin{lemma}
    \label{lemma:conn-comp-genus}
    Let $D$ be a $V$-unsaturated divisor.
    Then for any connected component $C$ of $\Gamma
    \setminus \supp D_E$, we have that $g(C) \geq \deg D|_C$
\end{lemma}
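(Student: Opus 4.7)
The plan is to argue by contrapositive: assuming $\deg D|_C > g(\hat{C})$, I will exhibit a closed subgraph $Z \subseteq \Gamma$ with $\partial Z \cap V \neq \emptyset$ that fires with respect to $D$, contradicting the $V$-unsaturated hypothesis.

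I regard $\hat{C}$ as a metric graph in its own right and consider the divisor $\tilde{D}$ on $\hat{C}$ given by $\tilde{D}(v) = D(v)$ for $v \in V \cap C$ and $\tilde{D} = 0$ on the added boundary leaves, so $\deg \tilde{D} = \deg D|_C > g(\hat{C})$. By the tropical Riemann--Roch theorem (Theorem~\ref{thm:trop-rr}) applied to $\hat{C}$, we have $r(\tilde{D}) \geq \deg \tilde{D} - g(\hat{C}) \geq 1$. Since the case $V \cap C = \emptyset$ is immediate, we may assume $V \cap C \neq \emptyset$; then by connectedness of $\hat{C}$ there exists a boundary leaf $\tilde{x}_0$ of $\hat{C}$ joined by an edge $e$ to some $v_0 \in V \cap C$. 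The rank bound furnishes a rational function $\tilde{f}$ on $\hat{C}$ with $\tilde{D} + \fdiv_{\hat{C}}(\tilde{f}) \geq \tilde{x}_0$.

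The core step is to extract from $\tilde{f}$ a firing in $\hat{C}$ that transfers to $\Gamma$ with boundary in $V$. By Lemma~\ref{lemma:tropical-sum}, $\tilde{f}$ decomposes into a sum of legal chip-firing moves $CF_{\hat{C}}(\tilde{Z}_i, \epsilon_i)$; since $\tilde{D}(\tilde{x}_0) = 0$ but a chip must end up at $\tilde{x}_0$, some move $\tilde{f}_k$ deposits a chip there. Because $\tilde{x}_0$ is a leaf whose only neighbor is $v_0$, such a move necessarily has $v_0 \in \tilde{Z}_k$ with the edge $e$ excluded, so $v_0 \in \partial \tilde{Z}_k \cap (V \cap C)$. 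I then augment $\tilde{Z}_k$ to include every other boundary leaf of $\hat{C}$ together with its incident edge (which does not change its boundary at $V \cap C$) and translate via the natural projection $\pi: \hat{C} \to \closure{C}$ to $Z = \pi(\tilde{Z}_k) \cup (\Gamma \setminus \closure{C}) \subseteq \Gamma$. Because $\tilde{Z}_k$ contains all of $\pi^{-1}(\partial C)$, the set $Z$ is closed in $\Gamma$; because the neighborhoods of $V \cap C$ vertices agree in $\hat{C}$ and $\Gamma$, we obtain $\partial Z = \partial \tilde{Z}_k \cap (V \cap C) \ni v_0$ and $\degout_Z(w) = \degout_{\tilde{Z}_k}(w)$ for $w \in \partial Z$, so the firing condition transfers.

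The main obstacle is that $\tilde{Z}_k$ is only legal with respect to the intermediate divisor $\tilde{D}_{k-1} = \tilde{D} + \fdiv(\tilde{f}_1 + \cdots + \tilde{f}_{k-1})$ rather than $\tilde{D}$ itself, so the translated firing is not a priori legal with respect to $D$. Closing this gap will require either reordering the decomposition so that the chip-delivering move occurs first, or working directly with a super-level-set construction: for $t$ slightly above $\min \tilde{f} = \tilde{f}(\tilde{x}_0)$, the set $\{y \in \hat{C} : \tilde{f}(y) \geq t\}$ is a natural candidate for $\tilde{Z}$, and its legality with respect to $\tilde{D}$ should follow from $\tilde{f} \in R(\tilde{D})$ combined with slope-integrality arguments in the spirit of Corollaries~\ref{cor:reduced-max} and~\ref{cor:superlevel-connected}. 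Either route boils down to careful bookkeeping of the slopes of $\tilde{f}$ along the boundary of $\tilde{Z}$.
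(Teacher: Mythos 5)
Your overall plan---argue the contrapositive, use Riemann--Roch on $\hat{C}$ to get $r(\tilde{D})\geq 1$, then extract a firing move violating $V$-unsaturatedness---is a genuinely different strategy from the paper's. The paper instead inducts on $\deg D|_C$: at each step it picks $x\in C\setminus\supp D|_C$, lets $A$ be its component in $\hat{C}\setminus\supp D|_C$ and $B$ the complement, and shows either (a) $B$ can fire, yielding a $V$-unsaturation contradiction, or (b) there is $y\in\supp D|_C$ with $\degout_B(y)>D(y)$, in which case one cuts at $y$ along a tangent into $A$, lowers both $\deg D|_C$ and $g(C)$ by one, checks $V'$-unsaturatedness is preserved, and recurses. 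The crucial feature of the paper's set $B$ is that $\boundary B\subseteq\supp D|_C\subseteq V\cap C$ by construction, so the boundary automatically lands on vertices.

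Your attempt lacks exactly that control, and that is a genuine gap rather than routine bookkeeping. A chip-firing move $CF(\tilde{Z}_k,\epsilon_k)$ deposits chips on $\boundary(\tilde{Z}_k)_{\epsilon_k}$, so a chip arriving at $\tilde{x}_0$ only tells you $\dist(\tilde{x}_0,\tilde{Z}_k)=\epsilon_k$; there is no reason the boundary of $\tilde{Z}_k$ sits at $v_0$ rather than in the interior of the leaf edge $e$. In the latter case the translated set $Z$ has $\boundary Z$ interior to an edge of $\Gamma$, and $Z$ firing is perfectly compatible with $D$ being $V$-unsaturated. The same objection applies to the super-level-set variant: even granting legality, a level set $\tilde{f}^{-1}([t,\infty))$ has boundary wherever $\tilde{f}=t$, which is generically interior to edges. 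Two further issues: (i) $\tilde{x}_0$ is only a \emph{local} minimum of $\tilde{f}$ (you have $\ord_{\tilde{x}_0}\tilde{f}\geq 1$ at the leaf, forcing strict increase along $e$, but nothing constrains $\tilde{f}$ globally), so the claim $\min\tilde{f}=\tilde{f}(\tilde{x}_0)$ does not hold; and (ii) you correctly flag the legality gap (that $\tilde{f}_k\in R(\tilde{D}_{k-1})$ rather than $R(\tilde{D})$), but reordering the decomposition of Lemma~\ref{lemma:tropical-sum} is not possible in general, because the layers are constructed top-down and the chip-delivering layer for a local min like $\tilde{x}_0$ is necessarily late in the sequence. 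To fix the approach you would need a different choice of subgraph whose boundary is forced to lie in $\supp D|_C\subseteq V\cap C$, and verify it fires with respect to $\tilde{D}$ directly---at which point you essentially reconstruct the paper's complement-of-a-component argument. If you wanted to keep the Riemann--Roch entry point, a workable route is to combine it with the paper's complement construction in place of the chip-firing decomposition.
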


\begin{proof}
    We proceed by induction on $\deg D|_C$.
    The fact is clearly true when $\deg D|_C = 0$, as $g(C)$ is always
    non-negative. So suppose the Lemma holds when $\deg D|_C < n$ for some
    fixed $n \in \N$, we will now show that it also holds when
    $\deg D|_C = n$.

    We may see $D|_C$ as a divisor in $\hat{C}$.
    Fix some $x \in C \setminus \supp D|_C$ and let $A$ be the connected
    component of $x$ in $\hat{C} \setminus \supp D|_C$ and let $B$
    be the complement of $A$.
    We claim that there is some $y \in \supp D|_C$ such that 
    $\degout_{B}(y) > D(y)$.
    For the sake of contradiction, suppose that for all $y \in \supp D|_C$,
    $\degout_{B}(y) \leq D(y)$, then we can fire $B$ in $\hat{C}$.
    Let $B'$ be the image of $B$ in $\Gamma$, then
    $\boundary B' \setminus C$
    consists of points in $\supp D_E$. Let $y$ be such a point, then
    $\val(y) = 2$, which implies that $\degout_{B'}(y) = 1$ as $y$ is on the 
    boundary of $B'$. Furthermore, $D(y) \geq 1$ since $y \in \supp D_E$.
    We deduce that $B'$ can fire in $\Gamma$.
    Since we assumed $D$ is $V$-unsaturated, this implies
    $\boundary B' \cap V = \emptyset$ and so
    $\boundary B' \subseteq \supp D_E$.
    We deduce that $B'$ is all of $\closure{C}$, but
    this would imply that $B$ is all of $\hat{C}$ (since only a finite number of
    points are identified in the gluing $\hat{C} \to \overline{C}$ and
    $B$ is closed),
    which is absurd as $A$ is non-empty and open.

    So let $y \in \supp D|_C$ such that $\degout_{B}(y) > D(y)$.
    Then choose $\zeta \in T_y\Gamma$ along some edge in
    $A$. Cut $\Gamma$ along $\zeta$ to obtain a new metric graph $\Gamma'$,
    and let $C'$ be the set of points of $\Gamma'$ lying above $C$.
    We claim that $C'$ is connected.
    Indeed, since $y \in \supp D|_C$, it follows that $D(y) \geq 1$ and so
    $\degout_{B}(y) \geq 2$. In particular, this means there are at least
    two independent paths between $x$ and $y$ in $A$,
    which go along a different
    tangent at $y$. So after the cut, one of the two paths has to stay intact,
    which implies the connectedness of $C'$.
    In $\Gamma'$ there are two points lying above $y$, say $y_0, y_1$ and
    suppose $y_1$ is the leaf that corresponds to the cut along $\zeta$.
    Lift the model of $\Gamma$ to $\Gamma'$ by letting
    \[V' = (V\setminus \{y\})\cup\{y_0, y_1\}.\]

    Define a divisor $D'$ on $\Gamma'$ by lifting $D$ to $\Gamma'$ on
    $\Gamma \setminus \{y\}$ and let $D'(y_0) = D(y) - 1$
    and $D(y_1) = 0$. We claim that $D'$ is a $V'$-unsaturated divisor of $\Gamma'$.
    Suppose there is a closed subgraph $Z' \subseteq \Gamma'$ with
    $\boundary{Z'}\cap V' \neq \emptyset$ that can fire. Then for all
    $z \in \boundary{Z'}$, we have that $\degout_{Z'}(z) \leq D'(z)$.
    Let $Z$ be the image of $Z'$ under the gluing $\Gamma' \to \Gamma$.
    It follows that for $z \in \boundary{Z}\setminus\{y\}$,
    \begin{equation*}
        \degout_{Z}(z) = \degout_{Z'}(z) \leq D'(z) = D(z).
    \end{equation*}
    If $y \in \boundary{Z}$, then we have by construction that
    \begin{equation*}
        \degout_{Z}(y) \leq \degout_{Z'}(y_0) + 1 \leq D'(y_0) + 1 = D(y).
    \end{equation*}
    We conclude that $Z$ can fire and so $\boundary{Z}\cap V = \emptyset$
    since $D$ is $V$-unsaturated. In particular, $\boundary{Z}$ is contained 
    in the image of $\boundary{Z'}$, so we deduce that $\boundary{Z'}\cap V' =
    \emptyset$ by choice of model $V'$.

    So we may apply the induction hypothesis to $\Gamma', D', C'$ to get that
    $g(C') \geq \deg D'|_{C'}$. Now, notice that $\hat{C'}$
    has the same number of
    edges as $\hat{C}$ but one extra vertex ($\hat{C'}$ is the cut of $\hat{C}$
    along $\zeta$), so 
    by Remark \ref{rem:genus-formula}, we have that $g(C') = g(C) - 1$.
    Furthermore, we have by construction of $D'$ that
    $\deg D'|_{C'} = \deg D|_C - 1$,
    and so the conclusion follows.
\end{proof}

\begin{lemma}
    \label{lemma:dim-deg-formula}
    Suppose $\Gamma$ is connected.
    Let $D$ be a generic divisor. Let $C_1, \dots, C_n$
    be the connected components of $\Gamma \setminus \supp D_E$.
    Then
    \begin{equation*}
        \dim \Delta_D = \deg D - g(\Gamma) + 
        \sum_{i = 1}^n\left(g(C_i) - \deg D|_{C_i}\right)
    \end{equation*}
\end{lemma}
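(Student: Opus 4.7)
The plan is to reduce the claimed identity to an application of Lemma \ref{lemma:glueing-genus} after first expressing $\dim\Delta_D$ combinatorially via Proposition \ref{prop:dim-comp}. By that proposition, $\dim\Delta_D = n - 1$, so the statement is equivalent to
\[
g(\Gamma) - \sum_{i=1}^n g(C_i) = \deg D - \sum_{i=1}^n \deg D|_{C_i} - n + 1.
\]
Writing $D = D_V + D_E$ and noting that each $C_i$ is disjoint from $\supp D_E$ while their union together with $\supp D_E$ covers $\Gamma$, we have $\deg D - \sum_i \deg D|_{C_i} = \deg D_E$. So the goal reduces to showing
\[
g(\Gamma) - \sum_{i=1}^n g(C_i) = \deg D_E - n + 1.
\]

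The first key step is to exploit genericity. By Proposition \ref{prop:generic-char}, $D(x) < \val(x) = 2$ for every $x$ in the interior of an edge, so every chip of $D_E$ has multiplicity exactly $1$. Consequently $\deg D_E = |\supp D_E|$, and moreover every point of $A := \supp D_E$ has valence $2$ in $\Gamma$, so $\sum_{x \in A}(\val(x) - 1) = |A| = \deg D_E$.

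The second key step is to apply Lemma \ref{lemma:glueing-genus} to the finite set $A \subset \Gamma$. Since the connected components of $\Gamma \setminus A$ are exactly the $C_i$, we have $N = n$, and the genus of a disjoint union is the sum of the genera, so $g(\Gamma \setminus A) = \sum_{i=1}^n g(C_i)$ (recall that $g(C_i)$ is by definition $g(\hat{C_i})$). Substituting into the lemma gives
\[
g(\Gamma) = \sum_{i=1}^n g(C_i) + \deg D_E + 1 - n,
\]
which is precisely the reduced identity.

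There is no real obstacle here: once genericity is used to force all edge-chips to be simple, so that $\deg D_E$ coincides with the number of cut points, the statement becomes a direct book-keeping consequence of the cut-and-glue genus formula and of Proposition \ref{prop:dim-comp}. The only mild care is in identifying $g(\Gamma \setminus \supp D_E)$ with $\sum_i g(C_i)$ via the disjoint union decomposition, and in verifying that the hypothesis of Lemma \ref{lemma:glueing-genus} (finiteness of $A$) holds, which is immediate since $\supp D_E$ is finite.
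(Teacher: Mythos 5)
Your proof is correct and follows essentially the same route as the paper: both express $\dim\Delta_D = n-1$ via Proposition \ref{prop:dim-comp}, use genericity to get that every chip of $D_E$ is simple and sits at a valence-$2$ point, and then apply Lemma \ref{lemma:glueing-genus} to $A = \supp D_E$. The only cosmetic difference is that you make the identity $\sum_{x\in A}(\val(x)-1) = \deg D_E$ fully explicit before invoking the lemma, whereas the paper folds it silently into the application.
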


\begin{proof}
    The points of $\supp D_E$ are all of valence 2, furthermore, since $D$ is
    generic, by Proposition \ref{prop:generic-char},
    $D(x) = 1$ for
    all $x \in \supp D_E$.
    So we deduce from Lemma
    \ref{lemma:glueing-genus} that
    \begin{equation*}
        g(\Gamma) = g(\Gamma \setminus \supp D_E) 
        + \deg D_E + 1 - N,
    \end{equation*}
    where $N$ is the number of connected components of $\Gamma \setminus D_E$.
    By Proposition \ref{prop:dim-comp}, we have that
    \begin{align*}
        \dim \Delta_D &= N - 1\\
        &= \deg D_E - g(\Gamma) + g(\Gamma \setminus \supp D_E) \\
        &=  \deg D - g(\Gamma) + g(\Gamma \setminus \supp D_E) - \deg D_V.
    \end{align*}  
    Now the result follows from the fact that
    \begin{equation*}
        g(\Gamma \setminus \supp D_E) = \sum_{i= 1}^n g(C_i),
        \qquad\textrm{and}\qquad
        \deg D_V = \sum_{i = 1}^n \deg D|_{C_i}.
    \end{equation*}
\end{proof}

By the Riemann-Roch theorem (Theorem \ref{thm:trop-rr}) we have that
\[r(K) = \deg(K) - g(\Gamma) + 1.\]
The degree of $K$ is $2g - 2$, which may be computed using Remark
\ref{rem:genus-formula}, and hence the rank of $K$ is $g - 1$.
By Proposition \ref{prop:rank-dim}, we know that maximal cells have dimension at
least $g - 1$. As we will now see, there is a general class of graphs for which
the canonical linear system always has cells of higher dimension.

\begin{definition}
    We say two cycles are \emph{disjoint}
    if they don't intersect.
\end{definition}

\begin{proposition}
    \label{prop:disjoint-dim}
    If $\Gamma$ has at least two disjoint cycles,
    then there is a cell $\sigma$ of $|K|$
    of dimension at least $g$.
\end{proposition}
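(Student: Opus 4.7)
The plan is to exhibit a divisor $D^* \in |K|$ whose cell $\Delta_{D^*}$ has dimension at least $g$, leveraging the fact that the two disjoint cycles each contribute $+1$ to the genus excess appearing in Lemma \ref{lemma:dim-deg-formula}.

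I would start by firing the cycles $\gamma_1$ and $\gamma_2$ in succession, beginning from the canonical divisor $K$. The move $CF(\gamma_i, \epsilon)$ is legal for small $\epsilon > 0$: for each $p \in \partial \gamma_i$, since $\gamma_i$ is a simple closed curve we have $\val_{\gamma_i}(p) = 2$, and therefore
\[K(p) = \val(p) - 2 = \val_B(p) = \degout_{\gamma_i}(p),\]
where $B$ denotes the closure of $\Gamma \setminus \gamma_i$. Because $\gamma_1 \cap \gamma_2 = \emptyset$ the two firings do not interfere, and the resulting divisor $D \in |K|$ satisfies $D|_{\gamma_1 \cup \gamma_2} = 0$ while $\supp D_E$ contains a chip at distance $\epsilon$ on every $B$-edge emanating from a junction, effectively separating each $\gamma_i$ from the rest of the graph.

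Next, if $D$ fails to be generic in the sense of Proposition \ref{prop:generic-char}, I would iteratively apply further chip-firing moves until reaching a generic divisor $D^* \in |K|$. The key observation making this step work is that any subgraph $Z$ with $\partial Z \cap V \neq \emptyset$ that can still fire on $D$ must have $\partial Z$ disjoint from $\gamma_1 \cup \gamma_2$: a boundary point $p \in \partial Z \cap \gamma_i$ would require $D(p) \geq \degout_Z(p) \geq 1$, contradicting $D|_{\gamma_i} = 0$. Hence these further firings redistribute chips only within $\overline{\Gamma \setminus (\gamma_1 \cup \gamma_2)}$, preserving both the chiplessness of the two cycles and their separation (the chips sitting at distance $\epsilon$ from the junctions, when affected at all, can only be pushed further into $B$, which maintains the cut).

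Once such a $D^*$ is in hand, $\gamma_1$ and $\gamma_2$ lie in distinct connected components $C_1, C_2$ of $\Gamma \setminus \supp D^*_E$, with $g(C_i) \geq g(\gamma_i) = 1$ and $\deg D^*|_{C_i} = 0$. Since $D^*$ is generic, Lemma \ref{lemma:conn-comp-genus} ensures that the remaining components contribute non-negatively to the sum, so $\sum_i (g(C_i) - \deg D^*|_{C_i}) \geq 2$, and Lemma \ref{lemma:dim-deg-formula} yields
\[\dim \Delta_{D^*} \geq \deg K - g(\Gamma) + 2 = (2g - 2) - g + 2 = g.\]
The hardest part will be making the iterative firing step rigorous: one must argue that the process terminates (using that $|K|$ has only finitely many cells) and that no firing along the way can reintroduce chips onto $\gamma_1 \cup \gamma_2$ or reconnect the two cycles inside $\Gamma \setminus \supp D^*_E$.
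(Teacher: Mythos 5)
Your endgame is exactly the paper's: fire the two cycles off the canonical divisor, reach a generic divisor $D^*$ whose support on edges separates the two cycles into distinct chip-free components of positive genus, then combine Lemma \ref{lemma:conn-comp-genus} with the dimension formula of Lemma \ref{lemma:dim-deg-formula} to extract the bound $\dim\Delta_{D^*}\geq \deg K - g + 2 = g$. (Firing $\gamma_1$ and $\gamma_2$ individually is the same as firing the union $Z_1\cup Z_2$ for disjoint cycles, so that part matches too.) Where you diverge is the middle step: you propose to manufacture the generic divisor by repeatedly applying chip-firing moves, whereas the paper simply appeals to the previously recorded fact that generic divisors are dense in $|K|$ and picks a generic $D$ sufficiently close to the fired divisor $F$ so that $\supp D$ still avoids $Z_1\cup Z_2$ and still separates the two cycles. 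The density argument sidesteps entirely the two things you flag as hard: termination of the iteration, and the invariant that further firings never push chips back onto $\gamma_1\cup\gamma_2$ or reconnect the cycles.

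On that iterative step there is in fact a genuine gap, not merely a presentational one. Your observation that any firing set $Z'$ with $\partial Z'\cap V\neq\emptyset$ must have $\partial Z'$ disjoint from the cycles is correct; but when $Z'$ is disjoint from $\gamma_i$ (rather than containing it), firing $Z'$ pushes chips \emph{outward} from $Z'$, i.e.\ potentially \emph{toward} $\gamma_i$. Choosing each firing amount small keeps chips off $\gamma_i$ at that step, but over the iteration nothing in your sketch prevents the separating chips from drifting all the way onto the cycle or merging the two relevant components. You would also need to argue that the iteration strictly increases the dimension of the cell (so that it terminates in a maximal cell), which is itself nontrivial since Proposition \ref{prop:dim-comp} measures dimension by the number of connected components of $\Gamma\setminus\supp D_E$, not by the number of edge-supported chips. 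The paper's route via density is both shorter and avoids these issues; if you want to keep the constructive flavor, the cleanest fix is to invoke density after the firing step exactly as the paper does, rather than building the generic divisor by hand.
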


\begin{proof}
    The union of the two cycles $Z = Z_1 \cup Z_2$
    is a subgraph where each vertex $v$ has $\val(v) - 2$ outgoing
    edges. Since $K(v) = \val(v) - 2$, we deduce $Z$ can fire and firing $Z$
    will remove all the chips from it. So fire $Z$ by a
    small amount $\epsilon > 0$ to obtain a divisor $F \in |K|$.
    Since generic divisors are dense in $|K|$, we
    can find a generic divisor $D$ close enough to $F$,
    so that $\supp D \cap Z =
    \emptyset$. It follows that $\Gamma \setminus D_E$ has at least two
    connected components containing a cycle and no point of $D_V$ (the
    connected components containing $Z_1$ and $Z_2$ respectively).
    By Proposition \ref{prop:generic-char} and 
    Lemma \ref{lemma:conn-comp-genus}, it follows that 
    \begin{equation*}
        \sum_{C} (g(C) - \deg D|_C)
        \geq 2,
    \end{equation*}
    where the sum is over the connected components of
    $\Gamma \setminus \supp D_E$.
    Since $\deg D = \deg K = 2g - 2$, the result follows from
    Lemma \ref{lemma:dim-deg-formula}.
\end{proof}

\begin{remark}
    The converse statement is not true, for example the bipartite graph on $6$
    vertices does not contain two disjoint cycles, however $|K|$ has a cell of
    dimension $5 \geq g(\Gamma) = 4$. This divisor is represented in Figure
    \ref{fig:bipartite}.
\end{remark}

\begin{figure}[ht]
    \centering
    \begin{tikzpicture}[scale=2.5]
    \draw (0,0)--(0,1);
    \draw (1,0)--(0,1);
    \draw (2,0)--(0,1);
    \draw (0,0)--(1,1);
    \draw (1,0)--(1,1);
    \draw (2,0)--(1,1);
    \draw (0,0)--(2,1);
    \draw (1,0)--(2,1);
    \draw (2,0)--
        node[pos=1/7, divisor] {}
        node[pos=2/7, divisor] {}
        node[pos=3/7, divisor] {}
        node[pos=4/7, divisor] {}
        node[pos=5/7, divisor] {}
        node[pos=6/7, divisor] {}
        (2,1);
    \node[vertex] at (0,0) {};
    \node[vertex] at (1,0) {};
    \node[vertex] at (2,0) {};
    \node[vertex] at (0,1) {};
    \node[vertex] at (1,1) {};
    \node[vertex] at (2,1) {};
    \end{tikzpicture}
    \caption{Divisor $D$ on bipartite graph on six vertices
    with $\dim \Delta_D = 5$. The points in the support of $D$ are all of
    multilpicity $1$.}
    \label{fig:bipartite}
\end{figure}
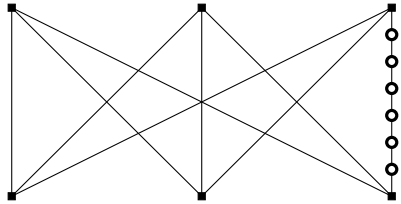

Unlike in the case of arbitrary complete linear systems, the lower bound for
dimension is always attained in the case of canonical linear systems.

\begin{proposition}
    \label{prop:minimal-dim-attained}
    For any metric graph $\Gamma$, there always exists a
    maximal cell $\sigma$ of $|K|$ of dimension $g - 1$.
    Furthermore, for any $D \in \relint(\sigma)$, we have that
    $\supp D \cap V = \emptyset$.
\end{proposition}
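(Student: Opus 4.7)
The plan is to exhibit an explicit generic divisor $D \in |K|$ with $\supp D \cap V = \emptyset$ and $\dim \Delta_D = g - 1$, so as to attain with equality the lower bound of Proposition \ref{prop:rank-dim}. I would first reduce to the case that $\Gamma$ is connected, treating each component separately: on a disjoint union, both the canonical linear system factors and the genus adds, so the result assembles componentwise. Thus, assuming $\Gamma$ connected, $\deg K = 2g - 2$ and $r(K) = g - 1$ by Theorem \ref{thm:trop-rr}.

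Fix a spanning tree $T \subseteq G$ and single out one non-tree edge $e_0$; denote the remaining non-tree edges by $e_1, \ldots, e_{g-1}$. I target the combinatorial type of divisor that places two distinct interior chips on each $e_i$ with $i < g$ and no chips elsewhere, for a total of $2(g - 1) = 2g - 2$ chips matching $\deg K$. For any $D$ of this type, genericity is automatic by Proposition \ref{prop:generic-char}: each chip has $D(x) = 1 < 2 = \val(x)$, and since $\supp D \cap V = \emptyset$, no closed subgraph whose boundary meets $V$ can fire, for $D$ vanishes identically on $V$. Removing $\supp D$ from $\Gamma$ disconnects each $e_i$ with $i < g$ into three pieces, isolating $g - 1$ middle segments of genus $0$, while leaving the subgraph $T \cup e_0$ intact as the remaining connected component of genus $1$. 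Thus $\Gamma \setminus \supp D$ has $N = g$ connected components, and Proposition \ref{prop:dim-comp} gives $\dim \Delta_D = N - 1 = g - 1$.

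The remaining step is to verify that at least one such $D$ actually lies in $|K|$, i.e.\ that this combinatorial cell is non-empty. Writing $D - K = \fdiv(f)$, the required piecewise linear function $f$ has a single slope $s_e$ on each tree edge and on $e_0$, and on each $e_i$ with $i < g$ has a three-slope ``valley'' pattern with slopes $s_i^-, s_i^- + 1, s_i^- + 2$ on three consecutive subintervals separated by the chip positions $p_i^- < p_i^+$. Flow conservation $\ord_v(f) = 2 - \val(v)$ at each vertex, together with the closure condition around the fundamental cycle of $e_0$, determines the slopes up to the residual degrees of freedom from a linear system with integer coefficients and integer right-hand sides (the values $2 - \val(v)$), so integer-slope solutions exist. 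The closure conditions around the remaining $g - 1$ fundamental cycles then impose $g - 1$ affine equations on the positions $p_i^\pm$, each of the form $p_i^- + p_i^+ = c_i$, cutting out a $(g - 1)$-dimensional affine subspace of the configuration space. For a suitable choice of the slopes $s_i^-$ the constants $c_i$ can be arranged to lie strictly between $0$ and $2L_i$, so the subspace meets the open region $\{0 < p_i^- < p_i^+ < L_i\}$ and the cell is non-empty.

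The main technical obstacle is establishing this non-emptiness in full generality: one must check that the affine subspace of valid positions meets the strict open interiors for every metric graph and every choice of combinatorial data. This reduces to a finite case analysis balancing integer slope constraints against real edge lengths, and may require small modifications to the spanning tree $T$ or the distinguished edge $e_0$ in degenerate length configurations (for instance, when the fundamental cycle of $e_0$ swallows most of $T$). Once this is handled, the cell $\Delta_D$ of $|K|$ is a maximal cell of dimension exactly $g - 1$, and by construction every divisor in its relative interior has the prescribed combinatorial type, hence satisfies $\supp D' \cap V = \emptyset$.
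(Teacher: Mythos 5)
Your overall strategy is the same as the paper's only in its last steps --- both end up computing $\dim\Delta_D$ via the number of connected components of $\Gamma\setminus\supp D_E$ and showing this number is $g$, so that Proposition \ref{prop:dim-comp} gives $\dim\Delta_D = g-1$. But the way you \emph{produce} the divisor $D$ is genuinely different, and this is where the gap lies. You fix a combinatorial type in advance (two interior chips on each of $g-1$ chosen non-tree edges, nothing elsewhere) and then try to build a rational function $f$ with $K + \fdiv(f)$ of that type. This amounts to proving that a specific cell of $|K|$ is non-empty, and as you yourself concede in your final paragraph, you do not actually carry that out: the ``finite case analysis balancing integer slope constraints against real edge lengths'' and the suggestion that one ``may require small modifications to the spanning tree $T$ or the distinguished edge $e_0$'' is an acknowledgement that the existence argument is incomplete. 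In fact, the interdependence you gloss over is real: changing $s_i^-$ to move $c_i$ into $(0,2L_i)$ also perturbs the divergence at the two endpoints of $e_i$, which propagates through the flow constraints into the slopes on the spanning tree and hence into the tree-path contributions to \emph{all} the $c_j$. You cannot adjust the $s_i^-$ independently, and your proof gives no mechanism for handling that coupling. So the heart of the proposition --- existence --- is not proved.

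The paper avoids this entirely by letting the rank do the work. Since $r(K) = g-1$ by Riemann--Roch, for \emph{any} choice of $g-1$ interior points $P_1,\dots,P_{g-1}$ with $g(\Gamma\setminus\{P_i\}) = 1$ there is a divisor $R \in |K - P_1 - \cdots - P_{g-1}|$, hence $S = R + \sum P_i \in |K|$ with $\supp S \supseteq \{P_i\}$. Perturbing $S$ to a nearby generic divisor $D$ keeps a chip of $D_E$ near each $P_i$, which forces $g(\Gamma\setminus\supp D_E) \le 1$; plugging this into Lemma \ref{lemma:glueing-genus} gives $\#\{\text{components}\}\le g$, so $\dim\Delta_D \le g-1$, and equality holds by Proposition \ref{prop:rank-dim}. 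Notice that this does \emph{not} require $D$ to have exactly the combinatorial type you target --- $D$ might well have extra chips beyond those near the $P_i$ --- and it is precisely this looseness that makes the argument go through without any feasibility analysis. The chain of equalities forced by $\dim\Delta_D = g-1$ then delivers $\deg D_E = \deg D$ as a free by-product, giving $\supp D \cap V = \emptyset$ without having imposed it by hand. If you want to rescue your constructive version, the cleanest repair is to drop the prescribed combinatorial type and instead run the paper's rank argument: it already picks out a divisor supported on the non-tree edges near the midpoints you had in mind, but without committing to exactly two chips per edge.
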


\begin{proof}
    By the tropical Riemann-Roch theorem (Theorem \ref{thm:trop-rr}),
    we get that $r(K) = g-1$.
    So choose $P_1, \dots, P_{g-1}$ in $\Gamma \setminus V$, such that
    $\Gamma \setminus \{P_1, \dots, P_{g-1}\}$ has genus $1$.
    By the definition of rank, there exists some divisor 
    $R \in |K - P_1 - \dots - P_{g - 1}|$, so let $S = R + P_1 + \dots +
    P_{g-1}$.

    Let $D \in |K|$ be a generic divisor
    sufficiently
    close enough to $S$, so that $B(P_i, \epsilon) \cap \supp D$ is
    non-empty for some small $\epsilon$ (we may always find such a divisor $D$,
    since generic divisors are dense in the complete linear system).
    For such a $D$ we necessarily have 
    $g(\Gamma \setminus \supp D_E) \leq 1$.

    Now we may apply
    Lemma \ref{lemma:glueing-genus} to get
    \begin{align*}
        \#\{\textrm{conn. comp. of } \Gamma \setminus D_E\} &= 
        1 + \deg D_E - (g(\Gamma) - g(\Gamma \setminus D_E))\\
        &\leq 1 + \deg D - (g - 1) = g.
    \end{align*}
    By Proposition \ref{prop:dim-comp}, we get that $\dim \Delta_D \leq g-1$.
    We also have that $\dim \Delta_D \geq g-1$,
    by Proposition \ref{prop:rank-dim},
    so we must get $\dim \Delta_D = g - 1$.
    In particular, this forces $\deg D_E = \deg D$ and so $D_E = D$.
\end{proof}

\subsection{Tropical linear systems}
\label{sec:tropical-linear-systems}

We would now like to define the notion of linear systems on metric graphs. In
algebraic geometry linear systems are projective subspaces of the complete
linear system, so in analogy we will call tropical linear system the
projectivizations of tropical submodules of $R(D_0)$.

\begin{definition} 
    Let $\fd$ be a tropically convex subset of the complete linear system
    $|D_0|$,
    we say $\fd$ is a \emph{tropical linear system}
    (or tropical linear \emph{series}).

    For any $D \in |D_0|$, we
    will denote by $R(\fd, D)$ the cone over $\fd$ in $R(D)$. In other words,
    \[R(\fd, D) := \{f \in R(D) \mid D + \fdiv(f) \in \fd\}\]
    is the tropical submodule of $R(D)$ whose tropical projectivization
    is $\fd$.
\end{definition}

\begin{remark}
    Note that $R(\fd, D)$ contains the constant functions if and only if $D \in
    \fd$.
\end{remark}

\begin{remark}
    Some authors choose to start with a tropical
    submodule $\Sigma \subseteq R(D_0)$ and denote the associated tropical linear
    system by 
    \[|\Sigma| := \T(\Sigma) \subseteq |D_0|.\]
    This is the convention used in \cite{linsys-independence} and
    \cite{kodaira-dimensions}. Note that in these two papers the authors use the
    term
    ``tropical linear series" for $\Sigma$ rather than $|\Sigma|$.
\end{remark}

\begin{definition}
    Let $E$ be an effective divisor, we define
    \begin{equation*}
        \fd(-E) = \{D \in \fd \mid D - E \geq 0\}.
    \end{equation*}
    Note that $\fd(-E)$ is also a tropical linear series as for any choice of $D
    \in |D_0|$,
    \begin{equation*}
        R(\fd(-E), D) = R(\fd, D) \cap R(D - E) \subseteq R(D), 
    \end{equation*}
    is a tropical submodule.
\end{definition}

\begin{definition} 
    We define the \emph{rank} of $\fd$ to be the integer
    \begin{equation*}
        r(\fd) = \max\{d \in \N \mid \fd(-E) \neq \emptyset
        \textrm{ for all effective divisor $E$ of degree $d$}\},
    \end{equation*}
    where if $\fd(-E) = \emptyset$, we set $r(\fd) = -1$.
\end{definition}

\begin{definition}
    We will say that a
    tropical linear system $\fd \subseteq |D_0|$ is finitely generated if
    $R(\fd, D_0)$ is finitely generated as a tropical module.
\end{definition}

\begin{proposition}
    \cite[Lemma 2.8]{linsys-independence}
    \label{prop:fin-gen-definable}
    If $\fd \subseteq |D_0|$ is a finitely generated tropical linear series,
    then
    $\fd$ is a closed, definable subset of $|D_0|$.
\end{proposition}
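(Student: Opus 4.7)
The plan is to use finite generation directly. Fix a finite generating set $g_1, \ldots, g_n$ of $R(\fd, D_0)$, so that every non-$(-\infty)$ element is a tropical combination $f_a := \bigoplus_{i=1}^n a_i \odot g_i$ for some $a = (a_1, \ldots, a_n) \in (\R \cup \{-\infty\})^n$ not identically $-\infty$. After refining the chosen model of $\Gamma$, I may assume each $g_i$ is linear on every edge. Since projectivization quotients out global tropical shifts, I restrict attention to the compact normalization
\[\Delta := \{a \in [-\infty, 0]^n : \max_i a_i = 0\},\]
and study the evaluation map $\Phi : \Delta \to |D_0|$, $a \mapsto D_0 + \fdiv(f_a)$, whose image is exactly $\fd$.

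For closedness, the key observation is that $\Phi$ is continuous: the map $a \mapsto f_a$ is continuous from $\Delta$ into $(R(D_0), \|\cdot\|_\infty)$ because $a \mapsto \max_i(a_i + g_i(x))$ is continuous in $a$ even when some coordinates equal $-\infty$, and the quotient $R(D_0)/\R \to |D_0|$ is an isometry with respect to $d_\infty$. Since $\Delta$ is compact, $\fd = \Phi(\Delta)$ is compact and hence closed in $|D_0|$.

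For definability, I fix a cell $\sigma \in \cK_{D_0}$ and aim to write $\fd \cap \sigma$, viewed inside $|\sigma|$, as a finite union of polyhedra. The plan is to subdivide $\Delta$ polyhedrally according to the combinatorial type of $f_a$. On each edge $e$, the dominance pattern of the generators in $f_a$ is determined by which affine-linear functions $a_i + g_i(\cdot)$ are maximal on which sub-interval of $e$, and this pattern stays constant on open pieces of a polyhedral subdivision of $\Delta$. On each such piece, the break points of $f_a$ on each edge are obtained by solving equations $a_i + g_i = a_j + g_j$ in the edge coordinate, giving affine-linear functions of $a$; the slope changes, and hence the multiplicities of $\fdiv(f_a)$, are also locally constant. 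Thus on each piece the divisor $\Phi(a)$ lies in a fixed cell $\tau \in \cK_{D_0}$ and its chip positions in $|\tau|$ are affine-linear in $a$. By Fourier-Motzkin, the image in $|\tau|$ of each such piece is a polyhedron, and collecting the pieces that map into $\sigma$ presents $\fd \cap \sigma$ as a finite union of polyhedra in $|\sigma|$, which is manifestly definable.

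The main technical obstacle is rigorously setting up the polyhedral subdivision of $\Delta$ and verifying that on each piece $\Phi$ is indeed affine-linear with image in a single cell. The bookkeeping involves enumerating dominance patterns on each edge, handling the boundary behaviour when some $a_i = -\infty$ (where $g_i$ simply drops out of the maximum), and tracking how multiplicities at break points are assembled from slope differences of the $g_i$. Finiteness is guaranteed because there are only finitely many edges and only finitely many pairs of generators, but translating the qualitative picture into a clean disjunctive normal form in the coordinates of $|\sigma|$ is where the work lies.
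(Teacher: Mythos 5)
The paper does not prove this statement — it cites \cite[Lemma 2.8]{linsys-independence} directly — so there is no paper proof to compare against; your argument must be judged on its own.

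It is correct. The closedness part is complete: $[-\infty,0]^n$ with the order topology is compact, $\{\max_i a_i = 0\}$ is closed in it, so $\Delta$ is compact; the map $a \mapsto f_a$ is uniformly continuous into $C(\Gamma)$ (as $a_i \to -\infty$ the term $a_i + g_i$ drops out uniformly since $g_i$ is bounded), and uniform convergence controls $d_\infty$; surjectivity of $\Phi$ onto $\fd$ follows because any $\bigoplus_i a_i \odot g_i$ with not all $a_i = -\infty$ equals $f_{a'}$ for $a' = a - \max_i a_i \in \Delta$ up to tropical scaling, which does not change the associated divisor. The definability part has the right skeleton. Two refinements worth making explicit when you write it out: first, stratify $\Delta$ by the support $S = \{i : a_i > -\infty\}$ and carry out the polyhedral subdivision separately in each $\Delta_S \subset \R^S$, since the polyhedral machinery lives in Euclidean space and the $-\infty$ boundary strata are handled by the lower-dimensional $\Delta_{S'}$; second, note that the $\Delta_S$ are unbounded, which is harmless because affine images of unbounded polyhedra are still polyhedra, and here they are automatically bounded since they land in the bounded set $|\sigma|$. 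With those in place, the argument goes through exactly as you say: on each open piece the dominance pattern along every edge is fixed, the conditions defining that piece are linear in $a$ after eliminating the affine-linear breakpoint coordinates, the combinatorial type $(m_V, m_E, s)$ of $D_0 + \fdiv(f_a)$ is constant, the chip coordinates in $|\tau|$ are affine in $a$, and Fourier--Motzkin yields a polyhedron. You have correctly identified where the remaining work lies; it is bookkeeping rather than a missing idea.
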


\begin{corollary}
    Any finitely generated tropical linear series $\fd$ admits the structure of
    an abstract polyhedral complex.
\end{corollary}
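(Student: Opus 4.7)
The plan is to combine the two immediately preceding results. By Proposition \ref{prop:fin-gen-definable}, the hypothesis that $\fd$ is finitely generated guarantees that $\fd$ is a closed and definable subset of the complete linear system $|D_0|$. But Corollary \ref{cor:closed-def-complex} already says precisely that any closed, definable subset of $|D_0|$ inherits the structure of an abstract polyhedral complex. So the only thing to do is to cite these two results in sequence.

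There is no real obstacle here; the work has been done in setting up the definable/closed framework. If anything, the only step that deserves a brief comment is that the polyhedral complex structure on $\fd$ obtained this way is the one induced cell-by-cell from the ambient structure on $|D_0|$: for each cell $\sigma \in \cK_{D_0}$, the intersection $\fd \cap \sigma$ is closed and definable inside $|\sigma|$, hence by Proposition \ref{prop:closed-definable-polyhedra} a finite union of polyhedra, and these polyhedral pieces glue together compatibly along faces by the same refinement argument used in the proof of Corollary \ref{cor:closed-def-complex}.

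So the proof is a one-line deduction: apply Proposition \ref{prop:fin-gen-definable} to conclude $\fd$ is closed and definable, then apply Corollary \ref{cor:closed-def-complex}.
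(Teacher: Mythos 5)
Your proof is correct and matches the paper's own argument exactly: the paper's proof is the one-line citation of Corollary \ref{cor:closed-def-complex}, with Proposition \ref{prop:fin-gen-definable} supplying the hypotheses. Your added remark about how the cell-by-cell polyhedral structure is induced is accurate but merely recapitulates the content of Corollary \ref{cor:closed-def-complex}.
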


\begin{proof}
    This follows from Corollary \ref{cor:closed-def-complex}.
\end{proof}

We would now like to generalize
Proposition \ref{prop:rank-dim} to 
tropical linear systems. The problem we face is that the notion of
\emph{generic} divisor we defined in the previous section is not as tractable in
this setting, as the abstract polyhedral complex structure is not so easy to
characterize. However, we can show the result for a dense class of divisors,
which will then imply the dimension bound for all maximal cells.
Fix $\fd \subseteq |D_0|$ a complete linear system.

\begin{definition}
    Let $D \in \fd$ be a divisor.
    Let $f \in R(\fd, D)$ be a non-constant function and let $Z$ be the set
    on which $f$ attains its maximum. Then we say that
    $f$ \emph{splits} $D$ (at $x$) if
    $D(x) + \ord_x(f) > 0$ for some $x \in \boundary{Z}$.

    We will say $D$ \emph{does not split} (in $\fd$) if
    there is no $f \in R(\fd, D)$ that splits $D$.
\end{definition}

\begin{proposition}
    The set of divisors that do not split
    in $\fd$ is dense in $\fd$.
\end{proposition}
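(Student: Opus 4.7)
The plan is to prove density by an iterative procedure: starting from an arbitrary $D \in \fd$ and $\epsilon > 0$, I will apply small weighted chip-firing moves to produce a sequence in $\fd$ along which the cell dimension $\dim \Delta_{D_i}$ in the complete linear system $|D_0|$ strictly increases at each step, continuing until a non-splitting divisor within $\epsilon$ of $D$ is reached. Since $\dim \Delta_{D_i}$ is a bounded integer, termination is guaranteed.

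Concretely, set $D_0 = D$ and iterate: if $D_i$ does not split, stop and output $D_i$; otherwise, choose $f_i \in R(\fd, D_i)$ splitting $D_i$ with maximum set $Z_i$ and splitting point $x_i \in \boundary{Z_i}$, choose a sufficiently generic $t_i \in (0, \epsilon/2^{i+1})$, and define
\[D_{i+1} := D_i + \fdiv\bigl(f_i \oplus (\sup f_i - t_i)\bigr).\]
Since $D_i \in \fd$, the constant function $\sup f_i - t_i$ belongs to $R(\fd, D_i)$, which is tropically convex, so $f_i \oplus (\sup f_i - t_i) \in R(\fd, D_i)$ and hence $D_{i+1} \in \fd$. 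Moreover $d_\infty(D_i, D_{i+1}) = t_i$, so the triangle inequality keeps the total drift below $\epsilon$.

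The crux is showing that $\dim \Delta_{D_{i+1}} > \dim \Delta_{D_i}$ for generic small $t_i$. The function $f_i \oplus (\sup f_i - t_i) = \max(f_i, \sup f_i - t_i)$ equals $f_i$ on the superlevel set $\{f_i \geq \sup f_i - t_i\}$ and is constant elsewhere; consequently its divisor differs from $\fdiv(f_i)$ only by the appearance of new bends at the level set $L_{t_i} := \{f_i = \sup f_i - t_i\}$. For $t_i$ generic in a small interval, $L_{t_i}$ is a finite set of points lying in edge interiors and disjoint from $V \cup \supp(D_i)$; at each such point the order of $f_i \oplus (\sup f_i - t_i)$ equals the (positive) incoming slope of $f_i$, contributing a strictly positive new chip to $D_{i+1}$ on an edge previously uncut by $\supp(D_i)_E$. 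By Proposition \ref{prop:dim-comp}, these extra cuts strictly increase the number of connected components of $\Gamma \setminus \supp(D_{i+1})_E$, yielding the desired strict inequality. The splitting hypothesis intervenes to rule out boundary cancellations, since the chip at the splitting point $x_i$ persists in $D_{i+1}$ (as $D_{i+1}(x_i) = D_i(x_i) + \ord_{x_i}(f_i) > 0$).

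Since $\dim \Delta_{D_i}$ is a strictly increasing integer sequence bounded above by $\deg D$, the iteration terminates in finitely many steps at a non-splitting divisor $D_N \in \fd$ with $d_\infty(D, D_N) < \epsilon$, proving the proposition. The main obstacle will be the careful bookkeeping in the cell-dimension step: one must verify that the new chips contributed by $L_{t_i}$ genuinely increase the edge-support once the possible loss of old support points at $\boundary{Z_i}$ is taken into account, and the splitting hypothesis is what ensures this net gain.
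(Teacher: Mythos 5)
Your overall plan is the same as the paper's: iterate the small weighted chip-firing move $f\oplus(\sup f - t)$ associated to a splitting function, shrink the step size geometrically so the triangle inequality keeps the total drift below $\epsilon$, and argue that the iteration terminates because an integer-valued quantity strictly improves at each step. The paper, however, uses the quantity $\deg D - \#\supp D$ as its decreasing invariant, whereas you use the cell dimension $\dim\Delta_D$ of $|D_0|$. The conclusion you want (that $\dim\Delta_{D_{i+1}} > \dim\Delta_{D_i}$) does happen to be true, but your justification of it is where the argument breaks down.

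The specific claim that ``these extra cuts strictly increase the number of connected components of $\Gamma\setminus\supp(D_{i+1})_E$'' is not valid as stated. Adding a cut at a point in the interior of an edge does not always increase the number of connected components of the complement: if the point lies on a cycle of the component containing it, the component count is unchanged and the genus of that component drops by one instead (Lemma \ref{lemma:glueing-genus} makes this precise: for valence-$2$ cuts, the change in component count is the change in $\#\supp D_E$ plus the change in the genus of the complement, and the latter can be negative). Moreover, your argument does not address the \emph{loss} of support: boundary points $y\in\boundary Z$ with $D_i(y) = \degout_Z(y)$ leave the support of $D_{i+1}$, and you only note that the splitting point $x_i$ persists, not that the net effect on the cut set produces strictly more components. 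Finally, the claim that the new chips land ``on an edge previously uncut by $\supp(D_i)_E$'' is not established and can fail. To make your invariant work one would need a genuinely different argument — for instance, observing that $D_i$ lies in the boundary of $\Delta_{D_{i+1}}$ (being the $t\to 0$ limit of points of a fixed combinatorial type) and in the relative interior of $\Delta_{D_i}$, so that $\Delta_{D_i}$ is a proper face of $\Delta_{D_{i+1}}$ by the abstract polyhedral complex axioms. The paper sidesteps all of this: using $n = \deg D - \#\supp D$ reduces the key step to showing that the total number of support points strictly grows under the firing, and there the splitting hypothesis does exactly the needed job (the splitting point survives while still contributing at least one new outgoing chip, so the lost support at other boundary points is always over-compensated). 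You correctly sensed that this bookkeeping was the crux and that the splitting hypothesis had to intervene there, but the bridge from ``new support points appear'' to ``component count strictly increases'' is the missing and non-trivial step.
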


\begin{proof}
    Let $D \in \fd$.
    We will show that for all $\epsilon > 0$ there exists
    a $D' \in \fd$ that does not split and
    $d_\infty(D, D') < \epsilon$. We proceed by induction on
    $n = \deg D - \#\supp D$.
    If $n=0$, then $D$ has multiplicity $1$ at all points in its support, so
    there cannot be any function that splits $D$.

    Now, suppose $n > 0$ and
    suppose there exists an $f \in R(\fd, D)$ that splits
    $D$, then let $Z$ be the set on which $f$ attains its maximum and let
    \begin{equation*}
        f_\epsilon = f \oplus (\sup(f) - \epsilon).
    \end{equation*}
    Then $f_\epsilon$ is a weighted chip-firing move.
    For $\epsilon > 0$ small enough, $Z_\epsilon \cap \supp D = Z \cap \supp D$,
    and $\interior{Z_\epsilon} \setminus Z$ consists only of open intervals,
    where we let
    \begin{equation*}
        Z_\epsilon = \{x \in \Gamma \mid \dist(x, Z) \leq \epsilon\}.
    \end{equation*}
    
    Since $f$ splits $D$, there exists some $x \in \boundary Z$ such that
    $D(x) + \ord_x(f) > 0$, in particular $D(x) + \ord_x(f_{\epsilon/2}) > 0$.
    Let $D' = D + \fdiv(f_{\epsilon/2})$.
    If $D'$ does not split,
    we're done since $d_\infty(D, D') = \epsilon/2 <
    \epsilon$. Otherwise, since
    $\# \supp D' \cap Z_\epsilon > \# \supp D \cap Z_\epsilon$
    and $D'|_{Z_\epsilon^c} = D|_{Z_\epsilon^c}$, we see that we can apply the
    induction hypothesis to get divisor $D''$ that does not split with
    $d_\infty(D', D'') < \epsilon/2$. The result the follows by the triangle
    inequality.
\end{proof}

\begin{proposition}
    If $D \in \fd$ does not split,
    then the dimension of the tangent space of $\fd$ at
    $D$ is at least $r(\fd)$.
\end{proposition}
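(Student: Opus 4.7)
The plan is to mimic the proof of Proposition~\ref{prop:rank-dim} by induction on $r = r(\fd)$. The base case $r = 0$ is immediate, so assume $r \geq 1$ and that the statement is already known in smaller rank.

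First I would produce a non-constant $f \in R(\fd, D)$ and build the candidate tangent vector. Since $r(\fd) \geq 1$, picking any $p \in \Gamma \setminus \supp D$ gives some $D' \in \fd(-p)$, and writing $D' = D + \fdiv(f)$ exhibits a non-constant $f \in R(\fd, D)$ (the chip of $D'$ at $p$ forces $p$ into $\bend(f)$). Let $Z$ be the max locus of $f$ and set $f_t = f \oplus (\sup f - t)$. Because $R(\fd, D)$ is a tropical submodule containing the constants (as $D \in \fd$), $f_t$ lies in $R(\fd, D)$ for every $t \geq 0$, so the path $t \mapsto D + \fdiv(f_t)$ stays in $\fd$ and defines a tangent vector $\zeta \in T_D\fd$. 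Choosing any $x \in \boundary Z$, the non-splitting hypothesis forces $D(x) + \ord_x(f) = 0$ so $D(x) = -\ord_x(f) > 0$; moreover, for all $t > 0$ the truncation displaces every chip at $x$ outward, giving $(D + \fdiv(f_t))(x) = 0$.

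Next I would apply the inductive hypothesis to $\fd(-x)$ at $D$. Three facts are used: $D \in \fd(-x)$ (since $D(x) \geq 1$); $r(\fd(-x)) \geq r - 1$, via the identity $\fd(-x)(-E) = \fd(-(x+E))$; and $D$ does not split in $\fd(-x)$, because the inclusion $R(\fd(-x), D) \subseteq R(\fd, D)$ converts any splitter for $D$ in $\fd(-x)$ into one for $D$ in $\fd$. Provided the inductive hypothesis applies, this yields $\dim T_D\fd(-x) \geq r - 1$.

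Finally one must show $\zeta \notin T_D\fd(-x)$, which combined with the previous step gives $\dim T_D\fd \geq r$. In local coordinates on the cells of $\Gamma^d/S_d$ meeting $D$, chips near $D$ can be ordered so that one of the chips of $D$ lying at $x$ appears as a ``pinned'' chip. The constraint $D'(x) \geq 1$ that cuts out $\fd(-x)$ locally forces this pinned chip to remain at $x$, so $T_D\fd(-x)$ is contained in the hyperplane where the pinned-chip coordinate is zero. The path $D + \fdiv(f_t)$, however, moves every chip at $x$ outward at a strictly positive rate, so $\zeta$ has nonzero component along the pinned-chip coordinate and lies outside that hyperplane.

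The main obstacle is this final independence step: the symmetric product $\Gamma^d/S_d$ is singular at diagonal points where multiple chips coincide, the local tangent cone is a Weyl chamber rather than a linear space, and identifying the ``pinned-chip coordinate'' along which $\zeta$ records a positive jump requires a careful stratification of the tangent cone of $\fd$ at $D$. A secondary delicate point is ensuring that the induction actually decreases the rank, i.e., that $r(\fd(-x)) = r - 1$ rather than $r$; the fact that $(D + \fdiv(f))(x) = 0$ prevents $x$ from being a base point of $\fd$ is the key input, and one may also have to exploit freedom in choosing $f$ so that $\boundary Z$ meets the support of a witness $E'$ of degree $r+1$ with $\fd(-E') = \emptyset$.
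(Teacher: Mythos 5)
Your plan follows the same overall structure as the paper's proof: induct on $r$, construct a tangent $\zeta$ from the truncations $f_t = f\oplus(\sup f - t)$, pass to $\fd(-x)$ for $x \in \boundary{Z}$, and argue that $\zeta$ is independent from a basis of $T_D\fd(-x)$. You correctly observe that non-splitting passes to $\fd(-x)$ (the paper leaves this implicit but does need it to close the induction), and you correctly compute that $\zeta$ moves every chip at $x$ outward.

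The gap is in the independence step, and it is exactly where the paper makes a second, essential use of the non-splitting hypothesis that your plan omits. The constraint $D'(x)\geq 1$ defining $\fd(-x)$ does not pin a particular chip: when $D(x)\geq 2$ (which is possible here, since all we know from $x\in\boundary{Z}$ is $D(x)=-\ord_x(f)\geq\degout_Z(x)$, unlike in Proposition~\ref{prop:rank-dim} where genericity forces $D(x)=1$), a path in $\fd(-x)$ may push $D(x)-1$ chips off $x$ while leaving one behind; moreover, ``the chip remaining at $x$'' is a minimum over unordered chips, not a linear coordinate, so the ``hyperplane where the pinned-chip coordinate is zero'' is not a well-defined linear constraint near $D$. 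The paper's argument is different and stronger: from the non-splitting of $D$ it deduces that no $g\in R(\fd(-x),D)$ can attain its maximum at $x$ with $\ord_x(g)<0$ — for such a $g$ one would have $x\in\boundary{Y}$ (with $Y$ the max locus of $g$) and $D(x)+\ord_x(g)\geq 1$ because $D+\fdiv(g)\geq x$, so $g$ would split $D$, a contradiction — and from this it concludes that tangent vectors of $\fd(-x)$ at $D$ fix \emph{all} chips at $x$, which is what actually lets $\zeta$ escape $T_D\fd(-x)$. That lemma is the missing ingredient in your plan. Your secondary concern about arranging $r(\fd(-x))=r-1$ exactly is a non-issue: the induction runs on the auxiliary parameter $r$, and $r(\fd(-x))\geq r(\fd)-1\geq r-1$ is all that is required.
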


\begin{proof}
    We will show by induction on $r$ that if
    $r(\fd) \geq r$, then $\dim T_D\fd \geq r(\fd)$,
    which will imply the result.
    When $r = 0$ there is nothing to show, so suppose $r > 0$.

    Since $r(\fd) \geq 1$, there exists some non-constant
    $f \in R(\fd, D)$. Define
    \begin{equation*}
        f_t = f \oplus (\sup(f) - t).
    \end{equation*}
    Then $t \in [0, \epsilon) \mapsto D + \fdiv(f_t)$
    determines a tangent vector in $\fd$.

    Let $Z$ be the set on which $f$ attains its maximum
    and choose some $x \in \boundary Z$. Clearly, $D(x) \geq 1$,
    so $D \in \fd(-x)$.
    We claim that
    $R(\fd(-x), D)$ contains no function $g$ that
    attains its maximum at $x$ and $\ord_x(g) < 0$. Indeed, if $g$ was such a
    function and we denote by $Y$ the set on which $g$ attains its maximum,
    then $x \in Y$ and $D(x) + \ord_x(g) \geq 1$ by definition of $\fd(-x)$,
    which contradicts the fact that $D$ does not split.

    Now, it follows from the definition of rank that
    \[r(\fd(-x)) \geq r(\fd) - 1 \geq r - 1,\] 
    and hence 
    $\dim_D \fd(-x) \geq r - 1$ by the induction hypothesis,
    so let $\zeta_1, \dots, \zeta_{r-1}$ be independent tangents in the
    tangent space of $\fd(-x)$ at $D$. Since
    $\fd(-x) \subseteq \fd$, we may see $\zeta_i$ as tangents in the
    tangent space of $\fd$ at $D$.
    Since there is no function $g \in R(\fd(-x), D)$
    such that $\ord_x(g) \neq 0$ attains its maximum at $x$, we deduce
    that the $\zeta_i$ all correspond to infinitesimal transformations of $D$
    that fix the chips at $x$. However, since $x$ was chosen in the boundary of
    $Z$, $\zeta$ has a non-zero component among the coordinates
    corresponding to the chips of $D$ at $x$, so $\zeta$ is independent
    from the $\zeta_i$, which completes the proof.
\end{proof}

\begin{corollary}
    \label{cor:complex-dim}
    If $\fd$ admits the structure of an abstract polyhedral complex, then
    all its maximal faces have dimension at least $r(\fd)$.
\end{corollary}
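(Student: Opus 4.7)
The plan is to combine the previous two results (density of non-splitting divisors and the tangent-space lower bound at non-splitting divisors) with the observation that the relative interior of a maximal cell is open in the polyhedral complex.

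More concretely, I would start by fixing a maximal face $\sigma$ of $\fd$ and noting that its relative interior $\relint(\sigma)$ is non-empty and, crucially, open in $\fd$: in an abstract polyhedral complex any other cell meeting $\sigma$ does so in a proper face, so points of $\relint(\sigma)$ lie in no other maximal cell. Since the non-splitting divisors form a dense subset of $\fd$ by the preceding proposition, I can choose a divisor $D \in \relint(\sigma)$ that does not split in $\fd$.

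Applying the previous proposition to this $D$ gives $\dim T_D \fd \geq r(\fd)$. Since $D$ lies in the relative interior of the maximal cell $\sigma$, Remark \ref{rmk:dims-tangents} yields $\dim \sigma = \dim_D \fd = \dim T_D \fd \geq r(\fd)$, which is exactly what we need.

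The main potential obstacle is the claim that $\relint(\sigma)$ is open in $\fd$ for a maximal cell $\sigma$; however, this is essentially built into the definition of an abstract polyhedral complex, since cells intersect along common faces and therefore the complement of $\relint(\sigma)$ inside $\fd$ is a finite union of closed cells. Beyond that, one just needs to be slightly careful that the density statement is with respect to the metric $d_\infty$ (equivalently the topology of $\fd \subseteq \Gamma^d/S_d$), which is the same topology used to define the polyhedral complex structure, so that ``dense'' really does intersect the open set $\relint(\sigma)$.
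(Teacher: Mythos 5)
Your proof is correct and takes essentially the same approach as the paper: pick a non-splitting divisor in the (open, non-empty) relative interior of a maximal cell using density, then invoke the tangent-space lower bound together with Remark~\ref{rmk:dims-tangents}. The paper's own proof is terser (and in fact contains a small typo, writing $\dim(\fd)$ where $\dim(\sigma)$ is meant), but the logic is identical.
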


\begin{proof}
    For any maximal face $\sigma$ of $\fd$, there exists
    some $D \in \relint(\sigma)$ which does not split (since $\relint(\sigma)$
    is open in $\fd$) and so 
    \[\dim(\fd) = \dim T_D(\fd) = \dim T_D\fd \geq
    r(\fd).\]
\end{proof}

\begin{corollary}
    \label{cor:fin-gen-dim}
    If $\fd$ is finitely generated, then $\fd$ is an abstract polyhedral
    complex whose maximal faces are all of dimension at least $r(\fd)$.
\end{corollary}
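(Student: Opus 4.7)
The plan is to observe that this corollary is essentially a concatenation of three results already established in the paper: it combines the fact that finite generation implies closed definability, the passage from closed definable subsets to abstract polyhedral complexes, and the dimension bound for abstract polyhedral complexes.

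First, I would invoke Proposition \ref{prop:fin-gen-definable}, which tells us that any finitely generated tropical linear series $\fd \subseteq |D_0|$ is a closed, definable subset of $|D_0|$. This is the foothold we need in order to apply the polyhedral machinery developed in Subsection \ref{sec:polyhedral-complex}.

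Next, I would apply Corollary \ref{cor:closed-def-complex}, which asserts that any closed, definable subset of $|D_0|$ inherits the structure of an abstract polyhedral complex. This gives $\fd$ the abstract polyhedral complex structure required to speak meaningfully about its maximal faces and their dimensions.

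Finally, having endowed $\fd$ with an abstract polyhedral complex structure, I would directly apply Corollary \ref{cor:complex-dim}, which states that for any tropical linear system equipped with such a structure, all maximal faces have dimension at least $r(\fd)$. Combining the three steps yields the conclusion. Since every step is a direct application of an established result, there is no serious obstacle; the only thing to be careful about is noting explicitly that the abstract polyhedral complex structure on $\fd$ coming from Corollary \ref{cor:closed-def-complex} is the same structure with respect to which Corollary \ref{cor:complex-dim} is formulated, so that the two results can indeed be chained.
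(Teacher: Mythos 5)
Your proposal is correct and matches the paper's (implicit) argument exactly: the corollary is stated without a separate proof precisely because it is the concatenation of Proposition \ref{prop:fin-gen-definable}, Corollary \ref{cor:closed-def-complex}, and Corollary \ref{cor:complex-dim}, which is what you wrote.
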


\section{The realizability problem}
\label{sec:realizability}

\subsection{Tropicalization}
\label{sec:tropicalization}

We will now describe the tropicalization process, which attributes to a
smooth projective curve a metric graph. We
will first recall a few definitions on algebraic curves and models.

\begin{definition}
An algebraic curve $C$ over $k$
is called \emph{pre-stable} if it is reduced and has only
ordinary double points as singularities. It is called \emph{stable} if in
addition 
\begin{enumerate}
    \item $C$ is connected and projective, of arithmetic genus $p_a(C) \geq 2$.
    \item If $Y$ is an irreducible component of $C$, which is isomorphic to
        $\proj^1_k$, then $Y$ meets the other components of $C$ in at least
        three points.
\end{enumerate}
If in the above we replace the requirement for three intersection points
with only
two intersection points, we obtain the definition of a \emph{semi-stable} curve.
The curve is called
\emph{totally degenerate} if all of its irreducible components are isomorphic to
the projective line over $k$ and all singularities of $C$ are $k$-rational.
\end{definition}

We may naturally associate to a pre-stable curve a weighted graph called
its \emph{dual graph}, the
vertices of which correspond to the irreducible components and the edges to the
nodes. The weights of the vertices
are given by the genus of the respective components. Figure \ref{fig:trop-ex}
depicts an example of a stable curve (in the center) and its corresponding
dual graph (on the right).

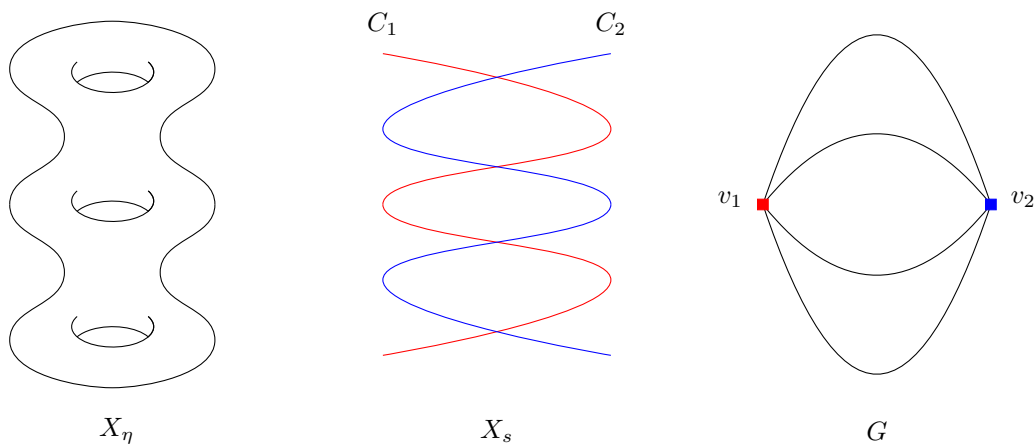
\begin{figure}[ht]
    \centering
    \begin{tikzpicture}
        \begin{scope}[scale=0.9, shift={(0.1, 0)}]
            \draw (0,0) .. controls (0,0.5) and (0.8,0.5) .. (0.8,1)
                        .. controls (0.8,1.5) and (0,1.5) .. (0,2)
                        .. controls (0,2.5) and (1,2.7) .. (1.5,2.7)
                        .. controls (2,2.7) and (3,2.5) .. (3,2)
                        .. controls (3,1.5) and (2.2,1.5) .. (2.2,1)
                        .. controls (2.2,0.5) and (3,0.5) .. (3,0);
            \draw (0,0) .. controls (0,-0.5) and (0.8,-0.5) .. (0.8,-1)
                        .. controls (0.8,-1.5) and (0,-1.5) .. (0,-2)
                        .. controls (0,-2.5) and (1,-2.7) .. (1.5,-2.7)
                        .. controls (2,-2.7) and (3,-2.5) .. (3,-2)
                        .. controls (3,-1.5) and (2.2,-1.5) .. (2.2,-1)
                        .. controls (2.2,-0.5) and (3,-0.5) .. (3,-0);
            \draw (2.025,0.2) arc (30:-210:0.6 and 0.3);
            \draw (2.025,0.2) arc (30:-30:0.6 and 0.3)
                arc (30:150:0.6 and 0.3);
            \draw (2.025,2.1) arc (30:-210:0.6 and 0.3);
            \draw (2.025,2.1) arc (30:-30:0.6 and 0.3)
                arc (30:150:0.6 and 0.3);
            \draw (2.025,-1.65) arc (30:-210:0.6 and 0.3);
            \draw (2.025,-1.65) arc (30:-30:0.6 and 0.3)
                arc (30:150:0.6 and 0.3);
        \end{scope}
        
        \begin{scope}[shift={(5,0)}]
            \draw[red] (0,0) .. controls (0,0.5) and (3, 0.5) .. (3, 1)
                .. controls (3,1.5) and (0, 2) .. (0, 2);
            \draw[red] (0,0) .. controls (0,-0.5) and (3, -0.5) .. (3, -1)
                        .. controls (3,-1.5) and (0, -2) .. (0, -2);
            \draw[blue] (3,0) .. controls (3,0.5) and (0, 0.5) .. (0, 1)
                        .. controls (0,1.5) and (3, 2) .. (3, 2);
            \draw[blue] (3,0) .. controls (3,-0.5) and (0, -0.5) .. (0, -1)
                        .. controls (0,-1.5) and (3, -2) .. (3, -2);
            \node at (0,2.4) {$C_1$};
            \node at (3,2.4) {$C_2$};
        \end{scope}

        \begin{scope}[shift={(10, 0)}]
            \draw (0,0) .. controls (1, 1.25) and (2, 1.25) .. (3, 0);
            \draw (0,0) .. controls (1, -1.25) and (2, -1.25) .. (3, 0);
            \draw (0,0) .. controls (1, 3) and (2, 3) .. (3, 0);
            \draw (0,0) .. controls (1, -3) and (2, -3) .. (3, 0);
            \node[vertex, red, scale=1.5, label={[left=4]$v_1$}] at (0,0) {};
            \node[vertex, blue, scale=1.5, label={[right=4]$v_2$}] at (3,0) {};
        \end{scope}
        
        \node at (1.5,-3.0) {$X_\eta$};
        \node at (6.5,-3.0) {$X_s$};
        \node at (11.5,-3.0) {$G$};

    \end{tikzpicture}
    \caption{Example of a smooth curve of genus 3 (left)
            degenerating to a stable curve (center) whose dual graph is a
            genus 3 banana graph (right). The four edges of the dual graph
        correspond to the four intersection points between the two irreducible
        components of the stable curve.}
    \label{fig:trop-ex}
\end{figure}

We may give this graph further structure if we consider the pre-stable curve
inside a fixed smoothing.
Let $K$ be a valued field with ring of integers $R = \cO_K$, and corresponding
maximal ideal $\fm_{\cO_K} = \fm$. Denote $k = R/\fm$ the residue field.
For simplicity, we will assume the residue field $k$ is algebraically closed and
that $R$ is complete.

\begin{definition}
    A \emph{fibered surface} over $S = \Spec{R}$ is an integral, projective,
    flat scheme $\pi: X \to S$ of dimension 1 over $S$. 
    Let $\eta$ be the generic point of $S$ and $s$ its only closed point. The
    fiber $X_\eta$ is called the \emph{generic fiber} and $X_s$
    the \emph{special fiber}.

    We say $X$ is a \emph{regular (resp. normal) fibered surface},
    whenever $X$ is a regular (resp. normal)
    scheme. We also call a regular fibered surface an \emph{arithmetic surface}.
    We will also say
    that $X$ is (pre/semi-)stable and/or totally degenerate whenever
    these properties hold for the special fiber $X_s$.
\end{definition}

When $X$ is pre-stable, we may equip the dual graph of $X_s$ with the structure
of a metric graph.

\begin{proposition}\cite[Corollary 10.3.22]{liu}
    \label{prop:node-local-eq}
    Let $X$ be a pre-stable fibered surface over $S$ such that $X_\eta$ is
    smooth.
    Let $x \in X_s$ be a singular point of $X_s$.
    Then we have an isomorphism
    \begin{equation*}
        \completion{\cO}_{X, x}
        \cong \completion{\cO}_{K}[[u, v]]/(uv - c)
    \end{equation*}
    for some $c \in \fm_{\cO_{K}}$.
\end{proposition}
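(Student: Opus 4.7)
The plan is to use the ordinary double point structure of the special fiber to present $\completion{\cO}_{X, x}$ as a quotient of $\completion{\cO}_K[[U, V]]$, and then identify the defining equation via a dimension count together with a deformation-theoretic argument.

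First, the pre-stability hypothesis means that $x$ is an ordinary double point of $X_s$, so by definition $\completion{\cO}_{X_s, x} \cong k[[\bar u, \bar v]]/(\bar u \bar v)$. I would choose lifts $\tilde u, \tilde v \in \completion{\cO}_{X, x}$ of $\bar u, \bar v$ and consider the $\completion{\cO}_K$-algebra homomorphism
\[
\phi : \completion{\cO}_K[[U, V]] \to \completion{\cO}_{X, x}, \qquad U \mapsto \tilde u, \quad V \mapsto \tilde v.
\]
Surjectivity of $\phi$ follows from Cohen's structure theorem together with the fact that $\tilde u, \tilde v$ and a uniformizer $\pi$ of $\completion{\cO}_K$ generate the maximal ideal of the complete local ring $\completion{\cO}_{X, x}$: they already generate it modulo $\pi$, because the induced map on special fibers $k[[U, V]] \to \completion{\cO}_{X_s, x}$ is onto by construction.

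Next I would analyze $I = \ker\phi$. Flatness of $X/S$ makes $\pi$ a regular element of $\completion{\cO}_{X, x}$, so $\dim \completion{\cO}_{X, x} = 1 + \dim \completion{\cO}_{X_s, x} = 2$. Since $\completion{\cO}_K[[U, V]]$ is a regular local ring of dimension $3$, and in particular a UFD, the ideal $I$ has height $1$ and is therefore principal: $I = (F)$ for some $F$. Reducing modulo $\pi$ gives $k[[U, V]]/(\bar F) \cong k[[U, V]]/(UV)$, so $\bar F = \bar w \cdot UV$ for some unit $\bar w \in k[[U, V]]^{\times}$. Lifting $\bar w$ to a unit of $\completion{\cO}_K[[U, V]]$ and replacing $F$ by its unit multiple, one may assume $F \equiv UV \pmod{\pi}$.

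The remaining and, I expect, most delicate step is to refine $F$ to the explicit form $UV - c$ for some $c \in \fm_{\completion{\cO}_K}$. The strategy is a successive-approximation procedure: starting from $F = UV - c_0 + \pi \cdot G(U, V)$ with $c_0 \in \pi\completion{\cO}_K$, one iteratively modifies the lifts $\tilde u$ and $\tilde v$ by corrections of higher and higher $\pi$-adic order so as to absorb the non-constant part of the error into the scalar, with convergence ensured by $\pi$-adic completeness of $\completion{\cO}_{X, x}$. Equivalently, this is the classical fact that the versal deformation of the node $\Spec k[[u, v]]/(uv)$ is the one-parameter family $\Spec \completion{\cO}_K[[u, v, t]]/(uv - t)$, so every flat deformation over $\completion{\cO}_K$ is obtained by specialising $t \mapsto c$ for some $c \in \fm$. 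The careful bookkeeping of orders of the successive corrections, needed to guarantee convergence of this coordinate change, is the main technical ingredient.
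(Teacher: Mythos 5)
The paper does not prove this statement; it cites it from Liu's book. On its own terms, your sketch has a genuine gap in the principality step. You conclude that $I=\ker\phi$ is principal because it has height $1$ and $\completion{\cO}_K[[U,V]]$ is a UFD; but in a UFD only height-one \emph{prime} ideals are automatically principal. For instance $(xy,xz)\subset k[[x,y,z]]$ has height $1$ with a two-dimensional quotient, yet is not principal. Your argument therefore tacitly requires $\completion{\cO}_{X,x}$ to be a domain, and that is not immediate: $\cO_{X,x}$ is a domain because $X$ is integral, but completion does not preserve integrality in general (the node itself is the standard one-dimensional counterexample). One could repair this by first showing $X$ is normal (Serre's criterion: $S_2$ since the fibers are Cohen--Macaulay, $R_1$ from smoothness of $X_\eta$ together with reducedness of $X_s$) and then invoking excellence to conclude that the completion is normal, hence a domain.

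A cleaner route that sidesteps the domain question entirely is to use flatness directly: both $\completion{\cO}_K[[U,V]]$ and $\completion{\cO}_{X,x}$ are flat over $\cO_K$, so the short exact sequence $0\to I\to\completion{\cO}_K[[U,V]]\to\completion{\cO}_{X,x}\to 0$ stays exact after applying $\otimes_{\cO_K}k$; this identifies $I/\pi I$ with the ideal $(UV)\subset k[[U,V]]$, which is free of rank one, and Nakayama's lemma (with $\pi$ in the Jacobson radical of the complete local source) then gives that $I$ is principal with a generator $F$ satisfying $\bar F=UV$ up to a unit. Your reduction modulo $\pi$ and the successive-approximation normalization of $F$ to $UV-c$ are correctly conceived; the latter is precisely the technical lemma Liu proves, and the versal-deformation heuristic you give is the right way to think about why it works.
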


\begin{definition}
    In the setting of Proposition \ref{prop:node-local-eq},
    let $w_x \geq 1$ be the valuation of $c$.
    We call $w_x$ the \emph{thickness} (or \emph{width}) of $x$ in $X$.
\end{definition}

\begin{definition}
    Let $X$ be a smooth, geometrically connected, projective curve over $K$.
    A normal
    fibered surface $\fX \to S$ such that 
    $\fX_\eta \cong X$ is a called \emph{model of $X$ over $S$}.
\end{definition}

\begin{remark}
    There may be many different models of any given curve $X$, but they might
    not be pre-stable. As we will soon see, we will still be able to uniquely
    attribute a metric graph
    to $X$, which will be what we call 
    the \emph{tropicalization} of $X$.
\end{remark}

\begin{theorem}\cite[Corollary 2.7]{stable-reduction}
    \label{stable-reduction}
    When $X$ is a smooth, projective, geometrically connected curve over $K$,
    with
    $g(X) \geq 2$, there exists
    a finite algebraic extension $L$ of $K$,
    such that $X_L = X \times_K L$ has a unique stable model
    $\fX_L$ over $\cO_L$ with generic fiber isomorphic
    to $X_L$. Moreover, $L$ can be taken separable over $K$.
\end{theorem}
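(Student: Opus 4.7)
The plan is to follow the classical Deligne--Mumford strategy: first obtain semistable reduction after a finite separable extension, then contract unstable components to reach the stable model. The key input for the first stage is Grothendieck's semistable reduction theorem for abelian varieties (SGA 7, Exposé IX), which guarantees that after some finite separable extension $L/K$, the Jacobian $J(X_L) = \mathrm{Pic}^0(X_L)$ acquires semistable reduction over $\cO_L$ --- equivalently, the identity component of the special fiber of its Néron model is a semi-abelian variety. I would then invoke the equivalence ``$X_L$ has semistable reduction iff $J(X_L)$ does''. The forward implication follows from a direct computation: $\mathrm{Pic}^0$ of a semistable curve is an extension of an abelian variety by a torus whose ranks are encoded by the dual graph of the special fiber. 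The reverse implication, which is the heart of the argument, shows that when $J(X_L)$ is semistable one can massage any regular proper model of $X_L$ (obtainable via resolution of singularities for arithmetic surfaces) through a sequence of blow-ups and contractions into one whose special fiber has only ordinary double points.

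Given a semistable arithmetic surface $\fX_L \to \Spec \cO_L$ as input, the second stage proceeds by iteratively contracting the unstable components of the special fiber: smooth rational components meeting the rest of the fiber in fewer than three points. Castelnuovo's contractibility criterion and its generalizations to the arithmetic setting ensure these contractions exist and preserve normality; semistability is preserved throughout, and the hypothesis $g(X) \geq 2$ is precisely what guarantees that the process terminates at a non-empty stable model rather than collapsing the entire fiber (as happens in genera $0$ and $1$). Uniqueness of the stable model then follows from the rigidity of stable curves together with the valuative criterion of properness for the Deligne--Mumford stack $\overline{\mathcal{M}}_g$: the $L$-point $[X_L] \in \overline{\mathcal{M}}_g(L)$ extends uniquely to an $\cO_L$-point, which is exactly the data of $\fX_L$.

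The main obstacle is the first stage. Grothendieck's semistable reduction theorem for abelian varieties is itself a deep theorem whose proof rests on the monodromy theorem: after a finite extension, the inertia action on the $\ell$-adic Tate module $T_\ell J$ becomes unipotent. Transferring this conclusion back from Jacobians to curves --- while conceptually clean --- is technically delicate, since one has to match the combinatorics of the special fiber of a candidate regular model with the abelian and toric ranks of the Néron model, and in general further blow-ups are needed before genuine nodal singularities appear. By contrast, the contraction procedure of the second stage and the uniqueness statement are comparatively formal once $\overline{\mathcal{M}}_g$ is known to be a proper Deligne--Mumford stack.
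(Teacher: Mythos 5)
The paper does not prove this theorem; it cites it verbatim from Deligne and Mumford's ``The irreducibility of the space of curves of given genus'' (their Corollary~2.7, which is precisely what the citation key points to), and the remark immediately following observes that the same statement appears as Theorem~10.4.3 in Liu's book. Since the paper offers no argument of its own, there is no internal proof to compare your attempt against.

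On its own terms, your sketch is a faithful summary of the original Deligne--Mumford strategy: obtain potential semistable reduction of the Jacobian from Grothendieck's theorem in SGA~7 Exposé~IX, transfer this to the curve by matching toric and abelian ranks of the Néron model against the combinatorics of a regular model's special fiber, contract the unstable rational components, and use $g \geq 2$ to guarantee the contraction terminates at a nonempty stable model. One point deserves a caveat. You deduce uniqueness from the valuative criterion of separatedness for $\overline{\mathcal{M}}_g$, but separatedness of $\overline{\mathcal{M}}_g$ and uniqueness of stable models are logically entangled: Deligne and Mumford establish separatedness (their Theorem~1.11, via Lemma~1.12) \emph{before} proving the reduction theorem, by a direct argument about stable curves over discrete valuation rings, so the logic is not circular in their treatment, but this dependence should be made explicit rather than invoked as a black box. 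An alternative that sidesteps the moduli stack entirely, and is the route Liu takes, is to prove uniqueness directly: the stable model is obtained from the minimal regular model with normal crossings by contracting the chains of $(-2)$-curves, and the minimal regular model is unique, so the stable model is too. Either route is legitimate; yours is closer in spirit to Deligne--Mumford, while Liu's is more elementary in the sense of avoiding stacks.
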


\begin{remark}
    The theorem stated in this form can be found in \cite[Theorem 10.4.3]{liu}.
\end{remark}

\begin{definition}[Tropicalization]
    Let $X$ be a smooth, projective, geometrically connected curve over $K$. Let
    $L$ be a finite algebraic extension of $K$ such that $X_L$ has a unique
    stable model $\fX_L$ as in Theorem \ref{stable-reduction}.

    We equip the dual graph $G$ of $\fX_s$ with the
    structure of a metric graph by defining the length function
    \begin{align*}
        l: E(G) &\to \Q\\
        e &\mapsto \frac{w_{n_e}}{[L : K]},
    \end{align*}
    where $n_e$ denotes the node
    of $\fX_s$ corresponding to the edge $e$. The resulting metric graph
    $\Gamma$ is called the \emph{tropicalization} of $X$.
\end{definition}

\begin{remark}
    The fact that the above definition does not depend on choice of 
    the field extension
    $L\mid K$ is verified in 
    \cite[Lemma 2.2.4]{viviani}.
\end{remark}

\subsection{Specialization of divisors}
\label{sec:specialization}

A first natural object we might want to transfer from the algebraic curve to the
tropicalization are divisors. We will now describe the specialization process as
introduced in \cite{specialization-lemma}.

Let $X$ be a smooth, geometrically connected, projective curve of genus
$\geq 2$. Up to performing base change to a finite extension of $X$, suppose
$X$ admits a stable model $\fX$. So let $\Gamma$ be the tropicalization of
$X$.
By taking the minimal desingularization of
$\fX$ we obtain the \emph{minimal regular model} of $X$, denoted
$\fX_{\min}$. 
\cite[Corollary 10.3.25]{liu} tells us that this model is semi-stable and
that the dual graph corresponds to the subdivision of the edges of $\Gamma$,
so that each edge has length 1.

Denote by $C$ the special fiber of $\fX_{\min}$ and let
$C_1, \dots, C_n$ be its irreducible components,
corresponding to the vertices $v_1, \dots, v_n$ of the dual graph.
For a $K$-rational point $P$ of $X$,
we may take its Zariski closure in $\fX_{\min}$
to obtain a Weil divisor on $\fX_{\min}$
which we denote by $\closure{P}$. We have that
$\closure{P}$ intersects the special fiber $C$
in its smooth locus in a single point by \cite[Proposition 9.1.32]{liu} 
and so $\closure{P}$ intersects a unique irreducible
component of $C$. Let $v(P)$ be the corresponding
vertex in $\Gamma$.

This allows us to define a map $\rho: \Div(X(K)) \to \Div(\Gamma)$ by setting
\begin{equation*}
    \rho(D) = \sum_{P \in X(K)} D(P) \cdot v(P).
\end{equation*}
We would like to
extend the definition of $\rho$ to all of $X(\closure{K})$. In order to do so,
we need to check that $\rho$ is compatible with base change. Let $L$ be a finite
extension of $K$, and
denote
$\rho_K: \Div(X(K)) \to \Div(\Gamma)$ and $\rho_L: \Div(X(L)) \to \Div(\Gamma)$
the corresponding maps. We 
want to show that $\rho_L|_{\Div(X(K))} = \rho_K$. 
Taking the base change $\fX_{\min, K} \times_{\cO_K} \cO_L$
yields a semi-stable model of
$X_L$. By \cite[Corollary 10.3.22(c)]{liu},
this has the effect of multiplying the
thickness of the nodes of the special fiber by $[L:K]$.
This introduces new singularities to the
model, and the minimal regular model of $X_L$ is given
by repeatedly blowing up those singularities. The
effect of this on the dual graph is to subdivide the edges into $[L:K]$ segments
of equal length. Note moreover that if $P$ is a $K$-rational point, it will
specialize to the same connected component after performing base-change and
desingularizing. So in fact the two specialization maps agree on divisors that
are supported on $K$-rational points. See \cite[Section 2]{capacity-pairing} for
more details on the compatibility with base change.

Since the specialization map is compatible with base change, it induces a map
\[\rho: \Div(X(\closure{K})) \to \Div(\Gamma)\]
that has image in $\Div_\Q(\Gamma)$, which is the set of divisors supported on 
the points in $\Gamma$ which
have rational distance from any given vertex.

We now define the rank of a divisor on an algebraic curve in analogy to
Definition \ref{def:rank}.
\begin{definition}
    If $D$ is a divisor on $X$, we define
    \begin{equation*}
        r(D) := 
            \max \{d \in \N \mid |D - E| \neq \emptyset\textrm{
            for all effective divisor $E$ of degree $d$}\},
    \end{equation*}
    to be the \emph{rank} of $D$.
\end{definition}
\begin{remark}
    By \cite[Lemma 2.4]{specialization-lemma} the rank of a divisor
    $D$ is equal to $\dim\mathcal{L}(D) - 1$. This explains the formula of the
    Riemann-Roch theorem (Theorem \ref{thm:trop-rr}) in terms of the rank.
\end{remark}

Matt Baker has famously shown that during the specialization process,
the rank of a divisor
can only increase.
\begin{lemma}[Specialization lemma]\cite[Lemma 2.8]{specialization-lemma}
    \label{lemma:specialization}
    For all divisors 
    $D \in \Div(X(K))$,
    \begin{equation*}
        r(\rho(D)) \geq r(D).
    \end{equation*}
\end{lemma}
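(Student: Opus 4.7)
The plan is to follow Baker's classical argument and prove $r(\rho(D)) \geq d := r(D)$ by verifying that $|\rho(D) - E'| \neq \emptyset$ for every effective $E' \in \Div(\Gamma)$ of degree $d$. The proof rests on three ingredients: a closedness/density reduction to $\Q$-rational divisors, a lifting of such divisors from $\Gamma$ back to $X$ after a finite base change, and the compatibility of $\rho$ with linear equivalence.

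First, I would reduce to the case where $E'$ is $\Q$-rational, i.e.\ supported on points whose distance from every vertex is rational. The set
\[A := \{E' \in \Div^+_d(\Gamma) \mid |\rho(D) - E'| \neq \emptyset\}\]
is closed in $\Div^+_d(\Gamma) \cong \Gamma^d/S_d$: it is the image under projection to the first factor of the closed relation $\{(E', F') \mid F' \geq E'\}$ inside $\Div^+_d(\Gamma) \times |\rho(D)|$, and $|\rho(D)|$ is compact. Since $\Q$-rational points are dense in $\Gamma$, $\Q$-rational effective divisors are dense in the symmetric product, so it suffices to show $E' \in A$ for $\Q$-rational $E'$.

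Given a $\Q$-rational effective divisor $E' = x_1 + \dots + x_d$, I would choose a finite extension $L/K$ large enough that $[L:K]\cdot \dist(x_i, V)$ is integral for every $i$, so that under base change the points $x_i$ become vertices of the dual graph of $\fX_{\min, L}$. Each such vertex corresponds to an irreducible component of the special fiber, and after possibly enlarging $L$ we may pick $L$-rational points on $X_L$ whose Zariski closures meet these components in their smooth locus, yielding an effective $E \in \Div(X(L))$ with $\rho_L(E) = E'$. Since $h^0$ is preserved by flat base change, $r(D_L) \geq r(D) = d$, so $|D_L - E| \neq \emptyset$, and there exists an effective $F \in \Div(X(L))$ with $D_L - E \sim F$. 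Applying $\rho_L$ and using compatibility with linear equivalence and with base change, we obtain
\[\rho(D) - E' \;=\; \rho_L(D_L) - \rho_L(E) \;\sim\; \rho_L(F) \;\geq\; 0,\]
which shows $E' \in A$ and completes the argument.

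The main obstacle is the claim, used at the last step, that $\rho_L$ sends principal divisors on $X_L$ to principal divisors on $\Gamma$. Given $f \in \kappa(X_L)^*$, one must produce a tropical rational function $F \in \Rat(\Gamma)$ with $\fdiv(F) = \rho_L(\fdiv(f))$. The natural candidate is the piecewise-linear function whose value at each vertex $v_i$ is the intersection number $(\closure{\fdiv(f)} \cdot C_i)$ on the arithmetic surface $\fX_{\min, L}$, interpolated linearly along the edges. Verifying that its slopes are integral, that the associated divisor indeed equals $\rho_L(\fdiv(f))$, and that the construction is independent of the chosen regular model requires intersection theory on regular arithmetic surfaces — in particular the vanishing of the intersection of a principal divisor with any fibral component — and is the substantive computation underlying the whole argument.
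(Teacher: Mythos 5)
The paper does not prove this lemma; it is stated as a cited result (Baker's Lemma~2.8 in \cite{specialization-lemma}), and the only substantive discussion appears later, in Section~\ref{sec:lin-series}, where the author works out that $\rho$ carries principal divisors to principal divisors and explicitly remarks that this is ``the main ingredient for the proof of the Specialization Lemma.'' Your reconstruction follows Baker's original argument and is correct in outline: the closedness/density reduction to $\Q$-rational effective divisors $E'$ (via compactness of $|\rho(D)|$), the lifting of a $\Q$-rational $E'$ to an effective divisor on $X_L$ after a base change making the support of $E'$ into vertices of the subdivided dual graph, and the use of compatibility of $\rho$ with linear equivalence and base change to obtain $\rho(D) - E' \sim \rho_L(F) \geq 0$. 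You have also correctly identified the preservation of principality as the load-bearing step.

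There is, however, a small slip in the final paragraph. The piecewise-linear function $g$ on $\Gamma$ with $\fdiv(g) = \rho_L(\fdiv(f))$ should \emph{not} have value $(\closure{\fdiv(f)}\cdot C_i)$ at the vertex $v_i$: those numbers are the coefficients of the target divisor $\fdiv(g)$, not the values of $g$ itself. The correct candidate (worked out in Section~\ref{sec:lin-series}) takes $g(v_i) = -\ord_{C_i}(f)$, up to a global additive constant, where $\ord_{C_i}(f)$ is the multiplicity of $C_i$ in the vertical part of the principal divisor $\fdiv(f)$ on the regular model $\fX_{\min, L}$. One then checks $\fdiv(g) = -\sum_j \ord_{C_j}(f)\sum_i(C_j\cdot C_i)v_i$, which by the vanishing $\fdiv(f)\cdot C_i = 0$ on the regular surface equals $\sum_i(\closure{\fdiv(f)}\cdot C_i)v_i = \rho_L(\fdiv(f))$; each term $\sum_i(C_j\cdot C_i)v_i$ is precisely the divisor of the unit chip-firing move at $v_j$, so $g$ is a bona fide tropical rational function. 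With this correction your argument is complete and agrees with the paper's account of the key ingredient.
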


\begin{example}
    Consider the situation in Figure \ref{fig:trop-ex}. Let $D = v_1 + v_2$.
    Then $D$ is a divisor of rank $1$. We may lift $D$ to a divisor $D'$ 
    of $X_\eta$, since by \cite[Remark 2.3]{specialization-lemma} the
    specialization map is surjective.
    When $X_\eta$ is not hyperelliptic, the divisor $D'$ must be of rank $0$,
    which gives an example of a case where the inequality in Lemma
    \ref{lemma:specialization} is strict.

    We will now construct a model as in Figure \ref{fig:trop-ex}. 
    Let $K$ be a valued field with ring of integers $R$, maximal ideal $\fm$ and
    residue field $k = R/\fm$.
    Let $q_1, q_2$ be two homogeneous polynomials over $R$ of
    degree 2 such that
    $X_s = V(\overline{q_1}\overline{q_2}) \subseteq \proj^2_k$
    is a union of two smooth
    quadrics in general position. By Bézout's theorem, these quadrics will
    intersect in four points, hence the dual graph of $X_s$ is $G$ as in Figure
    \ref{fig:trop-ex}.
    Let $c \in \fm$
    and let $p$ be a homogeneous
    degree 4 polynomial
    such that $X_\eta = V(q_1q_2 + cp) \subseteq \proj^2_K$ is a smooth quartic.
    It is then clear that $X = V(q_1q_2 + cp) \subseteq \proj^2_R$ is a fibered
    surface with generic fiber $X_\eta$ and special fiber $X_s$. By the
    degree-genus forumla $X_\eta$ is of genus 3.
    To conclude, note that by
    \cite[Example IV.5.2.1]{hartshorne} any quartic plane curve is
    non-hyperelliptic.
\end{example}

When $\Gamma$ is a metric graph, we call a divisor $D \in \Div(\Gamma)$
\emph{realizable}, whenever there exists a curve $X$ and $D'$ an effective
divisor on $X$ of the same rank as $D$ such that $(\Gamma, D)$ is the
tropicalization of $(X, D')$.
It is an important open problem to characterize realizable divisors.

\subsection{Realizability of canonical divisors}
\label{sec:realizability-canonical}

In \cite{realizability-canonical}, the authors give 
a complete characterization for the realizability of divisors in the
canonical tropical linear system.
We will reinterpret this result in a simpler context and give some
sufficient conditions for realizability.

The condition as presented in \cite{realizability-canonical} works with more
structure on the graph $\Gamma$. First of all, the vertices are decorated with a
function $h: V \to \N$. During the tropicalization process, $h$ records the
genus of the corresponding irreducible component. With this decoration, the
canonical divisor on $\Gamma$ is defined as
\begin{equation*}
    K = \sum_{x \in V}(2h(x) - 2 + \val(x)) \cdot x
\end{equation*}
and the genus is
\begin{equation*}
    g = b_1(\Gamma) + \sum_{x \in V}h(x),
\end{equation*}
where $b_1(\Gamma)$ is the first betti number of $\Gamma$. Note that when the
semi-stable reduction is totally degenerate, these definitions agree
with the previous ones.

\begin{definition}
    A \emph{graph with legs} is a length space obtained from a metric graph by
    attaching to it a finite set of half-rays, which we call \emph{legs}.
    The notions from Section \ref{sec:metric-graphs} extend naturally
    to graphs with legs.

    Let $\Gamma$ be a graph with legs.
    We call any function $l: V \to \Z_{\leq 0}$ such that $l^{-1}(0) \neq
    \emptyset$ a \emph{level function} on $\Gamma$. Such a level function
    induces a full order on the vertices of $\Gamma$.
    We call $\Gamma$ with the data of a level function a \emph{level graph},
    and denote it by $\overline{\Gamma}$.
    For any edge $e$ between two vertices $x, y$, we say $e$ is
    \emph{horizontal} whenever $l(x) = l(y)$, otherwise we say $e$ is vertical.

    We write $\Lambda = \bigsqcup_{x \in V}T_x\Gamma$ for the
    set of tangent vectors of $\Gamma$ based at the vertices.
    This is naturally identified with the set of half-edges and legs.

    An \emph{enhanced level graph} $\Gamma^+$
    is a level graph $\overline{\Gamma}$
    together with a function $k: \Lambda \to \Z$ such that
    \begin{enumerate}
        \item For any edge $e$ with corresponding tangents $\zeta^+$,
            $\zeta^-$ along $e$,
            we have that $k(\zeta^+) + k(\zeta^-) = -2$.
            An edge is horizontal iff $k(\zeta^\pm) = -1$ and when $e$ is
            vertical with $\zeta^+$ being the tangent at the higher
            vertex, then $k(\zeta^+) > k(\zeta^-)$.
        \item For each vertex $v$,
            \begin{equation*}
                \sum_{\zeta \in T_v\Gamma} k(\zeta) = 2h(v) - 2.
            \end{equation*}
    \end{enumerate}

    When $\Gamma^+$ is an enhanced level graph,
    we define the \emph{type} $\mu(v)$ of a vertex to be the ordered tuple (in
    decreasing order) of the 
    $k(\zeta)$, for $\zeta \in T_v(\Gamma)$.
\end{definition}

\begin{definition}
    \label{def:inconvenient-vertex}
    Let $\Gamma^+$ be an enhanced level graph.
    A vertex $v \in \Gamma^+$ is called \emph{inconvenient}
    if $h(v) = 0$ and its type $\mu(v) = (k_1,
    \dots, k_n)$ has the following properties:
    \begin{itemize}
        \item $k_i \neq -1$ for all $i$.
        \item There exists an index $i$ such that
            \begin{equation*}
                k_i > \left(\sum_{k_j < 0}-k_j\right) - \#\{k_j < 0\} - 1
            \end{equation*}
    \end{itemize}
\end{definition}

We can naturally attribute to a metric graph
$\Gamma$ along with a given canonical divisor
$D \in |K|$ an enhanced level graph.

\begin{definition}
    \label{def:enhanced-level-graph}
    Let $\Gamma$ a metric graph and let
    $D = K + \fdiv(f) \in |K|$ an effective canonical divisor on $\Gamma$.
    Up to subdividing the model of $\Gamma$, we may
    assume that $D$ is supported on the vertices of $\Gamma$.
    \begin{itemize}[itemsep=0em]
        \item
            For each vertex $x$,
            we attach $D(x)$ legs to $\Gamma$ and call the resulting graph with
            legs $\Gamma'$.
        \item 
            Extend $f$ to a rational function on $\Gamma'$ so that $f$ is linear
            on the legs with $s_\zeta(f) = -2$ for $\zeta$ a tangent at a vertex
            along a leg.
        \item 
            Give $\Gamma'$ the structure of level map
            induced by the function $f$.
        \item
            Define also $k(\zeta) = -s_\zeta(f) - 1$.
    \end{itemize}
    This equips $\Gamma'$ the structure of an enhanced
    level graph, which we denote
    by $\Gamma^+(f)$.
\end{definition}

\begin{definition}
    We will say a vertex $v$ of $\Gamma$ is inconvenient if it is an
    inconvenient vertex of the enhanced level graph $\Gamma^+(f)$.
\end{definition}

We may now state \cite[Theorem 6.3]{realizability-canonical}, which gives us a
necessary and sufficient condition for the realizability of a divisor on a
tropical curve. A \emph{cycle} is a non-trivial path from a vertex from itself.
A cycle is \emph{simple}
if it's not self-intersecting (other than at the endpoints).

\begin{theorem}
    \label{thm:realizability-iff}
    Let $\Gamma$ be a metric graph and
    let $D = K + \fdiv(f)$ be an effective canonical divisor on $\Gamma$.
    We consider the model of $\Gamma$ subdivided so that $\bend f$ is
    supported on the vertices.
    Then $D$ is realizable if and only if the following conditions
    are satisfied:
    \begin{itemize}
        \item Every inconvenient vertex is contained in a simple cycle that lies
            above it (in the sense that $f(Z) \geq f(v)$, where $Z$ is the given
            cycle).
        \item Every horizontal edge (meaning that $f$ is constant on that
            edge) is contained in a simple cycle that lies
            above it.
    \end{itemize}
\end{theorem}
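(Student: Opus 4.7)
The plan is to derive this reformulation directly from the main realizability theorem of \cite{realizability-canonical}, which is phrased in the technical language of enhanced level graphs and twisted canonical divisors, and then show our two geometric conditions are equivalent to the original combinatorial ones.

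First I would verify that the enhanced level graph $\Gamma^+(f)$ built in Definition \ref{def:enhanced-level-graph} is a well-formed enhanced level graph. Condition (1) (the slope identity $k(\zeta^+) + k(\zeta^-) = -2$ on each edge) follows immediately from $s_{\zeta^+}(f) = -s_{\zeta^-}(f)$ along linear edges, and the horizontal/vertical dichotomy matches the sign of the slopes. Condition (2) (the sum $\sum_\zeta k(\zeta) = 2h(v) - 2$ at each vertex) is equivalent, since $h \equiv 0$, to $\ord_v(f) + D(v) = \val(v) - 2$, which is precisely the statement $D = K + \fdiv(f)$ after accounting for the $D(v)$ legs attached at $v$.

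Next, I would invoke \cite[Theorem 6.3]{realizability-canonical} in its native form, which asserts realizability iff certain local smoothing conditions can be met at inconvenient vertices and at horizontal edges. The core translation step is then:
\begin{itemize}[itemsep=0em]
  \item At an inconvenient vertex $v$, the paper's local condition is satisfied iff one can find, in the subgraph lying strictly above $v$ in the level order, a simple cycle through $v$. Under our construction of $\Gamma^+(f)$, ``strictly above $v$ in the level order'' is exactly the sublevel condition $f(\cdot) \geq f(v)$, so this matches our phrasing.
  \item At a horizontal edge $e$, the original condition similarly demands a simple cycle through $e$ lying weakly above it in level; again this translates to $f(Z) \geq f(e)$ in our notation.
\end{itemize}

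The main obstacle is carefully unpacking the original condition, which in \cite{realizability-canonical} is phrased in terms of twisted canonical divisors and prong matchings on the special fiber of a smoothing family. I would need to argue that the extra residue/matching data plays no role in our setting (because we have not fixed them and only ask for \emph{some} realization), so that the realizability criterion collapses to the purely graph-theoretic existence of the required simple cycles. Once this reduction is made, the equivalence of the two formulations is essentially a dictionary between ``level above'' and ``$f$-value at least $f(v)$'', together with the observation that $\bend f \subseteq V$ guarantees $f$ is monotone along each edge, so that $f(Z) \geq f(v)$ for a full cycle $Z$ is equivalent to $Z$ lying in the closed superlevel set.
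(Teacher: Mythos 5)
The paper does not actually give a proof of this theorem: it is stated as a direct citation of \cite[Theorem 6.3]{realizability-canonical}, and the entire preceding subsection is devoted to importing the definitions (enhanced level graphs, type of a vertex, inconvenient vertices) from that source so that the statement can be quoted verbatim. The translation into the $f$-based language you propose to carry out is, in the paper, simply absorbed into Definition \ref{def:enhanced-level-graph} (which declares $k(\zeta) = -s_\zeta(f) - 1$ and the leg rule) and into the follow-up Proposition \ref{prop:def-inconvenient}, which restates ``inconvenient'' purely in terms of outgoing slopes.

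Your outline of the verification is essentially correct and would be the content one would need to fill in if the paper wanted to record the equivalence rather than cite it. The check that $\Gamma^+(f)$ satisfies the two enhanced level graph axioms is right: $k(\zeta^+) + k(\zeta^-) = -2$ is the linearity of $f$ along edges, and the vertex sum reduces to $D = K + \fdiv(f)$ once the $D(v)$ legs with slope $-2$ are accounted for. One thing worth tightening: you write ``strictly above $v$ in the level order'' but then identify this with $f(\cdot) \geq f(v)$; the paper's condition is the weak inequality (a cycle can sit at the same level as $v$, which is exactly what happens in the horizontal edge case), so ``strictly'' is misleading here. You also gesture at a nontrivial reduction --- arguing that the prong-matching/residue data in \cite{realizability-canonical} can be discarded since we only ask for \emph{some} realization --- which is a genuine issue that the paper sidesteps entirely by deferring to the source, so your plan would require more work than the paper performs, not less.
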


We will now describe more explicitly what it means for a vertex to be
inconvenient.

\begin{proposition}
    \label{prop:def-inconvenient}
    Let $\Gamma$ be a metric graph and
    $D = K + \fdiv(f)$ an effective canonical divisor in $|K|$.
    Let $v$ a vertex and denote $s_1, \dots, s_r$ the outgoing slopes of $f$
    along the edges adjacent to $v$.
    Then $v$ is inconvenient iff $h(v) = 0$,
    $s_j \neq 0$ for all $j$,
    and there is an $i$ such that $s_i < 0$
    and 
    \begin{equation*}
        -s_i > \sum_{j, s_j > 0} s_j.
    \end{equation*}
\end{proposition}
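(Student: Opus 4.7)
The plan is to unwind the definition of inconvenient vertex on the enhanced level graph $\Gamma^+(f)$ associated to $(\Gamma, f)$ in Definition \ref{def:enhanced-level-graph}, by substituting $k(\zeta) = -s_\zeta(f) - 1$ everywhere. The tangent vectors at $v$ in $\Gamma^+(f)$ split into the $r$ edge-tangents (with $k$-values $k_j = -s_j - 1$) and $D(v)$ leg-tangents (each having $s_\zeta(f) = -2$, hence $k = 1$). The non-vanishing requirement $k(\zeta) \neq -1$ is automatic on the leg-tangents and becomes $s_j \neq 0$ on the edge-tangents.

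Next I would rewrite the right-hand side of the defining inequality of Definition \ref{def:inconvenient-vertex}. Leg-tangents do not contribute to $\{k < 0\}$ since there $k = 1$; among edge-tangents, $k_j < 0$ is equivalent to $s_j > 0$ (using that $s_j$ is a nonzero integer), with $-k_j = s_j + 1$ in that case. A direct calculation gives
\begin{equation*}
    \left(\sum_{k_j < 0}(-k_j)\right) - \#\{k_j < 0\} - 1 = \sum_{s_j > 0} s_j - 1.
\end{equation*}
The existence of a tangent $\zeta$ with $k(\zeta)$ exceeding this quantity then splits into two sub-cases. If $\zeta$ is an edge-tangent the inequality reads $-s_i > \sum_{s_j > 0} s_j$, which automatically forces $s_i < 0$ since the right-hand side is non-negative; this is precisely the condition of the proposition. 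If $\zeta$ is a leg-tangent, the inequality becomes $\sum_{s_j > 0} s_j \leq 1$, and a leg at $v$ exists only when $D(v) \geq 1$.

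The one obstacle is to rule out the leg sub-case by showing it cannot arise on its own: if $h(v) = 0$, $D(v) \geq 1$, all edge-slopes are nonzero, and $\sum_{s_j > 0} s_j \leq 1$, then $D$ fails to be effective at $v$, giving a contradiction. Using $D(v) = \val(v) - 2 + \ord_v(f) = r - 2 + \sum_j s_j$ together with the bound $s_j \leq -1$ for each negative slope, a short case analysis on $\sum_{s_j > 0} s_j \in \{0, 1\}$ bounds $\sum_j s_j$ from above by $-r$ and by $2 - r$ respectively, yielding $D(v) \leq 0$ in both sub-cases and contradicting $D(v) \geq 1$. Consequently only the edge sub-case contributes, giving the claimed characterization.
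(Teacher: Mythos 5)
Your proof is correct and follows essentially the same route as the paper: translate Definition \ref{def:inconvenient-vertex} through the substitution $k(\zeta)=-s_\zeta(f)-1$, observe that leg-tangents have $k=1$ and hence contribute neither to the first condition nor to the sum $\sum_{k_j<0}(-k_j)-\#\{k_j<0\}-1=\sum_{s_j>0}s_j-1$, and then rule out the sub-case where the witnessing index $i$ is a leg-tangent by deriving $D(v)\leq 0$ from $\sum_{s_j>0}s_j\leq 1$, contradicting the presence of a leg. The paper carries out the same elimination by bounding $\fdiv(f)(v)$ and concluding $D(v)=0$; your two-way case analysis on $\sum_{s_j>0}s_j\in\{0,1\}$ is the same estimate packaged slightly more explicitly.
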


\begin{proof}

Let $v$ be a vertex,
since $k(\zeta) = -s_\zeta(f) - 1$
for tangents $\zeta \in \Lambda$,
the first condition in Definition \ref{def:inconvenient-vertex}
translates to $s(\zeta)
\neq 0$ for all tangents at $v$.

Let $\zeta_j$ be the tangents corresponding to the $k_j$ appearing 
in the type of the vertex $v$. Denote also $s_j = s_{\zeta_j}(f)$.
The second condition of Definition \ref{def:inconvenient-vertex} is equivalent
to the existence of an index
$i$ such that
\begin{equation*}
    -s_i > \sum_{s_j > 0} s_j.
\end{equation*}
From Definition \ref{def:enhanced-level-graph} it follows
that for all $j$ such that $s_j > 0$, $\zeta_j$ is a tangent along an
edge.
Now, if $\zeta_i$ is tangent along a leg,
the left hand side is just $2$. In this case we would get 
$2 > \sum_{s_j > 0} s_j$. For this to be satisfied there should be
at most one tangent with positive outgoing slope at $v$ and that slope
has to be equal to $1$.
But then there are at least $\val_\Gamma(x) - 1$
other edges with strictly negative outgoing
slopes, which would imply that $\fdiv(f)(x) \leq -\val_\Gamma(x) + 2$ (here the
divisor of $f$ is taken inside $\Gamma$).
This in turn means that
$(K + \fdiv(f))(x) \leq 0$,
which forces $(K + \fdiv(f))(x) = 0$, as $D = K + \fdiv(f)$ is by
assumption effective.
But then by definition of the enhanced level graph $\Gamma^+(f)$, we
would not have attached any legs, so this situation cannot happen.
We deduce that all the terms appearing in the inequality can only 
come from tangents along edges (so tangents of the original metric graph
$\Gamma$), which finishes the proof.
\end{proof}

Thanks to this, when talking about realizability we will not need to refer to
the structure of enhanced level graphs, and
so we will restrict our discussion to metric
graphs as defined in Section \ref{sec:metric-graphs}.

Denote by $\tropicalhodge$ the the moduli space
parametrizing isomorphism classes of
metric graphs (with vertex weights)
of genus $g$ with the choice of a canonical divisor.
It carries the structure of a generalized
cone complex by \cite[Theorem 4.3]{tropical-hodge}.
Let $\realizablelocus \subseteq \tropicalhodge$,
the subset of pairs $([\Gamma], D) \in \tropicalhodge$ that are
realizable. By \cite[Theorem 6.6]{realizability-canonical},
$\realizablelocus$ is an abstract cone complex
whose maximal cones have
dimension $4g-4$. 
Furthermore, by \cite[Proposition 6.9(i)]{realizability-canonical} the graphs
appearing in the maximal cones have $h \equiv 0$.
Hence the set of pairs $([\Gamma], D) \in \realizablelocus$, which have $h
\equiv 0$ is dense in $\realizablelocus$. For this reason in our
following discussion we
will only focus on the case where $h \equiv 0$ and so we
will be looking only at graphs that appear as the dual of a totally degenerate
semi-stable curve.

\subsection{Realizability locus of the canonical linear system}
\label{sec:canonical-linsys}

We will now give some characterizations of the realizability locus of the
canonical
complete linear system $|K|$, which is the subset of divisors which are
realizable.

\begin{proposition}
    \label{prop:realizable-convex}
    Let $\Gamma$ be a metric graph and let $\Real(|K|)
    \subseteq |K|$ be the set of
    realizable canonical divisors. Then $\Real(|K|)$ is tropically convex.
\end{proposition}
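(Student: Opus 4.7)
The plan is to apply the realizability criterion of Theorem \ref{thm:realizability-iff} directly. Fix two realizable canonical divisors $D_i = K + \fdiv(f_i)$ for $i = 1, 2$, and consider a tropical linear combination $D = K + \fdiv(f)$ with $f = (a \odot f_1) \oplus (b \odot f_2)$. Since tropical scaling does not affect the principal divisor, I can absorb the constants into the $f_i$ and assume $f = f_1 \oplus f_2$. The whole argument then rests on three elementary observations: $f \geq f_i$ globally; $f$ agrees locally with $f_i$ near any point where that $f_i$ strictly dominates; and at a point where $f_1 = f_2$ the outgoing slopes of $f$ are given by $s_\zeta(f) = \max(s_\zeta(f_1), s_\zeta(f_2))$.

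For horizontal edges of $f$, I would refine the model so that both $f_1$ and $f_2$ are linear on every edge, and then observe that $\max(f_1, f_2)$ is constant on an edge $e$ only when (without loss of generality) $f_1$ is itself constant on $e$ and $f_1 \geq f_2$ throughout $e$. Thus $e$ is horizontal for $f_1$, and the simple cycle $Z \supseteq e$ supplied by realizability of $D_1$ is automatically a cycle above $e$ for $f$ as well, since $f \geq f_1 \geq f_1|_e = f|_e$ on $Z$.

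For inconvenient vertices $v$ of $f$, the easy case is $f_1(v) \neq f_2(v)$: if $f_1(v) > f_2(v)$, then $f = f_1$ in a neighborhood of $v$, so $v$ is inconvenient for $f_1$ and the cycle above $v$ furnished by realizability of $D_1$ works for $f$ too. The delicate case is $f_1(v) = f_2(v) = f(v)$, where $v$ can be inconvenient for $f$ without being inconvenient for either $f_i$. Using the characterization in Proposition \ref{prop:def-inconvenient}, I would pick the inconvenient direction $\zeta_i$ of $f$ and, without loss of generality, assume $s_{\zeta_i}(f_1) = s_{\zeta_i}(f)$. The pointwise inequality $s_\zeta(f_1) \leq s_\zeta(f)$ together with $\{s_\zeta(f_1) > 0\} \subseteq \{s_\zeta(f) > 0\}$ then yields $-s_{\zeta_i}(f_1) > \sum_{s_\zeta(f_1) > 0} s_\zeta(f_1)$, so the inconvenience inequality transfers to $f_1$.

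The main obstacle, which is resolved by appealing to the horizontal edge condition, is that $v$ may still fail to be inconvenient for $f_1$ if some outgoing slope $s_\zeta(f_1)$ vanishes. But then, by linearity of $f_1$ on each edge, $f_1$ is constant along the corresponding adjacent edge $e$, making $e$ a horizontal edge for $f_1$. Realizability of $D_1$ supplies a simple cycle $Z$ containing $e$ (and hence $v$) with $f_1 \geq f_1|_e = f(v)$ on $Z$, and $f \geq f_1$ upgrades this to a cycle above $v$ for $f$. This closes the argument and shows that $D$ satisfies both conditions of Theorem \ref{thm:realizability-iff}, so $\Real(|K|)$ is tropically convex.
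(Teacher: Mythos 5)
Your proof follows the same strategy as the paper's: verify both conditions of Theorem~\ref{thm:realizability-iff} for $f = f_1 \oplus f_2$ by transferring witnessing cycles from the realizability of $D_1$ and $D_2$, using $f \geq f_i$ to upgrade each cycle. Your treatments of horizontal edges and of inconvenient vertices $v$ with $f_1(v) \neq f_2(v)$ coincide with the paper's. Where you are actually more careful than the paper is in the case $f_1(v) = f_2(v)$: both arguments derive the inconvenience inequality
\[
    -s_{\zeta_i}(f_1) > \sum_{s_\zeta(f_1) > 0} s_\zeta(f_1)
\]
for the chosen direction $\zeta_i$, but the paper concludes from this alone that $v$ is an inconvenient vertex for $f_1$ and then invokes realizability of $D_1$ to get a cycle above $v$. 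That step quietly assumes all outgoing slopes $s_\zeta(f_1)$ at $v$ are nonzero, which is part of the definition in Proposition~\ref{prop:def-inconvenient} and can genuinely fail whenever $s_\zeta(f_2) > 0$ achieves the maximum while $s_\zeta(f_1) = 0$. You spot this and resolve it correctly: a vanishing slope of $f_1$ along an edge $e$ at $v$ makes $e$ a horizontal edge for $f_1$ (once the model is refined so that $f_1$ is linear on edges), so the horizontal-edge part of the realizability of $D_1$ supplies a simple cycle through $e$, hence through $v$, lying above $f_1(e) = f_1(v) = f(v)$ with respect to $f_1$; since $f \geq f_1$, the same cycle lies above $v$ for $f$. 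In short, your proof is essentially the paper's, but it explicitly closes a small gap that the paper's argument leaves open.
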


\begin{proof}
    Let $M$ be the set of $f \in R(K)$ that correspond to realizable
    divisors $K + \fdiv(f)$,
    plus the element $-\infty$. We claim that $M$ is a submodule of
    $R(K)$.

    Let $f \in M$.
    Since for all $c \in \R$, we have that $\fdiv(c\odot f) = \fdiv(f)$,
    we deduce that $K + \fdiv(c\odot f)$ and so $M$ is stable under tropical
    scalar multiplication.

    When $f,g \in M$, we will show that $f \oplus g = \max(f,g) \in M$.
    Consider the model $G$ of $\Gamma$, such that both $f$ and $g$
    are linear when restricted to any given edge.
    It follows that when restricted to any fixed edge,
    $f \oplus g$ is equal to either $f$ or $g$.

    If $e$ is a horizontal edge of $\Gamma$ with respect to $f \oplus g$,
    then we must have that $e$ is a horizontal edge with respect to one of
    $f$ or $g$. W.l.o.g. $f$ is constant on $e$, but then as $K + \fdiv(f)$
    is realizable, there is a simple cycle $\gamma \subseteq \Gamma$ containing
    $e$ and which lies above it with respect to $f$.
    But $f \oplus g \geq f$ and hence $\gamma$ also lies above $e$ with respect
    to $f \oplus g$. We deduce that the condition on the edges in Theorem
    \ref{thm:realizability-iff} is satisfied.

    Now, suppose $v$ is an inconvenient vertex with respect to $f \oplus g$.
    If $f(v) \neq g(v)$, then
    w.l.o.g. $f(v) > g(v)$ and so $f \oplus g$ agrees with $f$ in a
    neighbourhood of $v$. This implies that $v$ is inconvenient with respect
    to $f$ as well. Hence there is a simple cycle $\gamma \subseteq \Gamma$
    containing $v$ and which lies above it with respect to $f$, and as before
    this also implies it lies above $v$ with respect to $f \oplus g$.
    Now, suppose $f(v) = g(v)$. In this case,
    for any tangent $\zeta \in T_v\Gamma$, we have
    that $s_\zeta(f\oplus g) = \max(s_\zeta(f), s_\zeta(g))$.
    Fix any ordering $\zeta_1, \dots, \zeta_r$ on $T_v\Gamma$ and let
    $s_i = s_{\zeta_i}(f \oplus g)$ and $s_i' = s_{\zeta_i}(f)$.
    It follows that $s_i \geq s_i'$ for all $i$.
    Since $v$ is inconvenient with respect to $f \oplus g$, we know that there
    exists some $i$ such that $s_i < 0$ and
    \begin{equation*}
        -s_i > \sum_{j, s_j > 0}s_j
    \end{equation*}
    Then we have that
    \begin{equation*}
        -s_i' \geq -s_i > \sum_{j, s_j > 0}s_j
        \geq \sum_{j, s_j' > 0}s_j \geq \sum_{j, s_j' > 0}s_j'.
    \end{equation*} 
    Hence it follows that $v$ is also an inconvenient vertex for $f$, 
    and as before this yields a simple cycle that lies above $v$ with respect to
    both $f$ and $f \oplus g$.
    We deduce that the condition on inconvenient vertices in Theorem
    \ref{thm:realizability-iff} is satisfied and so
    $K + f\oplus g$ is realizable.

    We conclude that $M$ is a submodule of $R(K)$ and hence $\Real(|K|)$
    being the
    image of this submodule is tropically convex.
\end{proof}

\begin{proposition}
    \label{prop:rl-definable}
    The realizability locus $\Real(|K|)$
    is a definable subset of $|K|$.
\end{proposition}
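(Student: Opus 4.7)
The plan is to verify definability cell by cell, using the fact that within each cell the combinatorial data determining realizability (slopes, inconvenient vertices, horizontal edges) is constant, while the actual values of $f$ depend affine-linearly on the cell coordinates.

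Fix a cell $\sigma\in\cK_K$ and decompose the closed cell as $\sigma=\bigsqcup_{\tau\leq\sigma}\relint(\tau)$; each $\relint(\tau)$ is cut out inside $|\sigma|$ by linear equalities (chip positions coinciding or hitting the endpoints of the edge) together with strict linear inequalities, hence is itself definable. Since a finite union of definable sets is definable, it suffices to prove that $\Real(|K|)\cap\relint(\tau)$ is definable in $|\sigma|$ for each face $\tau\leq\sigma$.

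Fix such a $\tau$ with combinatorial data $(m_V,m_E,s)$. For any $D\in\relint(\tau)$ I refine the model so that $\supp(D)$ lies in the vertex set; the slopes of the associated $f$ on every segment of this subdivision are determined by $s$ and $m_E$ alone, independent of the chip positions. By Proposition~\ref{prop:def-inconvenient} the set $I(\tau)$ of inconvenient vertices depends only on the outgoing slopes, and the set $H(\tau)$ of horizontal segments is the set where a slope equals $0$; both are therefore fixed on $\relint(\tau)$. Moreover, after fixing the additive constant by setting $f$ equal to $0$ at a chosen base vertex, integrating the known slopes along paths shows that for every vertex $w$ of the subdivided model $f(w)$ is an affine-linear function of the cell coordinates (the chip positions), so every difference $f(w_1)-f(w_2)$ is linear in those coordinates.

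By Theorem~\ref{thm:realizability-iff}, $D\in\Real(|K|)$ iff each $v\in I(\tau)$ and each $e\in H(\tau)$ is contained in some simple cycle of $\Gamma$ lying above it. The number of simple cycles of $\Gamma$ is finite, and a cycle $Z$ lies above $v$ precisely when $f\geq f(v)$ on $Z$; since $f$ is linear on each segment of the subdivision, this is the conjunction, over the finitely many vertices $w$ of the subdivided model that meet $Z$, of the linear inequalities $f(w)\geq f(v)$. Horizontal segments are handled identically. Therefore the realizability condition on $\relint(\tau)$ is a finite Boolean combination (conjunction over $I(\tau)\cup H(\tau)$, disjunction over the finite set of candidate simple cycles, conjunction over the vertices of each cycle) of linear inequalities in the coordinates of $|\sigma|$, so it defines a definable subset. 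Intersecting with the definable description of $\relint(\tau)$ and taking the union over $\tau\leq\sigma$ yields the definability of $\Real(|K|)\cap\sigma$.

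The main bookkeeping obstacle is step three: verifying that the slope data (hence the set of inconvenient vertices and horizontal edges) is genuinely constant on $\relint(\tau)$, and that evaluating $f$ at the moving vertices of the subdivision produces affine-linear functions of the cell coordinates. Once this linearity is established, the remainder is a finite enumeration over cycles, inconvenient vertices, and horizontal edges, which fits directly into the Boolean framework of the definition.
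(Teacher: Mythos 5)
Your proof is correct but takes a genuinely different route from the paper's. The paper fixes a finite generating set $\phi_1,\dots,\phi_r$ of $R(K)$, works in the coefficient space $\R^r$, constructs there the definable sets $I_v, Z_v, H_e, Y_e$ encoding the inconvenient-vertex and horizontal-edge conditions, and pushes them forward into $|K|$ via the piecewise affine parametrization $\eta(a) = K + \fdiv\bigl(\bigoplus_i a_i \odot \phi_i\bigr)$ — relying on the fact that images of definable sets under piecewise affine maps are definable. You instead unwind the definition of definability directly, verifying it cell by cell in $\cK_K$ using the combinatorial type $(m_V, m_E, s)$ of each face. The point you flag at the end is in fact fine and not a gap: $s(e)$ pins down the slope at the origin of each edge, and at an interior chip of multiplicity $m$ the slope jumps by exactly $m$ (since at a valence-$2$ point $D(x) = \ord_x f$), so the slope on every subsegment, hence the set of inconvenient vertices and horizontal segments, is constant across $\relint(\tau)$; moreover a valence-$2$ point of an effective canonical divisor can never be inconvenient, so $I(\tau) \subseteq V$. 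Integrating these fixed slopes gives $f$ at every breakpoint as an affine function of the chip coordinates, making each ``cycle lies above'' condition a conjunction of linear inequalities. Your approach trades the paper's dependence on finite generation of $R(K)$ (and the implicit Tarski--Seidenberg-type closure property) for an explicit analysis of the cell structure, and is arguably the more elementary argument since the paper's notion of definability in $|D|$ is itself given cell by cell.
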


\begin{proof}
    We know that $R(K)$ is finitely generated and so let $\{\phi_1, \dots,
    \phi_r\}$ be a generating set. Consider a model $G = (V, E)$ of $\Gamma$,
    such that the $\phi_i$ are all linear on each edge. This is possible since
    the set
    \[\bigcup_{i = 1}^r \bend(\phi_i)\]
    is finite, so we may choose $V$ to contain this set.
    The set of $a_i$ such that $\max(a_i + \phi_i) \geq 0$ on a fixed edge is
    clearly a definable subset of $\R^r$.
    Denote this subset by $C_e$.
    Let also $\gamma$ be a simple path in $\Gamma$, then the set of $a_i$
    such that $\max(a_i + \phi_i) \geq 0$ an all of $\gamma$ is just the
    intersection of all the $C_e$ for each $e \subseteq \gamma$.
    Hence this set is also definable and we will denote it by
    $C_\gamma$.

    Now, choose a vertex $v$. Let $S$ be any subset of $\{1, \dots, r\}$ such
    that the rational function
    \[\phi_S := \bigoplus_{i \in S} (\phi_i - \phi_i(v))\]
    makes $v$ an inconvenient vertex.
    Then for any choice of $a_1, \dots, a_r$ such that
    $a_i = -\phi_i(v)$ for all $i \in S$ and $a_i < -\phi_i(v)$, we have that
    \[\phi := \bigoplus_{i = 1}^r a_i\odot \phi_i\]
    also makes $v$ inconvenient. Indeed, we have that $\phi$ and $\phi_S$
    coincide in a neighbourhood of $v$,
    and hence the outgoing slopes of $\phi$ at $v$ are all equal to the outgoing
    slopes of $\phi_S$.
    Denote the set of such $(a_1, \dots, a_r) \in \R^r$ by
    $I_{v, S}$. Again, $I_{v, S}$ is clearly a definable set.
    Define now
    \[I_v := \bigcup_{S \subseteq \{1, \dots, r\}}I_{v, S},\]
    where the union ranges over the subsets $S$ such that $v$ is inconvenient
    for $\phi_S$.
    This set corresponds to all the rational functions $f \in R(K)$ such that
    $f(v) = 0$ and for which $v$ is inconvenient.
    Indeed, if $f$ is such a function, then
    \[f = \bigoplus_{i = 1}^r a_i\odot \phi_i\]
    for some choice of $a_i$ and since $f(v) = 0$, we necessarily have that
    $a_i \leq -\phi_i(v)$ for all $i$ and $a_i = -\phi_i(v)$ at least for one
    $i$.
    We may then set $S = \{i \mid a_i = -\phi_i(v)\}$ for which $f \in I_{v,S}$.

    Now, the set
    \[Z_v := I_v \cap \bigcup_\gamma C_\gamma,\]
    where the union is over the simple cycles of $\Gamma$ containing $v$
    corresponds to all the rational functions $f \in R(K)$ such that
    $f(v) = 0$, $v$ is inconvenient and contained in some simple cycle that lies
    above it.

    The image of $Z_v$
    via the piece-wise affine map
    \begin{align*}
        \eta: \R^r &\to |K|\\
        (a_1, \dots, a_r) &\mapsto K
        + \fdiv\left(\bigoplus_{i = 1}^r a_i \odot \phi_i\right)
    \end{align*}
    is precisely the subset of divisors of $|K|$ such that $v$ is inconvenient,
    but contained in a simple cycle that lies above it.
    Similarly, we have that $\eta(I_v)$ is the set of divisors of $|K|$ such
    that $v$ is inconvenient (with no further conditions).
    Note also that since $\eta$ is piece-wise affine, the image of any definable
    set is again
    definable.
    
    Let $U$ be a subset of vertices. We let 
    \[A_U := \bigcap_{v \in U} \eta(Z_v) \cap \bigcap_{v \notin U} \eta(I_v)^c.\]
    This is the set of divisors in $|K|$ for which the set of inconvenient
    vertices is precisely $U$ and every inconvenient vertex is
    contained in a simple cycle that lies above it.

    It follows that $A := \bigcup_{U \subseteq V}A_U$ is the set of divisors in
    $|K|$ for which each inconvenient vertex is contained in a simple cycle that
    lies above it. It is clear from the construction of $A$ that this set is
    definable.

    We will now show using an analogous argument that the set of divisors in 
    $|K|$ for which each horizontal edge is contained in a simple cycle lying
    above it is also definable, which will finish the proof.

    Let $e$ be an edge and let $\phi_j$ be a rational function that is constant
    on $e$. Let $H_{e, j}$ be the set of $(a_1, \dots, a_r) \in \R^r$ 
    with $a_j = -\phi_j(e)$ and
    \[\min_{x \in e} \max_{i \neq j} (a_i + \phi_i) < a_j + \phi_j(e).\]
    Then $H_{e, j}$ is clearly definable and corresponds to the set of rational
    functions that have a horizontal segment along $e$, on which they are equal
    to $a_j + \phi_j(e) = 0$.
    Define also $H_e := \bigcup_{j=1}^r H_{e,j}$, where we just let $H_{e, j} =
    \emptyset$ when $\phi_j$ is not constant on $e$. Then $\eta(H_e)$
    is the set of
    divisors in $|K|$ that have a horizontal segment on the edge $e$.
    Let 
    \[Y_e := H_e \cap \bigcup_\gamma C_\gamma,\]
    where the union is over the simple cycles of $\Gamma$ containing $e$.
    Then clearly $\eta(Y_e)$ is the set of divisors in $|K|$ such that $e$ is
    has a horizontal segment, which is contained in a cycle that lives above it.
    Finally, if we let
    \[B = \bigcup_{F \subseteq E}
    \bigcap_{e \in F} \eta(Y_e) \cap \bigcap_{e \notin F} \eta(H_e)^c,\]
    we deduce that $B$ is the set of divisors in $|K|$ for which each horizontal
    edge is contained in a simple cycle that lies above it.

    By construction $B$ is definable, and since $\Real(|K|) = A \cap B$, the
    realizability locus is definable as well.
\end{proof}

\begin{proposition}
    \label{prop:rl-closed}
    The realizability locus $\Real(|K|)$ is closed.
\end{proposition}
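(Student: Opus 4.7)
I would show that if $D_n \in \Real(|K|)$ converges to $D \in |K|$, then $D \in \Real(|K|)$, using the characterization in Theorem \ref{thm:realizability-iff}. Lifting via the bijection $R(K)/\R \cong |K|$, I pick rational functions $f_n \to f$ uniformly in $\|\cdot\|_\infty$ with $D_n = K + \fdiv(f_n)$ and $D = K + \fdiv(f)$. I then verify the two realizability conditions for $D$ by exploiting realizability of $D_n$ and passing to the limit, with the help of the finiteness of the set of simple cycles in $\Gamma$.

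For the horizontal edge condition, let $e$ be a horizontal edge of $f$ with $f \equiv c$ on $e$. Since $\fdiv(f) = 0$ and $K = 0$ on the interior of $e$, $D$ has no chips in $\interior{e}$, and thus the chips of $D_n$ lying on $e$ (finite in number since $\deg D_n = \deg K$) must all accumulate at the endpoints of $e$. On any compact subsegment $[a,b] \subset \interior{e}$, $f_n$ is therefore eventually linear with integer slope, and the uniform bound $f_n \to c$ on $[a,b]$ forces this slope to be $0$ for $n$ large. Hence $[a,b]$ is a horizontal edge of $D_n$ in the refined model containing $\supp(D_n)$ in its vertices, and Theorem \ref{thm:realizability-iff} applied to $D_n$ yields a simple cycle $\gamma_n \ni [a,b]$ with $f_n \geq f_n(a)$ on $\gamma_n$. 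Since $\Gamma$ has only finitely many simple cycles, a subsequence of $\gamma_n$ is constant equal to some $\gamma$; being simple and containing a subinterval of $\interior{e}$, $\gamma$ must contain all of $e$, and the limit yields $f \geq f(e)$ on $\gamma$.

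For the inconvenient vertex condition, let $v$ be inconvenient for $f$ with outgoing slopes $s_1, \dots, s_r$ along the edges $e_1, \dots, e_r$ at $v$. Let $\sigma_k^{(n)}$ denote the total multiplicity of the chips of $D_n$ on $e_k$ that converge to $v$; a direct slope-jump computation shows that for $n$ large the outgoing slope of $f_n$ at $v$ along $e_k$ equals $s_k - \sigma_k^{(n)}$. If all these remain non-zero, then by Proposition \ref{prop:def-inconvenient} the inconvenience witnessed by some $s_j < 0$ carries over to $f_n$, since $-(s_j - \sigma_j^{(n)}) \geq -s_j$ while the positive-slope sum only decreases; realizability of $D_n$ then gives a simple cycle through $v$ on which $f_n \geq f_n(v)$. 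Otherwise $s_{k_0} - \sigma_{k_0}^{(n)} = 0$ for some $k_0$, in which case $f_n$ is constant on the segment of $e_{k_0}$ from $v$ to its first chip, producing a horizontal edge of $D_n$ abutting $v$; realizability then yields a simple cycle containing this segment (and hence $v$) on which $f_n \geq f_n(v)$. Finiteness of simple cycles and passage to a subsequence again produce a simple cycle through $v$ with $f \geq f(v)$ on it, as required.

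The main technical obstacle is the bookkeeping around chip accumulation at $v$: chips of $D_n$ can merge into $v$ from various edges, alter the outgoing slopes at $v$, and either preserve, destroy, or relocate the structural features (inconvenient vertices, horizontal edges) that the realizability conditions hinge on. The integrality of slopes, which forces them to stabilize once chips are sufficiently localized, together with the finiteness of the set of simple cycles in $\Gamma$, is what converts the delicate limit argument into clean combinatorial statements, and I expect these to be the two ingredients making the argument go through.
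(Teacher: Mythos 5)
Your proof is correct and follows essentially the same strategy as the paper: take a convergent sequence of realizable divisors, use integrality to force the outgoing slopes of $f_n$ at the relevant vertex to stabilize along a subsequence, use convexity/chip-accumulation to compare them with the limiting slopes, and then invoke finiteness of the set of simple cycles to extract a single cycle witnessing realizability in the limit.

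One place where your argument is actually more careful than the paper's: in the inconvenient-vertex step, the paper deduces (by reference to the proof of Proposition~\ref{prop:realizable-convex}) that $v$ is inconvenient for all $f_n$. This uses only the inequality $s_\zeta(f)\ge s_\zeta(f_n)$ and verifies the slope inequality in Proposition~\ref{prop:def-inconvenient}, but it does not check the hypothesis that all outgoing slopes of $f_n$ at $v$ are nonzero. Since a positive slope $s_\zeta(f)>0$ can drop to $s_\zeta(f_n)=0$ (your $\sigma_{k_0}^{(n)}=s_{k_0}$ case), $v$ need not be inconvenient for $f_n$ at all. Your proof correctly handles this degenerate case by observing that a vanishing slope along $e_{k_0}$ at $v$ creates a horizontal segment abutting $v$, and then falling back on the horizontal-edge condition in Theorem~\ref{thm:realizability-iff} to produce a cycle through $v$ lying above it. So this is the same approach, but you patch a small gap that the paper leaves implicit.
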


\begin{proof}
    Let $D_n = K + \fdiv(f_n)$ be a sequence of realizable divisors in 
    $\Real(|K|)$ converging
    in $|K|$ to $D + \fdiv(f)$. Suppose $v$ is an inconvenient vertex for
    $f$. Let $\zeta$ be a tangent vector at $v$. Let $s$ be the slope of $f$
    along $\zeta$ and $s_n$ the slope of $f_n$ along $\zeta$.
    The sequence $s_n$ takes values in the finite set
    $\{-\deg{K},\dots, \deg{K}\}$, so up to switching to a subsequence, we may
    assume that the $s_n$ are all equal to some $s'$.
    Let $e$ be the edge corresponding to $\zeta$, then since 
    $K + \fdiv(f_n)$ is effective, all of the $f_n$ are convex along the edge
    $e$. This forces $s \geq s'$. Hence up to switching to a subsequence, we may
    assume that for all tangent vectors $\zeta$ at $v$, we have that
    \begin{equation*}
        s_\zeta(f) \geq s_\zeta(f_n).
    \end{equation*}
    We deduce the same way as in the proof of Proposition
    \ref{prop:realizable-convex} that $v$ is also an inconvenient vertex for all
    of the $f_n$. Since all the $D_n$ are realizable, we deduce that
    for all $n$ there exists some simple cycle $\gamma_n$
    that lie above $v$ (with respect to $f_n$).
    There are only finitely many simple cycles, so up to switching to a
    subsequence, we may assume that all of the $\gamma_n$ are equal to some
    fixed cycle $\gamma$. We have for all $n$ that $f_n(\gamma) \geq f_n(v)$ and
    so by taking the limit, we also obtain that
    $f(\gamma) \geq f(v)$. Hence we conclude that for every inconvenient vertex
    for $f$, there exists a simple cycle that lies above it.

    Now, let $e$ be a horizontal edge for $f$. We have that
    \[\lim_{n \to \infty}\|f - f_n\|_\infty = 0,\]
    and the $f_n$ are convex on $e$ and have only integral slopes. For 
    $n$ such that $\|f - f_n\| \leq l(e) < 2$ this implies $f_n$ has to have a
    horizontal section along $e$. We deduce that there exists a simple 
    cycle $\gamma_n$ containing $e$ such that $f_n(\gamma_n) \geq f_n(e)$. Like
    before, we deduce that there exists a simple cycle $\gamma$ containing 
    $e$, such that $f(\gamma) \geq f(e)$.

    The two conditions for realizability from Theorem \ref{thm:realizability-iff}
    are satisfied, and so we conclude that $D$ is realizable, and so
    $\Real(|K|)$ is closed.
\end{proof}

\begin{corollary}
    The realizability locus $\Real(|K|)$ admits the structure of an abstract
    polyhedral complex.
\end{corollary}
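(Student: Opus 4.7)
The plan is to derive this corollary immediately from the two preceding propositions together with the general structural result already established in subsection \ref{sec:polyhedral-complex}. Specifically, Corollary \ref{cor:closed-def-complex} states that any closed, definable subset $X \subseteq |D|$ admits the structure of an abstract polyhedral complex. Proposition \ref{prop:rl-definable} shows that $\Real(|K|)$ is definable in $|K|$, and Proposition \ref{prop:rl-closed} shows that it is closed. Applying Corollary \ref{cor:closed-def-complex} with $D = K$ and $X = \Real(|K|)$ yields the conclusion in one line.

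If I wanted to unfold the mechanism rather than black-box it, I would follow the construction used in the proof of Corollary \ref{cor:closed-def-complex}: for each cell $\sigma \in \cK_K$, the image of $\Real(|K|) \cap \sigma$ inside the polyhedron $|\sigma|$ is closed and definable, hence by Proposition \ref{prop:closed-definable-polyhedra} a finite union of polyhedra; after a suitable common refinement inside each $\sigma$ and along each inclusion $\tau \subseteq \sigma$ of cells of $\cK_K$, these polyhedral subdivisions glue into an abstract polyhedral complex structure $\cK_{\Real(|K|)}$ on the realizability locus. No new case analysis is required beyond what is already contained in Corollary \ref{cor:closed-def-complex}.

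Since both hypotheses of Corollary \ref{cor:closed-def-complex} have been verified in the preceding two propositions, there is no genuine obstacle here; the substantive difficulty was in establishing definability in Proposition \ref{prop:rl-definable}, where the conditions from Theorem \ref{thm:realizability-iff} had to be expressed as boolean combinations of half-space conditions on the coefficients $(a_1, \dots, a_r)$ of a generating set of $R(K)$, and in establishing closedness in Proposition \ref{prop:rl-closed} via a compactness-style argument on slopes and simple cycles. With both in hand, the present statement is a direct corollary.
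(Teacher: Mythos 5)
Your proposal matches the paper's proof exactly: both apply Corollary \ref{cor:closed-def-complex} to $\Real(|K|)$, using Proposition \ref{prop:rl-definable} for definability and Proposition \ref{prop:rl-closed} for closedness. The optional unfolding you sketch correctly reproduces the proof of Corollary \ref{cor:closed-def-complex}, so nothing is missing.
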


\begin{proof}
    By Propositions \ref{prop:rl-definable} and \ref{prop:rl-closed},
    $\Real(|K|)$ is closed and definable, so the statement follows from 
    Corollary \ref{cor:closed-def-complex}.
\end{proof}

Although we have shown that $\Real(|K|)$ is an abstract polyhedral complex, this
does not show anything about whether or not it is finitely generated. For
example, consider the triangle spanned by
$[1: 0: 0]$, $[0: 1: 0]$ and $[1: 1: 0]$
in $\T(\R^3)$. It is impossible to express
any point of the form $[a: b: 1]$ with $a + b = 1$ as a convex combination of 
other points in the triangle, so the triangle is not finitely generated, despite
being a tropically convex subspace.

If one considers the tropically convex subspace of $|K|$ spanned by the
realizable extremals of $|K|$, it would appear that this set agrees with
$\Real(|K|)$ when one consider genus 3 graphs. Unfortunately, this already fails
for genus 4 graphs. For example, the divisor shown in Figure
\ref{fig:canonical-span} is realizable, but
does not belong to the span of realizable extremals of $|K|$ (this is not
obvious
a priori, but has been checked using a computer program).

\begin{figure}
    \centering
    \begin{tikzpicture}[scale=1.75]
        \coordinate (a) at (0,0);
        \coordinate (b) at (1,-1);
        \coordinate (c) at (1,0);
        \coordinate (d) at (1,1);
        \coordinate (e) at (2,0);
        \coordinate (f) at (3,0);
        \draw (a) -- (b) -- (c) -- (d) -- (a) -- (c);
        \draw (b) -- (e) -- (d);
        \draw (e) -- (f);
        \draw ($(f) + (0.5, 0)$) circle (0.5);
        \begin{scope}[every node/.style=vertex]
            \node at (0, 0) {};
            \node at (1, -1) {};
            \node at (1, 0) {};
            \node at (1, 1) {};
            \node at (2, 0) {};
            \node at (3, 0) {};
        \end{scope}
        \node[divisor, label=1] at (d) {};
        \node[divisor, label=1] at (e) {};
        \node[divisor, label=4] at ($(d)!0.5!(e)$) {};
    \end{tikzpicture}
    \caption{Realizable divisor in the canonical linear system that is not in the
    span of the realizable extremals of $|K|$.}
    \label{fig:canonical-span}
\end{figure}
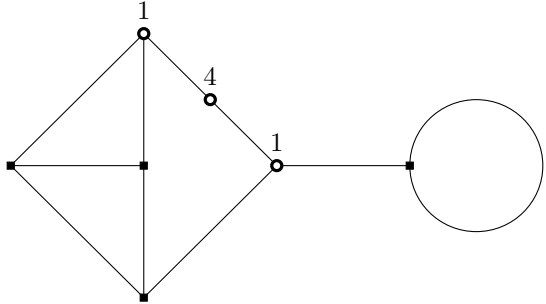

\begin{question}
    Is the realizability locus in the canonical linear system finitely
    generated?
\end{question}

An affirmative answer to this question would be very useful,
as it would allow to characterize the realizability locus using its extremals.

\subsection{Cycles and realizability}
\label{sec:cycles}

In this subsection we will give a sufficient characterization for realizability
of divisors in the canonical linear system
by studying the cycles that can appear in the metric graph.
We will be able to deduce that $\Real(|K|)$ always contains some maximal cell of
$|K|$ of dimension $g-1$.

\begin{proposition}
    \label{prop:local-max}
    Let $D = K + \fdiv(f)$ an effective canonical divisor.
    If $C$ is a local maximum of $f$,
    then $C$ does not have vertices of valence $1$ in $C$.
    In particular, $C$ contains
    a cycle.
\end{proposition}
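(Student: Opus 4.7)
The plan is to exploit that on a local maximum $C$ of $f$, the slopes of $f$ on tangents leaving $C$ must be strictly negative (and hence $\le -1$, since $f$ is rational with integer slopes), while along tangents staying inside $C$ the slope is $0$ (since $f$ is constant on $C$). Combined with the effectivity of $D = K + \fdiv(f)$, a valence-$1$ point of $C$ will force $D(x)$ to be strictly negative.

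More precisely, suppose for contradiction that $x \in C$ has valence $1$ in $C$. Subdivide the model of $\Gamma$ so that $x$ is a vertex and $f$ is linear on each edge adjacent to $x$. Exactly one tangent $\zeta_0 \in T_x\Gamma$ points into $C$; along $\zeta_0$ we have $s_{\zeta_0}(f) = 0$. The remaining $\val(x) - 1$ tangents at $x$ point into $U \setminus C$ for a small neighbourhood $U$ of $C$, and since $f(U \setminus C) < f(C)$, each such slope is strictly negative, hence at most $-1$. Summing,
\begin{equation*}
\ord_x(f) \;=\; \sum_{\zeta \in T_x \Gamma} s_\zeta(f) \;\le\; 0 + (\val(x)-1)\cdot(-1) \;=\; -(\val(x)-1).
\end{equation*}
Then
\begin{equation*}
D(x) \;=\; (\val(x) - 2) + \ord_x(f) \;\le\; (\val(x)-2) - (\val(x)-1) \;=\; -1,
\end{equation*}
contradicting $D \geq 0$. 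Hence $C$ has no points of valence $1$ in $C$.

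For the second assertion, the same computation rules out $C$ being a single point: if $C = \{x\}$, every tangent at $x$ would leave $C$, giving $\ord_x(f) \le -\val(x)$ and $D(x) \le -2$. Thus $C$ is a connected subgraph on more than one point in which every point has valence at least $2$ in $C$. Taking any model of $C$ whose vertex set contains all branch points, the handshake inequality $\sum_v \val_C(v) = 2|E(C)|$ combined with $\val_C(v) \ge 2$ gives $|E(C)| \ge |V(C)|$, so $g(C) = |E(C)| - |V(C)| + 1 \ge 1$ and $C$ contains a cycle. I do not expect a genuine obstacle here; the only care needed is to make sure slopes are taken in the correct outward direction and that the argument ruling out $C$ being a point is included, since a priori the ``valence $1$'' hypothesis alone does not exclude this degenerate case.
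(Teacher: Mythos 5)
Your proof is correct and follows essentially the same approach as the paper: both use that slopes leaving the local maximum $C$ are at most $-1$ while slopes staying in $C$ are $0$, then combine this with effectivity of $D = K + \fdiv(f)$ to bound $\val_C(v)$ from below, and finish with the handshake argument for the cycle. The only cosmetic difference is that the paper proves $\val_C(v) \ge 2$ directly for all $v \in C$ (which covers the isolated-point case automatically), whereas you argue by contradiction for valence $1$ and handle the single-point case separately.
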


\begin{proof}
    For any vertex $v \in C$, since $f$ may not have positive slope
    along any tangent at $x$ (as $C$ is a local maximum), we deduce that
    \begin{equation*}
        D(v) \leq K(v) - (\val_\Gamma(v) - \val_C(v)) = \val_C(v) - 2.
    \end{equation*}
    In particular, $\val_C(v) \geq 2 + D(v) \geq 2$ and so $v$ is not a leaf
    in $C$.

    The second statement follows from the fact that if $G$ is a connected 
    graph with edges $E$, vertices $V$, each of valence at least 2,
    then $\#E = \frac{1}{2}\sum_{v \in V} \val_G(v) \geq \#V$ and so
    $g(G) = \#E - \#V + 1 \geq 1$.
\end{proof}

\begin{lemma}
    \label{lemma:inconvenient-slopes}
    If $v \in V$ is an inconvenient vertex, there are at least two edges with
    strictly positive outgoing slope.
\end{lemma}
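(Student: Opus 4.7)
The plan is to use the characterization of inconvenient vertices from Proposition \ref{prop:def-inconvenient} together with the effectiveness of $D = K + \fdiv(f)$ to rule out the cases of having zero or exactly one edge with positive outgoing slope at $v$.

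First I would set up notation: write $s_1,\dots,s_r$ for the outgoing slopes at $v$ (so $r = \val(v)$), let
\[P := \sum_{j:\, s_j > 0} s_j, \qquad N := -\sum_{j:\, s_j < 0} s_j,\]
and let $n_+, n_-$ be the numbers of strictly positive and strictly negative outgoing slopes. By Proposition \ref{prop:def-inconvenient}, no $s_j$ vanishes, so $n_+ + n_- = \val(v)$. Since the slopes are integers, $P \geq n_+$ and $N \geq n_-$. Effectiveness of $D$ at $v$ gives
\[0 \leq D(v) = K(v) + \ord_v(f) = (\val(v) - 2) + (P - N),\]
which rearranges to $n_+ + n_- + P - N \geq 2$. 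The inconvenience condition supplies an index $i$ with $s_i < 0$ and $-s_i \geq P + 1$.

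Now I would derive a contradiction under the assumption $n_+ \leq 1$. If $n_+ = 0$, then $P = 0$ and the effectiveness inequality becomes $n_- - N \geq 2$, contradicting $N \geq n_-$. If $n_+ = 1$, write $P = p$ for the unique positive slope. The inconvenient index contributes at least $p+1$ to $N$ while each of the remaining $n_- - 1$ negative slopes contributes at least $1$, so $N \geq (p+1) + (n_- - 1) = p + n_-$. Plugging this into the effectiveness inequality yields $1 + n_- + p - N \geq 2$, i.e.\ $p + n_- - N \geq 1$, which contradicts $N \geq p + n_-$. Hence $n_+ \geq 2$.

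I don't foresee a genuine obstacle here; the whole proof is a short integer-arithmetic argument, and the only subtle point is bookkeeping the inequality $N \geq p + n_-$ correctly (remembering that at least one negative slope is forced by inconvenience to be sizeably larger in magnitude than $p$, while the rest are at least $1$ in magnitude by integrality).
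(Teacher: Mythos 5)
Your proof is correct and follows essentially the same strategy as the paper's: combine the effectiveness inequality $(K + \fdiv(f))(v) \geq 0$ with the inconvenience inequality from Proposition \ref{prop:def-inconvenient} and integrality of slopes to rule out $n_+ \leq 1$. The paper's version writes out only the $n_+ = 1$ case explicitly (using the ordering $s_1 \leq \dots \leq s_r$ and the estimate $\fdiv(f)(v) = (s_1 + s_r) + \sum_{i=2}^{r-1}s_i < -(r-2)$), leaving the $n_+ = 0$ case implicit; your version treats both cases and tracks the bookkeeping through the auxiliary quantities $P, N, n_+, n_-$, which makes it marginally more careful, but the underlying arithmetic argument is the same.
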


\begin{proof}
    Suppose $v$ is of valence $r$ and denote the outgoing
    slopes by $s_1, \dots, s_r$, in increasing order. If $s_r$ is the only
    strictly positive number, we have by definition of inconvenient vertex
    (Proposition \ref{prop:def-inconvenient}) that
    $-s_1 > s_r$ and $s_i < 0$ for all $i < r$.
    But then 
    \begin{equation*}
        \fdiv(f)(v) = 
        \sum_{i=1}^r s_i = (s_1 + s_r) + \sum_{i = 2}^{r-1} s_i < -(r - 2)
        = -K(v).
    \end{equation*}
    This is absurd, as this would imply $(K + \fdiv(f))(v) < 0$.
\end{proof}

\begin{proposition}
    \label{prop:no-disjoint-realizable}
    Let $D = K + \fdiv(f)$ be an effective canonical divisor.
    If there are no two disjoint horizontal cycles
    (possibly of different heights),
    then $D$ is realizable.
\end{proposition}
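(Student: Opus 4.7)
The plan is to verify the two criteria of Theorem \ref{thm:realizability-iff}. For each, I work inside the connected component of the superlevel set $\{f \geq c\}$ (with $c$ the relevant height) containing the inconvenient vertex or horizontal edge, and argue that either the required cycle exists there, or two disjoint horizontal cycles must appear elsewhere, contradicting the hypothesis.

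For an inconvenient vertex $v$ with $c = f(v)$, Lemma \ref{lemma:inconvenient-slopes} gives two tangents $\zeta_1, \zeta_2$ at $v$ with strictly positive slope. Let $C$ be the component of $\{f \geq c\}$ containing $v$. Either $\zeta_1, \zeta_2$ belong to the same component of $C \setminus \{v\}$, in which case stitching the two arcs through $v$ with a path in $C$ joining them produces a simple cycle through $v$ lying above $v$, or they lie in distinct components $D_1, D_2$. In the second case, each $\closure{D_i} = D_i \cup \{v\}$ is compact and contains points with $f > c$ (by following $\zeta_i$), so $f$ attains its maximum on $\closure{D_i}$ on a set $M_i \subseteq D_i$ with $f(M_i) > c$. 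Since $\closure{D_1}$ and $\closure{D_2}$ meet only at $v$ and $M_i$ sits strictly above $c$, a small enough neighborhood of $M_i$ in $\Gamma$ is disjoint from every other $D_j$ and from the complement of $C$, so $M_i$ is a local maximum of $f$ in $\Gamma$. Proposition \ref{prop:local-max} then produces two disjoint horizontal cycles, contradicting the hypothesis.

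For a horizontal edge $e$ at height $c$, let $C$ be the component of $\{f \geq c\}$ containing $e$. If $e$ is not a bridge in $C$, it lies in a simple cycle of $C$, which lies above $e$. Otherwise $C \setminus \interior{e}$ splits into two components $C_1', C_2'$ corresponding to the two endpoints of $e$; I will show each $C_i'$ contains a horizontal cycle, which gives two disjoint horizontal cycles. If $\max_{C_i'} f > c$, the maximum locus is a local maximum of $f$ in $\Gamma$ by the same reasoning as in the vertex case and contains a cycle by Proposition \ref{prop:local-max}. The delicate case is $f \equiv c$ on $C_i'$, where $C_i'$ itself is not a local maximum in $\Gamma$ because it borders $\interior{e}$ at the same level. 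Here I count valences: at any vertex $w \in C_i'$, tangents leaving $C$ carry strictly negative integer slope while tangents into $C_i'$ have slope zero, so effectivity of $K + \fdiv(f)$ at $w$ forces at least two of the tangents of $w$ to lie in $C_i'$, respectively at least one if $w$ is the endpoint of $e$ (since the tangent along $e$ accounts for one further zero slope). Summing yields $2 \# E(C_i') \geq 2 \# V(C_i') - 1$, hence $g(C_i') \geq 1$ and $C_i'$ contains a cycle, necessarily horizontal at height $c$.

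The main obstacle is the combinatorial case $f \equiv c$ on $C_i'$, in which Proposition \ref{prop:local-max} is unavailable. The valence-counting step essentially uses effectivity of $D$ at the boundary vertices of $C_i'$ together with the integrality of slopes of rational functions, so it is specific to divisors on metric graphs and does not have a purely topological analogue.
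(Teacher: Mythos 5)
Your proof is correct, and it takes a genuinely cleaner route than the paper's in the horizontal-edge case. The paper works with the level set $f^{-1}(c)$ containing the inconvenient vertex or horizontal edge, follows maximal non-decreasing paths to reach local maxima, and then does a case analysis on whether the level-set pieces $C_i$ contain a leaf of $C$; in that analysis the cycle arising from the leaf case lives above the level set, so the paper must subsequently argue about intersections of cycles that live in different places. You instead work with the superlevel set $\{f\geq c\}$, pass directly from the bridge hypothesis to the max locus of $f$ on each closed piece $C_i'$, and note that the resulting horizontal cycle (whether it comes from a local maximum above $c$ via Proposition \ref{prop:local-max} or from the valence count in the flat case) lies \emph{inside} $C_i'$; since $C_1'$ and $C_2'$ are disjoint this gives two disjoint horizontal cycles outright, so the bridge case is simply impossible under the hypothesis. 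Both proofs invoke the same key ingredients --- Lemma \ref{lemma:inconvenient-slopes}, Proposition \ref{prop:local-max}, and the parity/valence count $2\#E \geq 2\#V - 1 \Rightarrow g \geq 1$ --- and your dichotomy for the inconvenient vertex (same or different component of $C\setminus\{v\}$) is a natural reformulation of the paper's dichotomy on whether the two local maxima coincide. Two small points worth tightening in a final write-up: the ``stitching'' of the two arcs with a path in $C$ needs the standard minimal-intersection-point argument (as the paper does with the partial order on $(t_1,t_2)$) to guarantee the cycle is simple; and the max locus $M_i$ may fail to be connected, so one should pass to a connected component of it before applying Proposition \ref{prop:local-max}, which is harmless since the components of $M_i$ are still local maxima.
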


\begin{proof}
    We need to verify the conditions of Theorem \ref{thm:realizability-iff} are
    satisfied.

    If we have an inconvenient vertex $v$, by Lemma
    \ref{lemma:inconvenient-slopes} there are (at least)
    two edges $e_1, e_2$
    emanating from $v$ such that the outgoing slope of $f$ along these edges is
    strictly positive.
    Select two simple ($=$ not self-intersecting) paths $\gamma_1, \gamma_2$
    (seen as functions $[0, 1] \to \Gamma$ with $\gamma_i(0) = v$)
    along $e_1, e_2$, that are maximal for the property that $f \circ \gamma_i$
    is non-decreasing and $f \circ \gamma_i$ is strictly increasing on $(1 -
    \epsilon, 1)$ for some $\epsilon > 0$. Let $x_i := \gamma_i(1)$.
    Let $C_i$ be the connected component
    of $f^{-1}(f(x_i))$ containing $x_i$. By construction, $C_i$ is a
    local maximum, so by Proposition \ref{prop:local-max}, the $C_i$ each
    contain a horizontal cycle. Since by assumption these cycles have to
    intersect, we obtain $C_1 = C_2$.

    Now, if $\gamma_1, \gamma_2$ intersect, we can choose $(t_1, t_2)$ a pair
    such that $\gamma_1(t_1) = \gamma_2(t_2)$,
    minimal for the partial order $(t_1, t_2) \leq (s_1, s_2) \iff
    t_1 \leq s_1$ and $t_2 \leq s_2$. Then $\gamma_1|_{[0, t_1]} \oplus
    \overleftarrow{\gamma_2|_{[0, t_2]}}$ is a simple cycle that lies above $v$.
    Here by $\overleftarrow{\gamma}$ we mean the reversed path
    $\overleftarrow\gamma(t) = \gamma(1 - t)$. By $\oplus$ we mean the
    concatenation of
    paths, that is.
    \begin{equation*}
        \alpha\oplus\beta(t) := 
        \begin{cases}
            \alpha(2t) &\textrm{if } t \leq 1/2,\\
            \beta(2t - 1) &\textrm{if } t \geq 1/2.
        \end{cases}
    \end{equation*}
    Lastly, when we write the restriction $\gamma|_{[a, b]}$, it is understood
    that this new path is reparametrized as to have again domain $[0, 1]$, that
    is
    \begin{equation*}
        \gamma|_{[a, b]}(t) =
            \gamma(a + t(b-a))
    \end{equation*}

    If $\gamma_1, \gamma_2$ don't intersect, since $C_1 = C_2$ is connected,
    there is a simple path $\tau$ from $x_1$ to $x_2$.
    Then the path $\gamma_1 \oplus
    \tau \oplus \overleftarrow{\gamma_2}$ is a simple cycle that lies above $v$.
    We conclude that every inconvenient vertex is contained in a simple cycle
    that lies above it.

    Now let $e$ be a horizontal edge between two vertices $v_1$ and $v_2$.
    Let $C$ be the connected component of $f^{-1}(f(e))$ containing $e$.
    If $C \setminus e$ is connected, then
    there exists a simple path $\gamma$
    in $C \setminus e$ from $v_1$ to $v_2$. Then going along $\gamma$ from $v_1$
    to $v_2$ and then from $v_2$ to $v_1$ along $e$ determines a horizontal
    simple cycle containing $e$. Hence $e$ is not a problematic horizontal edge.

    So suppose $C \setminus e$ is 
    disconnected and let $C_1$, $C_2$ be the two components containing the
    vertices $v_1$, resp. $v_2$.
    We will show that there exist simple paths
    $\gamma_i$ in $\Gamma \setminus e$ from
    $v_i$ to the same horizontal cycle and like before,
    this would prove that $e$ is contained in
    in a simple cycle that lies above it.

    If $C_i$ has any point $x$ that is a leaf of $C$,
    then $x$ has an adjacent edge on which $f$ has
    strictly positive outgoing slope. Indeed, this follows because
    all of the other $\val_\Gamma(x) - 1$ edges have non-zero slopes,
    and if they
    were all negative, then $\fdiv(f)(x) \leq -(\val_\Gamma(x) - 1)$,
    but this would
    imply that $(K + \fdiv(f))(x) \leq -1$,
    which is absurd as we assumed $D = K +
    \fdiv(f)$ is effective.
    So $x$ neighbours an edge with strictly positive outgoing slope,
    and like before, we could take a path $\gamma$
    starting at $x$ along this edge, which is
    maximal for the property that $f \circ \gamma$ is non-decreasing and
    $f \circ \gamma$ is strictly increasing on $(1 - \epsilon, 1)$ for some
    $\epsilon$. Then $\gamma(1)$ would lie on a local maximum which
    contains a distinguished cycle.

    So suppose $C_i$ contains no leaf of $C$, then the only leaf of $C_i$ is
    possibly $v_i$.
    If we denote $V_i$ the vertices of $C_i$ and $E_i$ the edges of $C_i$,
    we know that $\sum_{v \in V_i} \val_{C_i}(v) = 2\cdot\#E_i$.
    It follows that the sum is even and so if $v_i$ were of valence 1 in $C_i$,
    there would also need to be another vertex of odd valence,
    and so this one would
    have to be of valence at least 3.
    In any case, we get that $\#E_i \geq \#V_i$ and so
    $g(C_i) \geq 1$. In other words, $C_i$ contains a simple cycle.
    The two cycles contained in $C_1, C_2$ are both horizontal, so they have
    to intersect, but this contradicts the fact that $C_1, C_2$ are disjoint.

    So we conclude that $e$ is contained in a simple cycle that lies above it,
    and so having verified the conditions of
    Theorem \ref{thm:realizability-iff}, we conclude $D$ is realizable.
\end{proof}

\begin{corollary}
    \label{cor:no-disjoint-realizable}
    Suppose $\Gamma$ does not contain disjoint cycles, then every $D \in |K|$ is
    realizable.
\end{corollary}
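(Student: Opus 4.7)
The plan is to derive this as an immediate consequence of Proposition \ref{prop:no-disjoint-realizable}. Given any divisor $D \in |K|$, we may write $D = K + \fdiv(f)$ for some rational function $f \in R(K)$, since $D$ is effective and equivalent to $K$.

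Next I would observe that any horizontal cycle (with respect to $f$) is in particular a cycle of $\Gamma$ in the graph-theoretic sense. Consequently, if $\Gamma$ contains no two disjoint cycles at all, then \emph{a fortiori} it cannot contain two disjoint horizontal cycles with respect to $f$, regardless of which $f$ was chosen. Hence the hypothesis of Proposition \ref{prop:no-disjoint-realizable} is satisfied for every choice of representative $f$, and we conclude that $D$ is realizable.

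There is no substantive obstacle here: the corollary is a straightforward specialization of the proposition, obtained by noting that the absence of disjoint cycles in the underlying metric graph is a strictly stronger combinatorial hypothesis than the absence of disjoint horizontal cycles for a specific $f$. The only thing to verify is that we can always express an effective divisor in $|K|$ in the form $K + \fdiv(f)$, which is just the definition of linear equivalence together with effectiveness.
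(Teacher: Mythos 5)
Your proposal is correct and is exactly the argument the paper has in mind: the corollary is stated without proof precisely because, as you observe, the absence of any two disjoint cycles in $\Gamma$ trivially implies the absence of two disjoint horizontal cycles for every representative $f$, so Proposition~\ref{prop:no-disjoint-realizable} applies directly.
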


\begin{corollary}
    If $|K|$ is of dimension at most $g - 1$,
    then every $D \in |K|$ is realizable.
\end{corollary}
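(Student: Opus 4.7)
The plan is to deduce the statement as an immediate consequence of Proposition \ref{prop:disjoint-dim} combined with Corollary \ref{cor:no-disjoint-realizable}, via contraposition.

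First I would unpack the hypothesis: saying $|K|$ has dimension at most $g-1$ means every cell $\sigma \in \cK_K$ satisfies $\dim \sigma \leq g-1$. Proposition \ref{prop:disjoint-dim} asserts that whenever $\Gamma$ contains two disjoint cycles, there must exist a cell of $|K|$ of dimension at least $g$. Taking the contrapositive, the assumption that no cell reaches dimension $g$ forces $\Gamma$ to contain no two disjoint cycles.

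Once we know that $\Gamma$ has no disjoint cycles, Corollary \ref{cor:no-disjoint-realizable} applies directly and yields that every $D \in |K|$ is realizable, which is what we want.

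I do not expect any real obstacle here: the only subtlety is making sure that the hypothesis ``$|K|$ has dimension at most $g-1$'' is interpreted as a bound on the dimension of every cell (equivalently, on every maximal cell), so that the contrapositive of Proposition \ref{prop:disjoint-dim} can be invoked cleanly. The actual realizability work has already been carried out in Proposition \ref{prop:no-disjoint-realizable} and its corollary, so nothing new needs to be verified.
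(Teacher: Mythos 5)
Your argument is correct and is exactly the paper's proof: contrapositive of Proposition \ref{prop:disjoint-dim} to rule out disjoint cycles, then Corollary \ref{cor:no-disjoint-realizable} to conclude realizability.
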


\begin{proof}
    If $\Gamma$ contained disjoint cycles, then $|K|$ would contain a cell of
    dimension at least $g$ by Proposition \ref{prop:disjoint-dim}. The
    corollary then follows from the contrapositive of this statement.
\end{proof}

\begin{remark}
    The converse of Proposition \ref{prop:no-disjoint-realizable} is not true.
    For example, the canonical divisor is realizable for the graph obtained by
    joining two cycles by a pair of edges (see Figure
    \ref{fig:realizable-canonical}).
    On the other hand, the converse of Corollary 
    \ref{cor:no-disjoint-realizable} is true
    as the following proposition shows.
\end{remark}

\begin{figure}[ht]
    \centering
    \begin{tikzpicture}[scale=2.5]
        \draw (0,0)--(1,0);
        \draw (0,1)--(1,1);
        \draw (0,0) to[bend right] (0, 1); 
        \draw (0,0) to[bend left] (0,1);
        \draw (1,0) to[bend right] (1,1);
        \draw (1,0) to[bend left] (1,1);
        \begin{scope}[every node/.style=divisor]
            \node at (0,0) {};
            \node at (0,1) {};
            \node at (1,0) {};
            \node at (1,1) {};
        \end{scope} 
    \end{tikzpicture}
    \caption{Graph with realizable canonical divisor.}
    \label{fig:realizable-canonical}
\end{figure}
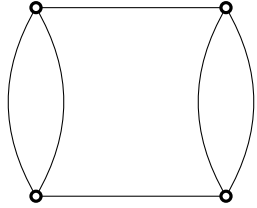

\begin{proposition}
    If $\Gamma$ contains disjoint cycles, then there exist non-realizable
    divisors $D \in |K|$.
\end{proposition}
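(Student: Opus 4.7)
The plan is to construct a non-realizable divisor by firing $Z = Z_1 \cup Z_2$ and then perturbing within the resulting cell of $|K|$. The subgraph $Z$ can fire with respect to $K$, since $Z_1, Z_2$ are simple cycles and each $v \in \boundary{Z}$ satisfies $K(v) = \val(v) - 2 = \degout_Z(v)$. Firing by a small $\epsilon > 0$ produces $F := K + \fdiv(CF(Z, \epsilon)) \in |K|$, whose associated function $f_0 = CF(Z,\epsilon)$ vanishes on $Z$ and equals $-\epsilon$ on the ``plateau'' of each connected component $C$ of $\Gamma \setminus Z$ (the part at distance greater than $\epsilon$ from $\boundary{C}$). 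By Proposition \ref{prop:disjoint-dim}, the divisor $F$ lies in a cell $\Delta_F$ of dimension at least $g$, which gives ample room to perturb chip positions along the connectors.

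If some edge of $\Gamma$ contained in a plateau is a cut edge of $\Gamma$, then this horizontal edge lies in no simple cycle of $\Gamma$, so by Theorem \ref{thm:realizability-iff} the divisor $F$ itself is not realizable. This already handles every graph that has a ``bridge'' between $Z_1$ and $Z_2$, such as a dumbbell.

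Otherwise I perturb $F$ within $\Delta_F$. The chips of $F$ that sit nearest $\boundary{Z}$ on each component $C_j$ can be moved along their edges, and this shifts the level of the plateau of $C_j$. I arrange the perturbation so that one distinguished component $C_1$ has its plateau at a strictly \emph{higher} level than every other $C_j$'s plateau. Since the new function stays close to $f_0$, effectiveness is preserved and the perturbed divisor $D$ remains in $\Delta_F \subseteq |K|$. Any simple cycle of $\Gamma$ that contains a point of $C_1$'s plateau must exit $C_1$ through $\boundary{Z}$ and re-enter the complement through another component $C_j$ (because the plateau sits in the interior of $C_1$ and removing the plateau disconnects $C_1$ from itself within a putative cycle). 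The portion of the cycle in $C_j$ necessarily traverses the plateau of $C_j$, which is at a strictly lower level; hence the cycle dips below the level of $C_1$'s plateau, and no simple cycle lies above the horizontal segment of $C_1$'s plateau. By Theorem \ref{thm:realizability-iff}, $D$ is not realizable.

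The main obstacle is the graph-theoretic analysis of the second case: in complicated topologies, where the components of $\Gamma \setminus Z$ need not be single edges or even trees, one has to verify carefully that every simple cycle through $C_1$'s plateau does traverse another component's plateau, and that the perturbation of chip positions can be arranged to set the plateau heights independently within $\Delta_F$. The first point reduces to the observation that any cycle entering $C_1$ via $\boundary{Z}$ and exiting it again stays in $\closure{C_1}$ only on a tree path (when $C_j$ is a tree) or otherwise produces a third disjoint cycle inside $C_1$ itself, to which the same perturbation argument applies; the second point follows from the dimension count $\dim \Delta_F \geq g \geq 2$ and the fact that each chip nearest $\boundary{Z}$ contributes an independent coordinate up to a single global equivalence.
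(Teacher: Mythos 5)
The approach of firing $Z = Z_1 \cup Z_2$ and then analysing the plateau levels diverges from the paper in a way that creates a genuine gap. The paper instead fires $Z = Z_1 \cup Z_2 \cup \gamma$, where $\gamma$ is a simple connecting path between the two cycles with $\gamma((0,1)) \cap (Z_1 \cup Z_2) = \emptyset$. This choice is exactly what makes the conclusion immediate: for any horizontal edge $e \subset \gamma$, deleting $e$ from $Z$ disconnects $Z$ (since $\gamma$ is a simple path joining two disjoint cycles), so every simple cycle through $e$ must leave $Z$ and enter $\Gamma \setminus Z$, which sits at strictly lower level. By dropping $\gamma$ from the fired set, you lose this disconnection property, and the burden shifts onto the plateau-height argument, which does not hold up.

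Concretely, your key claim --- that any simple cycle through a point of $C_1$'s plateau must re-enter $\Gamma \setminus Z$ through another component $C_j$ --- is false. If $C_1$ attaches to a single one of the two cycles, say $Z_1$, at two or more boundary points, then there is a simple cycle lying entirely in $\closure{C_1} \cup Z_1$: it enters $C_1$ from $Z_1$ at one attachment point, traverses the plateau, and returns to $Z_1$ at the other, never touching any $C_j$ with $j \neq 1$. Such a cycle stays at level $\geq -\epsilon_1$, hence lies above $C_1$'s plateau, so the horizontal edges there are unproblematic and your distinguished component gives no obstruction. The same failure occurs whenever $g(C_1) \geq 1$, since then the plateau itself contains a horizontal cycle. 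Your parenthetical ``or otherwise produces a third disjoint cycle ... to which the same perturbation argument applies'' does not rescue this: it is not a recursion with a clear base case, and it tacitly concedes that the dichotomy ``bridge in a plateau'' versus ``multiple components with adjustable heights'' is not exhaustive. Indeed, if $\Gamma \setminus Z$ is a \emph{single} component with no bridge in its plateau (e.g.\ two triangles joined by a theta-shaped connector), neither of your two cases applies at all, yet non-realizable divisors still exist. Incorporating a simple connecting path into the fired set, as the paper does, sidesteps all of these difficulties in one stroke; you may want to revise along those lines rather than trying to repair the plateau-comparison argument.

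Two smaller points: Proposition~\ref{prop:disjoint-dim} only asserts the \emph{existence} of a cell of dimension $\geq g$; it does not assert that $F$ lies in one, since $F$ need not be generic. And the assertion that the plateau levels can be set independently within a single cell needs justification --- it holds, but because at each boundary point of a component the outgoing slope is pinned by the requirement $D(p) = 0$, so each component contributes exactly one free scaling parameter, not because of the raw dimension count $\dim \Delta_F \geq g$.
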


\begin{proof}
    Let $Z_1, Z_2 \subseteq \Gamma$ be the two disjoint cycles.
    Let also $\gamma \subseteq \Gamma$ be a simple path such that
    $\gamma(0) \in Z_1$, $\gamma(1) \in Z_2$ and 
    $\gamma((0, 1)) \cap (Z_1 \cup Z_2) = \emptyset$.
    
    The union $Z = Z_1 \cup Z_2 \cup \gamma$ is a closed subgraph where each
    vertex $v$ has at most $\val(v) - 2$ outgoing edges, this means $Z$ can
    fire (if $Z = \Gamma$, firing doesn't have any effect). So fire $Z$ by a
    small amount to
    obtain a divisor $D = K + \fdiv(f) \in |K|$.
    Now, since $Z_1, Z_2$ are disjoint, $\gamma$ is non-trivial and so passes
    through at least one edge $e$. By construction, $e$ is a horizontal edge.
    If $C \subseteq \Gamma$ is any simple cycle passing through $e$, $C$ must
    intersect the complement of $Z$. Indeed, removing $e$ from $Z$ would
    disconnect it, as $e \subseteq \gamma$, which is a simple path connecting
    the two disjoint cycles. But because $D$ was obtained by firing $Z$, it is
    clear that $f(\Gamma \setminus Z) < f(Z)$, and so $C$ is not a cycle that
    lies above $e$. We conclude that $D$ is not realizable.
\end{proof}

\begin{corollary}
    There exists a maximal cell $\sigma$ of $|K|$ of dimension $g-1$ such that
    $\sigma \subseteq \Real(|K|)$.
\end{corollary}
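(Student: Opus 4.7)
The plan is to show that the maximal cell $\sigma$ of $|K|$ of dimension $g-1$ produced by Proposition \ref{prop:minimal-dim-attained} is itself entirely contained in $\Real(|K|)$. By Proposition \ref{prop:rl-closed} the realizability locus is closed, so it is enough to verify realizability for every $D \in \relint(\sigma)$, and by Proposition \ref{prop:no-disjoint-realizable} it suffices to check that no two disjoint horizontal simple cycles exist for the rational function $f$ determined (up to a constant) by $D = K + \fdiv(f)$.

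First I would analyse the combinatorics of $\Gamma \setminus \supp D$ for $D \in \relint(\sigma)$. By Proposition \ref{prop:minimal-dim-attained} the support of such a $D$ is disjoint from $V$, so $\deg D|_{C_i} = 0$ on every connected component $C_i$ of $\Gamma \setminus \supp D$. Applying Lemma \ref{lemma:dim-deg-formula} with $\deg D = 2g - 2$, $g(\Gamma) = g$, and $\dim \Delta_D = g - 1$ yields $\sum_i g(C_i) = 1$. Hence exactly one component $C_0$ has genus $1$ and the remaining $g - 1$ components are trees.

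Next I would show that any horizontal simple cycle $Y$ of $f$ must avoid $\supp D$: if $x \in Y \cap \supp D$, then $x$ lies in the interior of an edge, so $Y$ passes through $x$ along that edge with both outgoing slopes of $f$ at $x$ equal to $0$. But by genericity $D(x) = 1$ (Proposition \ref{prop:generic-char}), so $\ord_x(f) = D(x) - K(x) = 1$, a contradiction. Therefore $Y$ is contained in some $C_i$, and since trees have no cycles we must have $Y \subseteq C_0$. A connected graph of genus $1$ admits a unique simple cycle (its leafless core, obtained by iteratively stripping leaves), so every horizontal simple cycle of $f$ coincides with this one cycle of $C_0$ and no two such cycles can be disjoint. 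Proposition \ref{prop:no-disjoint-realizable} then yields $\relint(\sigma) \subseteq \Real(|K|)$, and closedness of the realizability locus extends this to all of $\sigma$.

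The main step to be careful with is the chip-slope argument excluding horizontal cycles through the support of $D$; this is a short local computation but it is what links the combinatorial description of the cell from Proposition \ref{prop:minimal-dim-attained} to the realizability criterion. Everything else is bookkeeping with lemmas already proved.
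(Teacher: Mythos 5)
Your proof is correct and follows the same high-level route as the paper: take the $(g-1)$-cell from Proposition \ref{prop:minimal-dim-attained}, deduce $g(\Gamma \setminus \supp D) = 1$ (which both you and the paper get from the dimension/degree bookkeeping), conclude there are no two disjoint horizontal simple cycles, invoke Proposition \ref{prop:no-disjoint-realizable} for the open cell, and then close up via Proposition \ref{prop:rl-closed}.

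One place where you are in fact more careful than the paper's own write-up: the paper concludes ``no two disjoint horizontal cycles'' from $g(\Gamma \setminus \bend f) \leq g(\Gamma \setminus \supp D) = 1$, tacitly treating horizontal cycles as if they lived in $\Gamma \setminus \bend f$. This is not quite right --- a horizontal cycle may pass through a vertex $v$ with $\ord_v(f) \neq 0$, so it can meet $\bend f$. Your argument instead shows the cleaner and correct statement that horizontal cycles avoid $\supp D$ (via the local slope computation at interior-of-edge points of $\supp D$), so that each such cycle sits inside a single connected component of $\Gamma \setminus \supp D$; since the total genus of these components is $1$, there is a unique component carrying a simple cycle, and it carries only one. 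That is exactly the missing step. A minor optional simplification: you do not actually need $D(x) = 1$ from genericity --- $D(x) \geq 1$ already forces $\ord_x(f) \geq 1 \neq 0$, yielding the same contradiction.
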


\begin{proof}
    Let $\sigma$ be the cell of dimension $g-1$ given by Proposition
    \ref{prop:minimal-dim-attained}. Let $D = K + \fdiv(f) \in \relint(\sigma)$,
    so that
    $\supp D \cap V = \emptyset$.
    It follows that that
    \begin{equation*}
        g - 1 = \dim \sigma = \deg D - (g - g(\Gamma \setminus D))
    \end{equation*}
    and so $g(\Gamma \setminus D) = 1$. 
    We have that $\supp D \subseteq \bend f$.
    In particular, 
    \[g(\Gamma \setminus \bend f) \leq g(\Gamma \setminus \supp D) = 1,\]
    and so there are no two
    disjoint horizontal cycles. This implies by Proposition
    \ref{prop:no-disjoint-realizable} that $D$ is realizable.
    Since $D$ was arbitrary, it follows that
    $\relint(\sigma) \subseteq \Real(|K|)$, and since $\Real(|K|)$ is closed by
    Proposition \ref{prop:rl-closed}, we deduce that
    $\sigma \subseteq \Real(|K|)$.
\end{proof}

\subsection{Specialization of linear series}
\label{sec:lin-series}

We are also interested in the specialization of linear series. Recall that given
a smooth projective curve $X$ over a valued field $K$, which is geometrically
connected, and of genus $\geq 2$,
we have defined the specialization map
\[\rho: \Div(X_\closure{K}) \to \Div_\Q(\Gamma).\]
First, note that
the specialization map $\rho$ by definition preserves the property
of being effective. We claim that it also preserves the property of being
principal. Let $D$ be a principal divisor on $X_\closure{K}$, then since $D$ is
a Weil divisor (Weil and Cartier divisors are identified on regular schemes),
it may be written as a finite sum
\[\sum_{i = 1}^r n_iP_i\]
where the $P_i \in X_\closure{K}$ are $\closure{K}$-rational points.
But then there
exists an algebraic extension $L$ of $K$ such that $P_i$ are all $L$-rational.
In particular $D$ may be seen as a principal divisor on $X_L$, and hence it
determines a rational function $f$ on $X_L$.
Let $\fX$ be the minimal regular model of $X_L$, with $C_1, \dots, C_s$
corresponding to a set of vertices $v_1, \dots, v_s$ on the metric graph
$\Gamma$.
Since $X_L$ is open in $\fX$,
$f$ determines a rational function on all of $\fX$, and so a principal
divisor $D'$ on $\fX$ (again we identify Weil and Cartier divisors, since $\fX$
is regular). Now, $f$ could only acquire new zeroes and poles on the complement
of $X_L$ in $\fX$, so only on the special fiber. It follows that the difference
\[D' - \sum_{i = 1}^r n_i \closure{P_i}\]
is a vertical divisor, which we will write as
\[D_v = \sum_{i = 1}^s m_i C_i.\]

Now, note that we may rewrite the restriction $\rho: X_L \to \Div(\Gamma)$ as
\[\rho(P) = \sum_{i = 1}^s (\closure{P} \cdot C_i)v_i,\]
where $\closure{P} \cdot C_i$ denotes the intersection number
$\deg(\cO_\fX(\closure{P})|_{C_i})$.
Let 
\[\closure{D} = \sum_{i = 1}^rn_i\closure{P_i},\]
then it follows by
bilinearity
of the intersection number that
\[\rho(D) = \sum_{i = 1}^s (\closure{D} \cdot C_i)v_i.\]
Since $D'$ is principal, we have that $D' \cdot C_i = 0$ and hence
\[\closure{D} \cdot C_i = -D_v \cdot C_i.\]
Now, note that for any component $C_j$, the divisor
\[\sum_{i = 1}^s (C_j \cdot C_i) v_i\]
corresponds to the divisor obtained by firing the vertex $v_j$ by the unit
distance (distance between any adjacent vertices of $\Gamma$), in particular it
is principal. We conclude that 
$\rho(D)$ is a sum of principal divisors, and so also principal.

Now, let $|D|$ be a complete linear series on $X$, then for any divisor
$D' \in |D|$, we have that $D'$ is effective and $D - D'$ is principal.
By what we just showed, this implies that $\rho(D')$ is effective and 
$\rho(D - D')$ is principal. As a result, $\rho(D') \in |\rho(D)|$ and so
$\rho(|D|) \subseteq |\rho(D)|$ (this is
actually the main ingredient for the proof of the Specialization Lemma).
We are now interested in studying the specialization of linear series
$\fd \subseteq |D|$, which we will define as the topological closure
of the subspace $\rho(\fd) \subseteq |\rho(D)|$ and we will denote it
by $\trop(\fd)$.

To distinguish when we talk about linear systems on the algebraic curve or on
the tropical curve, we will denote the objects on the algebraic curve with a
subscript. So in what follows,
fix a divisor $D_X \in \Div(X_\closure{K})$, specializing to
a divisor $D \in \Div(\Gamma)$.

\begin{proposition}
    \label{prop:fjp-props}
    Let $\fd_X \subseteq |D_X|$ be a linear series of rank $r$. Then
    $\trop(\fd_X)$ is a finitely generated tropical linear series
    of rank at least $r$.
\end{proposition}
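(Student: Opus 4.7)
The plan is to exhibit $\trop(\fd_X)$ explicitly as the tropical projectivization of a finitely generated submodule of $R(D)$, and then to derive the rank bound by lifting effective divisors and invoking Baker's specialization lemma. Let $L_X$ be the $(r+1)$-dimensional vector space of rational functions corresponding to $\fd_X$, and fix a basis $f_0, \dots, f_r$ after passing to a finite extension of $K$ over which everything is defined. For each $i$ let $\phi_i \in \Rat(\Gamma)$ be the tropicalization of $f_i$, i.e.\ the PL function whose value at $x \in \Gamma$ is $-\val_x(f_i)$ for the natural valuation at $x$. The slope formula---essentially the same local intersection-theoretic computation used at the beginning of this subsection to show that $\rho$ sends principal divisors to principal divisors---gives $\fdiv(\phi_i) = \rho(\fdiv(f_i))$, so each $\phi_i$ lies in $R(D)$ and for every $f = \sum c_i f_i \in L_X \setminus \{0\}$ we have $\rho(D_X + \fdiv(f)) = D + \fdiv(\trop(f))$.

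By the ultrametric inequality, $\trop(f) \leq \bigoplus_i \trop(c_i) \odot \phi_i$ pointwise, with equality when $(c_0,\dots,c_r)$ lies in a dense open subset of the coefficient space (namely, where no two tropical terms are simultaneously maximal on any given edge). Combined with the density of the value group of $\closure{K}$ in $\R$, this shows both that $\rho(\fd_X)$ is contained in the tropical projectivization $\T(M)$ of the submodule $M = \spn(\phi_0, \dots, \phi_r) \subseteq R(D)$ and that it is dense therein. Since $\T(M)$ is closed in $|D|$ by Proposition \ref{prop:fin-gen-definable}, passing to closures gives $\trop(\fd_X) = \T(M)$, establishing tropical convexity and finite generation simultaneously.

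For the rank estimate, fix an effective $E \in \Div(\Gamma)$ of degree $d \leq r$; we must produce an element of $\trop(\fd_X)(-E)$. Assume first that $\supp E$ lies in the image of $\rho$ on closed points, so that after possibly enlarging $K$ we may lift $E$ to an effective $E_X \in \Div(X_{\closure{K}})$ of the same degree with $\rho(E_X) = E$. Since $r(\fd_X) = r \geq \deg E_X$, some $D_X' \in \fd_X$ satisfies $D_X' \geq E_X$; applying $\rho$, which is additive and effectivity-preserving, gives $\rho(D_X') - E = \rho(D_X' - E_X) \geq 0$, so $\rho(D_X') \in \rho(\fd_X)(-E) \subseteq \trop(\fd_X)(-E)$. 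For an arbitrary effective $E$, approximate each point of $\supp E$ by $\Q$-rational points of $\Gamma$ to obtain $E_n \to E$ of the same degree; apply the previous step to each $E_n$, and extract a convergent subsequence of the resulting $D_n' \in \trop(\fd_X)(-E_n)$ using compactness of $|D|$. The limit lies in $\trop(\fd_X)$ by closedness and satisfies $D' \geq E$ by continuity of the pointwise inequality.

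The main technical ingredient is the slope formula, which is the bridge between the algebraic and tropical sides; once this identification is in hand, finite generation reduces to an elementary density argument and the rank bound reduces to a combination of Baker's specialization lemma (Lemma \ref{lemma:specialization}) with a standard compactness and approximation step.
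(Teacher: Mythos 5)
The paper itself doesn't give a proof of this statement; it cites Lemmas 6.1, 6.2 and Proposition 6.4 of \cite{kodaira-dimensions}. Your attempt to reprove it directly by tropicalizing a basis has a genuine gap in the finite-generation half.

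The problem is the claimed containment $\rho(\fd_X) \subseteq \T(M)$ where $M = \spn(\phi_0,\dots,\phi_r)$. The ultrametric inequality only gives $\trop(f) \leq \bigoplus_i \trop(c_i)\odot\phi_i$; it does not show that $\trop(f)$ itself lies in the tropical span $M$, and in general it does not. Concretely, if $\phi_i$ and $\phi_j$ agree on a segment of positive length, a linear combination $f = c_i f_i + c_j f_j$ with $\val(c_i) = \val(c_j)$ can acquire a new zero reducing to the interior of that segment. The resulting $\trop(f)$ then has a bend on that segment, whereas every element of $M$ is constant-plus-$\phi_i$ there, so $\trop(f) \notin M$. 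Your ``generic coefficients'' remark handles the density direction of the argument, but not the inclusion $\rho(\fd_X)\subseteq\T(M)$, and the inclusion is exactly what you need to conclude $\trop(\fd_X)=\T(M)$ by passing to closures. This is precisely the subtlety that the lemmas you would need from \cite{kodaira-dimensions} address: one must either choose the basis $f_0,\dots,f_r$ generically (so that no such degenerate cancellation occurs along any edge), or enlarge the generating set beyond the tropicalizations of a single basis, and proving that a finite generating set suffices requires additional work, not just the ultrametric inequality and density of the value group.

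Your rank argument is essentially sound: lifting $\Q$-rational effective divisors along $\rho$, using $r(\fd_X)=r$ to get an element of $\fd_X(-E_X)$, specializing, and then an approximation-and-compactness step for irrational support points. One should double-check that the set of pairs $(D',E)$ with $D'\geq E$ is closed in the product of symmetric powers so the limit really satisfies $D'\geq E$, but that is straightforward. Still, the finite-generation half needs the genericity input (or a different characterization of $\trop(\fd_X)$) before the proof is complete.
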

\begin{proof}
    This is shown in Lemmas 6.1, 6.2 and Proposition 6.4 of
    \cite{kodaira-dimensions}.
\end{proof}

\begin{remark}
    Note that in my thesis the term ``tropical linear series" does not designate
    the same thing as the same term in \cite{kodaira-dimensions} and
    \cite{linsys-independence}. Here it means a tropically convex subset of a
    complete linear series (in analogy to the nomenclature from algebraic
    geometry).
\end{remark}

\begin{corollary}
    For $\fd_X \subseteq |D_X|$ a linear series, $\trop(\fd_X)$
    admits the structure
    of an abstract polyhedral complex.
\end{corollary}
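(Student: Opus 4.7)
The plan is to chain together the three results the paper has already built for exactly this purpose, so the proof should be essentially immediate. First I would invoke Proposition \ref{prop:fjp-props}, which guarantees that $\trop(\fd_X)$ is a finitely generated tropical linear series inside the complete linear system $|\rho(D_X)|$ of the tropicalization. The finite generation is the only nontrivial input here, and it has already been imported from \cite{kodaira-dimensions}.

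Next I would apply Proposition \ref{prop:fin-gen-definable} to conclude that $\trop(\fd_X)$, being finitely generated, is a closed and definable subset of $|\rho(D_X)|$. Finally I would appeal to Corollary \ref{cor:closed-def-complex}, which states that any closed definable subset of a complete linear system carries the induced structure of an abstract polyhedral complex; applied to $\trop(\fd_X)$ this gives exactly the desired conclusion.

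There is no real obstacle to overcome in this corollary itself: all the work has been front-loaded into Propositions \ref{prop:fin-gen-definable} and \ref{prop:fjp-props} and Corollary \ref{cor:closed-def-complex}. The only thing to be careful about is the bookkeeping with notation, namely that $\trop(\fd_X)$ was defined as the topological closure of $\rho(\fd_X)$ in $|\rho(D_X)|$, so it genuinely sits inside a complete linear system on the metric graph $\Gamma$, which is the hypothesis required by Corollary \ref{cor:closed-def-complex}. Thus a two-line proof citing these three results is all that is needed.
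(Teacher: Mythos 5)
Your proof is correct and follows exactly the same chain as the paper: Proposition~\ref{prop:fjp-props} gives finite generation, Proposition~\ref{prop:fin-gen-definable} gives closed and definable, and Corollary~\ref{cor:closed-def-complex} yields the abstract polyhedral complex structure. Nothing to add.
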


\begin{proof}
    By Proposition \ref{prop:fjp-props}, $\trop(\fd_X)$ is finitely generated
    and so
    this follows from Proposition \ref{prop:fin-gen-definable}
    and Corollary \ref{cor:closed-def-complex}
\end{proof}

A linear system $\fd_X$ is a projective subspace of $|D_X|$, in particular it
has a well-defined notion of independence -- a set of vectors of $\fd_X$ of size
$s$ is independent if and only if it is not contained in a projective subspace
of $\fd_X$ of dimension $s-1$.
This gives $\fd_X$ the structure of a matroid and it turns out that this
notion translates well through tropicalization.

\begin{definition}
    Let $S = \{D + \fdiv(\phi_1), \dots, D + \fdiv(\phi_n)\}$
    be a subset of
    $|D|$. We say that $S$ is \emph{tropically dependent}
    if there are real numbers
    $a_i$ such that for every point $v \in \Gamma$, the minimum in 
    $\min_i\{\phi_i(v) + a_i\}$ is achieved at least twice.
\end{definition}

It turns out that linearly dependent subsets of $|D_X|$
specialize to tropically dependent subsets of $|D|$. For $\fd_X \subseteq |D_X|$
a linear series, its rank is equal to its dimension as a projective subspace of
$|D_X|$ and so any subset of $r + 2$ points of $\fd_X$ is linearly dependent.
Hence we expect the same property to hold after tropicalizing.

\begin{proposition} 
    \cite[Lemma 6.2]{kodaira-dimensions}
    Any subset of $\trop(\fd_X)$ of size at least $r+2$ is
    tropically dependent.
\end{proposition}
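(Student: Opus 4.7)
The plan is to first establish the statement for tuples contained in the image $\rho(\fd_X)$ by combining linear algebra on the curve with the non-archimedean ultrametric inequality, and then extend to the full closure $\trop(\fd_X)$ via a compactness argument in tropical projective space.

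For the first step, the linear series $\fd_X$ corresponds to a linear subspace $V_X \subseteq \cL(D_X)$ of dimension $r + 1$. Given $D_1, \dots, D_{r+2} \in \rho(\fd_X)$, lift each to a function $f_i \in V_X$ with $D_i = \rho(D_X + \fdiv(f_i))$. Since $r + 2 > \dim V_X$, there exist coefficients $c_i \in \closure{K}$, not all zero, with $\sum_i c_i f_i = 0$. Setting $\phi_i := \trop(f_i) \in R(D)$ (so that $D_i = D + \fdiv(\phi_i)$) and $a_i := \val(c_i)$ for indices with $c_i \neq 0$, the non-archimedean ultrametric inequality applied at each point $v \in \Gamma$ to the identity $\sum c_i f_i = 0$ forces the minimum $\min_i\{a_i + \phi_i(v)\}$ to be achieved at least twice among the active indices; otherwise the valuation of the left-hand side would be finite rather than $+\infty$. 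Indices with $c_i = 0$ may then be assigned any sufficiently large real $a_i$ without affecting the condition (their term never attains the min), yielding tropical dependence of the full $(r+2)$-tuple.

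To extend to $\trop(\fd_X) = \closure{\rho(\fd_X)}$, approximate any tuple $(D_1, \dots, D_{r+2})$ in $\trop(\fd_X)$ by tuples $(D_1^{(n)}, \dots, D_{r+2}^{(n)})$ in $\rho(\fd_X)$ with uniformly convergent representative functions $\phi_i^{(n)} \to \phi_i$ in $R(D)$ (available from the metric $d_\infty$ on $|D|$). By the first step, each $n$ admits real witnesses $a_i^{(n)}$; view the equivalence class $[a_1^{(n)}: \dots : a_{r+2}^{(n)}]$ in the compact space obtained by adjoining $-\infty$ coordinates to $\R^{r+2}$ and taking the quotient by tropical scaling. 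Extract a convergent subsequence with limit $[a_1 : \dots : a_{r+2}]$ and let $I \subseteq \{1, \ldots, r+2\}$ collect the indices with $a_i > -\infty$. Passing to the limit in the pointwise dependence condition yields tropical dependence of the sub-tuple $\{D_i\}_{i \in I}$; since tropical dependence extends trivially from any subset to its supersets (by choosing the additional $a_j$ so large that the $j$-th term never contributes to the min), we obtain tropical dependence of the full tuple.

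The main obstacle is the passage to the limit: individual $a_i^{(n)}$ may diverge to $-\infty$, meaning certain indices effectively drop out, and one must ensure that the surviving indices still achieve the minimum at least twice at every $v \in \Gamma$ simultaneously, rather than only pointwise via pigeonhole. The resolution uses the fact that the combinatorial type of the min-locus of $\min_i(a_i^{(n)} + \phi_i^{(n)})$ takes only finitely many values as $n$ varies, a consequence of the polyhedral complex structure on $|D|$ from Subsection~\ref{sec:polyhedral-complex}; after further refining the subsequence so this combinatorial type is constant, the uniform version of the claim follows directly from the pointwise one.
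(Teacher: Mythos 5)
The paper does not provide its own proof of this statement -- it cites \cite[Lemma 6.2]{kodaira-dimensions} directly, so this is stated as an external result without proof. Your attempt is a plausible sketch of how such a proof would go in broad outline, and the first half (the linear-algebra-plus-ultrametric step on $\rho(\fd_X)$) is the standard argument and is essentially correct. However, there is a real gap in the second half: you assert that after passing to a subsequence in which some coefficients $a_i^{(n)} \to -\infty$, the surviving indices $I$ still witness tropical dependence in the limit. This is exactly the delicate point, and the fix you offer -- that the combinatorial type of the min-locus takes only finitely many values because of the polyhedral structure on $|D|$ -- does not close the gap as stated, because the functions $\phi_i^{(n)}$ are themselves varying with $n$; the polyhedral decomposition of $|D|$ controls where a fixed divisor sits, not where the pointwise minimum of a varying $(r+2)$-tuple of piecewise-linear functions is achieved twice. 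More fundamentally, there is nothing a priori preventing $|I| \leq 1$ in the limit, in which case the sub-tuple is not tropically dependent and your ``extend trivially to supersets'' step has nothing to extend from; one must argue (e.g.\ by rescaling the $a_i^{(n)}$ so that the two smallest values are normalized, using that the minimum is achieved twice at every point) that at least two indices survive with finite limits. Finally, the statement itself does not restrict to closed points or to $K$-points, and one needs a careful treatment of base change and of the closure operation (working over a spherically complete extension, or using the semicontinuity arguments in the cited source); your compactness argument in ``tropical projective space with $-\infty$ adjoined'' is heuristically reasonable but would need these issues addressed to be rigorous.
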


The same way the notion of rank provides a lower bound on the dimension of a
tropical linear series, the notion of tropical independence yields an upper
bound.

\begin{proposition}
    \label{prop:max-dim}
    \cite[Corollary 4.7]{linsys-independence} Let $\fd \subseteq |D|$ be a
    finitely generated submodule such that any set of $r + 2$ functions of
    $\fd$ is
    tropically dependent, then
    $\dim \fd \leq r$.
\end{proposition}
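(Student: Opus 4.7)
The plan is to prove the contrapositive: assuming $\dim \fd \geq r+1$, I will construct $r+2$ tropically independent divisors in $\fd$. Since $\fd$ is finitely generated, Corollary \ref{cor:fin-gen-dim} endows it with the structure of an abstract polyhedral complex, so I pick a maximal face $\sigma$ with $\dim \sigma \geq r+1$ and a divisor $D_0 \in \relint(\sigma)$. By Remark \ref{rmk:dims-tangents}, the tangent space $T_{D_0}\fd$ then has dimension at least $r+1$.

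The next step is to select $r+1$ linearly independent tangent vectors $\zeta_1, \ldots, \zeta_{r+1} \in T_{D_0}\fd$, each represented by a continuously differentiable path $\gamma_i : [0, \epsilon_i) \to \fd$ with $\gamma_i(0) = D_0$. For every sufficiently small $t > 0$, I can write $\gamma_i(t) = D_0 + \fdiv(f_i^t)$ with $f_i^t \in R(\fd, D_0)$, uniquely determined up to a tropical scalar; I normalize so that $\max f_i^t = 0$, which forces $\|f_i^t\|_\infty \to 0$ as $t \to 0$. Together with the constant function $f_0 \equiv 0$ (representing $D_0$ itself), this gives a family of $r+2$ functions $f_0, f_1^t, \ldots, f_{r+1}^t$ in $R(\fd, D_0)$.

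The main step is to show that for $t$ small enough, these $r+2$ functions are tropically independent. Suppose for contradiction that for arbitrarily small $t$ there exist reals $a_0^t, \ldots, a_{r+1}^t$, not all $-\infty$, such that $\min_i(f_i^t(x) + a_i^t)$ is attained at least twice at every $x \in \Gamma$. After a suitable simultaneous rescaling and passing to a subsequence $t \to 0$, I extract a limit of both the normalized scalars and of the rescaled functions $f_i^t / t$, which converge to piecewise linear representatives of the tangent vectors $\zeta_i$. The ``coincidence at every point'' condition is preserved in the limit, and translates into a nontrivial linear relation among $\zeta_1, \ldots, \zeta_{r+1}$ in $T_{D_0}\fd$, contradicting their independence.

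The hard part will be making this limiting argument rigorous. The difficulty is that different $a_i^t$ may diverge at different rates as $t \to 0$, and one must choose the correct scaling so that the tropical dependence passes to an infinitesimal statement rather than degenerating. Handling this requires a careful stratification of the $\R^{r+2}$ of coefficients according to relative scales, and showing that in each stratum the tropical dependence for $f_i^t$ forces, in the limit, a nontrivial affine relation among the slopes of the $\zeta_i$ along every edge of $\Gamma$; integrating this edge-wise information back to a global linear relation in $T_{D_0}\fd$ is essentially the content of \cite[Corollary 4.7]{linsys-independence}, which develops the precise dictionary between tropical independence of functions with nearby divisors and linear independence of the corresponding tangent vectors.
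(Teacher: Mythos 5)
The paper does not prove this proposition; it is cited directly from \cite[Corollary 4.7]{linsys-independence}, so there is no internal argument to compare against. Judged on its own merits, your proposal is circular: the final paragraph concedes that making the limiting step rigorous ``is essentially the content of \cite[Corollary 4.7]{linsys-independence}'' --- but that is precisely the statement you are asked to prove. The decisive implication, namely that tropical dependence of $f_0, f_1^t,\dots,f_{r+1}^t$ along a sequence $t\to 0$ forces a nontrivial linear relation among $\zeta_1,\dots,\zeta_{r+1}$ in $T_{D_0}\fd$, is asserted rather than established.

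Even setting aside the circularity, the limiting argument has a real obstacle that you name but do not resolve. Tropical dependence of the $f_i^t$ produces coefficients $a_i^t$, defined only up to a common shift, such that $\min_i(f_i^t(x)+a_i^t)$ is attained twice at every $x\in\Gamma$. Some $a_i^t$ may diverge to $-\infty$ relative to the rest, in which case the corresponding functions drop out of the tropical minimum; after passing to a subsequence the surviving ``dependence'' can involve as few as two of the functions and may not constrain all $r+1$ tangent directions. The stratification of relative scales you propose is exactly where the content of the proof would lie, and no argument is given for why each stratum yields a contradiction. You would also need to justify why, under the rescaling, a \emph{tropical} coincidence condition becomes an \emph{affine} relation among the slopes of the $\zeta_i$ --- tropical operations do not linearize under scaling in general. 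The proposal identifies plausible objects (maximal cell, tangent space, paths $\gamma_i$) but the key step from tropical dependence to linear dependence is missing, and as written it relies on the very result it is meant to prove.
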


\begin{corollary}
    If $\fd \subseteq |D|$ is a finitely generated tropical linear series
    of rank $\geq r$, such that any set of $r + 2$ points of $\fd$ is
    tropically dependent, then $\fd$ is of rank $r$. Furthermore,
    $\fd$ is equi-dimensional of dimension $r$ (in the sense that all the maximal
    cells have dimension $r$).
\end{corollary}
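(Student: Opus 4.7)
The plan is to combine the lower bound on the dimension of maximal cells provided by Corollary \ref{cor:fin-gen-dim} with the upper bound on $\dim \fd$ provided by Proposition \ref{prop:max-dim}, thereby squeezing both the rank of $\fd$ and the dimension of every maximal cell to exactly $r$.

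First, I would invoke Corollary \ref{cor:fin-gen-dim}: since $\fd$ is finitely generated, it carries the structure of an abstract polyhedral complex in which every maximal cell $\sigma$ satisfies $\dim \sigma \geq r(\fd)$. Combined with the hypothesis $r(\fd) \geq r$, this gives $\dim \sigma \geq r$ for every maximal cell.

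Second, I would invoke Proposition \ref{prop:max-dim}: the hypothesis that any $r+2$ points of $\fd$ are tropically dependent, together with finite generation, yields $\dim \fd \leq r$. In particular every (maximal) cell $\sigma$ satisfies $\dim \sigma \leq r$.

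Combining the two chains of inequalities, for any maximal cell $\sigma$ we obtain
\[
r \leq r(\fd) \leq \dim \sigma \leq r,
\]
so $\dim \sigma = r$ for all maximal cells (equi-dimensionality) and $r(\fd) = r$. I do not anticipate any real obstacle here, as both ingredients have been established earlier in the paper; the content of the corollary is essentially to observe that the rank lower bound and the tropical independence upper bound pin the two invariants down exactly.
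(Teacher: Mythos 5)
Your proof is correct and follows essentially the same approach as the paper: combine Corollary \ref{cor:fin-gen-dim} for the lower bound on the dimension of every maximal cell with Proposition \ref{prop:max-dim} for the upper bound, and squeeze both invariants to $r$.
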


\begin{proof}
    Corollary \ref{cor:fin-gen-dim} implies that
    all the maximal faces of $\fd$ have dimension at least $r(\fd)$,
    and so
    $\dim \fd \geq r(\fd) \geq r$. But by Proposition
    \ref{prop:max-dim} this forces $\dim \fd = r$ and so in particular
    $r(\fd) = r$, and the maximal dimensional faces of $\fd$ need to
    have dimension exactly $r$.
\end{proof}

\begin{corollary}
    If $\fd_X \subseteq |D_X|$ is a linear series of rank $r$,
    then $\trop(\fd_X)$
    is of rank $r$, and equi-dimensional of dimension $r$.
\end{corollary}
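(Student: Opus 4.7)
The plan is to combine the three previous results directly; this corollary is essentially the synthesis statement that packages them together.

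First I would invoke Proposition \ref{prop:fjp-props} to obtain that $\trop(\fd_X)$ is a finitely generated tropical linear series, and that its (tropical) rank satisfies $r(\trop(\fd_X)) \geq r$. This gives the lower bound on the rank and puts us in a setting where the finite-generation machinery of Section \ref{sec:tropical-linear-systems} is available — in particular, Corollary \ref{cor:fin-gen-dim} applies, so $\trop(\fd_X)$ already carries the structure of an abstract polyhedral complex whose maximal faces have dimension at least $r(\trop(\fd_X)) \geq r$.

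Next I would invoke the proposition cited from \cite[Lemma 6.2]{kodaira-dimensions} to obtain the complementary upper-bound ingredient: since $\fd_X$ has rank $r$ as a projective linear series, any set of $r+2$ points in $\trop(\fd_X)$ is tropically dependent. This is exactly the hypothesis needed to feed into Proposition \ref{prop:max-dim}, which gives $\dim \trop(\fd_X) \leq r$.

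Combining these two bounds, the dimension and the rank must both equal $r$. More precisely, the chain of inequalities
\begin{equation*}
    r \leq r(\trop(\fd_X)) \leq \dim \trop(\fd_X) \leq r
\end{equation*}
is forced, where the first inequality is Proposition \ref{prop:fjp-props}, the second is Corollary \ref{cor:fin-gen-dim} applied at a maximal face, and the last is Proposition \ref{prop:max-dim}. This simultaneously pins down the rank to $r$ and, since every maximal face has dimension at least $r(\trop(\fd_X)) = r$ and at most $\dim \trop(\fd_X) = r$, forces equi-dimensionality. This is exactly the content of the preceding corollary, so in practice the proof is a single sentence invoking it, with Proposition \ref{prop:fjp-props} and the tropical-dependence statement supplying its two hypotheses. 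There is no real obstacle here — all the genuine work has been done in the results cited, and the role of this corollary is simply to translate from the algebraic input (linear series of rank $r$) to the tropical output (equi-dimensional complex of dimension $r$).
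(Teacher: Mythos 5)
Your proof is correct and is precisely the intended argument: the paper leaves this corollary without an explicit proof because it follows immediately from Proposition \ref{prop:fjp-props}, the tropical-dependence result from \cite{kodaira-dimensions}, and the preceding corollary, exactly as you lay out. The only thing worth noting is that your chain of inequalities $r \leq r(\trop(\fd_X)) \leq \dim \trop(\fd_X) \leq r$ is the same computation the paper performs inside the proof of the preceding corollary, so you are essentially unpacking that proof rather than just citing it — either presentation is fine.
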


\section{Discrete representations}
\label{sec:discrete-representations}

In this section, we are going to briefly discuss possible approaches to
working with tropical curves using computer techniques.

One approach, which is the closest to the original definitions is to
represent a metric graph as a set of vertices $V$ and edges $E$ with a fixed
orientation and length.
Divisors and rational functions can be entirely determined
by specifying their value on a finite set of points of the graph $\Gamma$.
Furthermore, a point of $\Gamma$ is either a vertex, in which case it
corresponds to an element of $V$, or a point along an edge,
in which case it may be specified by an edge $e \in E$, and a distance along
that edge. This approach is well-suited for working with tropical submodules, as
the tropical operations are fairly easy to implement.

Another approach is to restrict our attention to rational functions and divisors
on a given model of $\Gamma$, for which each edge has identical length. The
advantage of this approach is that operations on such a curve may be represented
by matrix operations, which can make many things significantly faster and easier
to implement. It is much easier to represent and work with chip-firing moves,
find $v$-reduced divisors and
go back and forth between divisors and rational functions. An important aspect
of using this representation is that in a linear system there may only be a
finite number of divisors which are supported on a given model of $\Gamma$. This
allows us to develop an algorithm for finding the set of these divisors
exhaustively.

We are now going to describe in detail how to work with linear systems on graphs
and in particular justify how these finite models carry
information about linear systems on the whole metric graphs.

\subsection{Graphs}
\label{sec:graphs}

Let $\Gamma$ be a metric graph. We are mainly interested in metric graphs with
sides of integer length as these are those that appear as the tropicalization of
an algebraic curve. We can choose the model $G = (V, E)$ of $\Gamma$ that has
all edge lengths of size 1. If we restrict rational maps and divisors to be
supported on $V$, we have a nice description.

\begin{definition}
    Let $G = (V, E)$ be a graph. A \emph{level map} is a function
    \begin{equation*}
        f: V \to \Z.
    \end{equation*}
\end{definition}

A level map $f$ uniquely determines a rational function on $\Gamma$ if we
interpolate linearly between the vertices. Indeed, as we assumed all edge
lengths are of size 1, this implies the resulting function has integral slopes.

Reciprocally, up to adding a constant, we may assume
a rational function $f: \Gamma \to \R$ supported on $V$ admits integral values
on $V$. So by restricting $f$ to $V$ this determines a level map up to a
constant.

\begin{definition}
    Let $G = (V, E)$ be a graph. A \emph{divisor} on $G$ is a divisor $D$ 
    on $\Gamma$, which is supported on the set of vertices $V$.
    We denote the set of divisors on $G$ by $\Div(G)$.
\end{definition}

When we scale a metric graph by an integer, we do not affect the combinatorial
structure of the metric graph, nor the structure of linear systems.
Scaling the metric graph $m$ times has the effect
of uniformly subdividing the associated model where all edges have length 1
by splitting each edge into $m$ edges.
Let $D \in
\Div(\Gamma)$ be a $\Q$-rational divisor. Then that all the points in the
support of $D$ are a rational distance away from any given vertex.
If we take the common denominator of these numbers, say $d$, we know that 
$D$ will be supported on the $d$\textsuperscript{th}
subdivision of the graph $G$.
So we deduce that by subdividing the graph $G$ we may obtain a more and more
faithful representation of the whole set of divisors $\Div(\Gamma)$.

\begin{definition}
    Let $G = (V, E)$ be a graph. We define the \emph{incidence map} to be the
    map
    \begin{equation*}
        \phi: E \to \{\{x, y\}: x, y \in V\},
    \end{equation*}
    which maps each edge to its respective vertices.
    
    For $x \in V$, denote $E_x = \{e \in E: x\in \phi(E)\}$ the set of edges
    adjacent to $x$ and for
    $e \in E_x$, with $\phi(e) = \{x, y\}$, we denote $\nu_x(e) = y$ the vertex
    adjacent to $x$ along the edge $e$.
\end{definition}

\begin{definition}
    Let $f$ be a level map on $G$. Then the \emph{order} of $f$ at $x$ is
    defined by 
    \begin{equation*}
        \ord_x(f) = \sum_{e \in E_x}(f(\nu_x(e)) - f(x)).
    \end{equation*}
    The divisor $\fdiv(f) \in \Div(G)$
    associated to $f$ is defined by
    \begin{equation*}
        \fdiv(f) = \sum_{x \in V}\ord_x(f)\cdot x
    \end{equation*}
\end{definition}

\begin{remark}
    This definition is compatible with the analogous definitions on rational
    maps.
\end{remark}

\begin{definition}
    Let $A \subseteq V$ be any subset. The level map associated to a chip firing
    move is
    \begin{equation*}
        CF(A)(x) := 
        \begin{cases}
            1 &\textrm{if } x \in A\\
            0 &\textrm{otherwise}
        \end{cases}
    \end{equation*} 
\end{definition}

\begin{remark}
    The rational map associated to this chip firing move on a metric graph is
    $\sum_{x \in A} CF(\{x\}, 1)$, or equivalently if $Z \subseteq \Gamma$
    is the subgraph obtained by adding all the edges between the vertices in
    $A$, then the chip firing move is the same as $CF(Z, 1)$. We say $Z$ is the
    subgraph \emph{spanned} by the set of vertices $A$.
\end{remark}

\begin{remark}
    Any level map is a sum of chip-firing moves (up to a constant).
\end{remark}

The advantage of working with such a discretization is that we can express
level maps and divisors as vectors, and chip-firing moves as matrix operations.
To this end, fix an ordering $\{v_1, \dots, v_n\}$ on the vertices $V$.

\begin{definition}
    Let $f$ be a level map, the \emph{vector associated to} $f$, denoted by
    $[f]$ is the column-vector
    \begin{equation*}
        [f] = 
        \begin{bmatrix}
            f(v_1)\\
            \vdots\\
            f(v_n)
        \end{bmatrix}.
    \end{equation*}
    Similarly, for $D$ a divisor, the \emph{vector associated to} $D$ is
    \begin{equation*}
        [D] =
        \begin{bmatrix}
            D(v_1)\\
            \vdots\\
            D(v_n)
        \end{bmatrix}
    \end{equation*}
\end{definition}

\begin{definition}
    We define the \emph{adjacency matrix} of $G$ to be the symmetric
    matrix
    $\Adj(G) \in \Z_{\geq 0}^{n\times n}$ defined by
    \begin{equation*}
        \Adj(G)_{i, j} = \#\{e \in E: \phi(e) = \{v_i, v_j\}\}
    \end{equation*}

    We define also the \emph{firing matrix} of $G$ to be the matrix
    \begin{equation*}
        F = F(G) = \Adj(G) - \diag(\val(v_1), \dots, \val(v_n))
    \end{equation*}
\end{definition}

\begin{proposition}
    For $f$ a level map, we have that
    \begin{equation*}
        [\fdiv(f)] = F\cdot[f]
    \end{equation*}
\end{proposition}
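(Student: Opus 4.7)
The proof will be a direct componentwise verification, so the plan is simply to unfold both sides of the identity for a fixed index $i$ and match them term by term. On the right, the $i$-th entry of $F \cdot [f]$ is
\begin{equation*}
    (F\cdot[f])_i = \sum_{j=1}^n \Adj(G)_{i,j}\, f(v_j) - \val(v_i)\, f(v_i),
\end{equation*}
by the definition $F = \Adj(G) - \diag(\val(v_1), \dots, \val(v_n))$. On the left, $[\fdiv(f)]_i = \ord_{v_i}(f) = \sum_{e \in E_{v_i}}\bigl(f(\nu_{v_i}(e)) - f(v_i)\bigr)$.

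The key step is a reindexing of the sum $\sum_{e \in E_{v_i}} f(\nu_{v_i}(e))$. I would group the edges in $E_{v_i}$ by their other endpoint: for each fixed $j$, the term $f(v_j)$ appears exactly once for every edge $e$ with $\phi(e) = \{v_i, v_j\}$, and by definition this count is precisely $\Adj(G)_{i,j}$. Hence the sum rearranges as $\sum_{j=1}^n \Adj(G)_{i,j}\, f(v_j)$. The remaining contributions $-f(v_i)$ appear once per edge in $E_{v_i}$, and since $|E_{v_i}| = \val(v_i)$, they assemble into $-\val(v_i)\, f(v_i)$. Matching the two expressions gives the result.

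There is no deep obstacle; the only mild subtlety is a bookkeeping one about self-loops, if present. A self-loop at $v_i$ contributes two half-edges at $v_i$ (so it is counted twice in $\val(v_i)$), but only a single term in $E_{v_i}$, with $f(\nu_{v_i}(e)) - f(v_i) = 0$. The identity therefore still holds provided $\Adj(G)_{i,i}$ is taken to count each self-loop twice; alternatively, restricting to models without self-loops (as done earlier in the paper when defining the polyhedral complex structure) makes the verification immediate. In either convention, the calculation above goes through unchanged.
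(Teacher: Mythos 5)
Your proof is correct and takes essentially the same route as the paper: both rewrite $\ord_{v_i}(f) = \sum_{e\in E_{v_i}} f(\nu_{v_i}(e)) - \val(v_i)\,f(v_i)$ and match this componentwise against $F\cdot[f]$ by grouping the edge sum according to the other endpoint. Your aside about self-loops is a genuine point the paper's one-line proof silently elides: the rewriting uses $|E_{v_i}| = \val(v_i)$, which fails when a self-loop is present (it contributes twice to $\val(v_i)$ but once to $E_{v_i}$ and once to $\Adj(G)_{i,i}$ as literally defined), so either the loop-free assumption or the ``count loops twice in $\Adj$'' convention you mention is indeed needed.
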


\begin{proof}
    This follows from the observation that
    \begin{equation*}
        \ord_x(f) = \sum_{e \in E_x} f(\nu_x(e)) - \val(x)f(x)
    \end{equation*}
\end{proof}

\begin{remark}
    Let $D$ be a divisor. To say that $D$ is principal
    is the same as saying that there exists some
    $\vec{u} \in \Z^{n \times n}$ such that
    \begin{equation*}
        [D] = F\cdot \vec{u}.
    \end{equation*}
    In other words, $D$ is principal if and only if $[D] \in \im(F)$.

    Let $F^+$ be the pseudo-inverse of $F$.
    By the properties of the pseudo-inverse, $FF^+$ is the projection on the
    image of $F$. It follows that when $D$ is principal,
    \begin{equation*}
        FF^+ [D] = [D].
    \end{equation*} 
    Let $g$ be the level map given by the vector $F^+[D]$, we deduce that
    $D = \fdiv(g)$.
    So when $D$ is principal,
    this gives us a way to find a level map whose associated divisor is $D$.
\end{remark}

\begin{definition}
    A \emph{path} $\gamma$ in $G$ is a sequence of
    vertices $x_0, x_1, \dots, x_n$ such
    that for each $i \in \{1, \dots, n\}$, there is an edge in $G$
    between the vertices $x_{i-1}$ and $x_i$.
    The \emph{length} of the path $\gamma$ is $L(\gamma) = n$.

    We define the distance between two vertices $x, y$ on the graph to be
    \begin{equation*}
        d(x, y) := \inf L(\gamma).
    \end{equation*}
\end{definition}

\begin{remark}
    The length of a path and distance between points clearly 
    agrees with the notions 
    defined for metric graphs.
\end{remark}
\begin{remark}
    For a given vertex $x$, an efficient way to find the distance of $x$ from
    each other
    vertex is via a single pass of breadth-first search (BFS).
\end{remark}

\begin{definition}
    Let $A \subseteq V$ be a set of vertices. We say $A$ is \emph{connected}
    when for all $x, y \in A$, there exists a path from $x$ to $y$ contained in
    $A$.
\end{definition}

\subsection{Reduced divisors on graphs}
\label{sec:reduced-div-graphs}

Unless stated otherwise, we suppose $G$ is a connected graph.

\begin{proposition}
    A divisor
    $D \in \Div(G)$ is $v$-reduced if and only if it
    is effective away from $v$ and 
    for all $A \subset V$ with
    $v \notin A$, we have that $A$ cannot fire with respect to $D$.
\end{proposition}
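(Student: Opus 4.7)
The plan is to prove both implications by relating the firing of a vertex subset $A \subseteq V$ to the firing of an associated closed subgraph $Z \subseteq \Gamma$, and then exploiting the fact that a divisor $D \in \Div(G)$ is supported on $V$ to restrict the shape of subgraphs that can fire.

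The forward direction is the easier one. Suppose $D$ is $v$-reduced in the sense of Section \ref{sec:reduced-divisors}. Effectivity away from $v$ is immediate. Given $A \subseteq V$ with $v \notin A$, I take $Z$ to be the subgraph spanned by $A$, i.e.\ the union of $A$ with all edges between pairs of vertices in $A$. This is a closed subgraph with $v \notin Z$. The boundary $\boundary Z$ consists precisely of those $x \in A$ incident to an edge leading outside $A$, and for such $x$ one has $\degout_Z(x) = $ (number of edges from $x$ to $V \setminus A$). Unpacking the definition, $A$ can fire with respect to $D$ on $G$ exactly when $D(x) \geq \degout_Z(x)$ for all $x \in \boundary Z$, so if $A$ fires then $Z$ fires on $\Gamma$, contradicting $v$-reducedness.

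The reverse direction requires the key observation which I expect to be the main obstacle. Suppose $D$ is effective away from $v$ and that no $A \subseteq V$ with $v \notin A$ can fire; I want to rule out any closed subgraph $Z \subset \Gamma$ with $v \notin Z$ that can fire. First I argue $\boundary Z \subseteq V$: if $x \in \boundary Z$ lay in the interior of an edge then $\val_\Gamma(x) = 2$ and $\degout_Z(x) \geq 1$, but $D(x) = 0$ since $D \in \Div(G)$ is supported on $V$, contradicting the fire condition $D(x) \geq \degout_Z(x)$. Then I set $A := V \cap Z$, which satisfies $v \notin A$, and aim to show $A$ can fire on $G$, producing the contradiction.

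The delicate part is showing, for each $x \in A$ and each edge $e$ from $x$ to a vertex $y \notin A$, that the tangent at $x$ along $e$ lies outside $Z$, so that such edges contribute to $\degout_Z(x)$. The connected component of $x$ in the closed set $Z \cap e$ is a closed subinterval $[x, p]$; since $y \notin Z$, $p \neq y$, and if $p$ lay in the interior of $e$ then $p \in \boundary Z \setminus V$, contradicting the observation above. The same argument rules out any other component of $Z \cap e$ having an interior endpoint, so in fact $Z \cap e = \{x\}$. Hence $\degout_Z(x) \geq $ (edges from $x$ to $V \setminus A$), and combining with $D(x) \geq \degout_Z(x)$ gives the graph-theoretic firing condition for $A$ at $x$. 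Since $x$ was arbitrary, $A$ fires, the desired contradiction, and the proposition follows.
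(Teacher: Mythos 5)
Your proposal is correct and follows essentially the same route as the paper's proof: in one direction pass from $A \subseteq V$ to the spanned closed subgraph, and in the other direction observe that $\boundary Z \subseteq \supp D \subseteq V$ (for you, directly from $D(x) = 0$ off $V$; in the paper, via $\boundary Z \subseteq \supp D$) and then fire $A = V \cap Z$ on $G$. The one place you go into more detail than the paper is the step asserting that $V \cap Z$ can fire on $G$: the paper states this without elaboration, while you justify it by showing that $\boundary Z \subseteq V$ forces $Z$ to meet each open edge in either nothing or everything, so that the tangent at $x \in A$ along any edge to $V \setminus A$ points out of $Z$ and hence $\degout_Z(x)$ dominates the graph-theoretic out-degree of $x$ relative to $A$. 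That extra argument is exactly what the paper implicitly relies on, and spelling it out is a genuine improvement in rigor rather than a deviation. (Incidentally, the paper's printed conclusion ``$v \notin V \cap Z$'' in its first direction is a typo for ``$v \in V \cap Z$''; your version states the implication correctly.)
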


\begin{proof}
   Suppose that $D$ satisfies these assumptions.
   If $Z \subseteq \Gamma$ is a subgraph that can fire with respect
   to, then $\boundary Z \subseteq \supp D$. 
   But then the set of vertices $V\cap Z$
   can fire on $G$, which implies that $v \notin V\cap Z$,
   and so in particular $v \notin Z$, which shows that $D$ is $v$-reduced. 

   Reciprocally, suppose $D$ is $v$-reduced. Let $A$ be a subset of $V$ that can
   fire on $G$. If we let $Z$ be the subgraph spanned by $A$, then $Z$ can fire.
   So by assumption $v \notin Z$ and so in particular $v \notin A$.
\end{proof}

We will now describe an algorithm that can be used to find $v$-reduced divisors.

\begin{lemma}
    \label{lemma:existence-effective}
    Let $D \in \Div(G)$. There exists a divisor on $G$
    effective away from $v$ that is
    linearly equivalent to $D$.
\end{lemma}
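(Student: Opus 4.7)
The plan is to invoke the Riemann--Roch theorem (Theorem~\ref{thm:trop-rr}), applied to $G$ regarded as a metric graph with all edge lengths equal to $1$. The idea is to add enough chips at $v$ so that the resulting divisor has large degree, apply Riemann--Roch to produce an effective representative, and then subtract off the extra chips at $v$.

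Concretely, choose a positive integer $k$ large enough that $\deg(D) + k > 2g(G) - 2$. Then $\deg(K - D - kv) < 0$, so $r(K - D - kv) = -1$, and Riemann--Roch gives
\[
r(D + kv) = \deg(D) + k - g(G) \geq 0.
\]
In particular $|D + kv|$ is non-empty, so we can select an effective divisor $D_1 \sim D + kv$. Setting $D' := D_1 - kv$, we obtain $D' \sim D$ with $D'(u) = D_1(u) \geq 0$ for all $u \neq v$, proving the claim.

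The one technical point to address is that Theorem~\ref{thm:trop-rr} is stated for the metric graph, so a priori $D_1$ could have chips in the interior of edges of $G$, whereas the lemma asks for $D' \in \Div(G)$ (supported on vertices). This is handled by observing that any chip of $D_1$ lying in the interior of an edge can be translated continuously to one of the edge's endpoint vertices via a one-parameter family of chip-firing moves $CF(Z,\epsilon)$ supported in a small neighbourhood of the chip. Since each such move only redistributes mass locally and the chip being moved has multiplicity at least $1$, effectiveness is preserved throughout. Iterating over all chips in edge interiors produces an equivalent effective divisor $\widetilde{D_1} \in \Div(G)$, and $\widetilde{D_1} - kv$ is the sought-after element of $\Div(G)$.

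The main obstacle is this final ``cleanup'' step, which is routine but does require care to ensure no negativity is introduced at vertices as chips are slid along edges. One could avoid this issue entirely by appealing instead to the discrete Baker--Norine version of Riemann--Roch for graphs, which directly yields an effective representative supported on the vertices of $G$; alternatively one could give a fully discrete proof by induction on the total deficiency $\Phi(D) := \sum_{u \neq v}\max(-D(u), 0)$, iteratively firing an appropriately chosen subset of $V \setminus \{v\}$ to decrease $\Phi$.
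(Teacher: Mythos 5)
Your main route --- tropical Riemann--Roch followed by a ``cleanup'' --- is genuinely different from the paper's argument, and the cleanup step has a real gap. The paper gives a purely combinatorial, constructive proof: it sets $A_i := \{x \in V : d(v,x) \leq i\}$ and fires the concentric balls $A_{n-i}$ (suitably many times) for $i = 1, \ldots, n$, inductively making the divisor effective on each shell $A_{n-i+1}\setminus A_{n-i}$ from the outermost shell inward, using only that every vertex in a shell is adjacent to a vertex of the next ball inward. This needs no degree hypothesis and never leaves $\Div(G)$, whereas your argument must first clear a degree threshold by adding $kv$ and then must re-enter $\Div(G)$.

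The flaw in your cleanup is that a chip in the interior of an edge cannot in general be slid to an endpoint while preserving effectiveness and leaving everything else untouched. Any rational function moving a single chip inside an edge $[u,w]$ and doing nothing else would have to be constant on $\Gamma\setminus(u,w)$ while taking different values at $u$ and $w$, which is impossible unless deleting that edge disconnects $\Gamma$; and a chip-firing move $CF(Z,\epsilon)$ always acts on all of $\boundary Z$, so ``localizing'' the move to one chip is not available. Concretely, a single chip in the interior of an edge lying on a cycle is not linearly equivalent to any divisor supported on the vertices. You do flag the issue and offer two repairs: invoking the discrete Baker--Norine theorem for graphs is sound but appeals to a statement not in the paper (Theorem~\ref{thm:trop-rr} is the metric-graph version) and is heavier machinery than the lemma requires; and the induction on total deficiency you sketch at the end is essentially the paper's proof, with the concentric-ball firing giving an explicit induction scheme.
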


\begin{proof}
    Let $n = \max_{x\in V} d(x, v)$ and for $i \in \{0, n\}$ let
    \begin{equation*}
        A_i := \{x \in V: d(v, x) \leq i\}.
    \end{equation*} 
    We will proceed by induction to define $D_i$ such that
    $D_i$ is effective away from $A_{n-i}$ and $D_i$ is linearly equivalent to
    $D$. In particular $D_n$ will be effective away from $A_0 = \{v\}$ and this
    would show the lemma.

    We start with $D_0 = D$. Suppose we have found $D_{i-1}$ for some $i$,
    then let 
    \begin{equation*}        
        m = \min\{D_{i-1}(x): x\in A_{n-i+1} \setminus A_{n-i}\}.
    \end{equation*}
    We set $D_i = D_{i-1} + \fdiv (m \cdot CF(A_{n-i}))$,
    then $D_i$ is effective away from $A_{n-i}$. Indeed
    for all $x \in A_{n-i+1} \setminus A_{n-i}$, there is at least one edge
    connecting $x$ to a vertex in $A_{n-i}$ and so by firing $A_{n-i}$,
    $m$ chips are moved to $x$ along this edge.
    In particular $D_i(x) \geq 0$. Furthermore, it is clear that the vertices in
    $V\setminus A_{n-i+1}$ are not affected, hence $D_i$ is indeed effective
    away from $A_{n-i}$.
\end{proof}

\begin{proposition}
    \label{prop:existence-v-reduced-graphs}
    Let $D \in \Div(G)$. There exists a unique $v$-reduced divisor on
    $G$ linearly
    equivalent to $D$.
\end{proposition}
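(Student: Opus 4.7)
The plan is to prove existence and uniqueness separately, via a combinatorial argument on level maps that mirrors the metric-graph proofs.

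For uniqueness, I would suppose $D_1, D_2 \in \Div(G)$ are both $v$-reduced with $D_1 \sim D_2$, so that $D_1 - D_2 = \fdiv(f)$ for some level map $f$. If $f$ is constant then $D_1 = D_2$ and we are done, so assume $f$ nonconstant and let $A = f^{-1}(\max f) \subsetneq V$. For $x \in A$, each neighbor $y \in A$ contributes $0$ to $\ord_x(f)$ and each neighbor $y \notin A$ contributes $f(y) - \max f < 0$, so $\ord_x(f) \leq -\degout_A(x)$. If $v \notin A$, every $x \in A$ satisfies $D_1(x) \geq 0$ by effectivity of $D_1$ away from $v$, and therefore $D_2(x) = D_1(x) - \ord_x(f) \geq \degout_A(x)$, which shows that $A$ can fire with respect to $D_2$, contradicting its $v$-reducedness. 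If instead $v \in A$, I would pass to $B = f^{-1}(\min f)$; since $f$ is nonconstant $A \cap B = \emptyset$, so $v \notin B$, and the symmetric argument applied to $-f$ exhibits a firing of $B$ with respect to $D_1$, again a contradiction.

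For existence, I would start with a divisor $D_0 \sim D$ effective away from $v$, produced by Lemma \ref{lemma:existence-effective}, and iteratively fire subsets $A \subset V \setminus \{v\}$ that can fire with respect to the current divisor; each such firing preserves linear equivalence with $D$ and effectivity away from $v$, so that when the algorithm halts the resulting divisor is $v$-reduced by construction. Termination is the crux, and I would handle it with a distance-sorted lexicographic potential. Fix an ordering $v = v_0, v_1, \dots, v_{n-1}$ of the vertices by nondecreasing distance to $v$, and compare divisors via the lex order on the vector $(D'(v_0), \dots, D'(v_{n-1}))$. Firing any $A \not\ni v$ strictly increases this vector: setting $d_{\min} = \min_{a \in A} d(a, v) \geq 1$, every vertex at distance less than $d_{\min} - 1$ is not adjacent to $A$ and stays fixed; every vertex at distance $d_{\min} - 1$ lies outside $A$, hence can only gain chips; and for any $u \in A$ realizing $d_{\min}$, a shortest-path neighbor $u'$ of $u$ satisfies $d(u', v) = d_{\min} - 1$ and $u' \notin A$, so $u'$ absorbs at least one chip along the edge $uu'$. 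Since $D'(v)$ is nondecreasing along the algorithm (firing a set disjoint from $v$ only sends chips into $v$ or leaves its value fixed) and bounded above by $\deg D$, while each $D'(v_i)$ for $i \geq 1$ is a nonnegative integer bounded by $\deg D - D_0(v)$, the lex vector lives in a finite subset of $\Z^n$ and the algorithm must terminate.

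The main obstacle is this termination argument. A naive attempt to maximize $D'(v)$ alone fails, because firing a set $A$ disjoint from the neighbors of $v$ leaves $D'(v)$ invariant. The distance-sorted lexicographic potential circumvents this, the key combinatorial input being that a shortest-path neighbor of any $u \in A$ realizing $d_{\min}$ must lie outside $A$, at strictly smaller distance from $v$, and must absorb a chip whenever $A$ fires.
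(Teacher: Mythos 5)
Your proof is correct, and in fact more self-contained than the paper's. For existence, you take essentially the same approach: start from a divisor effective away from $v$ (Lemma \ref{lemma:existence-effective}), iterate firings of sets $A \not\ni v$, and prove termination by a potential that strictly increases with each firing. The paper's potential is the sequence $\left(\deg D'|_{B_i}\right)_i$ where $B_i = \{x : d(v,x) \le i\}$, ordered lexicographically; yours is the lex order on the vector of individual vertex values $\left(D'(v_0), \dots, D'(v_{n-1})\right)$ with vertices sorted by distance. These are equivalent for the purpose at hand, and the key combinatorial step is identical in both: setting $n = d_{\min} - 1$, firing $A$ strictly increases the chip count at distance $n$ because a shortest-path neighbor of a closest vertex of $A$ lies at distance $n$, outside $A$. (One cosmetic difference: the paper fires the unique \emph{maximal} firable set $A$, scaled by the largest legal multiplicity $m$; you fire an arbitrary firable set once. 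Both terminate, and your version shows the stronger statement that \emph{any} greedy strategy terminates.)

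The genuine departure is in uniqueness. The paper simply cites uniqueness of $v$-reduced divisors on the ambient metric graph (Proposition \ref{prop:reduced-existence-unicity}, quoting \cite{amini-reduced}), whereas you give a direct combinatorial proof: if $D_1 - D_2 = \fdiv(f)$ with $f$ nonconstant, then the superlevel set $A = f^{-1}(\max f)$ can fire on $D_2$ if $v \notin A$, and the sublevel set $B = f^{-1}(\min f)$ can fire on $D_1$ if $v \in A$; either contradicts $v$-reducedness. This is the standard "max-set/min-set" argument and it is entirely correct; it has the virtue of making the graph statement independent of the metric-graph theory, at the cost of being slightly longer. Both are acceptable, and your route is arguably preferable in a section explicitly devoted to making the discrete theory self-sufficient for implementation.
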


\begin{proof}
    By Lemma \ref{lemma:existence-effective}, we may assume that $D$ is
    effective away from $v$.

    Suppose $D$ is not $v$-reduced.
    Let $A$ be the maximal subset of $\Gamma \setminus \{v\}$ that can fire.
    There is a unique such subset, since when $A_1, A_2$ are two subsets
    that can fire, then
    firing $A_1 \cup A_2$ corresponds to the chip-firing move
    $CF(A_1) \oplus CF(A_2)$. Since $R(D)$ is a tropical semi-module,
    we deduce that $A_1 \cup A_2$ can fire.
    Let $m$ be maximal for the property that $D + \fdiv(m\cdot CF(A))$ is 
    effective away from $v$ and set $D' = D + \fdiv(m\cdot CF(A))$.
    If $D'$ is not $v$-reduced, we may repeat this procedure until we
    obtain a $v$-reduced divisor.

    It remains to check that this algorithm terminates.
    Define as earlier
    \begin{equation*}
        B_i := \{x \in V: d(v, x) \leq i\}.
    \end{equation*} 
    We define the partial order $\prec$ on the set of divisors on $G$
    given by $D_1 \prec D_2$ if and only
    if there exists some $n \in \N$ such that
    $\deg D_1|_{B_i} = \deg D_2|_{B_i}$ for $i < n$
    and $\deg D_1|_{B_n} < \deg D_2|_{B_n}$.

    Let $D$ and $D'$ as before, we claim that $D \prec D'$.
    Indeed, let $A$ be the subset from before and let 
    $n$ maximal for the property that $A \cap B_n = \emptyset$.
    It follows that there is some $x \in B_{n+1} \cap A$ and so by the
    definition of distance there is some $y \in B_n$ that is adjacent to $x$.
    Firing $A$ will move at least one chip from $x$ to $y$ and since no chip is
    moved away from $B_n$ when firing $A$, we deduce that
    $\deg D|_{B_n} < \deg D|_{B_n}$. Clearly, firing $A$ does not affect the
    vertices in $B_i$ for $i < n$, so we deduce that $D \prec D'$.
    Since $D \prec D'$ and there is only a finite number of divisors of a given
    degree, we deduce that the algorithm has to terminate.

    Unicity then follows from the unicity of $v$-reduced divisors on
    metric graphs.
\end{proof}

\begin{remark}
    We can find the set $A$ from the above proof using
    \emph{Dhar's burning algorithm}.

    Start by distributing chips on the vertices, according to the divisor $D$,
    and light a fire at $v$.
    Then repeat the following steps:
    \begin{enumerate}
        \item If there is an unburned edge adjacent to a burned vertex, burn it.
            Otherwise terminate the algorithm.
        \item If there is at least one chip on the vertex adjacent to the
            corresponding edge, remove one chip. Otherwise burn the vertex.
        \item Go back to step 1.
    \end{enumerate}
    The set $A$ of unburned vertices is our desired set.
\end{remark}

\begin{remark}
    It is clear from the proof of the proposition that we may also easily find
    the corresponding level map such that $D + \fdiv(f)$ is $v$-reduced.
    When $D \sim D'$, and we know that $D + \fdiv(f)$ and $D' + \fdiv(g)$
    are $v$-reduced, then in fact $D + \fdiv(f) = D' + \fdiv(g)$ by unicity.
    It follows that $D - D' = \fdiv(g - f)$ and so this gives us another way to
    find level maps that relate two linearly equivalent divisors.
\end{remark}

\begin{corollary}
    Suppose $\Gamma$ is a graph and $G$ is a model with equal edge lengths.
    If $D \in \Div(\Gamma)$ is a divisor supported on the vertices of $G$,
    and $v$ is a vertex,
    then the $v$-reduced divisor linearly equivalent to $D$ is 
    also supported on the vertices.
\end{corollary}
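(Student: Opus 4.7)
The plan is to leverage the two uniqueness results---Proposition \ref{prop:reduced-existence-unicity} on metric graphs and Proposition \ref{prop:existence-v-reduced-graphs} on combinatorial graphs---by showing that the unique $v$-reduced divisor on $G$ is automatically $v$-reduced on $\Gamma$.

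First, I take $D_G \in \Div(G)$ to be the unique $v$-reduced divisor on $G$ linearly equivalent to $D$ in the discrete sense, and view it as an element of $\Div(\Gamma)$ supported on $V$. The key preliminary observation is that linear equivalence on $G$ implies linear equivalence on $\Gamma$: any level map $V \to \Z$ witnessing $D \sim D_G$ on $G$ extends to a rational function on $\Gamma$ by linear interpolation, and, because the edges of $G$ all have length $1$, the combinatorial and metric principal divisors of this function coincide. By the uniqueness part of Proposition \ref{prop:reduced-existence-unicity}, it then suffices to verify that $D_G$ is $v$-reduced as a divisor on the metric graph $\Gamma$.

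To do this, I suppose for contradiction that some closed subgraph $Z \subseteq \Gamma$ with $v \notin Z$ fires with respect to $D_G$. For each $x \in \boundary Z$ one has $D_G(x) \geq \degout_Z(x) \geq 1$, which forces $x \in \supp D_G \subseteq V$, and so $\boundary Z \subseteq V$. Since the interior of every edge of $G$ contains no vertices, a brief topological argument shows that $Z$ intersects each edge $e$ of $G$ either in $\emptyset$, in a subset of its endpoints, or in all of $e$. Setting $A := V \cap Z$ and letting $Z'$ denote the subgraph of $\Gamma$ spanned by $A$ (meaning $A$ together with every edge of $G$ whose endpoints both lie in $A$), a direct case check yields $\boundary Z' \subseteq \boundary Z$ and $\degout_{Z'}(v') \leq \degout_Z(v')$ for every $v' \in \boundary Z'$. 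Hence $Z'$ fires as well, which by construction is equivalent to the vertex subset $A$ firing on $G$ in the discrete sense. Since $v \notin A$, this contradicts the $v$-reduceness of $D_G$ on $G$, finishing the proof.

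The main obstacle is making the topological step precise: one must argue that a closed subset of $\Gamma$ with boundary contained in $V$ is essentially a combinatorial subgraph of $G$, so that firing it in the metric sense translates cleanly into firing a vertex subset in the discrete sense. Once this reduction is set up, the two uniqueness statements do the rest of the work.
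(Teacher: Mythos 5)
Your proof is correct and takes essentially the same route as the paper, which simply invokes Proposition \ref{prop:existence-v-reduced-graphs} together with the uniqueness of $v$-reduced divisors on the metric graph (Proposition \ref{prop:reduced-existence-unicity}). The one piece of heavy lifting you carry out explicitly---that a divisor supported on $V$ admitting no fireable vertex subset $A \subset V \setminus\{v\}$ is in fact $v$-reduced on $\Gamma$, via the passage from a fireable closed subgraph $Z$ to the fireable vertex set $V \cap Z$---is already established as the unlabelled proposition opening Subsection \ref{sec:reduced-div-graphs}, so you are re-deriving a step the paper cites implicitly through Proposition \ref{prop:existence-v-reduced-graphs}.
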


\begin{proof}
    This follows from Proposition \ref{prop:existence-v-reduced-graphs} and
    the unicity of $v$-reduced divisors on metric graphs.
\end{proof}

\begin{proposition}
    Let $D$ be a $v$-reduced divisor on $G$ and let $f$ be a
    level map such that
    $D + \fdiv(f) \geq 0$. Let $v \in A \subset V$ and
    \begin{equation*}
        m = \max\{f(x): x \in A, \degout_A(x) > 0\},
    \end{equation*}
    then $f(V \setminus A) \leq m$.
\end{proposition}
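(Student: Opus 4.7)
The plan is to argue by contradiction using the connectedness of superlevel sets provided by Corollary \ref{cor:superlevel-connected}. Suppose there exists $y \in V \setminus A$ with $f(y) > m$; since level maps take integer values, $a := f(y) \geq m+1$. I want to derive a contradiction from the $v$-reducedness of $D$.

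Viewing $f$ as a rational function on the metric realization $\Gamma$ of $G$ by extending linearly along each edge, the hypothesis $D + \fdiv(f) \geq 0$ gives $f \in R(D)$. Corollary \ref{cor:reduced-max} then shows that $f$ attains its maximum at $v$, so $f(v) \geq a$, and therefore Corollary \ref{cor:superlevel-connected} applies and tells us that $Y := f^{-1}([a, \infty)) \subseteq \Gamma$ is connected. Both $v$ and $y$ lie in $Y$.

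The next step is to pass from topological connectedness of $Y \subseteq \Gamma$ to combinatorial connectedness of the induced subgraph of $G$ on $B := \{x \in V : f(x) \geq a\}$. Since $f$ is linear along each edge with integral slopes and $a \in \Z$, an edge of $G$ is fully contained in $Y$ exactly when both of its endpoints lie in $B$; an edge with only one endpoint in $B$ contributes to $Y$ merely a dangling segment attached at its $B$-endpoint. Consequently, any continuous path in $Y$ from $v$ to $y$ can only move between distinct vertices by fully traversing edges whose both endpoints lie in $B$, so it yields a combinatorial path $v = x_0, x_1, \dots, x_n = y$ in the induced subgraph of $G$ on $B$.

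To finish, observe that $v \in A$ while $y \in V \setminus A$, so there must exist an index $i$ with $x_{i-1} \in A$ and $x_i \in V \setminus A$. Then $x_{i-1}$ has a neighbor outside $A$, so $\degout_A(x_{i-1}) > 0$, and by the very definition of $m$ this forces $f(x_{i-1}) \leq m$. On the other hand, $x_{i-1} \in B$ gives $f(x_{i-1}) \geq a > m$, a contradiction. The main obstacle is the translation step from connectedness of the metric-graph superlevel set $Y$ to connectedness of the vertex-induced subgraph on $B$; once this is handled via the integrality of $a$ and linearity of $f$ on each unit-length edge, the rest of the argument follows directly from the definition of $m$ and Corollary \ref{cor:reduced-max}.
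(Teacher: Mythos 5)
Your proof is correct, but it takes a genuinely different route than the paper's. The paper lets $B$ be the set where $f|_{V\setminus A}$ attains its maximum, shows that the subgraph spanned by $B$ is a local maximum of $f$ on $\Gamma$ (using the definition of $m$ to handle neighbours in $A$), and then applies Proposition~\ref{prop:reduced-local-max} directly to conclude $v$ must lie in $B$, a contradiction. You instead fix an arbitrary offending vertex $y \in V\setminus A$, set $a := f(y)$, apply Corollary~\ref{cor:superlevel-connected} (after using Corollary~\ref{cor:reduced-max} to ensure $a \leq f(v)$) to get connectedness of the superlevel set $Y := f^{-1}([a,\infty))$, and then convert topological connectedness of $Y \subseteq \Gamma$ into a combinatorial path in the subgraph induced on $B := \{x \in V : f(x) \geq a\}$, deriving the contradiction at the first edge of that path leaving $A$. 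Both arguments are powered by the same $v$-reducedness machinery, but Corollary~\ref{cor:superlevel-connected} is itself a consequence of Proposition~\ref{prop:reduced-local-max}, so you are working one step downstream, at the cost of the extra topological-to-combinatorial translation. You handle that translation correctly: since $f$ is linear on each edge, an edge lies entirely in $Y$ if and only if both endpoints lie in $B$, and an edge with only one endpoint in $B$ contributes only a dangling stub that cannot help a path reach a new vertex. One small remark: the integrality of $a$ that you invoke is not actually needed for that step — linearity of $f$ along each edge already suffices — though it does no harm.
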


\begin{proof}
    Then let $B$ the subset of $V\setminus A$ on which
    $f|_{V\setminus A}$ attains its maximum
    and suppose for the sake of contradiction that $f(B) > m$.
    Let $Z \subseteq \Gamma$ be the subgraph spanned by $B$. Then $Z$
    is a local maximum for $f$. Indeed, for any vertex $x \notin B$
    adjacent to a point
    in $B$, we have that either $x \in V\setminus A$, in which case
    $f(x) < f(B)$ by assumption, or $x \in A$.
    In this case, since we assumed $x$ is adjacent to a point in $B$, we have
    that $\degout_A(x) > 0$ and hence by definition of $m$, 
    $f(x) \leq m < f(B)$. So $Z$ is indeed a local maximum but this implies
    by Proposition \ref{prop:reduced-local-max} that
    $v\in Z$, a contradiction as $v \in A$.
\end{proof}

\begin{remark}
    This is the same as saying that the subgraph spanned by the vertices in
    $A$ is connected.
\end{remark}

\begin{proposition}
    Let $D$ be a $v$-reduced divisor on $G$ and $f$ a level map such that
    $D + \fdiv(f) \geq 0$.
    For any $x \in V$, there exists a non-decreasing path from $x$ to $v$.
\end{proposition}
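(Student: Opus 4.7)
The plan is to proceed by contradiction. Define
\[
A := \{y \in V \mid \textrm{there is no non-decreasing path from } y \textrm{ to } v\},
\]
so that the goal is to show $A = \emptyset$. Set $A' = V \setminus A$; by Corollary \ref{cor:reduced-max}, $f$ attains its maximum at $v$, and trivially $v \in A'$ (the length-zero path suffices). Suppose for contradiction $A \neq \emptyset$.

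The key combinatorial observation is: if $y \in A$ and $z \in A'$ is a neighbor of $y$, then $f(z) < f(y)$. Indeed, if $f(z) \geq f(y)$, a non-decreasing path $z = x_0, x_1, \dots, x_n = v$ witnessing $z \in A'$ could be extended to a non-decreasing path $y, x_0, x_1, \dots, x_n = v$ (using $f(y) \leq f(z) = f(x_0)$ and the fact that $y, z$ are adjacent), contradicting $y \in A$.

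Now apply the previous proposition to $A'$ with $m := \max\{f(z) \mid z \in A',\ \degout_{A'}(z) > 0\}$. Since $G$ is connected and both $A$ and $A'$ are non-empty, there is at least one edge between them, so the set over which we take the maximum is non-empty and $m$ is well-defined. The proposition then yields $f(A) \leq m$. Pick $z_0 \in A'$ achieving $m$ and having some neighbor $y_0 \in A$. By the key observation $f(y_0) > f(z_0) = m$, while $y_0 \in A$ forces $f(y_0) \leq m$; this is the desired contradiction.

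The main content has already been absorbed into the preceding proposition (and ultimately into Proposition \ref{prop:reduced-local-max}); the only thing to spot is the right set to feed into that proposition. No new estimates or case analysis are needed, so there is no genuine obstacle beyond choosing the definition of $A$.
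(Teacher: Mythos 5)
Your proof is correct, but it takes a genuinely different route from the paper's. The paper proves the statement by induction on $f(v) - f(x)$: the base case $f(v) = f(x)$ follows because the top level set $f^{-1}(f(v))$ is connected (Corollary~\ref{cor:superlevel-connected}), and in the inductive step one uses connectedness of $f^{-1}([f(x), \infty))$ to find a shortest path inside that superlevel set from $x$ to the nearest vertex $y$ with $f(y) > f(x)$, which by minimality is constant until the last step, and then concatenates with the path from $y$ to $v$ furnished by induction. Your argument instead runs by contradiction, defines $A$ as the set of vertices with no non-decreasing path to $v$, and applies the immediately preceding proposition to $A' = V \setminus A$; the ``key observation'' that any $A$--$A'$ edge strictly decreases $f$ from the $A$ side to the $A'$ side, combined with $f(A) \leq m = \max\{f(z) : z \in A',\ \degout_{A'}(z) > 0\}$, gives the contradiction at a maximizer $z_0$. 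The two approaches ultimately both trace back to Proposition~\ref{prop:reduced-local-max}, but yours reuses the proposition stated just above rather than invoking Corollary~\ref{cor:superlevel-connected} from the metric graph section; it buys a shorter, non-inductive argument, while the paper's version more explicitly \emph{constructs} the non-decreasing path, which may be more useful algorithmically (e.g.\ for the implementation described in the introduction). Both proofs are complete; your handling of well-definedness of $m$ (non-emptiness via connectedness of $G$) is the one point that needed explicit attention, and you addressed it.
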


\begin{proof}
    We will proceed by induction on $f(v) - f(x)$.
    If $f(v) = f(x)$, by Corollary \ref{cor:superlevel-connected},
    $f^{-1}(f(v))$
    is connected and so there exists a path from $x$ to $v$.

    If $f(v) > f(x)$, then we know that the set
    $f^{-1}([f(x), \infty))$ is connected 
    by Corollary \ref{cor:superlevel-connected}. Let $y$ be the closest vertex
    to $x$ such that $f(y) > f(x)$ and let $x = x_0, x_1, \dots, x_n = y$ be a
    minimal path from $x$ to $y$. This path is clearly non-decreasing as
    $f(x_i) = f(x)$ for all $i < n$ by assumption.
    By induction, there exists a non-decreasing 
    path from $y$ to $v$. By concatenating these paths we obtain a
    non-decreasing path from $x$ to $v$.
\end{proof}

\nocite{*}
\printbibliography

\end{document}